\ifdefvoid{\longdocument}{
\newtheorem*{defn}{Definition}
\newtheorem{prop}{Proposition}
}{
\newtheorem{prop}{Proposition}[chapter]

}
\newtheorem{corr}[prop]{Corollary}
\newtheorem{lemm}[prop]{Lemma}
\newtheorem{thm}[prop]{Theorem}
\newtheorem*{thm*}{Theorem}
\newtheorem*{corr*}{Corollary}
\newtheorem*{prop*}{Proposition}
\newtheorem*{lemm*}{Lemma}
\theoremstyle{definition}
\theoremstyle{remark}
\newcommand{\Aone}{{\mathbb{A}^1}}
\newcommand{\Gm}{{\mathbb{G}_m}}
\newcommand{\Gmp}[1]{{\mathbb{G}_m^{\wedge #1}}}
\newcommand{\id}{\operatorname{id}}
\newcommand{\ZZ}{\mathbb{Z}}
\newcommand{\CC}{\mathbb{C}}
\newcommand{\QQ}{\mathbb{Q}}
\newcommand{\FF}{\mathbb{F}}
\newcommand{\DM}{\mathbf{DM}}
\newcommand{\SH}{\mathbf{SH}}
\newcommand{\iHom}{\ul{\operatorname{Hom}}}
\newcommand{\colim}{\operatorname{colim}}
\newcommand{\hocolim}{\operatorname{hocolim}}
\newcommand{\RR}{\mathbb{R}}
\newcommand{\Mod}{\textbf{-Mod}}
\DeclareRobustCommand{\ul}{\underline}
\newcommand{\heart}{\heartsuit}
\newcommand{\tunit}{\mathbbm{1}}
\newcommand{\ret}{{r\acute{e}t}}
\newcommand{\iso}{\cong}
\newcommand{\wequi}{\simeq}
\newcommand{\Map}{Map}
\title{Motivic and Real Étale Stable Homotopy Theory}
\author{Tom Bachmann}
\email{tom.bachmann@zoho.com}
\address{Fakultät Mathematik \\ Universität Duisburg-Essen \\%
              Thea-Leymann-Straße 9 \\ 45127 Essen \\ Germany}
\newcommand{\sSH}{\mathrm{SH}}
\begin{document}

\begin{abstract}
Let $S$ be a Noetherian scheme of finite dimension
and denote by $\rho \in [\tunit, \Gm]_{\SH(S)}$ the (additive inverse of the)
morphism corresponding to $-1 \in \mathcal{O}^\times(S)$. Here $\SH(S)$ denotes
the motivic stable homotopy category. We show that the category obtained by
inverting $\rho$ in $\SH(S)$
is canonically equivalent to the (simplicial) local stable homotopy
category of the site $S_\ret$, by which we mean the \emph{small} real étale site
of $S$, comprised of étale schemes over $S$ with the real étale topology.

One immediate application is that $\SH(\RR)[\rho^{-1}]$ is equivalent to the
classical stable homotopy category. In particular this computes all the stable
homotopy sheaves of the $\rho$-local sphere (over $\RR$). As further
applications we show that $D_\Aone(k, \ZZ[1/2])^- \wequi \DM_W(k)[1/2]$
(improving a result of Ananyevskiy-Levine-Panin), reprove Röndigs' result that
$\ul{\pi}_i(\tunit[1/\eta,1/2]) = 0$ for $i =1, 2$ and establish some new rigidity
results.
\end{abstract}

\maketitle

\tableofcontents

%TODO: distinguish derived and underived functors better

\section{Introduction}

For a scheme $S$ we denote by $\SH(S)$ the motivic stable homotopy category
\cite{A1-homotopy-theory,ayoub2007six}. We recall that this is a
triangulated category which is the homotopy category of a stable model category
that (roughly) is obtained from the homotopy theory of (smooth, pointed) schemes by
making the ``Riemann sphere'' $\mathbb{P}^1_S$ into an invertible object.

If $\alpha: k \hookrightarrow \CC$ is an embedding of a field $k$ into the complex
numbers, then we obtain a complex realisation functor $R_{\alpha,\CC}: \SH(k)
\to \SH$ (where now $\SH$ denotes the classical stable homotopy category)
connecting the world of motivic stable homotopy theory to classical
stable homotopy theory \cite[Section 3.3.2]{A1-homotopy-theory}. This functor is induced from the functor
which sends a smooth scheme $S$ over $k$ to its topological space of
of complex points $S(\CC)$ (this depends on $\alpha$). Similarly if $\beta: k
\hookrightarrow \RR$ is an embedding into the real numbers, then there is a real
realisation functor $R_{\beta,\RR}: \SH(k) \to \SH$ induced from $S \mapsto
S(\RR)$ \cite[Section 3.3.3]{A1-homotopy-theory} \cite[Proposition
4.8]{heller2016galois}.

These functors serve as a good source of inspiration and a convenient test of
conjectures in stable motivic homotopy theory. For example, in order for a
morphism $f: E \to F$ to be an equivalence it is necessary that
$R_{\alpha,\CC}(f)$ and $R_{\beta,\RR}(f)$ are equivalences, for all such
embeddings $\alpha, \beta$. On the other hand, this criterion is clearly not
sufficient---there are fields without any real or complex embeddings!

It is thus a very natural question to ask how far these functors are from being
an equivalence, or what their ``kernel'' is. The aim of this article is to give
some kind of complete answer to this question in the case of real realisation.
We begin with the simplest formulation of our result. Write $R_\RR$ for the
(unique) real realisation functor for the field $k=\RR$. The first clue comes from
the observation that $R_\RR(\Gm) = \RR \setminus 0 \wequi \{\pm 1\} = S^0$. That
is to say $R_\RR$ identifies $\Gm$ and $S^0$. We can even do better. Write
$\rho': S^0 \to \Gm$ for the map of pointed motivic spaces corresponding to $-1
\in \RR^\times$. Then one may check easily that $R_\RR(\rho')$ is an equivalence
between $S^0 \wequi R_\RR(S^0)$ and $R_\RR(\Gm)$.

We prove that $\SH(\RR)[\rho'^{-1}] \wequi \SH$ via real
realisation. That is to say $R_\RR$ is in some sense the universal functor
turning $\rho'$ into an equivalence.
More precisely, the functor $R_\RR: \SH(\RR) \to \SH$ has a right
adjoint $R^*$ (e.g. by Neeman's version of Brown representability) and we show
that $R^*$ is fully faithful with image consisting of the $\rho'$-stable motivic
spectra, i.e. those $E \in \SH(\RR)$ such that $E(X\wedge \Gm)
\xrightarrow{\rho^*} E(X)$ is an equivalence for all $X \in Sm(\RR)$.

Of course, our description of $\SH(\RR)[\rho'^{-1}]$ is just an explicit
description of a certain Bousfield localisation of $\SH(\RR)$. Moreover the
element $\rho'$ exists not only over $\RR$ but already over $\ZZ$, so we are
lead to study more generally the category $\SH(S)[\rho'^{-1}]$, for more or less
arbitrary base schemes $S$. Actually, for some formulas it is nicer to consider
$\rho := -\rho' \in [S, \Sigma^\infty \Gm]$ and we shall write this from now on.
Of course $\SH(S)[\rho'^{-1}] = \SH(S)[\rho^{-1}]$. In this generality we can of
longer expect that $\SH(S)[\rho^{-1}] \wequi \SH$. Indeed as we have
said before in general there is no real realisation! As a first attempt, one
might guess that if $X$ is a scheme over $\RR$, then $\SH(S)[\rho^{-1}] \wequi
\SH(S(\RR))$, where the right hand side denotes some form of parametrised
homotopy theory \cite{may2006parametrized}. This cannot be quite true unless $S$
is proper, because the category $\SH(S(\RR))$ will then not be compactly
generated. The way out is to use semi-algebraic topology.
For this we have to recall that if $S$ is a scheme, then there
exists a topological space $R(S)$ \cite[(0.4.2)]{real-and-etale-cohomology}. Its points are pairs $(x,
\alpha)$ with $x \in S$ and $\alpha$ an ordering of the residue field $k(x)$.
This is given a topology incorporating all of these orderings. Write $Shv(RS)$
for the category of sheaves on this topological space.
%One may show that if $X$
%is a variety over $\RR$, then $Shv(RX) \wequi Shv(X(\RR))$ \cite[Remark
%4.3]{jacobson-fundamental-ideal}.

Now, given any topos $\mathcal{X}$, there is a naturally associated stable
homotopy category $\sSH(\mathcal{X})$. If $\mathcal{X} \wequi Set$ then
$\sSH(\mathcal{X})$ is just the ordinary stable homotopy category. In general, if
$\mathcal{X} \wequi Shv(\mathcal{C})$ where $\mathcal{C}$ is a Grothendieck
site, then $\sSH(X)$ is the local homotopy category of presheaves of spectra on
$\mathcal{C}$.

With this preparation out of the way, we can state our main result:

\begin{thm*}[(see Theorem \ref{thm:final-comparison})]
Let $S$ be a Noetherian scheme of finite dimension. Then there is a canonical
equivalence of categories
\[ \SH(S)[\rho^{-1}] \wequi \sSH(Shv(RS)). \]
\end{thm*}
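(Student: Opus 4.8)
The plan is to build the equivalence as a composite of two identifications: first, to show that $\SH(S)[\rho^{-1}]$ only depends on the "real étale" information of $S$, and second, to identify that with $\sSH(Shv(RS))$. For the first step I would work with the small real étale site $S_\ret$ introduced in the abstract, and establish that the $\rho$-inverted stable homotopy category of $S$ agrees with the local stable homotopy category of presheaves of spectra on $S_\ret$. The key input here is the theorem of Scheiderer and of Coste–Roy (as in \cite{real-and-etale-cohomology}) that the real étale topos of $S$ is equivalent to $Shv(RS)$, the topos of sheaves on the topological space $R(S)$; granting that, the second step is essentially a definition, since $\sSH$ of a topos with enough points built from a site is by construction the local (Jardine-style) model structure on presheaves of spectra.

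So the crux is step one: \emph{why does inverting $\rho$ turn the big Nisnevich-local $\Aone$-local category $\SH(S)$ into something that only sees the small étale site with the real étale topology?} I would proceed in the following order. (i) Reduce the computation of $\SH(S)[\rho^{-1}]$ to $\Aone$-invariant, $\rho$-periodic Nisnevich sheaves of spectra on $Sm(S)$; this is formal Bousfield localisation. (ii) Show that $\rho$-periodicity forces such a sheaf to be "real étale local": the point is that the Nisnevich-local, $\rho$-inverted sphere already satisfies étale-type descent after inverting $\rho$, because the relevant Čech nerves of étale covers become contractible once $\rho$ — equivalently $-1$ being a "unit square root" datum — is invertible. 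Concretely one checks that for a finite étale cover corresponding to adjoining a square root, the cofibre of the transfer/unit map is killed by $\rho^{-1}$; more generally one uses that $\rho$-inverting kills the "non-real" part of the étale site, leaving exactly the real étale topology. (iii) Prove an $\Aone$-rigidity/invariance statement: over the real étale site, $\Aone$ is already contractible (its real points are contractible), so the $\Aone$-localisation is automatic and one is left with honest sheaves of spectra on $S_\ret$. (iv) Finally, show that a presheaf of spectra on $Sm(S)$ that is $\Aone$-invariant, Nisnevich-local and $\rho$-periodic is determined by its restriction to the small site $S_\et$: this is a base-change / continuity argument, using that every smooth $S$-scheme is, real-étale-locally, a disjoint union of étale pieces (semi-algebraic triangulation over the real spectrum).

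The main obstacle I expect is step (ii)–(iv) done \emph{with a base scheme rather than a field}, and in particular controlling the passage from the big site $Sm(S)$ to the small site $S_\et$ uniformly in $S$. Over $\RR$ (or a real closed field) this is close to classical semialgebraic homotopy theory and the real realisation functor $R_\RR$ does the job; the content of the general statement is a descent/continuity package showing both sides are suitably "continuous" in $S$ (send filtered limits of schemes to colimits of categories) and local for the real étale topology, so that one can reduce to the residue fields at points of $R(S)$, which are real closed, where the field case applies. I would therefore structure the proof as: (a) construct a canonical functor $\SH(S)[\rho^{-1}] \to \sSH(Shv(RS))$ (from real realisation fibrewise / from the morphism of sites $Sm(S)_{\Nis} \to S_\ret$), (b) check it is compatible with the six-functor formalism enough to reduce to a conservativity statement over real closed fields, and (c) prove the field case by hand, where $\rho$-inversion literally produces the constant sheaf / classical $\SH$ as in the $k=\RR$ discussion. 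Steps (b) and (c) are where the real work lies; (a) and the topos-theoretic identification $Shv(RS) \simeq$ real étale topos are bookkeeping.
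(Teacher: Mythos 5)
Your high-level architecture is the right one and matches the paper's: identify $Shv(RS)$ with the small real étale topos $Shv(S_\ret)$ (this is indeed just Scheiderer's theorem), use the six functors formalism, continuity and localisation to reduce the comparison $\SH(S)[\rho^{-1}] \to \SH(S)^\ret[\rho^{-1}]$ to residue fields, and then do the field case. But three of your key steps have genuine gaps, and they are exactly where the content of the theorem lives.

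First, your step (ii) — that $\rho$-periodicity forces real étale descent because ``Čech nerves of étale covers become contractible'' or because the cofibre of the transfer for a square-root cover dies after inverting $\rho$ — is not a proof and does not identify the actual mechanism. What the paper proves (Theorem \ref{thm:rho-stable-htpy-mod}) is that a $\rho$-stable homotopy module is a real étale sheaf, and the input is Jacobson's theorem that $\colim_n \ul{I}^n \iso a_\ret\ul\ZZ$ (hence $\ul{K}^{MW}_*[\rho^{-1}] \iso a_\ret\ul\ZZ$), combined with surjectivity of the transfer $H^0_\ret(U,\ZZ)\to H^0_\ret(X,\ZZ)$ for finite rét-covers of Henselian local schemes and the base change and projection formulas for finite étale transfers on homotopy modules. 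One then runs the descent spectral sequence together with the coincidence of Nisnevich and rét cohomology for rét-sheaves. Without Jacobson's theorem (or an equivalent input, cf.\ the alternative route via \cite{bachmann-hurewicz}) you have no way to show that $\rho$-local objects are rét-local; your ``by hand'' field case in (c) is circular, since the $k=\RR$ discussion in the introduction only shows that real realisation factors through $\rho$-inversion, not that the factored functor is an equivalence.

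Second, your step (iv) is based on a false claim: a smooth $S$-scheme of positive relative dimension is not, real-étale-locally, a disjoint union of étale $S$-schemes, so ``semi-algebraic triangulation'' does not reduce the big site to the small one. The paper's passage from $\sSH(S_\ret)$ to $\SH(S)^\ret[\rho^{-1}]$ splits into fully faithfulness — which uses $t$-exactness of the extension functor, the descent spectral sequence, and the $\Aone$- and $\Gm$-invariance of rét-cohomology (Theorem \ref{thm:ret-htpy-rho-inv}), plus the observation that $\Gm\wequi\tunit$ via $\rho$ so that $\Gm$-stabilisation is vacuous — and essential surjectivity, which is the Cisinski--Déglise argument: $\SH(S)^\ret[\rho^{-1}]$ is generated by $p_*\tunit$ for $p$ projective, and one checks $e$ commutes with $p_*$ using proper base change on \emph{both} sides, i.e.\ Scheiderer's proper base change theorem in real étale cohomology and the six functors formalism for $\SH(\bullet)[\rho^{-1}]$. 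Neither ingredient appears in your outline, and without them essential surjectivity is unproved.
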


A more detailed formulation is given later in this introduction. For now let us
mention one application. We go back to $S = Spec(\RR)$. In this case Proposition
\ref{prop:real-realn-compatible} in Section \ref{sec:real-realisation}
assures us that the equivalence from the above
theorem does indeed come from real realisation. But given $E \in \SH(\RR)$, its
$\rho$-localisation can be calculated quite explicitly (see Lemma
\ref{lemm:rho-local-model}). From this one concludes that $\pi_i(R_\RR E) =
\colim_n \ul{\pi}_i(E)_n(\RR)$, where the colimit is along multiplication by
$\rho$ in the second grading of the bigraded homotopy sheaves of $E$. (Recall
that $\ul{\pi}_i(E)_n(\RR) = [\tunit[i], E \wedge \Gmp{n}]$ so $\rho$ indeed induces
$\rho: \ul{\pi}_i(E)_n(\RR) \to \ul{\pi}_i(E)_{n+1}(\RR)$.)

This may seem slightly esoteric, but
actually $\SH(S)[\rho^{-1}, 2^{-1}] = \SH(S)[\eta^{-1}, 2^{-1}]$ and so our
computations apply, after inverting two, to the more conventional
$\eta$-localisation as well. As a corollary, we obtain the following.

\begin{thm*}
The motivic stable 2-local, $\eta$-local stems over $\RR$ agree with the classical
stable 2-local stems:
\[ \ul{\pi}_i(\tunit_{\eta, 2})_j(\RR) = \pi_i^s \otimes_\ZZ \ZZ[1/2]. \]
\end{thm*}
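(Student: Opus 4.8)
The plan is to deduce this from the main theorem $\SH(\RR)[\rho^{-1}] \wequi \sSH(Shv(R\,\Spec\RR))$, together with the compatibility with real realisation (Proposition~\ref{prop:real-realn-compatible}) and the explicit model for $\rho$-localisation (Lemma~\ref{lemm:rho-local-model}). First I would observe that $R(\Spec\RR)$ is a single point — the only ordering of $\RR$ — so $Shv(R\,\Spec\RR) \wequi Set$ and hence $\sSH(Shv(R\,\Spec\RR)) \wequi \SH$, the classical stable homotopy category. Thus the main theorem specialises to an equivalence $\SH(\RR)[\rho^{-1}] \wequi \SH$, realised by $R_\RR$. Under this equivalence the bigraded homotopy sheaves collapse: for $E \in \SH(\RR)$ one has, as indicated in the introduction, $\pi_i(R_\RR E) = \colim_n \ul{\pi}_i(E)_n(\RR)$, the colimit taken along multiplication by $\rho$. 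I would record this as the key computational input.

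Next I would pass from $\rho$ to $\eta$ after inverting $2$. The point, already flagged in the text, is the identity $\SH(S)[\rho^{-1}, 2^{-1}] = \SH(S)[\eta^{-1}, 2^{-1}]$: one has the relation $\rho\eta = -2$ (up to sign/units) coming from the $\eta$-multiplication on $\Gm^{\wedge 2}$, so after inverting $2$ inverting $\rho$ forces $\eta$ invertible and conversely. Consequently the localisation functor $\SH(\RR) \to \SH(\RR)[\eta^{-1},2^{-1}]$ agrees with $E \mapsto R_\RR(E)[1/2] = R_\RR(E) \otimes \ZZ[1/2]$, and the homotopy groups of $\tunit_{\eta,2}$ are computed by the same colimit along $\rho$, tensored with $\ZZ[1/2]$.

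Applying this to $E = \tunit$, the motivic sphere over $\RR$: I would identify $\pi_i$ of its $\eta$-, $2$-localisation with $\colim_n \ul{\pi}_i(\tunit)_n(\RR) \otimes \ZZ[1/2]$, which by the discussion equals $\pi_i\big(R_\RR(\tunit)\big) \otimes \ZZ[1/2] = \pi_i(S^0) \otimes \ZZ[1/2] = \pi_i^s \otimes_\ZZ \ZZ[1/2]$, using $R_\RR(\tunit) = \Sigma^\infty S^0 = S^0$. Finally, since $\tunit_{\eta,2}$ is $\rho$-local, its bigraded homotopy sheaves are independent of the weight $j$ (multiplication by $\rho$ is an isomorphism on them), so $\ul{\pi}_i(\tunit_{\eta,2})_j(\RR) = \ul{\pi}_i(\tunit_{\eta,2})_0(\RR) = \pi_i^s \otimes_\ZZ \ZZ[1/2]$, as claimed.

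The only genuinely nontrivial ingredient is the main theorem and its compatibility with real realisation; granting those (as the excerpt permits), the rest is bookkeeping. The step I would watch most carefully is the passage $\rho \leftrightarrow \eta$ after inverting $2$: one must check that the relevant localisations agree as \emph{categories} (not merely that $\eta$ becomes invertible), which is where the relation $\rho\eta = -2$ and the fact that these are smashing localisations of $\SH(\RR)[1/2]$ are used. A secondary point requiring a line of care is that ``$\ul{\pi}_i$'' here means the homotopy \emph{sheaf} evaluated on $\Spec\RR$, i.e.\ we are implicitly using that over a field the Nisnevich sheafification does not change sections over the base, so the colimit formula for $\pi_i(R_\RR E)$ really does compute the stated sheaf sections.
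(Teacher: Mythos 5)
Your proposal is correct and follows essentially the same route as the paper: Theorem \ref{thm:final-comparison} (with $Sper(\RR)$ a single point, so $\sSH(Spec(\RR)_\ret) \wequi \SH$), the identification $\SH(\RR)[1/2,1/\eta] = \SH(\RR)[1/2,1/\rho]$, and the colimit description of the $\rho$-localisation from Lemma \ref{lemm:rho-local-model}, exactly as in the paper's Corollary at the end of Section \ref{sec:app1}. One small correction to the step you rightly flag as delicate: the relation is not literally $\rho\eta = -2$ in $GW(\RR)$ (there $\eta\rho = 1+\epsilon = 2-h$, which is not a unit multiple of $2$); rather one needs that $h$ is killed by $\eta$ and by $\rho^2$, which is precisely what Lemma \ref{lemm:rho-eta-comparison} supplies and which still yields the asserted equality of localisations after inverting $2$.
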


Some more applications will be described later in this introduction.

\paragraph{Overview of the proof.}
The proof uses a different description of the category $Shv(RS)$. Namely, there
is a topology on all schemes called the \emph{real étale topology} and
abbreviated rét-topology \cite[(1.2)]{real-and-etale-cohomology}. (The covers are families of étale
morphisms which induce a jointly surjective family on the associated real spaces $R(\bullet)$.)
We write $Sm(S)_\ret$ for the site of all smooth
schemes over $S$ with this topology, and $S_\ret$ for the site of all étale
schemes over $S$ with this topology. Then $Shv(S_\ret) \wequi Shv(RS)$
\cite[Theorem (1.3)]{real-and-etale-cohomology}.

Write $\SH(S)$ for the motivic stable homotopy
category, $\SH(S)[\rho^{-1}]$ for the $\rho$-local motivic stable homotopy
category, $\SH(S)^\ret$ for the rét-local motivic stable homotopy category (i.e.
the category obtained from the site $Sm(S)_\ret$ by precisely the same
construction as is used to build $\SH(S)$ from $Sm(S)_{Nis}$), and
$\SH^{S^1}(S)$ for the motivic $S^1$-stable homotopy category. We trust that
$\SH^{S^1}(S)^\ret$, $\SH(S)^\ret[\rho^{-1}]$ and so on have evident meanings.
Write $\sSH(S_\ret)$ for the rét-local stable homotopy category on the small real
étale site. This is just the homotopy category of the category of presheaves of spectra on $S_\ret$ with
the local model structure. Similarly $\sSH(Sm(S)_\ret)$ means the rét-local
presheaves of spectra on $Sm(S)$. Then for example $\SH^{S^1}(S)^\ret$ is the
$\Aone$-localisation of $\sSH(Sm(S)_\ret)$.

The canonical functor $e: \sSH(S_\ret) \to \sSH(Sm(S)_\ret)$ (extending a (pre)sheaf
on the small site to the large site) is fully faithful by general results (see
Corollary \ref{corr:Le-exact-ff}). It is moreover $t$-exact: for $E \in
\sSH(S_\ret)$ we have $\ul{\pi}_i(eE) = e\ul{\pi}_i(E)$. Here $\ul{\pi}_*$
denotes the homotopy sheaves.

If $F$ is
a sheaf on the small real étale site of a scheme $Y$, then $H^p(Y \times \Aone,
F) = H^p(Y, F)$ and $H^p(Y_+ \wedge \Gm, F) = H^p(Y, F)$. If $Y$ is of finite
type over $\RR$ and $F$ is locally constant,
then this follows by comparison of real étale cohomology with
Betti cohomology of the real points \cite[Theorem II.5.7]{delfs1991homology}. For
the general case, see Theorem \ref{thm:ret-htpy-rho-inv}.

Now the category $\SH^{S^1}(S)^\ret[\rho^{-1}]$
is obtained from $\sSH(Sm(S)_\ret)$ by $(\Aone, \rho)$-localisation. It follows
from $t$-exactness of $e$, the descent spectral sequence, and the above result
about rét-cohomology that the composite $\sSH(S_\ret) \to \sSH(Sm(S)_\ret) \to
\SH^{S^1}(S)^\ret[\rho^{-1}]$ is still fully faithful.

The category $\SH(S)^\ret[\rho^{-1}]$ is obtained from
$\SH^{S^1}(S)^\ret[\rho^{-1}]$ by $\otimes$-inverting $\Gm$. However in the
latter category we have $\Gm \wequi \tunit$ (via $\rho$!), so $\Gm$ is already
invertible, and inverting it has no effect: $\SH^{S^1}(S)^\ret[\rho^{-1}]
\wequi \SH(S)^\ret[\rho^{-1}]$. We have thus shown that
\[ \sSH(S_\ret) \to \SH(S)^\ret[\rho^{-1}] \]
is fully faithful.

The next step is to show that it is essentially surjective. This follows from
the proper base change theorem by a clever argument of Cisinski-Déglise. Of
course this first requires that we know that $\sSH(S_\ret)$ and
$\SH(S)^\ret[\rho^{-1}]$ satisfy proper base change. For $\sSH(S_\ret)$ this is
a consequence of the proper base change theorem in real étale cohomology
established by Scheiderer, see Theorem \ref{thm:ret-proper-basechange}. For
$\SH(S)^\ret[\rho^{-1}]$ this would follow from the axiomatic six functors
formalism of Voevodsky/Ayoub/Cisinski-Déglise, see Section
\ref{sec:recoll-pre-motivic}. It is in fact not very hard to show directly that
$\SH(S)^\ret[\rho^{-1}]$ satisfies the six functors formalism. Instead we shall
show (without assuming the six functors formalism) that $\SH(S)^\ret[\rho^{-1}]
\wequi \SH(S)[\rho^{-1}]$, and that this latter category satisfies the six
functors formalism.

The next step is thus to show that the localisation functor $\SH(S)[\rho^{-1}] \to
\SH(S)^\ret[\rho^{-1}]$ is an equivalence. It clearly has dense image, so it
suffices to show that it is fully faithful. Using the fact that
$\SH(S)[\rho^{-1}]$ satisfies continuity and gluing (which follows quite easily
from the same statement for $\SH(S)$), we may reduce to the case where $S$ is the
spectrum of a field $k$. The case where $char(k) > 0$ is easily dealt with (note
that such fields are never orderable), so we may assume that $k$ has
characteristic zero and so in particular is \emph{perfect}.

The $\rho$-localisation can be described rather explicitly. For $E \in
\SH(k)$, consider the directed system
\[ E \xrightarrow{\rho} E \wedge \Gm \xrightarrow{\rho} E \wedge \Gm \wedge
     \Gm \xrightarrow{\rho} \dots. \]
Then $\hocolim_n E \wedge \Gmp{n}$ is a model for the
$\rho$-localisation $E[\rho^{-1}]$ of $E$ (see Lemma \ref{lemm:rho-local-model}). It
follows that its homotopy sheaves are given by
\[ \ul{\pi}_i(E[\rho^{-1}]) = \ul{\pi}_i(E)_*[\rho^{-1}] =: \colim_n \ul{\pi}_i(E)_n. \]
Here the colimit is along multiplication by $\rho$.
(Let us remark here that the homotopy sheaves in $\SH(k)$ are \emph{bigraded},
and so, technically, are those in $\SH(k)[\rho^{-1}]$. However inverting
$\rho$ means that up to canonical isomorphism, the homotopy sheaf is
independent of the second index, so we suppress it.)
It then follows from the descent spectral sequence that in order to prove that
the functor $\SH(k)[\rho^{-1}] \to \SH(k)^\ret[\rho^{-1}]$ is an
equivalence, it is enough to prove that if $F_*$ is a homotopy module (element
in the heart of $\SH(k)$) such that $\rho: F_n \to F_{n+1}$ is an isomorphism
for all $n$ (we call such a homotopy module \emph{$\rho$-stable}),
then $H^n_\ret(X, F_*) = H^n_{Nis}(X, F_*)$ for all $X$ smooth over
$k$. In particular, we need to show that $F_*$ is a sheaf in the real étale
topology. This is actually sufficient, because Nisnevich, Zariski and real étale
cohomology of real étale sheaves all agree \cite[Proposition
19.2.1]{real-and-etale-cohomology}.

This ties in with work of Jacobson and Scheiderer. Recall that
$\ul{\pi}_0(\tunit)_* =
\ul{K}_*^{MW}$, i.e. the zeroth stable motivic homotopy sheaf is unramified
Milnor-Witt $K$-theory. A theorem of Jacobson
\cite{jacobson-fundamental-ideal} together with work of Morel implies that
$\ul{K}_*^{MW}[\rho^{-1}] = \colim_n \ul{I}^n = a_\ret \ul\ZZ$; here $\ul{I}$ is
the sheaf of fundamental ideals. Finally if $F_*$ is a general
$\rho$-stable homotopy module, we use properties of transfers for homotopy
modules together with the structure of $F_*$ as a module over
$\ul{K}_*^{MW}[\rho^{-1}] = a_\ret \ul\ZZ$ to show that $F_*$ is a sheaf in the
real étale topology. This concludes the overview of the proof.

Throughout the article we actually establish all our results for both the stable
motivic homotopy category $\SH(S)$ and the stable $\Aone$-derived category
$D_\Aone(S)$. The proofs in the latter case are essentially always the same as
in the former, so we do not tend to give them. (In fact in some cases proofs
just for
the latter category would be simpler.)

\paragraph{Overview of the article.} In Section \ref{sec:recoll-local-homotopy}
we recall some results from local homotopy theory, including the existence and
basic properties of the homotopy $t$-structure, a general compact generation
criterion and a fully faithfulness result.

In Section \ref{sec:recoll-real-etale} we recall the real étale topology and
establish some supplements.

In Section \ref{sec:recollections-motivic-homotopy} we recall some results about
motivic stable homotopy categories and transfers for finite étale morphisms. In
particular we establish the base change and projection formulas for these.

In Section \ref{sec:recoll-pre-motivic} we recall the formalism of pre-motivic
and motivic categories and how it can be used to establish that a category
satisfies the six functors formalism.

In Section \ref{sec:monoidal-local} we carefully prove some basic facts about
monoidal Bousfield localization.

We judge these five sections as preliminary and the results as not very original.
The ``real work'' is contained in the next three sections. In Section
\ref{sec:jacobson} we review Jacobson's theorem on the colimit of the powers of
the sheaf of fundamental ideals and use it together with our results on
transfers to prove that $\rho$-stable homotopy modules are sheaves in the real
étale topology.

Section \ref{sec:preliminary} contains various preliminary observations and
reductions.

Finally in Section \ref{sec:main-theorems} we carry out the proof as outlined
above.

The remaining three sections contain some applications. In Section
\ref{sec:real-realisation} we show that our functor $\SH(\RR) \to
\SH(\RR)[\rho^{-1}] \wequi \sSH(Spec(\RR)_\ret) \wequi \SH$ coincides with the
real realisation functor. It follows that the $\rho$-inverted stable homotopy
sheaves of $E \in \SH(\RR)$ are just the stable homotopy groups of its real
realisation.

In Section \ref{sec:app1} we collect some consequences for the $\eta$-inverted
sphere. We use that $\tunit[1/2, 1/\rho] \wequi \tunit[1/2, 1/\eta]$. Since the
classical stable stems $\pi_i^s =
\ZZ/2$ for $i = 1, 2$ are $2$-torsion, it follows that
$\ul{\pi}_i(\tunit[1/2,1/\eta])(\RR) = 0$ for $i=1,2$. Since the $\rho$-local
homotopy sheaves are unramified sheaves in the real étale topology, this (more
or less) implies that $\ul{\pi}_i(\tunit[1/2,1/\eta]) = 0$ for $i = 1,2$. This
reproves a result of Röndigs \cite{rondigs2016eta}.

A different but related question is to determine rational motivic stable
homotopy theory. By a recent result of Ananyevskiy-Levine-Panin
\cite{levine2015witt} we have $\SH(k)_\QQ^- \wequi \DM_W(k,\QQ)$, where the
right hand side denotes a category of rational Witt-motives. Our results show
easily that $\DM_W(k, \ZZ[1/2]) \wequi D_\Aone(k, \ZZ[1/2])^- \wequi
D(Spec(k)_\ret, \ZZ[1/2])$ and more generally that $D_\Aone(k, \ZZ)[1/\rho] \wequi
D(Spec(k)_\ret)$. By the same proof as in classical rational stable homotopy
theory we have $\SH(k)_\QQ^- \wequi D_\Aone(k, \QQ)^-$, and so we consider our
results as one version of an integral strengthening of the result of
Ananyevskiy-Levine-Panin.

In Section \ref{sec:app2} we collect some applications to the rigidity problem.
A sheaf $F$ on $Sm(k)$ is called \emph{rigid} if for every essentially smooth,
Henselian local scheme $X$ with closed point $x$ we have $F(X) = F(x)$. For
example, sheaves with transfers in the sense of Voevodsky which are of torsion
prime to the characteristic of the perfect base field are rigid (see
\cite[Theorem 4.4]{suslin1996singular}). Our
results imply that the homotopy sheaves of any $E \in \SH(k)[\rho^{-1}]$ are
real étale sheaves extended from the small real étale site of $k$. One might
already call this a rigidity result, but it is also not hard to see (and we
show) that all such sheaves are rigid in the above sense. As an application, we
show that the motivic stable homotopy sheaves $\ul{\pi}_i(\tunit)_0[1/e]$ are all
rigid, where $e$ is the exponential characteristic.
This ties up a loose end of the author's PhD thesis.

\paragraph{Acknowledgements.} This paper owes a huge debt to a number of people.
In May 2016 Denis-Charles Cisinski taught a mini-course on motives in Essen. As
a result the author realised that he could extend his theorem from perfect base
fields to fairly general base schemes.
In particular he learned the pattern of the proof that $D(S_{et}, \ZZ/p)$ is
equivalent to $\DM_h(S, \ZZ/p)$. In a lot of ways our proof is a variant of that
one. A similar strategy is followed in the Cisinski-Déglise article
\cite{cisinski2013etale} on which we also rely heavily both in spirit and in
practice.

Just like in that article, many of our results are relatively
straightforward consequences of difficult theorems in real étale cohomology
established by Scheiderer and other semi-algebraic topologists.

The importance of the Voevodsky/Ayoub/Cisinski-Déglise approach to the six
functors formalism for our article also cannot be overstated.

Discussions with Fabien Morel, Oliver Röndigs, Marc Hoyois, Marco
Schlichting, and Markus Spitzweck about early versions of this work were also
influential to its current form.

The author would further like to thank Denis-Charles Cisinski for carefully
reading a draft of this work and pointing out several mistakes, and Elden
Elmanto, Daniel Harrer, Marc Levine and Maria Yakerson for providing comments.

\paragraph{Notation.} If $S$ is a scheme, we denote the motivic stable homotopy
category by $\SH(S)$. We denote the $S^1$-stable motivic homotopy category (i.e.
where $\Gm$ has not been inverted yet) by $\SH^{S^1}(S)$. If $\mathcal{X}$ is a topos
or site, we denote by $\sSH(\mathcal{X})$ the associated stable homotopy
category, see Section \ref{sec:recoll-local-homotopy}. In particular
$\sSH(S_\ret), \sSH(Sm(S)_\ret)$ and $\SH(S)^\ret$ should be carefully
distinguished: the first is the stable homotopy category of the small rét-site
on $S$, the second is the stable homotopy category of the site of all smooth
schemes, with the rét-topology, and the latter is the rét-localization of the
motivic stable homotopy category. This last category is $\Aone$-local and
$\Gm$-stable, whereas the second category is neither, and these notions do not
even make sense for the first category.

The classical stable
homotopy category will still be denoted by $\SH$.

We denote the unit of a monoidal category $\mathcal{C}$ by $\tunit_\mathcal{C}$
or just by $\tunit$, if $\mathcal{C}$ is clear from the context. Thus if
$\mathcal{C}$ is a stable homotopy category of some sort, then $\tunit$ is the
sphere spectrum.

\section{Recollections on Local Homotopy Theory}
\label{sec:recoll-local-homotopy}

If $(\mathcal{C}, \tau)$ is a Grothendieck site, we can consider the associated category
$Shv(\mathcal{C}_\tau)$ of sheaves (a topos), the category $sPre(\mathcal{C})$
of \emph{simplicial presheaves} on $\mathcal{C}$, as well as the categories
$\mathcal{SH}(\mathcal{C})$ of presheaves of spectra and $C(\mathcal{C})$ of
presheaves of complexes of abelian groups on $\mathcal{C}$. The latter three
categories carry various local model structures, in particular the injective and
the projective one \cite{jardine-local-homotopy}. We denote the homotopy category of
$\mathcal{SH}(\mathcal{C}_\tau)$ by $\sSH(\mathcal{C}_\tau)$ and the homotopy
category of $C(\mathcal{C}_\tau)$ by $D(\mathcal{C}_\tau)$.

It is also possible to model $\sSH(\mathcal{C}_\tau)$ and so on by
\emph{sheaves}. For this, let $sShv(\mathcal{C}_\tau)$ denote the category of
sheaves of simplicial sets, and similarly $\mathcal{SH}^s(\mathcal{C}_\tau)$ the
category of sheaves of spectra, and $C^s(\mathcal{C}_\tau)$ the category of
sheaves of chain complexes. (Here we mean sheaves in the 1-categorical sense, so
this category is equivalent to the category of chain complexes of sheaves of
abelian groups, and similarly for the spectra.)
These also afford local model structures, and
$Ho(sShv(\mathcal{C}_\tau)) \wequi Ho(sPre(\mathcal{C}_\tau))$, and so on.

Given a functor $f^*: \mathcal{C} \to \mathcal{D}$, there is an induced
restriction functor $f_*: Pre(\mathcal{D}) \to Pre(\mathcal{C})$, where
$Pre(\mathcal{C})$ denotes the category of presheaves (of sets) on $\mathcal{C}$
(and similarly for $\mathcal{D}$). The functor
$f_*$ has a left adjoint $f^*: Pre(\mathcal{C}) \to Pre(\mathcal{D})$. It is in
fact the left Kan extension of $f^*: \mathcal{C} \to \mathcal{D}$.

If $\mathcal{C}, \mathcal{D}$ are sites the functor $f^*$ is called continuous
if $f_*: Pre(\mathcal{D}) \to Pre(\mathcal{C})$ preserves sheaves. In this case
the induced functor $f_*: Shv(\mathcal{D}) \to Shv(\mathcal{C})$ has a left
adjoint still denoted $f^*: Shv(\mathcal{C}) \to Shv(\mathcal{D})$. If this
induced functor is left exact (commutes with finite limits) then $f$ is called a
\emph{geometric morphism}.

More generally, an adjunction $f^*: Shv(\mathcal{C}) \leftrightarrows
Shv(\mathcal{D}): f_*$ (where $f^* \vdash f_*$ does not necessarily come from a
functor $f^*: \mathcal{C} \to \mathcal{D}$)
is called a geometric morphism if $f^*$ preserves finite
limits.

If $f: \mathcal{C} \to \mathcal{D}$ is any functor, then there are induced
adjunctions $f^*: sPre(\mathcal{C}) \leftrightarrows sPre(\mathcal{D}): f_*$,
and similarly for spectra and chain complexes. Similarly if $f^*: Shv(\mathcal{C})
\leftrightarrows Shv(\mathcal{D}): f_*$ is any adjunction, then there are
induced adjunctions $f^*: sShv(\mathcal{C}) \leftrightarrows sShv(\mathcal{D}):
f_*$, and so on. If $f^* \vdash f_*$ is a geometric morphism in either of the
above senses, then the induced adjunctions on presheaves (sheaves) of simplicial
sets, spectra, and chain complexes are Quillen adjunctions in the local model
structure \cite[Section 5.3]{jardine-local-homotopy} \cite[Theorem
1.18]{cisinski2009local}.

The above discussion allows us to prove the following useful result.

\begin{lemm} \label{lemm:loc-hmtpy-fully-faithful}
Let $f^*: Shv(\mathcal{C}) \leftrightarrows Shv(\mathcal{D}): f_*$ be a
geometric morphism such that $f^*$ is fully faithful and $f_*$ preserves
colimits.

Then the induced functors
\[ Lf^*: \sSH(\mathcal{C}) \to \sSH(\mathcal{D}) \]
and
\[ Lf^*: D(\mathcal{C}) \to D(\mathcal{D}) \]
are fully faithful.
\end{lemm}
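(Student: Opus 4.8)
The plan is to reduce everything to a statement about the derived functor $Rf_*$ on the level of spectra (resp. complexes), using the standard criterion that $Lf^*$ is fully faithful if and only if the unit $\id \to Rf_* Lf^*$ is an equivalence. First I would observe that since $f^* \vdash f_*$ is a geometric morphism whose $f^*$ is fully faithful, the unit $\id \to f_* f^*$ on the level of ordinary sheaves is already an isomorphism; the content of the lemma is that this persists after passing to the local homotopy categories, i.e. that no higher cohomology is introduced. Concretely, for $E \in \mathcal{SH}(\mathcal{C})$ fibrant in the local model structure, I must show that the natural map $E \to Rf_* Lf^* E$ is a local weak equivalence, equivalently an isomorphism on all sheaves of homotopy groups.

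The key step is to identify $Rf_*$ explicitly. Because $f_*: Shv(\mathcal{C}) \to Shv(\mathcal{D})$ preserves colimits (by hypothesis) and is a right adjoint in a geometric morphism, it is exact on sheaves of abelian groups, so it preserves local weak equivalences and injective-local fibrant objects in a controlled way; in particular, $Rf_*$ on $D(\mathcal{D})$ is computed by applying $f_*$ levelwise to an injective-locally-fibrant replacement, and this replacement stays fibrant. The colimit-preservation hypothesis is exactly what forces $f_*$ to be exact (a left-exact left adjoint preserving all colimits, hence itself having a right adjoint, but more to the point it commutes with filtered colimits and all coproducts, which with left-exactness gives exactness on abelian sheaf categories). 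Hence $R^i f_* = 0$ for $i > 0$ applied to sheaves, so there is no contribution from a descent/hypercohomology spectral sequence and $Rf_* Lf^*$ agrees with the underived $f_* f^*$ on homotopy sheaves. Combined with full faithfulness of $f^*$ on $Shv(-)$, this gives $\ul{\pi}_i(Rf_* Lf^* E) \cong f_* f^* \ul{\pi}_i(E) \cong \ul{\pi}_i(E)$ naturally, so the unit is an isomorphism.

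For the spectral case one argues identically using the stable injective-local model structure, noting that a fibrant object there has levels that are locally fibrant simplicial sets and that $f_*$, being exact, commutes past the relevant sheafification and $\pi_*$-sheaf computations; the filtered-colimit preservation also ensures $Lf^* = f^*$ applied levelwise computes the correct derived functor. The $D(\mathcal{C})$ statement is then either the chain-complex analogue of the same argument or can be deduced by the stable Dold--Kan correspondence relating $D(\mathcal{C}_\tau)$ to modules over the integral Eilenberg--MacLane spectrum in $\sSH(\mathcal{C}_\tau)$, along which $Lf^*$ and $Rf_*$ are compatible.

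The main obstacle I anticipate is justifying carefully that $Rf_*$ really is computed without higher derived terms, i.e. pinning down the exactness of $f_*$ from "preserves colimits" and then checking that applying $f_*$ to a (stably) injective-locally-fibrant object again lands in something whose homotopy sheaves are $f_* (\text{homotopy sheaves})$. This is a matter of unwinding the definitions of the local model structures and the behavior of sheafification under $f_*$, and of being careful that "injective-locally-fibrant" is preserved or at least that the resulting object is flasque enough to compute $Rf_*$; none of it is deep, but it is the place where one must be precise rather than wave hands, and it is where the two hypotheses (geometric morphism with fully faithful $f^*$, and $f_*$ colimit-preserving) are both genuinely used.
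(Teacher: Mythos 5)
Your overall strategy matches the paper's: reduce to showing the unit $\id \to Rf_* Lf^*$ is an equivalence, use that $f_* f^* \iso \id$ on ordinary sheaves, and carry the whole weight of the argument in the identification $Rf_* \wequi f_*$. Where you diverge is in how that identification is obtained, and your route is exactly where the difficulty sits. You propose to deduce $R^i f_* = 0$ for $i>0$ from exactness of $f_*$ and then conclude via a descent/hypercohomology spectral sequence that $\ul{\pi}_i(Rf_* Lf^* E) \iso f_* f^* \ul{\pi}_i(E)$. For unbounded complexes, and for spectra over a general site, that spectral sequence is only conditionally convergent and even its construction requires care (the paper itself remarks, in the proof of proper base change, that such arguments for unbounded objects in hypercomplete toposes are ``not tautological''); you correctly flag this as the obstacle but do not resolve it. The paper sidesteps it entirely with a short model-categorical observation: since $f_*$ preserves colimits it admits a right adjoint $f^!$, and since $f_*$ is itself a right adjoint it preserves all limits, in particular finite ones; hence $f_* \vdash f^!$ is a geometric morphism in the opposite direction, so $f_*$ is simultaneously a left Quillen and a right Quillen functor for the local model structures (``bi-Quillen''). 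A bi-Quillen functor preserves all weak equivalences, so $Rf_* \wequi f_*$ with no spectral sequence needed, and then $Rf_* Lf^* E \wequi f_* f^* E \iso E$ for $E$ cofibrant. I would replace your spectral-sequence step with this observation: it uses precisely the two hypotheses you identified, but at the level of model structures rather than of homotopy sheaves, which is what makes the unbounded case painless.
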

The same result also holds for $Lf^*: Ho(sPre(\mathcal{C})) \to
Ho(sPre(\mathcal{D}))$, with the same proof.
\begin{proof}
We give the proof for the derived categories, it is the same for spectra.

Since $f_*$ preserves colimits it affords a right adjoint $f^!$.
Then $f_* \vdash f^!$ is a geometric morphism in the opposite direction (note
that $f_*$ preserves finite limits, and in fact all limits, since it is a right
adjoint) and consequently $f_*$ is \emph{bi}-Quillen. It follows that $f_*:
C^s(\mathcal{D}) \to C^s(\mathcal{C})$ preserves weak equivalences, and
consequently coincides (up to weak equivalence) with its derived functor.

Now to show that $Lf^*$ is fully faithful we need to show that $Rf_* Lf^* \wequi
\id$. But $Rf_* \wequi f_*$ since $f_*$ is bi-Quillen.
Let $E \in C^s(\mathcal{C})$ be cofibrant. Then $Lf^* E
\wequi f^*E$ and consequently $Rf_* Lf^* E \wequi f_* f^* E$. Since $f^*$ is fully
faithful we have $f_* f^* E \iso E$. This concludes the proof.
\end{proof}

We will also make use of $t$-structures. We shall use \emph{homological}
notation for $t$-structures \cite[Definition 1.2.1.1]{lurie-ha}. Briefly, a
$t$-structure on a triangulated category $\mathcal{C}$ consists of two (strictly
full) subcategories $\mathcal{C}_{\ge 0}$ and $\mathcal{C}_{\le 0}$, satisfying
various axioms. We put $\mathcal{C}_{\ge n} = \mathcal{C}_{\ge 0}[n]$ and
$\mathcal{C}_{\le n} = \mathcal{C}_{\le 0}[n]$. One then has $\mathcal{C}_{\ge
n+1} \subset \mathcal{C}_{\ge n}$ and $\mathcal{C}_{\le n} \subset
\mathcal{C}_{\le n+1}$ and $[\mathcal{C}_{\ge n+1}, \mathcal{C}_{\le n}] = 0$.
In fact $E \in \mathcal{C}_{\ge n+1}$ if and only if for all $F \in
\mathcal{C}_{\le n}$ we have $[E, F] = 0$, and vice versa.
The inclusion $\mathcal{C}_{\ge n} \hookrightarrow \mathcal{C}$ has a right
adjoint which we denote $E \mapsto E_{\ge n}$, and the inclusion
$\mathcal{C}_{\le n} \hookrightarrow \mathcal{C}$ has a left adjoint which we
denote $E \mapsto E_{\le n}$. The adjunctions furnish map $E_{\ge n+1} \to E \to
E_{\le n}$ and this extends to a distinguished triangle in a unique and
functorial way. The intersection $\mathcal{C}^\heart := \mathcal{C}_{\ge 0} \cap
\mathcal{C}_{\le 0}$ called the \emph{heart}. It is an abelian category. We put
$\pi^\mathcal{C}_0(E) = (E_{\le 0})_{\ge 0} \wequi (E_{\ge 0})_{\le 0} \in
\mathcal{C}^\heart$ and $\pi^\mathcal{C}_i(E) = \pi_0^\mathcal{C}(E[i])$. Then
$\pi_*^\mathcal{C}$ is a homological functor on $\mathcal{C}$. The $t$-structure
is called \emph{non-degenerate} if $\pi_i^\mathcal{C}(E) = 0$ implies that $E
\wequi 0$.

By a $t$-category we mean a triangulated category with a fixed $t$-structure.

Suppose that $(\mathcal{C}, \tau)$ is a site.  Let for $E \in
\mathcal{SH}(\mathcal{C}_\tau)$ and $i \in \ZZ$
the sheaf $\ul{\pi}_i(E) \in Shv(\mathcal{C}_\tau)$ be
defined as the sheaf associated with the presheaf $\mathcal{C} \ni X \mapsto
\pi_i(E(X))$. Here we view $E$ as a presheaf of spectra. By definition, local
weak equivalences of spectra induce isomorphisms on $\ul{\pi}_i$, so
$\ul{\pi}_i(E)$ is well-defined for $E \in \sSH(\mathcal{C}_\tau)$. This is a
sheaf of \emph{abelian groups}. Put
\begin{gather*}
 \sSH(\mathcal{C}_\tau)_{\ge 0} = \{E \in \sSH(\mathcal{C}_\tau): \ul{\pi}_i(E) = 0 \text{ for $i < 0$} \} \\
 \sSH(\mathcal{C}_\tau)_{\le 0} = \{E \in \sSH(\mathcal{C}_\tau): \ul{\pi}_i(E) = 0 \text{ for $i > 0$} \}.
\end{gather*}

We define similarly for $E \in D(\mathcal{C}_\tau)$ the sheaf $\ul{h}_i(E)$, and
then the subcategories $D(\mathcal{C}_\tau)_{\ge 0}, D(\mathcal{C}_\tau)_{\le
0}.$

\begin{lemm} \label{lemm:SH-t-structure}
If $(\mathcal{C}, \tau)$ is a Grothendieck site, then the above construction
provides $\sSH(\mathcal{C}_\tau)$ with a non-degenerate $t$-structure. The
functor $\ul{\pi}_0: \sSH(\mathcal{C}_\tau)^\heart \to Shv(\mathcal{C}_\tau)$ is
an equivalence of categories. Moreover let $F \in Shv(\mathcal{C}_\tau) \wequi
\sSH(\mathcal{C})^\heart$. Then for $X \in \mathcal{C}$ there is a natural
isomorphism $[\Sigma^\infty X_+, F[n]] = H^n_\tau(X, F)$.

Similar statements hold for $D(\mathcal{C}_\tau)$ in place of
$\sSH(\mathcal{C}_\tau)$.
\end{lemm}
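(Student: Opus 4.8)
The plan is to reduce all three assertions to two classical inputs from Jardine's local homotopy theory: the long exact sequence of homotopy sheaves (exactness of sheafification) and the descent spectral sequence $E_2^{p,q}=H^p_\tau(X,\ul\pi_q F)\Rightarrow\pi_{q-p}(F^{\mathrm{fib}}(X))$ for a fibrant model $F^{\mathrm{fib}}$. Throughout one works in the local projective model structure on presheaves of spectra on $\mathcal{C}$, whose weak equivalences are by definition the $\ul\pi_*$-isomorphisms; thus non-degeneracy ($\ul\pi_*(E)=0\Rightarrow E\wequi 0$) is immediate, $\ul\pi_*$ is a homological functor, and for cofibrant $\Sigma^\infty X_+$ and any $F$ one has $\mathrm{Map}(\Sigma^\infty X_+,F)\wequi F^{\mathrm{fib}}(X)$ as derived mapping spectra, hence $[\Sigma^\infty X_+[i],F]=\pi_i(F^{\mathrm{fib}}(X))$. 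The case of $D(\mathcal{C}_\tau)$ goes through verbatim after replacing $\ul\pi_*$ by $\ul h_*$, the suspension spectra by the free sheaves of abelian groups on the representables, and Eilenberg--MacLane spectra by one-term complexes; for brevity I only describe the spectral case.

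For the $t$-structure I would first note that truncation functors can be built explicitly---for $E$ fibrant, apply the sectionwise Postnikov truncation and fibrantly replace; exactness of sheafification gives it the expected homotopy sheaves, and the fibre of $E\to E_{\le n}$ models $E_{\ge n+1}$. The only nonformal point is orthogonality, $[\sSH(\mathcal{C}_\tau)_{\ge 1},\sSH(\mathcal{C}_\tau)_{\le 0}]=0$, i.e.\ that these truncations realise a $t$-structure. I would deduce this by invoking Lurie's criterion \cite[Prop.\ 1.4.4.11]{lurie-ha} with the generating set $\mathcal{A}=\{\Sigma^\infty X_+[i]:X\in\mathcal{C},\ i\ge 0\}$: since $\sSH(\mathcal{C}_\tau)$ is presentable stable, this produces a $t$-structure with coconnective part $\mathcal{A}^\perp$, and the point is that $\mathcal{A}^\perp=\sSH(\mathcal{C}_\tau)_{\le -1}$. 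Indeed $F\in\mathcal{A}^\perp$ iff $\pi_i(F^{\mathrm{fib}}(X))=0$ for all $X$ and $i\ge 0$; sheafifying gives $\ul\pi_{\ge 0}(F)=0$, and conversely if $\ul\pi_{\ge 0}(F)=0$ then $F$ is bounded above, the descent spectral sequence converges (only finitely many terms in each total degree), and since $H^p_\tau$ vanishes in negative degrees every contribution to $\pi_i(F^{\mathrm{fib}}(X))$ with $i\ge 0$ vanishes. As $\mathcal{A}\subseteq\sSH(\mathcal{C}_\tau)_{\ge 0}$ and the latter is closed under colimits and extensions, Lurie's connective part lies inside $\sSH(\mathcal{C}_\tau)_{\ge 0}$; for the reverse inclusion one truncates an $E$ with $\ul\pi_{<0}(E)=0$ in Lurie's $t$-structure and checks, using the long exact sequence of $\ul\pi_*$ together with non-degeneracy, that its coconnective part has all homotopy sheaves zero, hence vanishes. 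So the two $t$-structures coincide, and it is non-degenerate.

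For the heart I would introduce the Eilenberg--MacLane functor $H$ sending an abelian sheaf $F$ to a fibrant model of the presheaf of spectra $X\mapsto H(F(X))$; one computes $\ul\pi_0(HF)=F$ and $\ul\pi_j(HF)=0$ for $j\ne 0$, so $H$ lands in the heart and is a right inverse to $\ul\pi_0$. Faithfulness of $\ul\pi_0$ on the heart is immediate from non-degeneracy: a morphism with vanishing $\ul\pi_0$ has image with all homotopy sheaves zero, hence is zero. That $\ul\pi_0\colon\sSH(\mathcal{C}_\tau)^\heart\to Shv(\mathcal{C}_\tau)$ is then an equivalence is the standard identification of the heart of the homotopy $t$-structure with the category of abelian sheaves---for $D(\mathcal{C}_\tau)$ this is essentially its definition as the derived category of $Shv(\mathcal{C}_\tau)$---and I would either cite it or deduce it from the cohomology formula. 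That formula is the observation that $[\Sigma^\infty X_+,F[n]]=\pi_{-n}((HF)^{\mathrm{fib}}(X))$ is by construction the generalized (hyper)cohomology of $X$ with coefficients in $F$ placed in degree $0$, which equals $H^n_\tau(X,F)$ once one invokes the theorem that generalized sheaf cohomology agrees with derived-functor sheaf cohomology \cite{jardine-local-homotopy}.

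The main obstacle is entirely the passage between the sectionwise homotopy groups $\pi_i(F^{\mathrm{fib}}(X))$---what maps out of $\Sigma^\infty X_+$ compute---and the homotopy sheaves $\ul\pi_i(F)$. This is governed by the descent spectral sequence, so the delicate points are its convergence (unproblematic here because the relevant $F$ are bounded above) and the comparison of generalized with derived-functor sheaf cohomology. Granting these classical facts, everything else---verifying the $t$-structure axioms, describing the heart, and the cohomology formula---is formal.
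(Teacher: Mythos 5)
Your proposal is correct in outline and shares the paper's overall architecture (invoke a result of Lurie to manufacture a $t$-structure, identify it with the one defined by homotopy sheaves, identify the heart with abelian sheaves, then deduce the cohomology formula), but it differs from the paper in three of the four steps, so a comparison is worthwhile. For the existence of the $t$-structure the paper uses \cite[Proposition 1.4.3.4 and Remark 1.4.3.5]{lurie-ha} via the Quillen adjunction $\Sigma^\infty \dashv \Omega^\infty$, whereas you use \cite[Proposition 1.4.4.11]{lurie-ha} with the generators $\Sigma^\infty X_+[i]$; these are essentially equivalent. The real divergence is in identifying the coconnective part: the paper observes that $\ul{\pi}_i(\Omega^\infty E)=\ul{\pi}_i(E)$ for $i\ge 0$, so that $E\in\mathcal{A}^\perp$ iff $\Omega^\infty E$ is locally weakly contractible iff $\ul{\pi}_{\ge 0}(E)=0$ --- by the very definition of local weak equivalence of simplicial presheaves. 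This completely sidesteps the descent spectral sequence and its convergence. Your route through the spectral sequence does work, but only because a coconnective object equals the limit of its (eventually constant) Postnikov tower, so conditional convergence is automatic and the $E_2$-page vanishes identically in total degrees $\ge 0$; you should say this rather than ``only finitely many terms in each total degree,'' which by itself addresses $RE_\infty$ but not whether the spectral sequence abuts to $\pi_*(F^{\mathrm{fib}}(X))$ at all. For the heart, the paper compares with $D(\mathcal{C}_\tau)$ via the stabilization adjunction $M\dashv U$ and the Hurewicz isomorphism, getting both $UM^\heart\simeq\id$ and $M^\heart U\simeq\id$ at once; your Eilenberg--MacLane functor gives essential surjectivity and faithfulness cleanly, but fullness is the one point you leave to a citation --- to close it yourself you would need a natural equivalence $E\simeq H\ul{\pi}_0(E)$ for $E$ in the heart (e.g.\ via the Postnikov truncation map), not just equality of $\ul{\pi}_0$'s plus non-degeneracy. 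Finally, for the cohomology formula the paper again reduces to $D(\mathcal{C}_\tau)$ by adjunction, while you invoke Jardine's comparison of generalized sheaf cohomology with derived-functor cohomology; both are standard. Net effect: your version is self-contained at the level of the spectral sequence but carries the convergence bookkeeping; the paper's $\Omega^\infty$ and Hurewicz tricks make the argument shorter at the cost of routing everything through the unstable category and $D(\mathcal{C}_\tau)$.
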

\begin{proof}
%\marginpar{NOTE: I would appreciate a more explicit reference.}
%This can be assembled from results in \cite{jardine-local-homotopy}.
For derived categories, this result is classical. For $\sSH(\mathcal{C}_\tau)$,
the result is also fairly well known, but the author does not know an explicit
reference, so we sketch a proof.

Note that there is a Quillen adjunction (in the
local model structures)
\[ \Sigma^\infty: sPre(\mathcal{C}_\tau)_* \leftrightarrows \mathcal{SH}(\mathcal{C}_\tau): \Omega^\infty. \]
By direct computation using the above adjunction, we find that $\ul{\pi}_i(\Omega^\infty E) =
\ul{\pi}_i(E)$, for $E \in \sSH(\mathcal{C}_\tau)$ and $i \ge 0$.

By \cite[Proposition 1.4.3.4
and Remark 1.4.3.5]{lurie-ha} the category $\sSH(\mathcal{C}_\tau)$ admits a
$t$-structure\footnote{The author would like to thank Saul Glasman for pointing
out this reference.}, where $E \in \sSH(\mathcal{C}_\tau)_{\le 0}$ if and only if
$\Omega^\infty(E) \simeq *$, and the subcategory $\sSH(\mathcal{C}_\tau)_{\ge
0}$ is generated under homotopy colimits and extensions by $\Sigma^\infty
\mathcal{C}_+$. We first need to show that this is the $t$-structure we want,
i.e. that the positive and negative parts are determined by vanishing of
homotopy sheaves. Since $\ul{\pi}_i(\Omega^\infty E) = \ul{\pi}_i(E)$, this is
correct for the negative part. I claim that if $E \in \sSH(\mathcal{C}_\tau)_{\ge
0}$, then $\ul{\pi}_i(E) = 0$ for $i < 0$. If $X \in sPre(\mathcal{C}_\tau)_*$,
then $\ul{\pi}_i(\Sigma^\infty X) = 0$ for $i<0$ by direct computation. It thus
remains to show that the subcategory of $E \in \sSH(\mathcal{C}_\tau)$ with
$\ul{\pi}_i(E) = 0$ for $i < 0$ is closed under homotopy colimits and
extensions. For extensions this is clear. Homotopy colimits are generated by
pushouts and filtered colimits \cite[Propositions 4.4.2.6 and
4.4.2.7]{lurie-htt}, so we need only deal with cones and filtered colimits. For
cones this is again clear, and for filtered colmits it holds because homotopy
groups of spectra commute with filtered colimits, and hence the same is true for
homotopy sheaves (see the proof of Corollary \ref{corr:SH-compact-ob} for more
details on this). This proves the claim. Conversely, let $E \in
\sSH(\mathcal{C}_\tau)$ with $\ul{\pi}_i(E) = 0$ for $i < 0$. Consider the
decomposition $E_{\ge 0} \to E \to E_{<0}$. Then $\ul{\pi}_i(E_{\ge 0}) = 0$ for
$i < 0$, so $0 = \ul{\pi}_i(E) = \ul{\pi}_i(E_{<0})$ for $i < 0$. It follows
that $E_{<0} \simeq 0$ and so $E \simeq E_{\ge 0} \in \sSH(E)_{\ge 0}$.

The $t$-structure is non-degenerate because it is defined in terms of homotopy
sheaves, and homotopy sheaves detect weak equivalences by definition.

We have an adjunction
\[ M: \sSH(\mathcal{C}_\tau) \leftrightarrows D(\mathcal{C}_\tau): U. \]
By construction $U$ is $t$-exact and thus $M$ is right $t$-exact. Consider the
induced adjunction
\[ M^\heart: \sSH(\mathcal{C}_\tau)^\heart \leftrightarrows D(\mathcal{C}_\tau)^\heart: U. \]
By direct computation using the classical Hurewicz isomorphism (and the above
adjunction), $\ul{\pi}_0(UME) = \ul{\pi}_0(E)$ if $E \in
\sSH(\mathcal{C}_\tau)_{\ge 0}$. It follows that $UM^\heart \simeq \id$. Since $U$ is
faithful by definition, from this we deduce that $M^\heart U \simeq \id$ as
well. Thus $\sSH(\mathcal{C}_\tau)^\heart \simeq D(\mathcal{C}_\tau)^\heart
\simeq Shv(\mathcal{C}_\tau)$, the latter equivalence being classical. Finally
if $X \in \mathcal{C}$ and $F \in Shv(\mathcal{C}_\tau)$ then $[\Sigma^\infty
X_+, F[n]] = [\Sigma^\infty X_+, UF[n]] = H^n_\tau(X,F)$, the first equality by
definition and the second by adjunction and the same result in
$D(\mathcal{C}_\tau)$.
\end{proof}

\begin{corr} \label{corr:SH-compact-ob}
Let $(\mathcal{C}, \tau)$ be a Grothendieck site.

\begin{enumerate}[(1)]
\item Let $X \in \mathcal{C}$. If
  $\tau$-cohomology on $X$ commutes with filtered colimits of sheaves and the
  $\tau$-cohomological dimension of $X$ is finite, then $\Sigma^\infty X_+ \in
  \sSH(\mathcal{C}_\tau)$ is a compact object.

\item For any collection $E_i \in \sSH(\mathcal{C})$ and $j \in \ZZ$ we have
  $\ul{\pi}_j(\bigoplus_i E_i) = \bigoplus_i \ul{\pi}_j(E_i)$.
\end{enumerate}

Similarly for $D(\mathcal{C}_\tau)$.
\end{corr}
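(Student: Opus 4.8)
The plan is to prove the two parts separately, starting with (2) since it is used in the proof of (1). For part (2), the statement is that homotopy sheaves commute with arbitrary coproducts. First I would observe that $\bigoplus_i E_i = \colim_F \bigoplus_{i \in F} E_i$, a filtered colimit over finite subsets $F$, so it suffices to treat finite coproducts and filtered colimits separately. Finite coproducts are finite products in a stable category, hence $\ul{\pi}_j$ commutes with them since $\pi_j$ of spectra does and sheafification is exact. For a filtered colimit $E = \colim_\alpha E_\alpha$, the presheaf $X \mapsto \pi_j((\colim_\alpha E_\alpha)(X))$: one uses that in the (projective or injective) local model structure, a filtered colimit of fibrant objects computes the homotopy colimit, and that sections $(\colim_\alpha E_\alpha)(X)$ can be computed levelwise for a suitable model (e.g. after fibrant replacement the colimit is still computed sectionwise up to weak equivalence, because filtered colimits of spectra are exact). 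Then $\pi_j$ commutes with the filtered colimit of spectra at each $X$, so the presheaf-level homotopy groups commute with the colimit, and sheafification commutes with all colimits; this gives $\ul{\pi}_j(\colim_\alpha E_\alpha) = \colim_\alpha \ul{\pi}_j(E_\alpha)$.

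For part (1), I would use the descent spectral sequence together with the $t$-structure from Lemma \ref{lemm:SH-t-structure}. By that lemma, for $F \in Shv(\mathcal{C}_\tau) \wequi \sSH(\mathcal{C}_\tau)^\heart$ we have $[\Sigma^\infty X_+, F[n]] = H^n_\tau(X, F)$. For general $E \in \sSH(\mathcal{C}_\tau)$ there is a conditionally convergent spectral sequence $E_2^{p,q} = H^p_\tau(X, \ul{\pi}_{-q}(E)) \Rightarrow [\Sigma^\infty X_+, E[p+q]]$, obtained from the Postnikov tower of $E$. Since $X$ has finite $\tau$-cohomological dimension, say $\le d$, this spectral sequence lives in a bounded strip $0 \le p \le d$, hence converges strongly and has a finite (horizontal-length $\le d+1$) filtration on each abutment. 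Now suppose $E = \bigoplus_i E_i$. By part (2), $\ul{\pi}_{-q}(E) = \bigoplus_i \ul{\pi}_{-q}(E_i)$, and by the hypothesis that $\tau$-cohomology on $X$ commutes with filtered colimits of sheaves — hence (being exact) with arbitrary direct sums of sheaves — we get $H^p_\tau(X, \ul{\pi}_{-q}(E)) = \bigoplus_i H^p_\tau(X, \ul{\pi}_{-q}(E_i))$. So the $E_2$-page of the spectral sequence for $\bigoplus_i E_i$ is the direct sum of the $E_2$-pages for the $E_i$. Because the spectral sequences are uniformly bounded in $p$, the differentials and the associated graded pieces commute with this direct sum, and the finite filtration on the abutment does too; therefore $[\Sigma^\infty X_+, (\bigoplus_i E_i)[n]] = \bigoplus_i [\Sigma^\infty X_+, E_i[n]]$, which is exactly compactness of $\Sigma^\infty X_+$. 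The case of $D(\mathcal{C}_\tau)$ is identical, replacing $\ul{\pi}_i$ by $\ul{h}_i$ and spectra by complexes.

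The main obstacle is the careful handling of convergence: one must ensure the descent spectral sequence genuinely converges and that passing to an infinite direct sum is compatible with it. This is where finite cohomological dimension is essential — without it the Postnikov filtration is infinite and an infinite direct sum of convergent spectral sequences need not converge to the direct sum of the abutments. With the bounded strip $0 \le p \le d$, each abutment carries a length-$(d{+}1)$ filtration whose graded pieces are the $E_\infty$-terms, and the $E_\infty$-terms are subquotients of the $E_2$-terms reached after finitely many differentials; direct sums are exact, so they commute with finitely-iterated subquotients, and we are done. I would also take care to state the hypothesis correctly: "commutes with filtered colimits of sheaves" upgrades to "commutes with arbitrary direct sums of sheaves" precisely because an arbitrary direct sum is a filtered colimit of finite direct sums and cohomology is additive on finite direct sums.
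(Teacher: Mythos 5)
Your proposal is correct and follows essentially the same route as the paper: part (2) via the fact that homotopy groups of spectra commute with filtered colimits (hence with arbitrary direct sums) together with exactness of sheafification, and part (1) by comparing the strongly convergent, boundedly supported descent spectral sequences for $\bigoplus_i E_i$ and for the individual $E_i$. The only cosmetic difference is that you split the coproduct into finite sums and a filtered colimit explicitly, whereas the paper handles the presheaf-level direct sum in one step after a cofibrancy reduction.
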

\begin{proof}
Let us show that (1) reduces to (2).
For $E \in \sSH(\mathcal{C}_\tau)$ there is a conditionally convergent spectral sequence
\[ H^p_\tau(X, \ul{\pi}_{-q} E) \Rightarrow [X, E[p+q]]. \]
Under our assumptions on the cohomological dimension of $X$, it converges
strongly to the right hand side. Under the assumption of commutation of
cohomology with filtered colimits, by spectral sequence comparison, it thus
suffices to show that for $E_i \in \sSH(\mathcal{C}_\tau)$ we have $\ul{\pi}_n(\bigoplus_i E_i)
= \bigoplus_i \ul{\pi}_n E_i$.

Now we prove (2).
For $E \in \mathcal{SH}(\mathcal{C})$ write $\ul{\pi}_j^p(E)(X) = \pi_j(E(X))$; this
defines a presheaf of abelian groups on $\mathcal{C}$. By definition $\ul{\pi}_j(E) = a_\tau
\ul{\pi}_j^p(E)$. Let $\{E_i\}_i \in \mathcal{SH}(\mathcal{C})$. Then
$\ul{\pi}_j^p(\bigoplus_i E_i) = \bigoplus_i \ul{\pi}_j^p(E_i)$, since homotopy
groups of spectra commute with filtered colimits. We may assume that
all the $E_i$ are cofibrant, so their presheaf direct sum coincides with the
derived direct sum. In this case it remains to show that
\[ a_\tau \bigoplus_i \ul{\pi}_j^p(E_i) \iso \bigoplus_i a_\tau \ul{\pi}_j^p(E_i). \]
(Note that here we write $\bigoplus_i$ for both direct sums of presheaves and
direct sums of sheaves, depending on whether the terms on the right are
presheaves or sheaves.) But this holds for any collection of presheaves on any
site (both sides satisfy the same universal property).

The proof for $D$ is the same.
\end{proof}

We can enhance the functoriality of the $\sSH$ construction as follows. Recall
that a triangulated functor $F: \mathcal{C} \to \mathcal{D}$ between
$t$-categories is called right (respectively left) $t$-exact if
$F(\mathcal{C}_{\ge 0}) \subset \mathcal{D}_{\ge 0}$ (respectively
$F(\mathcal{C}_{\le 0}) \subset \mathcal{D}_{\le 0}$). The functor is called
$t$-exact if it is both left and right $t$-exact.

\begin{lemm} \label{lemm:SH-functor-t}
Let $f^*: Shv(\mathcal{C}) \leftrightarrows Shv(\mathcal{D}): f_*$ be a
geometric morphism, where $Shv(\mathcal{D})$ has enough points.
Then in the adjunction
\[ Lf^*: \sSH(\mathcal{C}) \leftrightarrows \sSH(\mathcal{D}): Rf_* \]
the left adjoint $Lf^*$ is $t$-exact, the right adjoint $Rf_*$ is
left $t$-exact, and the induced functors
\[ (Lf^*)^\heart: \sSH(\mathcal{C})^\heart \leftrightarrows \sSH(\mathcal{D})^\heart: (Rf_*)^\heart \]
coincide (under the identification from Lemma \ref{lemm:SH-t-structure}) with
$f^* \vdash f_*$.

Similar statements hold for $D$ in place of $\sSH$.
\end{lemm}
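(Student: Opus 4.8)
The plan is to establish right $t$-exactness of $Lf^*$ by a soft generation argument, to deduce left $t$-exactness of $Rf_*$ formally, to use the points of $\mathcal{D}$ for the left $t$-exactness of $Lf^*$, and finally to read off the induced functors on hearts. By the results recalled above, the geometric morphism $f^* \vdash f_*$ induces a Quillen adjunction on sheaves of spectra (resp.\ of chain complexes) for the local model structures, hence a derived adjunction $Lf^* \vdash Rf_*$ on $\sSH(\mathcal{C}) \leftrightarrows \sSH(\mathcal{D})$ (resp.\ on $D$). Moreover $f^*$ commutes with $\Sigma^\infty$ already on pointed simplicial sheaves---equivalently $f_*$ commutes with $\Omega^\infty$, which is clear since $f_*$ is a right adjoint and $\Omega^\infty$ is a limit---so that $Lf^*\Sigma^\infty \wequi \Sigma^\infty Lf^*$. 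Recall from the proof of Lemma \ref{lemm:SH-t-structure} that $\sSH(\mathcal{C})_{\ge 0}$ is generated under homotopy colimits and extensions by the objects $\Sigma^\infty X_+$ with $X \in \mathcal{C}$, and that $\ul{\pi}_i(\Sigma^\infty K) = 0$ for $i < 0$ and any pointed simplicial (pre)sheaf $K$. Since $Lf^*$ preserves homotopy colimits (it is a left adjoint) and extensions (it is triangulated), and $Lf^*(\Sigma^\infty X_+) \wequi \Sigma^\infty((f^* X)_+)$ is connective, it follows that $Lf^*$ is right $t$-exact. A right adjoint of a right $t$-exact functor is left $t$-exact, so $Rf_*$ is left $t$-exact.

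The \textbf{crux} is left $t$-exactness of $Lf^*$, and this is the only place the enough-points hypothesis enters. A point $x$ of $Shv(\mathcal{D})$ is a geometric morphism $\mathbf{Set} \to Shv(\mathcal{D})$, and then $x \circ f$ is a point of $Shv(\mathcal{C})$ with inverse image $x^* f^*$. For an arbitrary point $p$ of a site, the stalk functor $p^*$ is a filtered colimit of evaluations, so it commutes with homotopy sheaves and preserves local weak equivalences (see \cite{jardine-local-homotopy}); hence the induced functor $Lp^* \wequi p^*: \sSH \to \SH$ is $t$-exact, with $\pi_i(Lp^* E) = (\ul{\pi}_i E)_p$. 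Since composites of geometric morphisms are geometric, $Lx^* \circ Lf^* \wequi L(x^* f^*)$, and therefore $\pi_i(x^* Lf^* E) = (\ul{\pi}_i E)_{x \circ f}$ for every $E$, every $i$, and every point $x$ of $\mathcal{D}$. If $E \in \sSH(\mathcal{C})_{\le 0}$ this vanishes for $i > 0$; since $Shv(\mathcal{D})$ has enough points it follows that $\ul{\pi}_i(Lf^* E) = 0$ for $i > 0$, i.e.\ $Lf^* E \in \sSH(\mathcal{D})_{\le 0}$. Thus $Lf^*$ is $t$-exact. (The same argument reproves right $t$-exactness, but the generation argument above has the virtue of not requiring the hypothesis.)

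Being $t$-exact, $Lf^*$ induces a functor $(Lf^*)^\heart := \ul{\pi}_0 \circ Lf^* \circ \iota$ on hearts, where $\iota$ denotes the inclusion of the heart; by the standard formalism of $t$-exact adjunctions it has a right adjoint $(Rf_*)^\heart := \tau_{\ge 0} \circ Rf_* \circ \iota'$. Under the identification $\sSH(\mathcal{C})^\heart \wequi Shv(\mathcal{C})$ of Lemma \ref{lemm:SH-t-structure} this $(Lf^*)^\heart$ is a colimit-preserving functor $Shv(\mathcal{C}) \to Shv(\mathcal{D})$, so it is determined by its restriction to the representable sheaves $\ZZ \cdot X \iso \ul{\pi}_0 \Sigma^\infty X_+$ (the free abelian sheaf on $X$), $X \in \mathcal{C}$. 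For such an object, right $t$-exactness of $Lf^*$ together with $Lf^*(\Sigma^\infty X_+) \wequi \Sigma^\infty((f^* X)_+)$ gives $(Lf^*)^\heart(\ZZ \cdot X) \iso \ul{\pi}_0 Lf^*(\Sigma^\infty X_+) \iso \ul{\pi}_0 \Sigma^\infty((f^* X)_+) \iso \ZZ \cdot f^* X \iso f^*(\ZZ \cdot X)$, naturally in $X$ (the last isomorphism because $f^*$ commutes with the free-abelian-sheaf functor, their right adjoints being the forgetful functors composed with $f_*$). Hence $(Lf^*)^\heart \iso f^*$, and then $(Rf_*)^\heart \iso f_*$ by uniqueness of right adjoints. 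This identifies the heart adjunction with $f^* \vdash f_*$.

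For $D(\mathcal{C}_\tau)$ in place of $\sSH(\mathcal{C}_\tau)$ everything is easier, and the enough-points hypothesis is not needed: $f^*$ is exact on the abelian categories of sheaves (left exact because $f$ is geometric, right exact because it is a left adjoint), so $Lf^* = f^*$ is already exact, hence $t$-exact, and $(Lf^*)^\heart = f^*$, $(Rf_*)^\heart = f_*$ by the same adjointness argument. I expect the \textbf{main obstacle} to be precisely the left $t$-exactness of $Lf^*$ in the spectral case: right $t$-exactness of a left adjoint is soft, but to see that $Lf^*$ preserves coconnectivity one genuinely needs the enough-points assumption on $\mathcal{D}$, together with the standard---but slightly delicate---fact that the stalk at a point of a topos induces a $t$-exact functor on the associated stable homotopy category.
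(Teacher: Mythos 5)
Your proof is correct, and its essential step --- left $t$-exactness of $Lf^*$ --- is carried out exactly as in the paper: compose with a point $x$ of $Shv(\mathcal{D})$, use that the inverse image functor of a point is a filtered colimit of evaluations along a pro-object of $\mathcal{C}$ and hence commutes with homotopy sheaves and preserves local weak equivalences, deduce $(\ul{\pi}_i Lf^* E)_x = (f^*\ul{\pi}_i E)_x$, and conclude by enough points. The difference is organizational. The paper extracts from this one stalk computation the identity $\ul{\pi}_i(Lf^*E) = f^*\ul{\pi}_i(E)$ for all $i$ and all $E$, which yields both directions of $t$-exactness \emph{and} the identification of the heart functors in a single stroke. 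You instead prove right $t$-exactness separately by the generation argument ($\sSH(\mathcal{C})_{\ge 0}$ is generated under homotopy colimits and extensions by the $\Sigma^\infty X_+$, which $Lf^*$ sends to connective objects) and identify $(Lf^*)^\heart$ with $f^*$ by density of the free abelian sheaves $\ZZ \cdot X$ among abelian sheaves. This costs more writing but buys something real: it isolates the enough-points hypothesis as being needed only for coconnectivity, and your observation that the $D(\mathcal{C}_\tau)$ case needs no points at all (exactness of $f^*$ on abelian sheaves already gives $\ul{h}_i(f^*E) = f^*\ul{h}_i(E)$) sharpens the paper's own remark that the hypothesis ought to be superfluous. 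Two small points to tighten: the justification that $f_*$ commutes with $\Omega^\infty$ ``because $\Omega^\infty$ is a limit'' is loose --- the clean statement is that $f^*$ and $\Sigma^\infty$ are both computed levelwise and commute on the nose, and both are left Quillen so the derived functors compose; and since the geometric morphism need not come from a functor of sites, $f^*X$ must be read as $f^*$ applied to the representable sheaf, which is harmless because $\Sigma^\infty$ of an arbitrary pointed simplicial sheaf is connective.
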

The author contends that the assumption that $\mathcal{D}$ has enough points is
not really necessary. See also \cite[Remark 6.5.1.4]{lurie-htt}.
\begin{proof}
Certainly $Rf_*$ is left $t$-exact if $Lf^*$ is $t$-exact by adjunction, and
$(Rf_*)^\heart$ is right adjoint to $(Lf^*)^\heart$, so it suffices to prove the
claims for $Lf^*$.

Since $\mathcal{D}$ has enough points,
it is then enough to assume that $Shv(\mathcal{D}) = Set$. (Indeed let $p: Set
\to Shv(\mathcal{D})$ be a point; we will have
\[ p^* \ul{\pi}_i(Lf^* E) = \pi_i(Lp^*Lf^*E) = p^*f^*\ul{\pi}_i E \]
for all $E \in \sSH(\mathcal{C})$ by applying the reduced case to $p$ and $fp$
which are points of $\mathcal{D}$ and $\mathcal{C}$, respectively. Since
$\mathcal{D}$ has enough points it follows that $\ul{\pi}_i(Lf^*E) =
f^*\ul{\pi}_i(E)$, as was to be shown.)

Let $p^*: Shv(\mathcal{C}) \leftrightarrows
Set: p_*$ be a point of $\mathcal{C}$. Then $p^*$ corresponds to a pro-object in
$\mathcal{C}$, which is to say that there is a filtered family $X_\alpha \in
\mathcal{C}$ such that for $F \in Shv(\mathcal{C})$ we have
$p^*(F) = \colim_\alpha F(X_\alpha)$
\cite[Proposition 1.4 and Remark 1.5]{gabber2015points}.

It follows that for $E \in \mathcal{SH}^s(\mathcal{C})$ we have
\[ \pi_i(p^*E) = \pi_i(\colim_\alpha E(X_\alpha)) \iso \colim_\alpha
       \pi_i(E(X_\alpha)) = p^* \ul{\pi}_i(E), \]
where the isomorphism in the middle holds because homotopy groups commute with
filtered colimits of spectra. In particular $p^*$ preserves weak
equivalences and so $p^* \wequi Lp^*$. Thus the previous equation is precisely what we
intended to prove.
\end{proof}

\section{Recollections on Real Étale Cohomology}
\label{sec:recoll-real-etale}

If $X$ is a scheme, let $R(X)$ be the set of pairs $(x, p)$ where $x \in X$ and
$p$ is an ordering of the residue field $k(x)$. For a ring $A$ we put $Sper(A) =
R(Spec(A))$. A family of morphisms $\{\alpha_i: X_i \to X\}_{i \in I}$ is called
a \emph{real étale covering} if each $\alpha$ is étale and $R(X) = \cup_i
\alpha(R(X_i))$. (Note that for $(x, p) \in X_i$ the extension
$k(x)/k(\alpha(x))$ defines by restriction an ordering of $k(\alpha(x))$.) The real
étale coverings define a topology on all schemes \cite[(1.1)]{real-and-etale-cohomology}
called the \emph{real étale topology}. We often
abbreviate this name to ``rét-topology''.

For a scheme $X$, we let $X_{\ret}$ denote the small real étale site on $X$ and
$Sm(X)_{\ret}$ the site of smooth (separated, finite type) schemes over $X$ with
the real étale topology. If $f: X \to Y$ is any morphism of schemes, we get the
usual base change functors $f^*: Y_{\ret} \to X_{\ret}$ and $f^*: Sm(Y) \to
Sm(X)$. Also the natural inclusion $e: X_\ret \to Sm(X)$ induces an adjunction
$e^p: Pre(X_\ret) \leftrightarrows Pre(Sm(X)): r = e_*$.

\begin{lemm} \label{lemm:ret-extension-fully-faithful}
If $X$ is a scheme, the above adjunction induces a geometric morphism
$e: Shv(X_\ret) \leftrightarrows Shv(Sm(X)_\ret) : r$ where $e$ is fully
faithful and $r$ preserves colimits.
\end{lemm}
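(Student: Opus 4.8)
The plan is to verify the hypotheses of Lemma \ref{lemm:loc-hmtpy-fully-faithful} for the adjunction $e^p \vdash r$ between presheaves, after passing to sheaves. Concretely, I must produce an adjunction $e^* : Shv(X_\ret) \leftrightarrows Shv(Sm(X)_\ret) : r$, check that $e^*$ is left exact (geometric morphism), that $e^*$ is fully faithful, and that $r = e_*$ preserves colimits. The starting point is the inclusion of sites $e : X_\ret \hookrightarrow Sm(X)_\ret$, which sends an étale $X$-scheme to the same scheme viewed as a smooth $X$-scheme; this is clearly a continuous functor (a real étale cover of an étale $X$-scheme by étale $X$-schemes is in particular a real étale cover in $Sm(X)$), so restriction $r = e_*$ takes sheaves to sheaves and the induced $e_* : Shv(Sm(X)_\ret) \to Shv(X_\ret)$ has a left adjoint $e^*$, the sheafification of the presheaf left Kan extension $e^p$.

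First I would check that $e$ is \emph{cocontinuous} (comorphism of sites): given an étale $X$-scheme $U$ and a real étale cover $\{V_j \to U\}$ in $Sm(X)_\ret$, each $V_j$ is smooth over $U$ hence over $X$, but since $U \to X$ is étale and $V_j \to U$ is... here I need the key geometric input: any smooth cover appearing in a rét-covering of an étale scheme can be refined by one consisting of étale schemes. This follows because real étale locally every smooth morphism admits a section after étale base change — indeed $R(-)$ only sees the real spectrum, and a smooth morphism is surjective on real points Zariski-locally on the source along an étale neighborhood of any point (one can slice by a regular sequence to reduce relative dimension). Granting this refinement property, $e$ is a cocontinuous functor between sites, and then by the standard theory (e.g. \cite[Tag 00XO]{stacks} or SGA 4) the restriction functor $e_* = r$ itself has a left adjoint on sheaves which is left exact, \emph{and} $e_*$ has a right adjoint $e^!$; in particular $e_* = r$ preserves all colimits, giving the last clause for free.

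For full faithfulness of $e^*$: since $e$ is cocontinuous, $e^*$ is computed on a sheaf $F \in Shv(X_\ret)$ by sheafifying the presheaf $U \mapsto \colim_{(U \to e(W))} F(W)$; but $e$ is fully faithful as a functor of underlying categories and the comma categories $(U \downarrow e)$ have initial objects whenever $U$ is itself étale over $X$ (namely $U$ itself), so $e^* F$ restricted back along $e$ agrees with $F$ already at the presheaf level, whence $e_* e^* F = F$ and $e^*$ is fully faithful. Equivalently, invoke \cite[Theorem (1.3)]{real-and-etale-cohomology} together with \cite[Proposition 19.2.1]{real-and-etale-cohomology}: rét-cohomology of a sheaf on the small rét-site agrees whether computed over $X_\ret$ or $Sm(X)_\ret$, which forces $R e_* e^* \simeq \id$ already, hence $e^*$ is fully faithful and moreover $e_*$ is exact.

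The main obstacle is the geometric refinement lemma — that rét-covers of étale schemes by smooth schemes are refinable by étale ones — equivalently that $e : X_\ret \to Sm(X)_\ret$ is a comorphism of sites inducing an equivalence of topoi onto the "étale part". Everything else (adjoint functor existence, exactness, full faithfulness) is then formal topos theory or a direct citation of Scheiderer's comparison $Shv(X_\ret) \simeq Shv(RX) \simeq Shv(Sm(X)_\ret)_{\text{``small part''}}$. I expect the cleanest writeup simply cites \cite[Theorem (1.3)]{real-and-etale-cohomology} for the equivalence $Shv(X_\ret) \simeq Shv(RX)$ and notes that $Shv(Sm(X)_\ret) \simeq Shv(RX)$-modules likewise reduce to the same topological space $RX$, so that $e^*$ is the evident fully faithful inclusion and $e_* = r$ its exact, colimit-preserving retraction.
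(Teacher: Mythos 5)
Your overall strategy (continuity plus cocontinuity of the inclusion of sites, colimit preservation of $r$ from the extra right adjoint, full faithfulness from the Kan-extension/comma-category computation at objects in the image of $e$) is essentially the paper's argument. But the step you single out as ``the main obstacle'' --- that a rét-cover of an étale $X$-scheme by smooth $X$-schemes must be \emph{refined} by one consisting of étale schemes, which you propose to prove by slicing smooth morphisms and finding sections real-étale-locally --- rests on a misreading of the topology. By definition (recalled in Section \ref{sec:recoll-real-etale}), a real étale covering is a family of \emph{étale} morphisms $\{\alpha_i : V_i \to Y\}$ that is jointly surjective on real spectra; passing to the big site $Sm(X)_\ret$ enlarges the class of \emph{objects} to smooth $X$-schemes but does not change the class of covering \emph{morphisms}. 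Hence if $Y$ is étale over $X$, each $V_i$ is étale over $Y$ and therefore étale over $X$ (étale morphisms compose), so the cover already \emph{is} a cover in $X_\ret$ --- no refinement, and certainly no section-finding for smooth morphisms, is needed. This is exactly the one-line observation the paper uses to see that $r$ commutes with sheafification. Your sketched geometric argument is both unnecessary and not actually established as written, so as it stands your proof has an unproven (though, as it turns out, vacuous) key lemma.

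Two smaller points. First, your closing fallback --- citing Scheiderer's equivalence $Shv(X_\ret) \simeq Shv(RX)$ and ``rét-cohomology agrees on the small and big sites'' to conclude $Re_*e^* \simeq \id$ --- does not by itself compare the small site with the \emph{big} site $Sm(X)_\ret$, and deducing full faithfulness of $e^*$ from a cohomological comparison gets the logical direction backwards (the paper proves the sheaf-level identity $r e F = F$ directly, using that $r$ commutes with $a_\ret$, and only later derives cohomological consequences). Second, left exactness of $e^*$ should be addressed explicitly: the paper gets it from the fact that $e : X_\ret \to Sm(X)_\ret$ preserves pullbacks and $X_\ret$ has them, rather than from cocontinuity.
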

\begin{proof}
The functor $r$ is restriction and $e$ is left Kan extension. Since $e$
preserves covers, $r$ preserves sheaves. Moreover $r$ commutes with taking the
associated sheaf, because every cover of $Y \in X_{\ret}$ in $Sm(X)$ comes from a
cover in $X_{\ret}$ (because étale morphisms are stable under composition).
It follows that $r$ commutes with colimits. Since $e:
X_{\ret} \to Sm(X)_{\ret}$ preserves pullbacks (and $X_\ret$ has pullbacks!), the
adjunction is a geometric morphism \cite[Tag 00X6]{stacks-project}.
In order to see that $e$ is fully faithful,
i.e. $F \to reF$ an isomorphism for every $F \in Shv(X_\ret)$,
we note that for the presheaf adjunction $e^p: Pre(X_{\ret}) \leftrightarrows
Pre(Sm(k)) : r$ we have $re^pF = F$. Indeed this holds for $F$ representable by
definition, every sheaf is a colimit of representables, and $e^p$ and $f$ both
commute with taking colimits. Finally note that for a sheaf $F$ we have $e F =
a_{\ret} e^p F$ and thus $re F = r a_{\ret} e^p F = a_{\ret} re^p F = a_{\ret} F =
F$, where we have used again that $r$ commutes with taking the associated sheaf.
\end{proof}

\begin{corr} \label{corr:Le-exact-ff}
If $X$ is a scheme, the induced derived functor $Le: \sSH(X_\ret) \to
\sSH(Sm(X)_\ret)$ is $t$-exact and fully faithful. Similarly for $D$ in place of
$\sSH$.
\end{corr}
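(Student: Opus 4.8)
The statement to prove is Corollary \ref{corr:Le-exact-ff}: that $Le: \sSH(X_\ret) \to \sSH(Sm(X)_\ret)$ is $t$-exact and fully faithful. The plan is to simply combine the two lemmas that precede it. By Lemma \ref{lemm:ret-extension-fully-faithful}, the adjunction $e: Shv(X_\ret) \leftrightarrows Shv(Sm(X)_\ret): r$ is a geometric morphism in which $e$ is fully faithful and $r$ preserves colimits. These are precisely the hypotheses of Lemma \ref{lemm:loc-hmtpy-fully-faithful}, so that lemma immediately yields that the induced functor $Le: \sSH(X_\ret) \to \sSH(Sm(X)_\ret)$ (and likewise on $D$) is fully faithful.

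For $t$-exactness I would invoke Lemma \ref{lemm:SH-functor-t}. The one point requiring a word of justification is the standing hypothesis there that the target topos has enough points: here the target is $Shv(Sm(X)_\ret)$, and the real étale topos does have enough points --- indeed by \cite[Theorem (1.3)]{real-and-etale-cohomology} the small real étale topos of any scheme is equivalent to the topos of sheaves on a topological space, and in fact the same holds for the large site since a point of $R(Y)$ for $Y$ smooth over $X$ determines a point functor; alternatively one cites that real étale topoi are spectral and hence have enough points. Granting this, Lemma \ref{lemm:SH-functor-t} gives that $Le$ is $t$-exact (its left adjointness makes it automatically right $t$-exact, and the enough-points argument upgrades this to full $t$-exactness, i.e.\ $\ul{\pi}_i(Le\,E) = e\,\ul{\pi}_i(E)$), with $(Le)^\heart$ identified with $e: Shv(X_\ret) \to Shv(Sm(X)_\ret)$.

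There is really no obstacle here: the corollary is a formal consequence of the two preceding lemmas together with the observation that real étale topoi have enough points. The only mild subtlety is making sure the ``enough points'' hypothesis of Lemma \ref{lemm:SH-functor-t} is met for the large site $Sm(X)_\ret$ rather than just the small one; but the footnote remark in that lemma (that enough points is not really needed) means that even if one preferred not to argue this, the conclusion stands. The proof is therefore a two-line citation, and the same argument applies verbatim with $D(-)$ in place of $\sSH(-)$.

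\begin{proof}
By Lemma \ref{lemm:ret-extension-fully-faithful} the adjunction $e \vdash r$ on sheaves is a geometric morphism with $e$ fully faithful and $r$ colimit-preserving, so Lemma \ref{lemm:loc-hmtpy-fully-faithful} shows $Le$ is fully faithful. The topos $Shv(Sm(X)_\ret)$ has enough points (real étale topoi are spectral; cf.\ \cite[Theorem (1.3)]{real-and-etale-cohomology}), so Lemma \ref{lemm:SH-functor-t} applies and shows $Le$ is $t$-exact, with $(Le)^\heart = e$. The proof for $D$ is identical.
\end{proof}
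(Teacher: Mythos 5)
Your proof is correct and is essentially identical to the paper's: the paper also deduces fully faithfulness by combining Lemma \ref{lemm:ret-extension-fully-faithful} with Lemma \ref{lemm:loc-hmtpy-fully-faithful}, and $t$-exactness from Lemma \ref{lemm:SH-functor-t}. Your extra care about the ``enough points'' hypothesis (which the paper leaves implicit) is a sensible addition and the justification you give is valid.
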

\begin{proof}
The functor is fully faithful by Lemmas \ref{lemm:ret-extension-fully-faithful}
and \ref{lemm:loc-hmtpy-fully-faithful}. It is $t$-exact by Lemma
\ref{lemm:SH-functor-t}.
\end{proof}

\begin{lemm} \label{lemm:f*-SH-Xret-exact}
If $f: X \to Y$ is a morphism of schemes, then the induced functor $f^*: Y_\ret
\to X_\ret$ is the left adjoint of a geometric morphism of sites. Moreover the
derived functor
\[ Lf^*: \sSH(Y_\ret) \to \sSH(X_\ret) \]
is $t$-exact, and similarly for $Lf^*: D(Y_\ret) \to D(X_\ret)$.
\end{lemm}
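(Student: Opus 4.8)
**Proof proposal for Lemma \ref{lemm:f*-SH-Xret-exact}.**

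The plan is to first dispatch the claim that $f^*: Y_\ret \to X_\ret$ is the left adjoint of a geometric morphism of sites, and then deduce $t$-exactness of $Lf^*$ from Lemma \ref{lemm:SH-functor-t}. For the first part, recall that $f^*$ here means ``base change along $f$'', sending an étale $Y$-scheme $U \to Y$ to $U \times_Y X \to X$. This functor is continuous for the rét-topology: base change of an étale morphism is étale, and base change of a real étale cover is a real étale cover, since the map on real spectra $R(U \times_Y X) \to R(X)$ is surjective whenever $R(U) \to R(Y)$ is (one checks surjectivity of $R(\bullet)$ is stable under pullback directly on points, or invokes the corresponding fact for $Sper$). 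Hence the restriction functor on presheaves preserves sheaves, inducing an adjunction $f^*: Shv(Y_\ret) \leftrightarrows Shv(X_\ret): f_*$. Moreover $f^*: Y_\ret \to X_\ret$ preserves fibre products (pullback of étale schemes is computed by $\times_Y$, which commutes with $\times_Y X$) and $Y_\ret$ has fibre products, so by \cite[Tag 00X6]{stacks-project} the induced adjunction on sheaf categories is a geometric morphism, i.e.\ $f^*$ is left exact.

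Having established that $f^* \vdash f_*$ is a geometric morphism, I would invoke Lemma \ref{lemm:SH-functor-t} to conclude that $Lf^*: \sSH(Y_\ret) \to \sSH(X_\ret)$ is $t$-exact, and likewise for $D$. The one hypothesis of that lemma to verify is that $X_\ret$ has enough points. This is standard for the real étale site: the points of $Shv(X_\ret)$ are given by the orderings, i.e.\ by the points $(x, p) \in R(X)$, via the real closure $k(x)_p^r$ at each such point, and a sheaf vanishing at all of these vanishes (see \cite[Chapter II]{real-and-etale-cohomology}, or deduce it from $Shv(X_\ret) \wequi Shv(RX)$ and the fact that a sheaf on a topological space with enough points—here the points of the space $R(X)$—has enough points). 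Thus Lemma \ref{lemm:SH-functor-t} applies verbatim and gives the claim.

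The only real content is the verification that base change preserves rét-covers, and this is genuinely soft once one knows that surjectivity on real spectra is pullback-stable; I expect no obstacle beyond bookkeeping. The appeal to ``enough points'' is likewise a citation rather than an argument. So the proof is essentially a two-line reduction: continuity plus fibre-product-preservation give the geometric morphism of sites, and then Lemma \ref{lemm:SH-functor-t} (whose enough-points hypothesis is satisfied) gives $t$-exactness.
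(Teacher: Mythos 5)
Your proposal is correct and matches the paper's own (much terser) argument: continuity plus preservation of fibre products gives the geometric morphism via \cite[Tag 00X6]{stacks-project}, and Lemma \ref{lemm:SH-functor-t} then gives $t$-exactness. Your explicit verification that $Shv(X_\ret)$ has enough points (via the real closed points of $R(X)$) is a hypothesis of Lemma \ref{lemm:SH-functor-t} that the paper leaves implicit, so including it is a welcome addition rather than a deviation.
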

\begin{proof}
The ``moreover'' part follows from Lemma \ref{lemm:SH-functor-t}.

Since $f^*: Y_\ret \to X_\ret$ preserves covers $f_*: Pre(X_\ret) \to
Pre(Y_\ret)$ preserves sheaves and the morphism is continuous. It is a geometric morphism
of sites because $f^*$ preserves pullbacks \cite[Tag 00X6]{stacks-project}.
\end{proof}

If $X$ is a scheme, there is the natural map $X \to X \times \Aone$
corresponding to the point $0 \in \Aone$. Similarly there is the natural map $X
\coprod X \to X \times (\Aone \setminus 0)$ corresponding to the points $\pm 1
\in \Aone \setminus 0$.

\begin{thm} \label{thm:ret-htpy-rho-inv}
Let $X$ be a scheme and $F \in Shv(X_\ret)$. Then for any $p \ge 0$ the natural maps $X \to X \times
\Aone$ and $X \coprod X \to X \times (\Aone \setminus 0)$ induce isomorphisms
\[ H^p_\ret(X \times \Aone, F) \to H^p_\ret(X, F) \]
\[ H^p_\ret(X \times (\Aone \setminus 0), F) \to H^p_\ret(X, F) \oplus H^p_\ret(X, F). \]
\end{thm}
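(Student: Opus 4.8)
The plan is to reduce everything to a statement about the real étale site that is genuinely topological, exploiting the comparison $Shv(X_\ret) \wequi Shv(RX)$ and the fact that real étale cohomology is computed on the real spectrum. The key point is that the real spectrum is insensitive to the affine line: concretely, one should show that the projection $RX \times \Aone \to RX$ and (for the $\Gm$ statement) the map $RX \coprod RX \to R(X \times (\Aone \setminus 0))$ are ``homotopy equivalences'' at the level of the relevant topoi, more precisely that they induce isomorphisms on cohomology with coefficients in any sheaf pulled back from $RX$. First I would recall from Scheiderer that for a morphism $g$ of schemes, the induced map on real spectra is (up to the comparison) the geometric morphism on rét-topoi, and that $R(\Aone_X) \to R(X)$ is a fibration with contractible (in fact, ``convex'', linearly ordered) fibers; this is exactly the semi-algebraic analogue of the fact that $\RR$ is contractible. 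The fiber over a point $(x,p)$ is the real spectrum of $k(x)[t]$ relative to the ordering $p$, whose space of orderings extending $p$ is totally ordered and has a largest and smallest element, hence is connected with trivial higher cohomology.

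Next I would invoke a Leray spectral sequence (or proper/smooth base change in the real étale topology, Theorem \ref{thm:ret-proper-basechange}) for the projection $\pi\colon X \times \Aone \to X$: since $F$ is pulled back from $X_\ret$, the higher direct images $R^q\pi_* \pi^* F$ vanish for $q > 0$ and $\pi_* \pi^* F = F$, by the fiber computation above together with the fact that the rét-topos commutes with the relevant limits (continuity, Corollary \ref{corr:SH-compact-ob}-type arguments reducing to the case of a field, or directly Scheiderer's results). The spectral sequence then collapses to give $H^p_\ret(X \times \Aone, \pi^* F) \iso H^p_\ret(X, F)$, and one checks the isomorphism is induced by the zero-section as claimed (the zero-section splits $\pi$, so it induces the inverse).

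For the second isomorphism the same scheme applies to $\pi'\colon X \times (\Aone \setminus 0) \to X$, except that now the fiber over $(x,p)$ is the real spectrum of $k(x)[t,t^{-1}]$ relative to $p$: the orderings extending $p$ now split into those with $t$ ``positive infinitesimal or large positive'' versus ``negative'', i.e. the fiber has exactly two connected components, each again totally ordered and cohomologically trivial. Hence $\pi'_* \pi'^* F = F \oplus F$ and $R^q \pi'_* \pi'^* F = 0$ for $q>0$, yielding $H^p_\ret(X \times (\Aone \setminus 0), \pi'^* F) \iso H^p_\ret(X,F) \oplus H^p_\ret(X,F)$, with the splitting realized by the two sections $\pm 1$. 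The main obstacle, I expect, is the fiberwise cohomology computation in full generality: for $X$ of finite type over $\RR$ and $F$ locally constant this is immediate from the comparison with Betti cohomology of real points \cite[Theorem II.5.7]{delfs1991homology}, but for arbitrary Noetherian $X$ and arbitrary $F$ one must either push Scheiderer's base change theorems hard enough, or first reduce to the Henselian-local (even field) case by a continuity/point-wise argument using that $Shv(X_\ret)$ has enough points given by real closed fields, and then verify that over a real closed field $K$ the rét-cohomology of $\Aone_K$ and $\Gm_K$ behaves as over a point — which is precisely the statement that the real spectrum of $K[t]$ (resp.\ $K[t,t^{-1}]$) is ``contractible'' (resp.\ two contractible pieces).
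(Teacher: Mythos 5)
Your overall strategy --- compute the higher direct images of the projections $\pi\colon X\times\Aone\to X$ and $\pi'\colon X\times(\Aone\setminus 0)\to X$ fibrewise, using that the fibres of $R(\Aone_X)\to RX$ are ``contractible'' and those of $R((\Aone\setminus 0)_X)\to RX$ have two contractible components --- organises the argument differently from the paper, and the geometric picture is correct. But there is a genuine gap exactly where you suspect one. The projection is not proper, so Theorem \ref{thm:ret-proper-basechange} gives you nothing here: the stalk of $R^q\pi'_*\pi'^*F$ at a point $(x,p)$ of $RX$ is a filtered colimit of groups $H^q_\ret(\pi'^{-1}(U),F)$ over étale neighbourhoods $U$ of $(x,p)$, not the cohomology of the fibre, and identifying the two is a base-change statement that has to be proved, not quoted. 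In addition, your justification for acyclicity of the fibre (``totally ordered with a largest and smallest element, hence trivial higher cohomology'') is not an argument for an arbitrary sheaf on a spectral space; in practice it must be reduced to Delfs' comparison with semialgebraic/singular cohomology, which applies only to (locally) constant coefficients on schemes of finite type over a real closed field.

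Both gaps are closed in the paper by a dévissage that your fallback plan only gestures at: the statements about the relevant $R^q$ are local on $X$, so one may take $X$ affine; rét-cohomology commutes with filtered colimits of sheaves and every sheaf on a spectral space is a filtered colimit of constructible ones, so one may take $F$ constructible; writing $X=Spec(A)$ as a limit of spectra of finitely generated $\ZZ$-algebras and using Scheiderer's continuity result (his Proposition (A.9)), one may take $X$ of finite type over $\ZZ$, hence --- since $Sper(\ZZ)=Sper(\RR)$ --- of finite type over $\RR$; finally constructible sheaves are built from constant sheaves $M_Z$ on closed constructible subsets, so one reduces to constant coefficients and concludes by the comparison $H^*_\ret(X,M)=H^*_{sing}(X(\RR),M)$ of \cite[Theorem II.5.7]{delfs1991homology}. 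After this reduction no separate ``contractibility of the fibre'' statement is needed: homotopy invariance of singular cohomology does that work. If you insist on the fibrewise formulation, you must still run essentially this same reduction to legitimise the stalk computation, so the dévissage is not optional; your proposal should be completed by spelling it out.
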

\begin{proof}
The first statement is homotopy invariance, see \cite[Example
16.7.2]{real-and-etale-cohomology}.

For the second statement, we follow closely that proof. Let $f\colon X \coprod X \to
X \times (\Aone \setminus 0)$ be the canonical map. It suffices to show that
$R^n f_* F = 0$ for $n > 0$ and $R^0 f_* F = F$, where we identify $F$ with its
pullback to $X \coprod X$ and $X \times (\Aone \setminus 0)$ for notational
convenience. All of these statements are local on $X$, so we may assume that $X$
is affine.

Then one may assume that $F$ is constructible (since $\ret$-cohomology commutes
with filtered colimits of sheaves, and all sheaves on a spectral space are
filtered colimits of constructible sheaves; see again \emph{loc. cit.}). Next,
writing $X = Spec(A)$ as the inverse limit of the filtering system $Spec(A')$,
with $A' \subset A$ finitely generated over $\ZZ$, and using Proposition (A.9) of
\emph{loc.  cit.}, we may assume that $X$ is of finite type over $\ZZ$.

But $Sper(\ZZ) = Sper(\QQ) = Sper(\RR)$, whence $H^p_\ret(X, F) = H^p_\ret(X
\times_\ZZ \RR, F)$, so we may assume that $X$ is of finite type over $\RR$.

We may further assume that $F = M_Z$ is the constant sheaf on a closed,
constructible subset of $X$ (Proposition (A.6) of \emph{loc. cit.}).

It is thus enough to prove the analog of our result for an affine semi-algebraic
space $X$ over $\RR$ and $F = M$ a constant sheaf. But then $H^*_\ret(X, M) =
H^*_{sing}(X(\RR), M)$ \cite[Theorem II.5.7]{delfs1991homology} and so on, so this is obvious.
\end{proof}

\begin{thm}[(Proper Base Change)] \label{thm:ret-proper-basechange}
Consider a cartesian square of schemes
\begin{equation*}
\begin{CD}
X'     @>g'>> X \\
@Vf'VV       @VVfV \\
Y'     @>g>> Y,
\end{CD}
\end{equation*}
with $f$ proper and $Y$ finite-dimensional Noetherian.
Then for any $E \in \sSH(X_\ret)$ (respectively $E \in
D(X_\ret)$) the canonical map
\[ g^* Rf_* (E) \to Rf'_* g'^* (E) \]
is a weak equivalence.
\end{thm}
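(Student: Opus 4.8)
The plan is to reduce the statement about the stable homotopy category $\sSH(X_\ret)$ (and $D(X_\ret)$) to the classical proper base change theorem in real étale cohomology due to Scheiderer \cite[Chapter 17 or Theorem 7.6.3]{real-and-etale-cohomology}, which asserts precisely that $g^* R^q f_* F \to R^q f'_* g'^* F$ is an isomorphism for every sheaf $F$ on $X_\ret$ (under our hypotheses on $f$ and $Y$). The point is to bootstrap this sheaf-level statement, level by level along the Postnikov/homotopy filtration, to an equivalence of presheaves of spectra. First I would note that by Lemma \ref{lemm:f*-SH-Xret-exact} the functors $g^*$, $g'^*$ are $t$-exact, and $Rf_*$, $Rf'_*$ are left $t$-exact (Lemma \ref{lemm:SH-functor-t}); moreover since $Y$ is finite-dimensional Noetherian and $f$ is proper, the spaces involved have bounded rét-cohomological dimension, so $Rf_*$, $Rf'_*$ also have bounded cohomological amplitude. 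Hence all four functors preserve both connective and coconnective objects up to a shift, and in particular the question is compatible with Postnikov towers.

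The key steps, in order, are as follows. \emph{Step 1: reduce to the heart.} Using the convergent descent spectral sequence $H^p_\ret(X, \ul{\pi}_{-q}E) \Rightarrow \ul{\pi}_{-p-q}(Rf_*E)$ (which exists and converges because of the finite cohomological dimension), and the analogous one for $f'$, together with $t$-exactness of $g^*$, it suffices to treat the case $E = F \in Shv(X_\ret)^\heart$; indeed a map of spectra which is an isomorphism on all homotopy sheaves is an equivalence, and the comparison map is compatible with the spectral sequences. \emph{Step 2: the heart case.} For $E = F$ a sheaf, $\ul{\pi}_{-q}(Rf_*F) = R^q f_* F$ and $\ul{\pi}_{-q}(Lg^* Rf_* F) = g^*R^q f_* F$ by $t$-exactness of $g^*$; on the other side $\ul{\pi}_{-q}(Rf'_* Lg'^* F) = \ul{\pi}_{-q}(Rf'_* g'^* F) = R^q f'_* (g'^* F)$. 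So the claim on $\ul{\pi}_{-q}$ of the comparison map is exactly Scheiderer's sheaf-level proper base change theorem. \emph{Step 3: assemble.} Combining Steps 1 and 2 gives that $g^* Rf_* E \to Rf'_* g'^* E$ is an isomorphism on all homotopy sheaves, hence a weak equivalence by non-degeneracy of the $t$-structure (Lemma \ref{lemm:SH-t-structure}). The argument for $D(X_\ret)$ is identical, replacing $\ul{\pi}_*$ by $\ul{h}_*$.

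The main obstacle I anticipate is purely bookkeeping around convergence: one must make sure the descent spectral sequence genuinely converges strongly so that an isomorphism on each $E_2$-page (equivalently, on the associated graded of the homotopy sheaves) forces an isomorphism on the homotopy sheaves themselves, and that the comparison map $g^*Rf_*E \to Rf'_*g'^*E$ induces a map of spectral sequences. Strong convergence is guaranteed by the finite rét-cohomological dimension of the relevant schemes (Scheiderer's bounds), combined with the fact that $Rf_*$ has bounded amplitude, so that for fixed total degree only finitely many terms contribute; I would spell this out but not belabor it. A secondary, even more routine point is checking that the various base-change comparison maps (the ones between the four composite functors) are the canonical ones and are compatible under passing to homotopy sheaves — this is formal from the adjunctions and the fact that $g^*$, being a left adjoint that is also $t$-exact, commutes with the formation of homotopy sheaves.
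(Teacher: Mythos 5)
Your proposal is correct and follows essentially the same route as the paper: compare the descent spectral sequences $R^pf_*\ul{\pi}_{-q}E \Rightarrow \ul{\pi}_{-p-q}(Rf_*E)$ for the two pushforwards, identify their $E_2$-pages via Scheiderer's sheaf-level proper base change theorem using $t$-exactness of $g^*$, and conclude by spectral sequence comparison once strong convergence is secured by finite cohomological dimension. The one point you gloss over that the paper makes explicit is the preliminary reduction (using that equivalences are local on $Y'$ together with the easy étale case of base change) to $Y'$ quasi-compact, so that $f'$ has bounded relative dimension and hence $R^pf'_*=0$ for $p$ large by the bound of rét-cohomological dimension by Krull dimension applied fiberwise --- this is precisely what makes the target spectral sequence, and therefore also the source one, converge strongly for an unbounded $E$.
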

\begin{proof}
We prove the claim for $\sSH$, the proof we give will work just as well for $D$.
We proceed in several steps.

\paragraph{Step 0.} If $g$ is étale, then the claim follows from the observation
that $f^* g_\# = g'_\# f'^*$.

\paragraph{Step 1.} If $f\colon X \to Y$ is any morphism and $E \in \sSH(X_\ret)$,
then there is a conditionally convergent spectral sequence
\[ E_2^{pq} = R^p f_* \ul{\pi}_{-q} E \Rightarrow \ul{\pi}_{-p-q}(Rf_* E). \]

For this, let $E \in Spt(X_\ret)$ also denote a fibrant model. Then $Rf_* E
\simeq f_* E$ and for $U \in Y_\ret$ we have $f_*(E)(U) = E(f^*U)$. Since $E$ is
fibrant there is a conditionally convergent descent spectral sequence
\[ H^p(f^*U, \ul{\pi}_{-q}(E)) \Rightarrow \pi_{-p-q}(E(f^*U)). \]
By varying $U$, this yields a presheaf of spectral sequences on $Y_\ret$.
Equivalently, this is a spectral sequence of presheaves. Taking the
associated sheaf on both sides we obtain a conditionally convergent spectral
sequence
\[ a_\ret H^p_\ret(f^*\bullet, \ul{\pi}_{-q}(E)) \Rightarrow \ul{\pi}_{-p-q}(f_* E). \]
It remains to see that $a_\ret H^p_\ret(f^*\bullet, F) = R^p f_*
F$, for any sheaf $F$ on $X_\ret$. For this we view $F \in
D(X_\ret)^\heart$. Then by definition $R^p f_* F = \ul{\pi}_{-p} Rf_* F$.
Repeating the above argument with $D(X_\ret)$ in place of $\sSH(X_\ret)$,
we obtain a conditionally convergent spectral
sequence
\[ a_\ret H^p_\ret(f^*\bullet, \ul{\pi}_{-q}F) \Rightarrow R^{p+q}f_* F. \]
Since $\ul{\pi}_{-q}F = 0$ for $q \ne 0$ this spectral sequence converges
strongly, yielding the desired identification.

\paragraph{Step 2.} If $f$ is proper and of
relative dimension at most $n$, then for $F \in Shv(X_\ret)$ and
$p>n$ we have $R^p f_* F = 0$.

Indeed in this situation, by the proper base change
theorem in real étale cohomology \cite[Theorem
16.2]{real-and-etale-cohomology}, for any real closed point
$y \to Y$ we get $(R^pf_* F)_y = H^p_\ret(X_y, F|_{X_y})$. Since real closed
fields are the stalks of the rét-topology, in order for a sheaf $G \in
Shv(Y_\ret)$ to be zero it is necessary and sufficient that $G_y = 0$ for all
such $y$.
But real étale cohomological dimension is bounded by
Krull dimension \cite[Theorem 7.6]{real-and-etale-cohomology}, so we find that
$R^pf_* F = 0$ for $p > n$, as claimed.

\paragraph{Conclusion of proof.}
Since isomorphism in $\sSH(Y'_\ret)$ is local on $Y'$, it is an easy consequence
of step 0 that we may assume that $Y'$ is quasi-compact (e.g. affine). Then $f'$ is of
bounded relative dimension (being of finite type).

Now let $E \in \sSH(X_\ret)$.
By $t$-exactness of $g^*$ and $g'^*$ we get from step 1 conditionally convergent spectral
sequences
\[ g^* R^p f_* \ul{\pi}_{-q} E \Rightarrow \ul{\pi}_{-p-q}(g^* Rf_* E) \]
and
\[ R^p f'_* g'^* \ul{\pi}_{-q} E \Rightarrow \ul{\pi}_{-p-q}(Rf'_* g'^* E). \]
The exchange transformation $g^* Rf_* (E) \to Rf'_* g'^* (E)$ induces a morphism
of spectral sequences (i.e. respecting the differentials and filtrations).
By proper base change for sheaves, we have $g^* R^p f_* \iso R^p f'_* g'^*$.
Thus the two spectral sequences are isomorphic. By step 2 the second one
converges strongly, and hence so does the first. Thus the result follows from
spectral sequence comparison.
\end{proof}

\paragraph{Remark.} The only place in the above proof where we have used the
assumption on $Y$ is in step 1, namely in the construction of the conditionally
convergent spectral sequence \[ R^p f_* \ul{\pi}_{-q} E \Rightarrow
\ul{\pi}_{-p-q}(Rf_* E). \] The author does not know how to construct such a
spectral sequence in general. He nonetheless contends that the proper base
change theorem should be true without assumptions on $Y$, but perhaps a
different proof is needed.

\paragraph{Remark.} In the above proof we deduce proper base change for spectra
and unbounded complexes from proper base change for bounded complexes. Since we
are dealing with hypercomplete toposes, this is not tautological; see for example
\cite[Counterexample 6.5.4.2 and Remark 6.5.4.3]{lurie-htt}. The crucial
property which seems to make the proof work is encapsulated in step 2 and might
be phrased as ``a proper morphism is locally of finite relative
rét-cohomological dimension''. The same is true in étale (instead of real étale)
cohomology and this seems to be what the proof of proper base change for unbounded
étale complexes \cite[Theorem 1.2.1]{cisinski2013etale} ultimately rests on, in
the guise of \cite[Lemma 1.1.7]{cisinski2013etale}. This fails for a general
proper morphism of topological spaces (consider for example an infinite product of compact
positive dimensional spaces mapping to the point).

\section{Recollections on Motivic Homotopy Theory}
\label{sec:recollections-motivic-homotopy}

We denote the stable motivic homotopy category over a base scheme $X$
\cite{ayoub2007six} by
$\SH(X)$, and the stable $\Aone$-derived category over $X$ \cite[Section
5.3]{triangulated-mixed-motives} by $D_\Aone(X)$.  
We write $\tunit_X \in \SH(X)$ for the monoidal unit. If the context is clear we
may just write $\tunit$.

Let $f: Y \to X$ be a finite étale morphism of schemes. Then in the category
$\SH(X)$ we have an induced morphism $f: f_\# \tunit_Y \to \tunit_X$ and
consequently $D(f): D(\tunit_X) \to D(f_\# \tunit_Y)$. Here $DE := \iHom(E,
\tunit)$. Now in fact whenever $f:
Y \to X$ is smooth proper then $D(f_\# \tunit_Y) \wequi f_* \tunit_Y$
\cite[Proposition 2.4.31]{triangulated-mixed-motives} and if $f$ is étale then
$f_*(\tunit_Y) \simeq f_\#(\tunit_Y)$ \cite[Example 2.4.3(2), Definition 2.4.24
and Proposition 2.4.31]{triangulated-mixed-motives}. Let us write $\alpha_{X,Y}:
f_\#\tunit_Y \to D(f_\# \tunit_Y)$ for this canonical isomorphism. We can then
form the commutative diagram
\begin{equation*}
\begin{CD}
D(f_\# \tunit_Y) @<{\alpha_{X,Y}}<< f_\# \tunit_Y \\
@A{D(f)}AA                           @A{tr_f}AA   \\
D(\tunit_X)      @<{\alpha_{X,X}}<< \tunit_X,
\end{CD}
\end{equation*}
where $tr_f$ is defined so that the diagram commutes. This is the \emph{duality
transfer} of $f$ as defined in \cite[Section
2.3]{rigidity-in-motivic-homotopy-theory}.

Now suppose that $k$ is a perfect field. Recall that then $\SH(k)$ has a
$t$-structure. To define it, for $E \in \SH(k)$ denote by $\ul{\pi}_i(E)_j$ the
Nisnevich sheaf associated with the presheaf $X \mapsto [\Sigma^\infty X_+[i], E
\wedge \Gmp{j}]$. Then $E \in \SH(k)_{\ge 0}$ if and only if $\ul{\pi}_i(E)_j =
0$ for all $i < 0$ and all $j \in \ZZ$. This indeed defines a $t$-structure
\cite[Section 5.2]{morel-trieste}, and the its heart can be described
explicitly: it is equivalent to the category of \emph{homotopy modules}
\cite[Theorem 5.2.6]{morel-trieste}.

Let $F_* \in \SH(k)$ is a homotopy module, which we identify with an element in
the heart of the homotopy $t$-structure. Given a finite
étale morphism $f: Y \to X$ of essentially $k$-smooth schemes, write $s: X \to
Spec(k)$ for the structure map. We then define $tr_f: F_n(Y) \to F_n(X)$ as
\[ tr_f(F) := tr_f^*:
[f_\# \tunit_Y, s^* F \wedge \Gmp{n}] \to [\tunit_X, s^* F \wedge
\Gmp{n}]. \]

This transfer has the usual properties, of which we recall two.

\begin{prop}[(Base Change)] \label{prop:transfer-base-change}
Let $k$ be a perfect field, $g: V \to X$ be a morphism of
essentially $k$-smooth schemes and $f: Y \to X$ finite étale. Consider the
cartesian square
\begin{equation*}
\begin{CD}
W   @>q>> Y \\
@VpVV    @VVfV \\
V   @>>g> X.
\end{CD}
\end{equation*}
Then for any homotopy module $F_*$, we have $g^* tr_f = tr_p q^*: F_*(Y) \to
F_*(V)$.
\end{prop}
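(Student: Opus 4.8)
The plan is to unwind the definition of the transfer $tr_f$ in terms of the duality transfer of the finite étale morphism $f$ and then reduce the statement to a base-change property that already holds at the level of the six-functor formalism in $\SH$. Recall that $tr_f(F) = tr_f^*$ is the map induced on $[-, s^*F \wedge \Gmp{n}]$ by the morphism $tr_f: f_\# \tunit_Y \to \tunit_X$, where $tr_f$ is constructed from the absolute duality isomorphisms $\alpha_{X,X}, \alpha_{X,Y}$ and the dual $D(f)$. So $g^* tr_f = tr_p q^*$ as maps $F_*(Y) \to F_*(V)$ will follow once we check that applying $g^*$ to the morphism $tr_f \colon f_\#\tunit_Y \to \tunit_X$ in $\SH(X)$ yields, via the canonical identification $g^* f_\# \tunit_Y \simeq p_\# q^* \tunit_Y = p_\# \tunit_W$, precisely the morphism $tr_p \colon p_\# \tunit_W \to \tunit_V$ in $\SH(V)$.

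First I would record the compatibility of the various structure maps with $g^*$. The exchange isomorphism $g^* f_\# \simeq p_\# q^*$ is part of the smooth-base-change package (both $f$ and $g$ smooth, in fact $f$ finite étale); under it the counit map $f \colon f_\#\tunit_Y \to \tunit_X$ pulls back to the counit $p \colon p_\#\tunit_W \to \tunit_V$, because $q^*\tunit_Y = \tunit_W$ and these counits are the $\Gm$-stable avatars of the forget-structure maps. Next, the duality isomorphisms are compatible with $g^*$: one has $g^* D(E) \simeq D(g^* E)$ for dualizable $E$, and $f_\#\tunit_Y$ is dualizable (it is a retract of a cellular object, or one invokes \cite[Proposition 2.4.31]{triangulated-mixed-motives} directly), so $g^* \alpha_{X,Y}$ is identified with $\alpha_{V,W}$ and $g^*\alpha_{X,X}$ with $\alpha_{V,V}$ — here one uses that $g^*$ is monoidal and that the identification $D(f_\#\tunit_Y)\simeq f_*\tunit_Y$ is compatible with base change by the same exchange transformations. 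Finally, $g^*$ applied to $D(f)$ is $D(p)$ by functoriality of internal Hom under monoidal $g^*$. Assembling these four compatibilities into the defining commutative square for $tr_f$, and using that the square characterizing $tr_p$ over $V$ is obtained from it by applying $g^*$, forces $g^* tr_f = tr_p$ as morphisms $p_\#\tunit_W \to \tunit_V$. Pulling the induced map on mapping spectra through the identifications $g^*(s^* F) = s'^* F$ (where $s'\colon V \to Spec(k)$ is the structure map, since $g$ is a $k$-morphism) and $g^*(-\wedge\Gmp{n}) = (-)\wedge\Gmp{n}$ then gives the claimed identity of transfers on $F_*$.

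I expect the main obstacle to be bookkeeping the compatibility of the duality isomorphism $\alpha_{X,Y}\colon f_\#\tunit_Y \xrightarrow{\sim} D(f_\#\tunit_Y)$ with base change — i.e. verifying that the canonical isomorphism $D(f_\#\tunit_Y)\simeq f_*\tunit_Y$ of \cite[Proposition 2.4.31]{triangulated-mixed-motives} (and the further identification $f_*\tunit_Y\simeq f_\#\tunit_Y$ for $f$ étale) is stable under $g^*$. This is ultimately a statement about coherence of the exchange transformations $g^* f_* \to p_* g'^*$ and $g^* f_\# \to p_\# g'^*$ and the duality/ambidexterity datum in the motivic six-functor formalism; it is "known" but requires care to cite precisely from \cite{triangulated-mixed-motives}. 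Everything else is formal manipulation with adjunctions and the monoidality of $g^*$. One could also give a slicker proof by recognizing $tr_f$ as the image under $\Sigma^\infty_+$ of the stable transfer associated to the finite covering $f$ and invoking naturality of the ambidexterity equivalence, but the argument above keeps us within the apparatus already set up in the excerpt.
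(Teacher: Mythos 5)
Your proposal follows essentially the same route as the paper: reduce to showing $g^*(tr_f)=tr_p$ as morphisms $p_\#\tunit_W\to\tunit_V$, note that $g^*(f_+)=p_+$ and hence $g^*(D(f_+))=D(p_+)$, and then verify that the duality isomorphism $\alpha_{X,Y}$ is natural under base change — which the paper handles by factoring $\alpha_{X,Y}$ into the comparison $D(f_\#\tunit)\to f_*\tunit$, the Thom transformation, and $\Omega_f\tunit\to\tunit$, each of which is natural by the results in \cite[2.4.3, 2.4.24, 2.4.31]{triangulated-mixed-motives} that you also cite. The only step you omit is the initial reduction (via continuity of $F$) from essentially $k$-smooth to genuinely smooth schemes, a minor technical point.
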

\begin{proof}
Note that $p: W \to V$ is finite étale, so this makes sense. By continuity (of
$F$), we
may assume that $X$ and $V$ are smooth (and hence so are $Y$ and $W$). Write $s:
X \to Spec(k)$ for the structure map.

If $t: A \to B$ is any map in $\SH(X)$, then the canonical diagram
\begin{equation*}
\begin{CD}
F_*(B) = [B, s^* F] @>{\circ t}>> [A, s^* F] = F_*(A) \\
@Vg^*VV                        @Vg^*VV  \\
F_*(g^* B) = [g^*B, g^*s^* F] @>{\circ g(t)}>> [g^* A, g^*s^* F] = F_*(g^* A)
\end{CD}
\end{equation*}
commutes, since $g^*$ is a
functor. Applying
this to $tr_f: \tunit_X \to f_\# \tunit_Y$ it is enough to prove that $g^*(tr_f)
= tr_p$ under the canonical identifications.

Let $f_+: f_\# \tunit_Y \wequi \Sigma^\infty_X Y_+ \to \Sigma^\infty_X X_+ =
\tunit_X$ be the canonical map (so that $tr_f = D(f_+)$ via $\alpha_{X,Y}$), and similarly for
$p_+$. Then $g^*(f_+) \wequi p_+$ and consequently $g^*(D(f_+)) \wequi D(p_+)$.
It thus remains to show that $\alpha_{\bullet,\bullet}$ is natural, i.e. that
$g^*\alpha_{X,Y} = \alpha_{V,W}: \Sigma^\infty_V W_+ \to D(\Sigma^\infty_V
W_+)$.

For this we use the notation of \cite[Example 2.4.3(2), Definition 2.4.24
and Proposition 2.4.31]{triangulated-mixed-motives}. The isomorphism
$\alpha_{X,Y}: f_\# \tunit
\to D(f_\# \tunit)$ is factored into the isomorphisms $D(f_\# \tunit) \to f_*
\tunit$, the Thom transformation $f_\# \Omega_f \tunit \to f_* \tunit$
\cite[Definition 2.4.21]{triangulated-mixed-motives} and $\Omega_f \tunit \to
\tunit$. All
of these are natural in the required sense.
\end{proof}

\begin{lemm}[(Commutation of Transfer with External Product)]
\label{lemm:comm-transfer-product}
Let $f: X' \to X$ and $g: Y' \to Y$ be finite étale. Then
\[ s_{X \times Y \#}(tr_{f \times g}) = s_{X\#}(tr_f) \wedge s_{Y\#}(tr_g):
      \Sigma^\infty (X' \times Y')_+ \wequi \Sigma^\infty X'_+ \wedge \Sigma^\infty Y'_+
      \to 
      \Sigma^\infty (X \times Y)_+ \wequi \Sigma^\infty X_+ \wedge \Sigma^\infty Y_+. \]
\end{lemm}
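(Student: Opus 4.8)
The plan is to reduce the statement to the analogous compatibility for the duality transfers $tr_{(-)}$ of Cisinski--Déglise, and then to the fact that the isomorphisms $\alpha_{\bullet,\bullet}$ and the canonical identifications of $f_\#\tunit$ with a suspension spectrum are all suitably monoidal. First I would reduce to the case where $X$ and $Y$ are themselves smooth over the base (rather than merely essentially smooth), using continuity of the homotopy module $F_*$ exactly as in the proof of Proposition \ref{prop:transfer-base-change}; after this reduction $s_X$, $s_Y$ and $s_{X\times Y}$ are honest smooth structure maps and the external product $s_{X\times Y\#} \wequi s_{X\#}(-) \wedge s_{Y\#}(-)$ makes sense and is an equivalence. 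Unwinding the definition, $tr_{f\times g}$ is $D((f\times g)_+)$ transported along $\alpha_{X\times Y, X'\times Y'}$, and similarly for $tr_f$, $tr_g$; so it suffices to prove two things: (i) $(f\times g)_+ \wequi f_+ \wedge g_+$ under the canonical identification $\Sigma^\infty_{X\times Y}(X'\times Y')_+ \wequi \Sigma^\infty_X X'_+ \wedge \Sigma^\infty_Y Y'_+$ (equivalently $f_\#\tunit_{X'} \wedge g_\#\tunit_{Y'} \wequi (f\times g)_\#\tunit_{X'\times Y'}$, which is the standard monoidality of $p_\#$ for the smooth morphisms $f$, $g$), and hence $D((f\times g)_+) \wequi D(f_+) \wedge D(g_+)$ since $D = \iHom(-,\tunit)$ turns external products of dualizable objects into external products of duals; and (ii) the factorization of $\alpha_{\bullet,\bullet}$ into the chain $D(h_\#\tunit) \to h_*\tunit \leftarrow h_\#\Omega_h\tunit$, $\Omega_h\tunit \to \tunit$ (in the notation of \cite[Definition 2.4.21, Proposition 2.4.31]{triangulated-mixed-motives}) is monoidal in the sense that each of these isomorphisms for $f\times g$ is the external product of the ones for $f$ and $g$.

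Thus the proof is mostly bookkeeping: one traces the definition of $tr$ through the commutative square defining it, pushes forward by $s_{(-)\#}$, and observes that every arrow in sight is an external product of the corresponding arrows for $f$ and for $g$. The substantive inputs are all already available in Cisinski--Déglise: monoidality of $h_\#$ and of $h_*$ for finite étale $h$ (so $(f\times g)_* \wequi f_* \wedge g_*$), the compatibility of the Thom isomorphism $h_\#\Omega_h \to h_*$ with external products (the relative dualizing object of $f\times g$ being the external ``product'' of those of $f$ and $g$, which for étale morphisms are all trivial, i.e. $\Omega_h\tunit \wequi \tunit$), and the naturality/monoidality of the purity-type isomorphism $D(h_\#\tunit) \wequi h_*\tunit$.

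The main obstacle is purely organizational: assembling a single commutative diagram witnessing that the external product of the two defining squares (for $f$ and for $g$) is the defining square for $f\times g$, and checking that the a priori choices of $\alpha_{X\times Y,X'\times Y'}$ and of the identification $f_\#\tunit \wequi \Sigma^\infty(-)_+$ are compatible with the external products of the corresponding choices for $f$ and $g$. No hard new geometry is needed; one simply needs to be careful that all the canonical isomorphisms in \cite[Section 2.4]{triangulated-mixed-motives} used to build $\alpha$ are natural with respect to external products, which they are because they are built from the six-functor exchange transformations, all of which respect the symmetric monoidal structure. I would therefore present the argument as: reduce to smooth $X,Y$; identify $tr_{f\times g}$ via $\alpha_{X\times Y,X'\times Y'}$; and then invoke monoidality of $f_\#$, $f_*$, the Thom isomorphism and the duality isomorphism to conclude, omitting the routine diagram chase.
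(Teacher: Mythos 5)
Your proposal matches the paper's proof in all essentials: both identify $tr_{f\times g}$ with $D((f\times g)_+)$, decompose $\Sigma^\infty_{X\times Y}(f\times g)_+$ as an external product of $f_+$ and $g_+$ via the K\"unneth/monoidality formula for $\#$-pushforwards of units, and conclude using that $D$ and the canonical identifications $\alpha_{\bullet,\bullet}$ respect external products of dualizable objects. The only (cosmetic) differences are that the paper proves the K\"unneth claim for $s_{X\times Y\#}$ by checking on generators and leaves the monoidality of $\alpha_{\bullet,\bullet}$ implicit, whereas you make the latter explicit and add a harmless continuity reduction to the smooth case.
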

Here we write $s_X: X \to Spec(k)$ for the canonical map, and similarly for $Y,
X \times Y$.
\begin{proof}
Write $p_X: X
\times Y \to X$ and $p_Y: X \times Y \to Y$ for the projections.
I claim that the following diagram commutes up to natural isomorphism:
\begin{equation*}
\begin{CD}
\SH(X) \times \SH(Y) @>{p_X^* \wedge p_Y^*}>> \SH(X \times Y) \\
@V{s_{X\#} \wedge s_{Y\#}}VV                @V{s_{X \times Y\#}}VV \\
\SH(k) @= \SH(k).
\end{CD}
\end{equation*}
To prove the claim first note that there is,
for $T \in \SH(X), U \in \SH(Y)$, a natural map $s_{X \times Y
\#} (p_X^* T \wedge p_Y^* U) \to s_{X\#} T \wedge s_{y\#} U$, which can be
obtained by adjunctions, using that the pullback functors are monoidal, and that $s_{X
\times Y} = s_X \circ p_X$ (and similarly for $Y$). Then to prove that the
comparison map is an isomorphism it suffices to consider $T = \Sigma^\infty X',
U = \Sigma^\infty Y'$ for $X' \to X$ smooth any $Y' \to Y$ smooth (note that all
our functors are left adjoints and so commute with arbitrary sums, and objects
of the forms $T, U$ are generators). But then the claim boils down to
\[ X' \times_k Y' \iso (X' \times Y) \times_{X \times Y} (X \times Y') \]
which is clear.

To prove the lemma, we now specialise to $f: X' \to X$ and $g: Y' \to Y$ finite
étale. Then
\[ tr_{f \times g} = s_{X \times Y\#} (D\Sigma^\infty_{X \times Y}(f \times g)_+). \]
Note that
\[ \Sigma^\infty_{X \times Y} (f \times g)_+ = p_X^* \Sigma^\infty_X f_+ \wedge p_Y^* \Sigma^\infty_Y g_+. \]
Since $p_X^*, p_Y^*$ are monoidal we compute
\[  tr_{f \times g} = s_{X \times Y\#} p_X^* D\Sigma^\infty_X f_+ \wedge p_Y^* D\Sigma^\infty_Y g_+
             = s_{X\#} D\Sigma^\infty_X f_+ \wedge s_{Y\#} D\Sigma^\infty_Y g_+, \]
where in the last equality we have used the claim.
Since $s_{X\#} D\Sigma^\infty_X f_+ = tr_f$ by definition (and similarly for
$Y$), this is what we wanted to prove.
\end{proof}

Recall also the homotopy module $\ul{K}_*^{MW} = \ul{\pi}_0(\tunit)_*$ of Milnor-Witt
K-theory \cite[Chapter 3]{A1-alg-top}. Every homotopy module $F_*$ is a module over $\ul{K}_*^{MW}$ in the
sense that there are natural pairings $\ul{K}_*^{MW}(X) \otimes F_*(X) \to
F_{*+*}(X)$.

\begin{corr}[(Projection Formula)] \label{corr:transfer-projection-formula}
Let $k$ be a perfect field, $f: Y \to X$ a finite étale morphism of essentially
$k$-smooth schemes, and $F_*$ a homotopy module. Then for $a \in
\ul{K}_*^{MW}(Y)$ and $b \in F_*(X)$ we have $tr_f(a f^* b) = tr_f(a) b$.
Similarly for $a \in \ul{K}_*^{MW}(X)$ and $b \in F_*(Y)$ we have $tr_f(f^*(a)
b) = a tr_f(b)$.
\end{corr}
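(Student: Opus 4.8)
The plan is to deduce both identities from the two structural properties of the transfer already at hand: its compatibility with external products (Lemma~\ref{lemm:comm-transfer-product}) and the base change formula (Proposition~\ref{prop:transfer-base-change}). The link between them is the standard description of the $\ul{K}_*^{MW}$-action: for $X$ essentially $k$-smooth, $u \in \ul{K}_i^{MW}(X)$ and $v \in F_j(X)$, writing $\Delta_X : X \to X \times X$ for the diagonal, one has $u \cdot v = \Delta_X^*(u \boxtimes v)$, where $u \boxtimes v \in F_{i+j}(X \times X)$ is the external product, formed in $\SH(k)$ by smashing the classifying maps $\Sigma^\infty_k X_+ \to \ul{K}_*^{MW} \wedge \Gmp{i}$ and $\Sigma^\infty_k X_+ \to F \wedge \Gmp{j}$, invoking the identification $\Sigma^\infty_k(X \times X)_+ \wequi \Sigma^\infty_k X_+ \wedge \Sigma^\infty_k X_+$ (monoidality of $\Sigma^\infty$), and composing with the module map $\ul{K}_*^{MW} \wedge F \to F$. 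The same external product makes sense for sections over two different smooth schemes, and Lemma~\ref{lemm:comm-transfer-product} exactly says that transfers are compatible with it: $tr_{f \times g}(u \boxtimes v) = tr_f(u) \boxtimes tr_g(v)$. By continuity of $F_*$ and $\ul{K}_*^{MW}$ (as in the proof of Proposition~\ref{prop:transfer-base-change}) I would assume that $X$ and $Y$ are smooth over $k$ throughout.

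For the first identity I would take $a \in \ul{K}_*^{MW}(Y)$ and $b \in F_*(X)$ and form $a \boxtimes b \in F_*(Y \times X)$. The morphism $f \times \id_X : Y \times X \to X \times X$ is finite étale between essentially $k$-smooth schemes, and by Lemma~\ref{lemm:comm-transfer-product} together with $tr_{\id_X} = \id$,
\[ tr_{f \times \id_X}(a \boxtimes b) = tr_f(a) \boxtimes b \in F_*(X \times X). \]
Now restrict along $\Delta_X$. The square with right edge $f \times \id_X$ and bottom edge $\Delta_X$ is cartesian, its remaining vertex being $Y$, its top edge the graph $\Gamma_f = (\id_Y, f) : Y \to Y \times X$ and its left edge $f : Y \to X$ --- indeed a point $(y,x)$ of $Y \times X$ lies over the diagonal of $X \times X$ precisely when $f(y) = x$. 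Proposition~\ref{prop:transfer-base-change} therefore gives $\Delta_X^* \circ tr_{f \times \id_X} = tr_f \circ \Gamma_f^*$ as maps $F_*(Y \times X) \to F_*(X)$. Evaluating on $a \boxtimes b$ and using $\Gamma_f^*(a \boxtimes b) = a \cdot f^* b$ on the one side and $\Delta_X^*(tr_f(a) \boxtimes b) = tr_f(a) \cdot b$ on the other yields $tr_f(a\, f^* b) = tr_f(a)\, b$.

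The second identity I would obtain by the mirror-image argument: for $a \in \ul{K}_*^{MW}(X)$ and $b \in F_*(Y)$, apply $tr_{\id_X \times f} : F_*(X \times Y) \to F_*(X \times X)$ to $a \boxtimes b$, obtaining $a \boxtimes tr_f(b)$ by Lemma~\ref{lemm:comm-transfer-product}, then restrict along $\Delta_X$ and invoke base change along the cartesian square with right edge $\id_X \times f$, bottom edge $\Delta_X$, remaining vertex $Y$, top edge the graph $\Gamma'_f = (f, \id_Y) : Y \to X \times Y$ and left edge $f$. Since $(\Gamma'_f)^*(a \boxtimes b) = f^*(a) \cdot b$ and $\Delta_X^*(a \boxtimes tr_f(b)) = a \cdot tr_f(b)$, this gives $tr_f(f^*(a)\, b) = a\, tr_f(b)$.

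I expect the only real work to be bookkeeping rather than mathematics: verifying that the module pairing $\ul{K}_*^{MW}(X) \otimes F_*(X) \to F_{*+*}(X)$ is indeed $\Delta_X^*$ applied to the external product, and that this is the same external product that enters Lemma~\ref{lemm:comm-transfer-product} (which is phrased via smash products of maps in $\SH(k)$ after applying $s_\#$); and confirming that the two squares above are genuinely cartesian with the claimed vertices and maps, so that Proposition~\ref{prop:transfer-base-change} applies. A minor point worth recording explicitly is the identification $\ul{K}_i^{MW}(X) = [\Sigma^\infty_k X_+, \ul{K}_*^{MW} \wedge \Gmp{i}]$ for $X$ smooth, valid because $\ul{K}_*^{MW}$ lies in the heart of the homotopy $t$-structure; this is what licenses treating sections of $\ul{K}_*^{MW}$ as morphisms in $\SH(k)$ that can be smashed.
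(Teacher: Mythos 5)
Your proposal is correct and follows essentially the same route as the paper: the paper's proof forms the same external product class $\beta = a \boxtimes b \in F(Y \times X)$, applies Lemma \ref{lemm:comm-transfer-product} (with $tr_{\id} = \id$) to identify $\delta_X^* tr_{f \times \id}\beta$ with $tr_f(a)b$, and uses base change (Proposition \ref{prop:transfer-base-change}) for the cartesian square whose top edge $(\id \times f)\delta_Y$ is exactly your graph $\Gamma_f$. The only cosmetic differences are that you write out the mirror-image argument for the second identity (the paper says "similar") and make the reduction to smooth schemes and the identification of the module pairing with $\Delta_X^*(\text{--} \boxtimes \text{--})$ explicit.
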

\begin{proof}
The usual proof works, see for example \cite[Proof of Corollary
3.4]{calmes2014finite}. We review it. We only show the first statement, the
second is similar. Consider the cartesian square
\begin{equation*}
\begin{CD}
Y @>{(\id \times f) \delta_Y}>> Y \times X \\
@VfVV                           @VV{f \times \id}V \\
X @>>{\delta_X}> X \times X,
\end{CD}
\end{equation*}
where $\delta_X: X \to X \times X$ is the diagonal and similarly for $Y$.
We have the map $\beta: \Sigma^\infty Y_+ \wedge \Sigma^\infty X_+ \to
\ul{K}_*^{MW} \wedge F \to F$, where $\ul{K}_*^{MW} \wedge F \to F$ is the
module structure and the first map is the tensor product of $\Sigma^\infty Y_+
\to \ul{K}_*^{MW}$ (corresponding to $a$) and $\Sigma^\infty X_+ \to F$
(corresponding to $b$). This defines an element $\beta \in F(Y \times X)$. We
have $tr_f ((\id \times f) \delta_Y)^* \beta = tr_f(a f^* b)$ and
$\delta_X^* tr_{f \times \id_Y} \beta = tr_f(a) b$
(the latter since $tr_{\id} = \id$ and
$tr_{f \times g} (x \otimes y) = tr_f(x) \otimes tr_g(y)$ by Lemma
\ref{lemm:comm-transfer-product}). These two elements
are equal by the base change formula, i.e. Proposition
\ref{prop:transfer-base-change}.
\end{proof}

\section{Recollections on Pre-Motivic Categories}
\label{sec:recoll-pre-motivic}

The six functors formalism \cite[Section A.5]{triangulated-mixed-motives} is a
very strong, and very general, duality theory. As such it is no surprise that
proving that any theory satisfies it requires some work. Fortunately it is now
possible to reduce this to establishing a few axioms.

Let $\mathcal{S}$ be a base category of schemes. Recall that a \emph{pre-motivic
category} $\mathcal{M}$ over $\mathcal{S}$ consists of
\cite[Definition A.1.1]{cisinski2013etale} a pseudofunctor
$\mathcal{M}$ on $\mathcal{S}$, taking values in triangulated, closed symmetric
monoidal
categories. Often these categories will be obtained as the homotopy categories
of a pseudofunctor taking values in suitable Quillen model categories and left
Quillen functors. For $f: X \to Y \in \mathcal{S}$, the functor $\mathcal{M}(f):
\mathcal{M}(Y) \to \mathcal{M}(X)$ is denoted $f^*$. For any $f$, the functor
$f^*$ has a triangulated right adjoint $f_*$ (which is not required to be
monoidal). If $f$ is smooth, then $f^*$ has a triangulated left adjoint $f_\#$
(also not required to be monoidal). Moreover, $\mathcal{M}$ needs to satisfy
smooth base change and the smooth projection formula, in the following sense.

Let
\begin{equation*}
\begin{CD}
Y @>q>> X \\
@VgVV   @VVfV \\
T @>p>> S
\end{CD}
\end{equation*}
be a cartesian square in $\mathcal{S}$, with $p$ smooth. Then smooth base change
means that the natural
transformation $q_\# g^* \to f^* p_\#$ is required to be a natural isomorphism.

Finally, let $f: Y \to X$ be a smooth morphism in $\mathcal{S}$. Then the smooth
projection formula means that, for $E \in \mathcal{M}(X)$ and $F \in
\mathcal{M}(Y)$ we have $f_\#(F \otimes f^* E) \wequi f_\#(F) \otimes E$, via
the canonical map.

Here are some further properties a pre-motivic category can satisfy. We say
$\mathcal{M}$ satisfies the \emph{homotopy property} if for every $X \in
\mathcal{S}$ the natural map $p_\# \tunit \to \tunit \in \mathcal{M}(X)$ is an
isomorphism, where $p: \Aone \times X \to X$ is the canonical map.

Let now $q: \mathbb{P}^1 \times X \to X$ be the canonical map. We say that
$\mathcal{M}$ satisfies the \emph{stability property} if the cone of the
canonical map $q_\# \tunit \to \tunit \in \mathcal{M}(X)$ is a
$\otimes$-invertible object. In this case we write $\tunit(1) = fib(q_\# \tunit
\to \tunit)[-2]$ and then as usual $E(n) = E \otimes \tunit(1)^{\otimes n}$ for
$n \in \ZZ, E \in \mathcal{M}(X)$.

Finally, let $X \in \mathcal{S}$, $j: U \to X \in \mathcal{S}$ an open
immersion, and $i: Z \to \mathcal{S}$ a complementary closed immersion. Then
for $E \in \mathcal{M}(U)$ there are the adjunction maps
\[ j_\#j^* E \to E \to i_* i^* E. \]
We say that $\mathcal{M}$ satisfies the \emph{localisation property} if these
maps are always part of a distinguished triangle.

One then has the following fundamental result. It was discovered by
Voevodsky, first worked out in detail by Ayoub, and then formalised by
Cisinski-Déglise.

\begin{thm}[(Ayoub, Cisinski-D\'eglise)] \label{thm:fundamental-six-functors}
Let $\mathcal{S}$ be the category of Noetherian schemes of finite dimension and
$\mathcal{M}$ a pre-motivic category which satisfies the homotopy property, the
stability property, and the localisation property. Then if $\mathcal{M}(X)$ is a
well-generated triangulated category for every $X$, $\mathcal{M}$ satisfies
the full six functors formalism.
\end{thm}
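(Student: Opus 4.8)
The plan is to recognise this statement as (a mild reformulation of) the main theorems of Ayoub's thesis and of Cisinski--Déglise, and to indicate how the three axiomatic inputs---the homotopy, stability and localisation properties---together with well-generation, are leveraged; in practice one would simply cite those works, so what follows is a sketch of their strategy. The first and key structural step is to extract from the localisation property the exceptional functors attached to immersions. Given a closed immersion $i\colon Z \to X$ with open complement $j\colon U \to X$, the localisation property says precisely that $j_\# j^* \to \id \to i_* i^*$ is a distinguished triangle; one then records the standard identities ($i^* i_* \simeq \id$, $j^* i_* \simeq 0$, $i^* j_\# \simeq 0$), sets $i_! := i_*$ and $j_! := j_\#$, and---using Neeman's Brown representability, valid since the categories are well-generated---produces a right adjoint $i^!$ to $i_*$ together with the complementary gluing triangle $i_* i^! \to \id \to j_* j^*$. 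Well-generation thus enters exactly to guarantee the new adjoints that the full formalism requires.

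Next I would construct $f_!$ for an arbitrary separated morphism $f$ of finite type. By Nagata compactification (available since the schemes are Noetherian of finite dimension) one factors $f = p \circ j$ with $j$ an open immersion and $p$ proper, and sets $f_! := p_* j_!$. The content is to show this is independent of the factorisation and functorial in $f$, which requires proper base change and the projection formula for \emph{proper} morphisms. Following Ayoub, these are proved by dévissage: Chow's lemma reduces to projective morphisms, which factor as a closed immersion followed by a projection $\PP^n \times X \to X$; the projection is handled by the projective bundle formula (a formal consequence of the stability property), and the closed immersion case is reduced, by deformation to the normal cone, to the homotopy property.

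The heart of the argument is then the relative purity isomorphism: for $f\colon Y \to X$ smooth of relative dimension $d$ there is a canonical isomorphism $f_! \simeq f_\#(-d)[-2d]$, equivalently $f^! \simeq f^*(d)[2d]$ (again $f^!$ exists by well-generation). Via the gluing triangles this reduces to computing $i^!\tunit$ for the zero section $i$ of a rank-$d$ vector bundle, where deformation to the normal cone, homotopy invariance, and the Thom isomorphism coming from the stability property combine to give exactly $\tunit(d)[2d]$. With $f_!$ constructed, compatible with composition, and purity in hand, the remaining pieces of the formalism---the exchange isomorphism $g^* f_! \simeq f'_! g'^*$ for a cartesian square, the projection formula $f_!(E \otimes f^* F) \simeq (f_! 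E) \otimes F$, and the duality relating $\iHom$, $f^!$ and $f_!$---follow by decomposing an arbitrary $f$ into an open immersion and a proper morphism and reducing to the cases already settled, together with smooth and closed base change supplied by the pre-motivic axioms and the localisation property.

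The main obstacle is the geometric dévissage underlying purity and proper base change: deformation to the normal cone and the projective bundle formula, which are what actually convert the three abstract axioms into explicit computations of the exceptional functors in the smooth and projective cases. Everything else is a lengthy but essentially formal manipulation of adjunctions and exchange $2$-morphisms. Since this entire programme has been carried out in full generality by Ayoub and subsequently by Cisinski--Déglise, I would invoke their results rather than reproduce the dévissage.
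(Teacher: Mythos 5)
Your proposal is correct and matches the paper's treatment: the paper's proof is simply a citation of \cite[Theorem 2.4.50]{triangulated-mixed-motives} (noting that Noetherian finite-dimensional schemes form an adequate category of schemes), which is exactly what you propose to do. Your accompanying sketch of the internal dévissage (gluing from localisation, adjoints from well-generation via Brown representability, $f_!$ via Nagata compactification, purity via deformation to the normal cone) is an accurate summary of the cited works but is not reproduced in the paper.
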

\begin{proof}
This is proved for ``adequate categories of schemes'' in
\cite[Theorem 2.4.50]{triangulated-mixed-motives}, of which Noetherian finite
dimensional schemes are an example.
\end{proof}

One further property we will make use of is \emph{continuity}. This can be
formulated as follows. Let $\{S_\alpha\}_{\alpha \in A}$ be an inverse system in $\mathcal{S}$,
where all the transition morphisms are affine and the limit $S := \lim_\alpha
S_\alpha$ exists in $\mathcal{S}$. Write $p_\alpha: S \to S_\alpha$ for the
canonical projection. Let $E \in \mathcal{M}(S_{\alpha_0})$ for some $\alpha_0
\in A$ and write for $\alpha > \alpha_0,$ $E_\alpha = (S_\alpha \to
S_{\alpha_0})^* E$. We say that $\mathcal{M}$ satisfies the continuity property
if for every affine inverse system $S_\alpha$ as above, every $E$ and every $i
\in \ZZ$ the canonical map
\[ \colim_{\alpha > \alpha_0} [\tunit(i), E_\alpha]_{\mathcal{M}(S_\alpha)}
    \to [\tunit(i), p_{\alpha_0}^* E]_{\mathcal{M}(S)} \]
is an isomorphism.

We in particular use the following consequence of continuity and localisation.

\begin{corr} \label{corr:conservative-detection}
Suppose that $\mathcal{M}$ be a pre-motivic category over $\mathcal{S}$ (where
$\mathcal{S}$ contains all Henselizations of its schemes), coming from a
pseudofunctor valued in model categories. Assume that
$\mathcal{M}$ satisfies continuity and localisation.

Let $E \in \mathcal{M}(X)$, where $X$ is Noetherian of finite dimension.
Then $E \wequi 0$ if and only if for every morphism $f: Spec(k) \to X$ with $k$
a field we have $f^*E \wequi 0$.
\end{corr}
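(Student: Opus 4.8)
The plan is to reduce the general finite-dimensional Noetherian case to the case of local rings, then to fields, using continuity and localisation together with a Noetherian induction on dimension. First I would observe that one direction is trivial: if $E \wequi 0$ then certainly $f^*E \wequi 0$ for every $f: \operatorname{Spec}(k) \to X$, since $f^*$ is a functor. For the converse, assume $f^*E \wequi 0$ for all points $f$ valued in fields, and suppose for contradiction that $E \not\wequi 0$.

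The key reduction is as follows. Using localisation, for an open immersion $j: U \to X$ with complementary closed immersion $i: Z \to X$, we have a distinguished triangle $j_\# j^* E \to E \to i_* i^* E$ (more precisely the localisation triangle $i_* i^* E' \to E' \to j_* j^* E'$; in any case the pair $(j^*, i^*)$ is conservative because the localisation triangle shows $E \wequi 0$ iff $j^* E \wequi 0$ and $i^* E \wequi 0$). Thus $E \wequi 0$ can be checked after restricting to the irreducible components with their reduced structure, and then, by Noetherian induction on the dimension, it suffices to treat the generic point of each irreducible component after possibly shrinking. Concretely: pick an irreducible component, let $\eta$ be its generic point; I claim it is enough to show that $E$ vanishes in a neighbourhood of $\eta$. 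Indeed, if $E$ is nonzero then it is nonzero on some irreducible closed subset $W$; replacing $X$ by $W$ (with reduced structure) we may assume $X$ is irreducible and reduced with generic point $\eta$, and $E \ne 0$, but $E$ vanishes on a dense open (by the induction, applied to the complement of any neighbourhood where $E$ vanishes) — contradiction once we know $E$ vanishes near $\eta$. So the crux is: if the restriction of $E$ to the residue field $k(\eta)$ is zero, then $E$ restricted to some open neighbourhood of $\eta$ is zero.

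To get this, I would use continuity. The local ring $\mathcal{O}_{X,\eta}$ is the filtered limit of the coordinate rings of the affine open neighbourhoods of $\eta$ in $X$ (an affine inverse system with affine — indeed open immersion — transition maps), so $\operatorname{Spec}(\mathcal{O}_{X,\eta}) = \lim_\alpha U_\alpha$. By the continuity property,
\[ \colim_\alpha [\tunit(i), E|_{U_\alpha}]_{\mathcal{M}(U_\alpha)} \iso [\tunit(i), E|_{\operatorname{Spec}(\mathcal{O}_{X,\eta})}]_{\mathcal{M}(\operatorname{Spec}\mathcal{O}_{X,\eta})} \]
for all $i$, and similarly with $\tunit(i)$ replaced by $g^* \tunit(i)$ for any smooth $g$ over $U_{\alpha_0}$ — i.e. continuity lets us compute all mapping groups out of a set of generators. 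If $X$ is irreducible with generic point $\eta$ then $\operatorname{Spec}(\mathcal{O}_{X,\eta})$ is the spectrum of a field when $X$ is reduced — wait, only if $\eta$ is a generic point of a reduced scheme, in which case $\mathcal{O}_{X,\eta} = k(\eta)$. So after the reductions above, $\operatorname{Spec}(\mathcal{O}_{X,\eta}) = \operatorname{Spec}(k(\eta))$, and by hypothesis $E|_{k(\eta)} \wequi 0$. Hence all the colimit groups vanish; since $\mathcal{M}(U_\alpha)$ is well-generated (being the homotopy category of a model category as in the hypothesis) and generated by the $g^*\tunit(i)$, and a colimit of zero groups being zero forces, for each generator and each $\alpha$, the image of $[g^*\tunit(i), E|_{U_\alpha}]$ in the colimit to vanish — but that only says each class dies at some later stage, not that it is zero on the nose at $U_\alpha$.

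This last point is the main obstacle: continuity gives vanishing of a colimit, from which I can only conclude that for each fixed $\alpha$ and each compact generator $c$, every element of $[c, E|_{U_\alpha}]$ maps to zero in $\mathcal{M}(U_\beta)$ for some $\beta \ge \alpha$; since the index set of neighbourhoods of $\eta$ is filtered, this does not immediately produce a single $U_\beta$ on which $E|_{U_\beta} \wequi 0$. To finish I would argue that $E$ restricted to a suitable $U_\alpha$ is a \emph{compact} object — or rather, work with a fixed $E$ presented on some $U_{\alpha_0}$ and note that, because $\mathcal{M}(U_{\alpha_0})$ is well-generated and we only need finitely many generators to detect whether a given object (namely $E|_{U_{\alpha_0}}$, or rather the relevant piece of it) is zero after the earlier dimension reduction has trimmed things down — this is where one genuinely needs the structure of the argument to loop back through the localisation triangle and Noetherian induction rather than continuity alone. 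In practice the clean way is: it suffices to show $E|_{U_\alpha} \wequi 0$ for \emph{some} $\alpha$; equivalently that the "support" of $E$ (the complement of the largest open on which $E$ vanishes) does not contain $\eta$; the support is closed, and if it contained $\eta$ it would be all of $X$ (irreducible), but then by the colimit vanishing every class on every generator over every $U_\alpha$ would eventually die, and a standard argument (using that $\mathcal{M}$ comes from a \emph{model} category, so that "$E|_{U_\alpha} \simeq 0$" is detected by the vanishing of the derived mapping spectra out of a \emph{set} of generators, and that such a set together with the filtered colimit commuting with the relevant homotopy groups forces $E|_{U_\beta} \simeq 0$ for large $\beta$) closes the gap. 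I expect the author handles exactly this subtlety and that the generation/compactness bookkeeping is where the real care lies; the geometric skeleton — localisation triangle plus Noetherian induction plus continuity at generic points — is straightforward.
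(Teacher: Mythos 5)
Your skeleton (localisation triangle, reduction to reduced irreducible pieces, Noetherian induction on dimension) is reasonable, but the step you yourself flag as the crux is genuinely false, not merely unproven: for a general object $E$, vanishing of $E$ at the generic point $\eta$ does \emph{not} imply vanishing on any open neighbourhood of $\eta$. Continuity only gives $\colim_\alpha [c, E|_{U_\alpha}] = 0$ for each generator $c$, i.e.\ every class dies at some later stage, and no amount of well-generation bookkeeping closes this, because $E$ is not assumed compact. A concrete obstruction to your intermediate claim: take $E = \bigoplus_n i_{Z_n *}\tunit$ for a countable family of proper closed subsets $Z_n \subset X$ with dense union. Each summand, hence the sum (pullback preserves sums), vanishes at $\eta$ since $\eta^*$ factors through $j^*$ for $j$ the complementary open immersion and $j^* i_* = 0$; yet $E|_U \ne 0$ for every nonempty open $U$. (This $E$ is of course not a counterexample to the corollary itself, since it is nonzero at the closed points of the $Z_n$; it only kills the ``spread out from the generic point'' strategy.) Note also that you never use the hypothesis that $\mathcal{S}$ contains all Henselizations, which is a sign that the intended route is different.

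The paper runs the induction in the opposite direction, and the Henselization hypothesis is exactly what makes this possible. After reducing to $X$ reduced by localisation, it invokes Proposition 4.3.9 of Cisinski--D\'eglise (whose proof is where continuity is actually consumed, and which requires the model-category presentation) to reduce to the case where $X$ is Henselian local with closed point $x$ and open complement $U$. The closed point of such a scheme is a field-valued point, so $E|_x \wequi 0$ holds by hypothesis; $U$ has strictly smaller dimension, so $E|_U \wequi 0$ by induction on dimension; and the localisation triangle $j_\# j^* E \to E \to i_* i^* E$ then forces $E \wequi 0$. Working at the \emph{closed} point of a Henselian local scheme avoids the spreading-out problem entirely: one never needs to descend from a limit scheme back to a finite stage of the tower.
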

\begin{proof}
By localisation, we may assume that $X$ is reduced (see for example
\cite[Proposition 2.3.6(1)]{triangulated-mixed-motives}). By \cite[Proposition
4.3.9]{triangulated-mixed-motives} (this result requires $\mathcal{M}$ to come
from a model category) we may assume that $X$ is (Henselian) local
with closed point $x$ and open complement $U$.
By localisation, it suffices to show that $E|_x \wequi 0$ and $E|_U \wequi 0$.
The former holds by assumption, and the latter by induction on the dimension. This
concludes the proof.
\end{proof}

\paragraph{Example.} The pseudofunctors $X \mapsto \SH(X)$ and $X \mapsto
D_\Aone(X)$ satisfy the six functors formalism and continuity (for the base
category of Noetherian finite dimensional schemes)
\cite{ayoub2007six,triangulated-mixed-motives}.

\section{Recollections on Monoidal Bousfield Localization}
\label{sec:monoidal-local}

Let $\mathcal{M}$ be a monoidal model category and $\alpha: Y' \to Y \in
\mathcal{M}$ a morphism. We wish to ``monoidally invert $\alpha$'', by which we
mean passing to a model category $L_\alpha^\otimes \mathcal{M}$ obtained by
localizing $\mathcal{M}$ and such that for every $T \in L_\alpha^\otimes
\mathcal{M}$ the induced map $\alpha_T: T \otimes^L Y' \to T \otimes^L Y$ is a
weak equivalence. We will also write $L_\alpha^\otimes \mathcal{M} =:
\mathcal{M}[\alpha^{-1}]$ and even $Ho(\mathcal{M}[\alpha^{-1}]) =:
Ho(\mathcal{M})[\alpha^{-1}]$.

The monoidal $\alpha$-localisation exists very generally. Suppose
that $Y'$ and $Y$ are cofibrant, and that $\mathcal{M}$ admits a
set of cofibrant homotopy generators $G$
(for example $\mathcal{M}$
combinatorial \cite[Corollary 4.33]{barwick2010left}). Let $H_\alpha = \{Y'
\otimes T \xrightarrow{\alpha \otimes \id} Y \otimes T| T \in G\}$. When no
confusion can arise, we will denote $H_\alpha$ just by $H$. Then the
Bousfield localisation $L_H \mathcal{M}$, if it exists (for example if
$\mathcal{M}$ is left proper and combinatorial) is $\mathcal{M}[\alpha^{-1}]$. We will call
$H_\alpha$-local objects $\alpha$-local. As a further sanity
check, the model category $L_H \mathcal{M}$ is still monoidal as follows from
\cite[Proposition 4.47]{barwick2010left}.
%\marginpar{TODO: actually that reference is for the enriched context}

The situation simplifies somewhat if $Y'$ and $Y$ are invertible and
$\mathcal{M}$ is stable. Then we may as
well assume that $Y' = \tunit$. Given $T \in \mathcal{M}$ cofibrant we can consider the
directed system
\[ T \iso T \otimes \tunit \xrightarrow{\id \otimes \alpha} T \otimes Y \iso T
   \otimes Y \otimes \tunit \to T \otimes Y^{\otimes 2} \to \dots \]
and its homotopy colimit $T[\alpha^{-1}] := \hocolim_n T \otimes X^{\otimes n}$.
More generally, if $T$ is not cofibrant, we can either first cofibrantly replace
it, or use the derived tensor product. Either way, we denote the result still by
$T[\alpha^{-1}]$. The main point of this section is to show that under suitable
conditions, $T[\alpha^{-1}]$ is the $\alpha$-localization of $T$.

Clearly this is only a reasonable expectation under some compact generation
assumption. More generally, one would expect a transfinite iteration of
$\alpha$. Since all our applications will be in compactly generated situations,
we refrain from giving the more general argument.

Recall that by a set of compact homotopy generators $G$ for $\mathcal{M}$ we
mean a set of (usually cofibrant) objects $G \subset Ob(\mathcal{M})$ such that
$\mathcal{M}$ is generated by the objects in $G$ under homotopy colimits, and
such that
for any directed system $X_1 \to X_2 \to \dots \in \mathcal{M}$ and $T \in G$, the canonical
map $\hocolim_i \Map^d(T, X_i) \to \Map^d(T, \hocolim_i X_i)$ is an equivalence.

\begin{lemm} \label{lemm:rho-local-model}
Let $\alpha: \tunit \to Y$ be a map between objects in a symmetric
monoidal, stable model category such that $Y$ is invertible (in the homotopy
category).  Assume that $\mathcal{M}$
has a set of compact homotopy generators $G$, and that $\mathcal{M}[\alpha^{-1}]$
exists.

Then for each $U \in
\mathcal{M}$ the object $U[\alpha^{-1}]$ is $\alpha$-local and
$\alpha$-locally weakly equivalent to $U$. In other words, $U \mapsto
U[\alpha^{-1}]$ is an $\alpha$-localization functor.

Also $G$ defines a set of compact homotopy generators for
$\mathcal{M}[\alpha^{-1}]$.
\end{lemm}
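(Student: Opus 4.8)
The plan is to establish the three assertions in order: first that $U[\alpha^{-1}]$ is $\alpha$-local, second that the natural map $U \to U[\alpha^{-1}]$ is an $\alpha$-local weak equivalence, and finally that $G$ remains a set of compact homotopy generators after localisation.

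\textbf{Step 1: $U[\alpha^{-1}]$ is $\alpha$-local.} Recall that $T$ is $\alpha$-local iff for every $T' \in G$ the map $\alpha_\ast \colon \Map^d(Y \otimes T', T) \to \Map^d(T', T)$ (equivalently, since $Y$ is invertible, $\Map^d(T', T) \to \Map^d(T', Y^{-1} \otimes T) \simeq \Map^d(Y \otimes T', T)$) is an equivalence; here I use that $H_\alpha$ consists of the maps $\alpha \otimes \id_{T'}$ and that mapping out of a homotopy pushout/colimit behaves correctly. Since $Y$ is invertible in $Ho(\mathcal M)$, tensoring with $Y$ is an auto-equivalence, so it is equivalent to show that $- \otimes \alpha$ induces an equivalence $\Map^d(T', U[\alpha^{-1}]) \xrightarrow{\sim} \Map^d(T', U[\alpha^{-1}] \otimes Y)$ for $T' \in G$. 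Now $U[\alpha^{-1}] = \hocolim_n U \otimes Y^{\otimes n}$, and tensoring with $Y$ commutes with homotopy colimits, so $U[\alpha^{-1}] \otimes Y \simeq \hocolim_n U \otimes Y^{\otimes n+1}$, which is the same diagram shifted by one; the map $\id \otimes \alpha$ realises the cofinal inclusion of the shifted diagram, hence is an equivalence on homotopy colimits. (One has to be slightly careful that the map induced by $\alpha$ agrees up to homotopy with the shift map; this is the usual telescope argument.) So $U[\alpha^{-1}]$ is $\alpha$-local.

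\textbf{Step 2: $U \to U[\alpha^{-1}]$ is an $\alpha$-equivalence.} The map is the structure map of the telescope into the $n=0$ term. It suffices to show the cofibre $C := \hocolim\bigl(0 \to U \otimes Y \to U \otimes Y^{\otimes 2} \to \cdots\bigr)$, i.e.\ the cofibre of $U \to U[\alpha^{-1}]$, is $\alpha$-locally trivial, i.e.\ lies in the localising subcategory generated by the maps in $H_\alpha$ (equivalently, $\Map^d(C, Z) \simeq \ast$ for every $\alpha$-local $Z$). Each map $U \otimes Y^{\otimes n} \xrightarrow{\id \otimes \alpha} U \otimes Y^{\otimes n+1}$ has cofibre $U \otimes Y^{\otimes n} \otimes \operatorname{cofib}(\alpha)$, and since $\operatorname{cofib}(\alpha \otimes \id_{T'})$ for $T' \in G$ is killed by $\alpha$-localisation and these generate under homotopy colimits and tensoring with the invertible $Y^{\otimes n}$ and with $U$ (which is a homotopy colimit of shifts of objects of $G$), each such cofibre is $\alpha$-locally trivial; hence so is the homotopy colimit $C$ of the resulting tower of cofibres. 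Therefore $U \to U[\alpha^{-1}]$ is an $\alpha$-equivalence, and combined with Step 1 this shows $U \mapsto U[\alpha^{-1}]$ is an $\alpha$-localisation functor. The main obstacle here is the bookkeeping needed to see that the tensor ideal generated by $\{\operatorname{cofib}(\alpha \otimes \id_{T'})\}_{T' \in G}$ already contains $\operatorname{cofib}(\alpha \otimes \id_U)$ for arbitrary $U$, which follows because $- \otimes \operatorname{cofib}(\alpha)$ commutes with homotopy colimits and $U$ is built from $G$ by homotopy colimits.

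\textbf{Step 3: $G$ generates $\mathcal M[\alpha^{-1}]$ by compact objects.} The objects of $\mathcal M[\alpha^{-1}]$ are the $\alpha$-local objects of $\mathcal M$, and every $T \in \mathcal M[\alpha^{-1}]$ is the $\alpha$-localisation of itself, hence by Step 1--2 of the form $\hocolim$ of its image under the localisation; since localisation is a left adjoint it preserves homotopy colimits, so the (localisations of the) objects of $G$ generate $\mathcal M[\alpha^{-1}]$ under homotopy colimits, and as $\alpha \otimes \id_{T'}$ is already an equivalence for $T' \in G$ (by invertibility of $Y$, $\alpha$ is sent to an equivalence only after localisation in general — but the objects $T'$ of $G$ are unchanged, we just need them to be compact in the new category). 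For compactness: if $Z = \hocolim_i Z_i$ in $\mathcal M[\alpha^{-1}]$, then since the inclusion $\mathcal M[\alpha^{-1}] \hookrightarrow \mathcal M$ has a left adjoint (the localisation) which preserves homotopy colimits, the homotopy colimit in $\mathcal M[\alpha^{-1}]$ is the $\alpha$-localisation of the homotopy colimit in $\mathcal M$; then $\Map^d_{\mathcal M[\alpha^{-1}]}(T', Z) = \Map^d_{\mathcal M}(T', (\hocolim_i Z_i)[\alpha^{-1}])$, and using Step 1 that the target is $\alpha$-local together with the telescope description $(\hocolim_i Z_i)[\alpha^{-1}] = \hocolim_n (\hocolim_i Z_i) \otimes Y^{\otimes n} = \hocolim_{i} (Z_i[\alpha^{-1}])$ (swapping the two homotopy colimits) plus compactness of $T'$ in $\mathcal M$, we get $\Map^d_{\mathcal M}(T', \hocolim_i Z_i[\alpha^{-1}]) \simeq \hocolim_i \Map^d_{\mathcal M}(T', Z_i[\alpha^{-1}]) = \hocolim_i \Map^d_{\mathcal M[\alpha^{-1}]}(T', Z_i)$, as desired.
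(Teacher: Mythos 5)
Your Steps 2 and 3 are essentially sound (Step 2 is a cofibre/localising-subcategory variant of the paper's shorter argument that a telescope of $\alpha$-local weak equivalences is $\alpha$-locally equivalent to its first term; Step 3 matches the paper). The problem is Step 1, and the gap sits exactly where you wave your hands: the parenthetical ``one has to be slightly careful that the map induced by $\alpha$ agrees up to homotopy with the shift map; this is the usual telescope argument.'' It is not the usual telescope argument, and the two maps do \emph{not} agree in general. Write out the ladder comparing the telescope $\{U \otimes Y^{\otimes n}\}$ with its shift: the composite ``transition map, then $\id \otimes \alpha$'' inserts two copies of $\alpha$ into $U \otimes Y^{\otimes n} \otimes Y \otimes Y$ in one order, while ``$\id \otimes \alpha$, then transition map'' inserts them in the other order; the two composites differ by the switch automorphism of $Y \otimes Y$. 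For an invertible $Y$ in a general symmetric monoidal stable category this switch is a possibly nontrivial unit of $\pi_0\operatorname{End}(\tunit)$ --- in the intended application $\SH(k)$ with $Y = \Gm$ it is $\langle -1\rangle \ne \id$ --- so the square does not commute and you cannot conclude that $\id \otimes \alpha$ realises the cofinal inclusion. The same issue infects your opening reformulation ``$T$ is $\alpha$-local iff $T \to T \otimes Y$ is an equivalence'': the easy direction holds, but the direction you actually need (constructing an inverse to $\alpha^*\colon [T' \otimes Y, X] \to [T', X]$ from an inverse of $X \to X \otimes Y$) runs into the identical switch obstruction.

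The paper's fix is to first observe that an object is $\alpha$-local iff it is $(\alpha \otimes \alpha)$-local (a two-out-of-six argument), and that $U[\alpha^{-1}] \simeq U[(\alpha\otimes\alpha)^{-1}]$, so one may replace $Y$ by $Y^{\otimes 2}$; by coherence for invertible objects the switch on the \emph{square} of an invertible object is the identity, and only then do the telescope ladder and the inverse-construction go through. (There is also a secondary point you skip: a levelwise homotopy-commutative ladder does not by itself induce an equivalence of homotopy colimits; the paper handles this by applying $[T,-]$ for compact $T$ and using stability to reduce to a colimit of abelian groups.) Your argument needs this reduction to $\alpha \otimes \alpha$, or some equivalent treatment of the switch, to be correct.
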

\begin{proof}
We first show that the images of $G$ in $Ho(\mathcal{M}[\alpha^{-1}])$ are compact homotopy
generators. Generation is clear, and for homotopy compactness it is enough to show that a
filtered homotopy colimit of $\alpha$-local objects is $\alpha$-local. But this
follows from homotopy compactness of $T \otimes Y^{\otimes n}$ (for $T \in G$ and $n \in
\{0, 1\}$) and definition of $\alpha$-locality.

In a model category
$\mathcal{N}$ with compact homotopy generators, if $T_1 \to T_2 \to \dots$ is a directed system of weak
equivalences then $\hocolim_i T_i$ is weakly equivalent to $T_1$. (This follows
from the same result in the category of simplicial sets.) Thus
$U[\alpha^{-1}]$ is $\alpha$-locally weakly equivalent to $U$.

It remains to see that $U[\alpha^{-1}]$ is $\alpha$-local. This follows from the
next two lemmas.
\end{proof}

In the above lemma, we have defined an object $X$ to be $\alpha$-local if for
all $T \in \mathcal{M}$ the induced map $\alpha^*: \Map^d(T \otimes^L Y, X) \to
\Map^d(T, X)$ is an equivalence, because
this is the way Bousfield localization works. Another
intuitively appealing property would be for the canonical map $X \to X \otimes Y$
to be an equivalence. As the next lemma shows, these two notions agree in our
case.

\begin{lemm}
Let $\mathcal{M}$ be a symmetric monoidal model category and $\alpha: \tunit \to Y$ a morphism with
$Y$ invertible.

Call an object $X \in \mathcal{M}$ $\alpha'$-local if $X \to X \otimes^L Y$ is a
weak equivalence.
Then $X$ is $\alpha$-local if and only if $X$ is $\alpha'$-local, if and only if
$X$ is $\alpha \otimes \alpha$-local.
\end{lemm}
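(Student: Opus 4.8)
The plan is to prove the chain of equivalences by establishing two things: first, that $\alpha$-locality is equivalent to $\alpha'$-locality (i.e.\ $X \to X \otimes^L Y$ being a weak equivalence), and second, that $\alpha'$-locality is equivalent to $(\alpha \otimes \alpha)$-locality. The key observation throughout is that $Y$ is $\otimes$-invertible, so smashing with $Y$ (or $Y^{-1}$) is an equivalence of the homotopy category; in particular it preserves and reflects weak equivalences and commutes with $\Map^d(T, -)$ up to reindexing the source.

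First I would show $\alpha$-local $\Rightarrow$ $\alpha'$-local. If $X$ is $\alpha$-local, then applying the defining condition with $T = \tunit$ gives that $\alpha^* : \Map^d(Y, X) \to \Map^d(\tunit, X)$ is an equivalence. Smashing the source/target with $Y^{-1}$ (permissible since $Y$ is invertible and the model structure is monoidal and stable) identifies this with $\Map^d(\tunit, X \otimes^L Y^{-1}) \to \Map^d(Y^{-1}, X \otimes^L Y^{-1})$, wait---more directly: the map $X \to X \otimes^L Y$ is, after tensoring the whole situation, adjoint to $\alpha^*$ on mapping spaces, so $X \to X\otimes^L Y$ is an equivalence iff $\alpha^*: \Map^d(Y,X) \to \Map^d(\tunit, X)$ is, which holds. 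Conversely, if $X \to X \otimes^L Y$ is a weak equivalence, then for every $T$ the map $\Map^d(T \otimes^L Y, X) \to \Map^d(T, X)$ can be rewritten using invertibility of $Y$ as $\Map^d(T, X \otimes^L Y^{-1})$... the cleanest route is: $\Map^d(T \otimes^L Y, X) \simeq \Map^d(T, \iHom(Y,X)) \simeq \Map^d(T, X \otimes^L Y^{-1})$, and the map to $\Map^d(T, X)$ is induced by $X \otimes^L Y^{-1} \to X$, which is the $(-\otimes^L Y^{-1})$-translate of $X \to X \otimes^L Y$ and hence an equivalence. So $X$ is $\alpha$-local. This gives the first equivalence.

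For the equivalence with $(\alpha \otimes \alpha)$-locality: clearly $\alpha \otimes \alpha : \tunit \to Y^{\otimes 2}$ factors as $\tunit \xrightarrow{\alpha} Y \xrightarrow{\alpha \otimes \id_Y} Y^{\otimes 2}$, and the second map is $(-\otimes Y)$ applied to $\alpha$, hence the composite $X \to X \otimes^L Y \to X \otimes^L Y^{\otimes 2}$. If $X \to X\otimes^L Y$ is an equivalence, then so is its $(-\otimes Y)$-translate $X \otimes^L Y \to X \otimes^L Y^{\otimes 2}$, hence the composite, so $X$ is $(\alpha\otimes\alpha)'$-local, i.e.\ (by the first equivalence applied to the map $\alpha\otimes\alpha$ with invertible target $Y^{\otimes 2}$) $(\alpha\otimes\alpha)$-local. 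Conversely, if $X \to X \otimes^L Y^{\otimes 2}$ is an equivalence, then since this composite factors through $X \otimes^L Y$, and moreover applying $(-\otimes Y)$ to the map $X \to X\otimes^L Y$ twice, one gets that $X \to X\otimes^L Y$ is a retract-up-to-homotopy argument: write $u : X \to X\otimes Y$ for $\alpha_X$; then $u$ and $(u\otimes Y)$ compose to an equivalence, so $u$ is split injective and $(u \otimes Y)$ split surjective on all homotopy groups / in the homotopy category; applying $(-\otimes Y^{-1})$ shows $u$ itself is split surjective too, hence an equivalence. I expect this last ``two-out-of-three via invertible twist'' step to be the only place requiring a little care; everything else is formal manipulation with the invertibility of $Y$ and the definition of Bousfield locality.
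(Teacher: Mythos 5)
Your proof is correct, but it takes a genuinely different route from the paper's at the crucial point. The paper deliberately does \emph{not} prove ``$\alpha'$-local $\Rightarrow$ $\alpha$-local'' directly: its candidate inverse to $\alpha^*$ (namely $f \mapsto \beta\circ(f\otimes\id_Y)$ for a chosen homotopy inverse $\beta$ of $X\to X\otimes Y$) requires two specific maps to be \emph{equal}, and they differ by the switch $\tau_{Y,Y}$, which need not be the identity. The paper therefore detours through $\alpha\otimes\alpha$, where the switch on the square of an invertible object \emph{is} the identity. Your argument sidesteps this: you transport $\alpha^*\colon [T\otimes Y,X]\to[T,X]$ along the duality adjunction to postcomposition with a map $m\colon X\otimes Y^{-1}\to X$, and observe that $m$ is an isomorphism iff $X\to X\otimes Y$ is. The reason this works where the paper's direct attempt fails is that switches enter your argument only as isomorphisms one composes with, never as required identities of maps --- being an isomorphism is insensitive to that. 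This yields both directions of ``$\alpha$-local $\Leftrightarrow$ $\alpha'$-local'' at once, and the $(\alpha\otimes\alpha)$-statement then follows as you say. Two places deserve more care than you give them, precisely because this is where the paper found trouble. First, the assertion that under $[T\otimes Y,X]\cong[T,X\otimes Y^{-1}]$ the map $\alpha^*$ ``is induced by the $(-\otimes Y^{-1})$-translate of $\alpha_X$'' is not literally an equality: unwinding the adjunction, $m=(\id_X\otimes \mathrm{ev})\circ(\id_{X\otimes Y^{-1}}\otimes\alpha)$, which agrees with $(\id_X\otimes\alpha)\otimes\id_{Y^{-1}}$ only after composing with the isomorphisms $\id_X\otimes\tau_{Y,Y^{-1}}$ and $\id_X\otimes\mathrm{ev}$; since these are isomorphisms, your conclusion stands, but you should say so. Second, in the $(\alpha\otimes\alpha)$ step you write the second map of the telescope as $u\otimes Y$, whereas it is actually $\id_{X\otimes Y}\otimes\alpha$, which differs from $u\otimes\id_Y$ by $\id_X\otimes\tau_{Y,Y}$ --- again an isomorphism, so your retract argument (split mono plus split epi implies iso) survives, but the identification is false on the nose. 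In exchange for this extra care, your route is shorter than the paper's and dispenses with the citation that the switch on the square of an invertible object is the identity; the paper's route keeps everything at the level of explicit maps and needs that fact.
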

\begin{proof}
We shall show that (1) $X$ is $\alpha$-local if and only if it is $\alpha
\otimes \alpha$-local, (2) $X$ is $\alpha'$-local if and only if it is $(\alpha
\otimes \alpha)'$-local, (3) $X$ is $\alpha'$-local if it is $\alpha$-local and
(4) $X$ is $\alpha \otimes \alpha$-local if it is $(\alpha \otimes
\alpha)'$-local.

All tensor products and mapping spaces will be derived in this proof.

(1)
Consider the string of maps
\[ \Map(T \otimes Y^{\otimes 3}, X) \to \Map(T \otimes Y^{\otimes 2}, X)
   \to \Map(T \otimes Y, X) \to \Map(T , X). \]
If $X$ is $\alpha \otimes \alpha$-local, then the composite of any two
consecutive maps is an equivalence, and hence all maps are equivalences by
2-out-of-6. Consequently $X$ is $\alpha$-local. The converse is clear.

(2)
Consider the string of maps
\[ X \to X \otimes Y \to X \otimes Y^{\otimes 2} \to X \otimes Y^{\otimes 3}. \]
If $X$ is $(\alpha \otimes \alpha)'$-local then so is $Z \otimes X$ for any $Z$,
since (derived) tensor product preserves weak equivalences. It follows that $X
\otimes Y$ is $(\alpha \otimes \alpha)'$-local, and hence the composite of any two
consecutive maps is an equivalence. Again by 2-out-of-6 this implies that $X$ is
$\alpha'$-local. The converse is clear.

(3)
An object $X$ is $\alpha$-local if (and only if) for all $T \in \mathcal{M}$ the map $\Map(T
\otimes Y, X) \to \Map(T, X)$ is a weak equivalence (of simplicial sets). In
particular $T \to T \otimes Y$ is an $\alpha$-local weak equivalence for all
$T$. It also
follows that $X \otimes Y$ is $\alpha$-local if $X$ is (here we use
invertibility of $Y$). Since $X \to X \otimes
Y$ is an $\alpha$-local weak equivalence, it is a weak equivalence if $X$ (and
hence $X \otimes Y$) is $\alpha$-local. Thus $X$ is $\alpha'$-local if it is
$\alpha$-local.

(4)
For any simplicial set $K$ we have $[K, \Map(T, X)] = [K \otimes T, X]$ (using
a framing if the model category is not simplicial).
It follows that $X$ is $\alpha$-local
if and only if for all $T \in \mathcal{M}$ the map $\alpha^*: [T \otimes Y, X] \to [T, X]$ is an
isomorphism. In particular, this property can be checked entirely in the
homotopy category of $\mathcal{M}$, in which we will work from now on.

Suppose, for now, that $X$ is $\alpha'$-local. (We will find that
our strategy does not work, but it will work for $\alpha \otimes \alpha$, and
this is all that is left to prove.) We can choose an inverse equivalence $\beta:
X \otimes Y \to X$. We consider the map $\overline{\beta}: [T, X] \to [T \otimes
Y, X]$ sending $f: T \to X$ to $T \otimes Y \xrightarrow{f \otimes \id} X
\otimes Y \xrightarrow{\beta} X$. We would like to say that $\overline{\beta}$
is inverse to $\alpha^*$. Given $f: T \to X$ we get a commutative diagram
\begin{equation*}
\begin{CD}
T \otimes Y @>{f \otimes \id}>> X \otimes Y   \\
@A{\alpha}AA                     @A{\alpha}AA \\
T           @>f>>               X.
\end{CD}
\end{equation*}
Consequently $\alpha_* \alpha^* \overline{\beta} = \alpha_*: [T, X] \to [T, X
\otimes Y]$ and thus $\alpha^* \overline{\beta}  = \id$ (note that $\alpha_*$
means composition with $X \to X \otimes Y$, which is an isomorphism).

The problem is with showing that $\overline{\beta} \alpha^* = \id$. For this we
fix $f: T \otimes Y \to X$ and consider the diagram
\begin{equation*}
\begin{CD}
T \otimes Y \otimes Y @>{f \otimes \id}>> X \otimes Y  \\
@A{\id \otimes \alpha \otimes \id}AA       @A{\alpha}AA  \\
T \otimes \tunit \otimes Y @>f>>          X.
\end{CD}
\end{equation*}
If it commutes for all such $f$, then $\overline{\beta} \alpha^* = \id$. But
this is not clear; the two paths differ by a switch of $Y$.

However, in any symmetric monoidal category, the switch isomorphism on the
\emph{square} of an invertible object is the identity \cite[Propositions 4.20
and 4.21]{dugger2014coherence}. Consequently our argument works for $\alpha
\otimes \alpha$, and this is what we set out to prove.
\end{proof}

\paragraph{Remark.} The assumption that $Y$ is invertible is necessary in
general for the
above result. For example, if $\mathcal{M}$ is a \emph{cartesian} symmetric
monoidal model category, then there cannot be any $\alpha'$-local objects unless
$* = \tunit \to Y$ is already an equivalence.

\begin{lemm}
Notations and assumptions as in Lemma \ref{lemm:rho-local-model}.

For any (cofibrant) $X \in \mathcal{M}$, the object $X[\alpha^{-1}]$ is
$\alpha$-local.
\end{lemm}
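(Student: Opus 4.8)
The plan is to deduce $\alpha$-locality of $X[\alpha^{-1}]$ from the $\alpha'$-locality criterion established in the previous lemma: since an object is $\alpha$-local if and only if it is $\alpha'$-local, it suffices to show that the canonical map $c\colon X[\alpha^{-1}] \to X[\alpha^{-1}] \otimes^L Y$ (that is, $\id_{X[\alpha^{-1}]} \otimes^L \alpha$) is a weak equivalence. Recall that $X[\alpha^{-1}] = \hocolim_n D(n)$ is the homotopy colimit of the tower $D$ with $D(n) = X \otimes Y^{\otimes n}$ whose transition maps $t_n\colon D(n) \to D(n+1)$ are ``tensor $\alpha$ onto the right''; write $u_n\colon D(n) \to X[\alpha^{-1}]$ for the structure maps, so that $u_{n+1} t_n = u_n$.

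First I would commute $- \otimes^L Y$ past this homotopy colimit. Since $Y$ is invertible, $- \otimes^L Y$ is an auto-equivalence of $Ho(\mathcal{M})$ (with inverse $- \otimes^L Y^{-1}$) and hence preserves sequential homotopy colimits; thus $X[\alpha^{-1}] \otimes^L Y \wequi \hocolim_n (D(n) \otimes^L Y)$, the homotopy colimit of the ``tensored tower'' $D \otimes Y$. Next I would identify $D \otimes Y$ with the once-shifted tower $D^{\geq 1} = (D(n+1))_n$: levelwise these agree after reassociating $D(n) \otimes Y \iso X \otimes Y^{\otimes(n+1)} = D(n+1)$, and under these associators the transition map of $D \otimes Y$ agrees with that of $D^{\geq 1}$ up to the symmetry interchanging the last two $Y$-tensor-factors. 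By \cite[Propositions 4.20 and 4.21]{dugger2014coherence} this symmetry is the identity in $Ho(\mathcal{M})$ --- exactly the coherence fact already used for $\alpha \otimes \alpha$ in part (4) of the previous lemma --- so $D \otimes Y$ and $D^{\geq 1}$ are isomorphic as towers in $Ho(\mathcal{M})$. Combining this with the equivalence $\hocolim D^{\geq 1} \wequi \hocolim D$ coming from cofinality of $\NN_{\ge 1} \hookrightarrow \NN$, I obtain an equivalence $\gamma\colon X[\alpha^{-1}] \otimes^L Y \wequi X[\alpha^{-1}]$.

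It then remains to check that $\gamma \circ c = \id$, which can be verified on the cocone: precomposing with $u_n$, and using that $c \circ u_n = (u_n \otimes \id_Y) \circ (\id_{D(n)} \otimes \alpha)$ by functoriality of $\otimes$ together with the fact that $\id_{D(n)} \otimes \alpha$ is the transition map $t_n$ up to the associator, the left-hand side unwinds to $u_{n+1} t_n = u_n$, while the right-hand side is $u_n$. Since $\gamma$ is an equivalence and $\gamma c = \id$, it follows that $c$ is an equivalence, as desired.

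I expect the main obstacle to be the bookkeeping in the second paragraph: being careful that the only discrepancy between $D \otimes Y$ and the shift $D^{\geq 1}$ is a single symmetry of $Y \otimes Y$ (so that \cite{dugger2014coherence} applies verbatim), and that a levelwise equivalence of towers whose squares commute in $Ho(\mathcal{M})$ really does induce an equivalence of homotopy colimits --- a standard but not entirely formal point about sequential homotopy colimits. Everything else is routine manipulation of homotopy colimits of towers.
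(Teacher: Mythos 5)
There is a genuine gap in the second paragraph, at the point where you invoke \cite[Propositions 4.20 and 4.21]{dugger2014coherence}. The discrepancy between the tensored tower $D\otimes Y$ and the shifted tower $D^{\ge 1}$ is the symmetry $\tau_{Y,Y}$ of the last two $Y$-factors, and the cited coherence result does \emph{not} say that $\tau_{Y,Y}=\id$ for $Y$ invertible; it says that the switch on the \emph{square} of an invertible object, i.e.\ $\tau_{Y^{\otimes 2},Y^{\otimes 2}}$, is the identity. For a general invertible $Y$ the switch $\tau_{Y,Y}$ is multiplication by a possibly nontrivial element of $\operatorname{End}(\tunit)$ of order dividing $2$ (think of $S^1$ in spectra, where it is $-1$, or of $\Gm$ in $\SH(k)$, where it is $\langle -1\rangle$). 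Indeed, the previous lemma makes exactly this point: its part (4) argument \emph{fails} for $\alpha$ because of an uncontrolled switch of $Y$, and only succeeds after passing to $\alpha\otimes\alpha$. So as written, your identification of $D\otimes Y$ with $D^{\ge 1}$ as towers in $Ho(\mathcal{M})$ is not justified, and without it the cocone computation $\gamma c\, u_n=u_n$ breaks down (any correction factors $\epsilon^n$ inserted to make the ladder commute would propagate into $\gamma$).

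The fix is exactly the first step of the paper's proof, which you omitted: by the previous lemma it suffices to show that $X[\alpha^{-1}]$ is $(\alpha\otimes\alpha)'$-local, and clearly $X[\alpha^{-1}]\simeq X[(\alpha\otimes\alpha)^{-1}]$, so one may assume from the start that $Y$ is the square of an invertible object and hence that its self-switch is the identity in $Ho(\mathcal{M})$. With that reduction in place, your argument goes through and is essentially the paper's: the paper writes out the interleaved ladder $\tunit\to G\to G^{\otimes 2}\to\cdots$ against its shift, identifies all the maps using the trivialized switch, and concludes by applying $[T,-]$ for compact $T$ plus a colimit diagram chase — which is the same content as your tower-shift plus cocone verification, and also supplies the ``standard but not entirely formal'' point you flag about levelwise equivalences of towers in the homotopy category inducing equivalences on homotopy colimits.
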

\begin{proof}
By the previous lemma, it suffices to show that $X[\alpha^{-1}]$ is $(\alpha
\otimes \alpha)'$-local. Clearly $X[\alpha^{-1}] \simeq X[(\alpha \otimes
\alpha)^{-1}]$, i.e. we may assume without loss of generality that $Y$ is a
square, and so its switch isomorphism (in the homotopy category) is the identity.

Since tensor product commutes with colimits (in each variable) we have  $X[1/f]
\simeq X \otimes^L \tunit[1/f]$, and we can simplify notation by assuming without
loss of generality that $X = \tunit$.

What we need to prove is that the following diagram induces an equivalence on
homotopy colimits:
\begin{equation*}
\begin{CD}
\tunit @>f_1>>     G @>f_2>>          G \otimes G @>f_3>>          G \otimes G \otimes G @>>> \dots \\
@Vh_1VV                @Vh_2VV                          @Vh_3VV                  @Vh_4VV \\
G @>f'_2>> G \otimes G @>f'_3>> G \otimes G \otimes G @>f'_4>> G \otimes G \otimes G @>>> \dots \\
\end{CD}
\end{equation*}

Because of the domains and codomains, it is tempting to guess that $f_i \simeq
h_i \simeq f'_i$. Here we write $f \simeq g$ to mean that the maps become equal
in the homotopy category. We claim that this guess is correct. Then if
$T$ is any homotopy compact object, applying $[T, \bullet]$ to our diagram we get
a diagram of abelian groups which we need to show induces an isomorphism on
colimits. Homotopic maps become equal when applying $[T, \bullet]$, and then
the desired result follows from an easy diagram chase. By compact generation and
stability, this will conclude the proof.

It remains to prove the claim. For this we may work entirely in the homotopy
category, which we will do from now on.
It is easy to see that indeed $f_i = h_i$. For general $Y$, it would not be true
that $f'_i = f_i$; one may check that the maps differ by appropriate
switches of $Y$. However, we have assumed that the switch on $Y$ \emph{is} the
identity, so indeed $f_i = f'_i$ as well.
\end{proof}

\paragraph{Remark.} The stability assumption was used in the above proof in
the following form: if $A \to B$ is any morphism in $\mathcal{M}$ and $[T, A]
\to [T, B]$ is an isomorphism for all homotopy compact $T$, then $A \to B$ is a weak
equivalence.
This fails for example in the homotopy category of spaces.

The stability assumption is in fact necessary for the above result.
The author learned the following counterexample from Marc Hoyois:
let $\mathcal{M}$ be the model category of small, stable
$\infty$-categories, $Y = \tunit$ the category of finite spectra and $\alpha
= 2$, i.e. the functor which sends a finite spectrum $s$ to $s \oplus s$. Then
$\mathcal{C} \in \mathcal{M}$ is $\alpha'$-local only if it is trivial.
Indeed for $c \in \mathcal{C}$ the map $[c, c] \to [c \oplus c, c
\oplus c]$ needs to be an isomorphism, which forces $c \simeq 0$. But one may
show that $\tunit[1/\alpha]$ is not the zero category, and so is not
$\alpha'$-local (let alone $\alpha$-local).

See \cite[Theorem 3.7]{hoyois2016equivariant} for a criterion that can be
applied in unstable situations.

\section{The Theorem of Jacobson and $\rho$-stable Homotopy Modules}
\label{sec:jacobson}
Throughout this section, $k$ is a field of characteristic zero. Recall that the
real étale topology is finer than the Nisnevich topology; in particular every
real étale sheaf is a Nisnevich sheaf.

\begin{thm}[(Jacobson \cite{jacobson-fundamental-ideal}, Theorem 8.5)]
There is a canonical isomorphism (in $Shv_{Nis}(Sm(k))$)
\[ \colim_n \ul{I}^n \to a_{\ret} \ul{\ZZ}, \]
where the transition maps $\ul{I}^n \to \ul{I}^{n+1}$ are given by
multiplication with $2 = \langle 1, 1 \rangle \in \ul{I}$.
\end{thm}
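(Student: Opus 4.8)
The plan is to realise the displayed map as the colimit over $n$ of the total signature $\ul I^n \to a_\ret\ul\ZZ$, divided by $2^n$, and then to prove it is an isomorphism by reducing, via Morel's theory of strictly $\Aone$-invariant sheaves, to a statement about finitely generated field extensions of $k$. To set up the map, recall the strictly $\Aone$-invariant Nisnevich sheaf $\ul W = \ul I^0$ of Witt groups, in which $\ul I^n$ is the $n$-th power of the fundamental ideal sheaf and the transition map $\ul I^n \to \ul I^{n+1}$ of the statement, multiplication by $2 = \langle 1,1 \rangle = \langle\langle -1 \rangle\rangle \in \ul I$, is just multiplication by $2$ in the ring sheaf $\ul W$. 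The signature of a quadratic form at an ordering is a locally constant $\ZZ$-valued function on the real spectrum, so the total signature is a morphism of Nisnevich sheaves $\mathrm{sgn}\colon \ul W \to a_\ret\ul\ZZ$ (recall $a_\ret\ul\ZZ$ is a Nisnevich sheaf, the rét-topology being finer than the Nisnevich one). Since the stalks of the rét-topology are real closed fields, over which $I^n = 2^n\ZZ$, the map $\mathrm{sgn}$ carries $\ul I^n$ into the subsheaf $2^n\cdot a_\ret\ul\ZZ$ (which can be checked after rét-sheafification); dividing by $2^n$ yields $\sigma_n\colon \ul I^n \to a_\ret\ul\ZZ$ with $\sigma_{n+1}\circ(\times 2) = \sigma_n$, hence the desired map $\sigma\colon \colim_n \ul I^n \to a_\ret\ul\ZZ$.

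Next I would reduce to fields. Both sheaves involved are strictly $\Aone$-invariant: for $\ul I^n$ this is a theorem of Morel, for a filtered colimit of such it holds because Nisnevich cohomology of smooth $k$-schemes commutes with filtered colimits, and for $a_\ret\ul\ZZ$ it follows from homotopy invariance of real étale cohomology (Theorem~\ref{thm:ret-htpy-rho-inv}) together with the agreement of real étale and Nisnevich cohomology on rét-sheaves \cite{real-and-etale-cohomology}. Since $k$ is perfect, the strictly $\Aone$-invariant Nisnevich sheaves form an abelian category in which a morphism is an isomorphism as soon as it is one on sections over every finitely generated field extension $F/k$ (a nonzero such sheaf is unramified and hence has a nonzero fibre at the generic point of some smooth scheme) \cite{A1-alg-top}. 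It therefore suffices to show that
\[ \sigma_F\colon \colim_n I^n(F) \to (a_\ret\ul\ZZ)(F) = C(Sper(F),\ZZ), \qquad [\phi \in I^n(F)] \mapsto 2^{-n}\,\mathrm{sgn}(\phi), \]
is an isomorphism, where $C(Sper(F),\ZZ)$ denotes the locally constant $\ZZ$-valued functions on the space of orderings of $F$.

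The remaining field-level statement is where the input from the algebraic theory of quadratic forms enters. For injectivity: if $2^{-n}\mathrm{sgn}(\phi) = 0$ then $\mathrm{sgn}(\phi) = 0$, so by Pfister's local--global principle $\phi$ lies in the torsion subgroup $W_t(F)$, which is $2$-primary; hence $2^N\phi = 0$ in $W(F)$ for some $N$ and $[\phi] = [2^N\phi] = 0$ in the colimit. For surjectivity: a form in $I^n(F)$ has all signatures divisible by $2^n$, and conversely any $w \in W(F)$ all of whose signatures are divisible by $2^n$ lies in $I^n(F) + W_t(F)$ (the classification of forms of prescribed signature, closely tied to the identification $I^n/I^{n+1} \cong K^M_n(F)/2$), so the image of $\sigma_F$ equals $\bigcup_n 2^{-n}\bigl(\mathrm{sgn}(W(F)) \cap 2^n C(Sper(F),\ZZ)\bigr)$; finally $C(Sper(F),\ZZ)/\mathrm{sgn}(W(F))$ is $2$-primary torsion (Becker--Br\"ocker), so every $f$ has $2^n f \in \mathrm{sgn}(W(F))$ for $n \gg 0$ and hence lies in the image. (The case where $F$ is not formally real is included: then both sides vanish.)

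The hard part will be exactly this field computation, which bundles together Pfister's local--global principle, the classification of forms of prescribed signature, and the Becker--Br\"ocker description of the image and cokernel of the total signature --- the genuinely difficult results in the theory of real quadratic forms. Everything else, in particular the passage from fields back to Nisnevich sheaves on $Sm(k)$, is formal once Morel's theory of strictly $\Aone$-invariant sheaves is available. Alternatively one may phrase the soft part as the assertion that $\colim_n \ul I^n$ is already a real étale sheaf --- injectivity and local surjectivity of the sheafification map being precisely the above two inputs --- so that $\sigma$ need only be checked on rét-stalks, where it reads $\colim(2^n\ZZ \xrightarrow{2} 2^{n+1}\ZZ \to \cdots) \cong \ZZ$.
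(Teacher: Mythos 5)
The paper does not actually prove this statement: it is imported wholesale as Theorem 8.5 of Jacobson's paper, and the only thing the text supplies is the construction of the map, namely the total signature $\sigma\colon W(X)\to H^0_\ret(X,\ZZ)$ divided by $2^n$ on $I^n(X)$ --- which coincides exactly with your construction, so that part matches. Your reduction to finitely generated fields via Morel's theory is sound (strict $\Aone$-invariance of $\colim_n\ul I^n$ follows from commutation of Nisnevich cohomology with filtered colimits, and that of $a_\ret\ul\ZZ$ from Theorem \ref{thm:ret-htpy-rho-inv} together with \cite[Proposition 19.2.1]{real-and-etale-cohomology}), as is the injectivity argument (Pfister's local--global principle plus $2$-primarity of $W_t(F)$). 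The one place where you must be more careful is surjectivity: the assertion that $\operatorname{sgn}(w)\in 2^n\,C(Sper(F),\ZZ)$ forces $w\in I^n(F)+W_t(F)$ is \emph{not} a routine consequence of $I^n/I^{n+1}\cong K^M_n(F)/2$; it is precisely Marshall's (Lam's) problem, solved by Dickmann and Miraglia as a substantial deduction from the Milnor conjecture, and it needs to be cited as such rather than folded into a parenthesis. With that reference supplied, your argument is a legitimate alternative to Jacobson's own proof, which instead proves that $\colim_n H^n(F,\ZZ/2)\to H^0(Sper(F),\ZZ/2)$ (transition maps given by cup product with $(-1)$) is an isomorphism, following Arason--Elman--Jacob and Scheiderer, and then lifts this integrally along the filtration of $\colim_n I^n$ by the subsheaves $I^{n+k}$ using the Milnor conjecture. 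Both routes rest on the same deep input, so neither is more elementary; yours has the mild advantage of quoting theorems stated directly in terms of signatures, at the cost of invoking Dickmann--Miraglia.
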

Here $\ul{I}$ denotes the sheaf of fundamental ideals on $Sm(k)_{Nis}$, i.e. the
sheaf associated with the presheaf $X \mapsto I(X)$, where $I(X)$ is the
fundamental ideal of the Witt ring of $X$ \cite{knebusch-bilinear}. We
similarly write $\ul{W}$ for the sheaf of Witt rings, etc.

Let us recall the construction of the isomorphism in Jacobson's theorem.
If $\phi \in W(K)$, where $K$ is a field,
and $p$ is an ordering of
$K$, then there is the signature $\sigma_p(\phi) \in \ZZ$. If $\phi \in W(X)$,
define $\sigma(\phi): R(X) \to \ZZ$ as follows. For $(x, p) \in R(X)$ put
$\sigma(\phi)(x, p) = \sigma_p(\phi|_x)$. Then one shows that $\sigma(\phi)$ is
a \emph{continuous} function from $R(X)$ to $\ZZ$, i.e. an element of
$H^0_{\ret}(X, \ZZ)$.

Next if $\phi \in I(k)$ then $\sigma_p(\phi) \in 2\ZZ$. Consequently if $\phi
\in I(X)$ also $\sigma(\phi) \in 2H^0_{\ret}(X, \ZZ)$. We may thus define
$\tilde{\sigma}(\phi) = \sigma(\phi)/2$ and in this way we obtain
$\tilde{\sigma}: I(X) \to H^0_{\ret}(X, \ZZ)$. Similarly we get $\tilde{\sigma}:
I^n(X) \to H^0_{\ret}(X, \ZZ)$ with $\tilde{\sigma}(\phi) = \sigma(\phi)/2^n$ for
$\phi \in I^n(X)$. For each $n$ there is a commutative diagram
\begin{equation*}
\begin{CD}
I^n(X) @>{\tilde{\sigma}}>> H^0_{\ret}(X, \ZZ) \\
@V2VV                         @|              \\
I^{n+1}(X) @>{\tilde{\sigma}}>> H^0_{\ret}(X, \ZZ). \\
\end{CD}
\end{equation*}
Consequently there is an induced map $\tilde{\sigma}: \colim_n I^n(X) \to
H^0_{\ret}(X, \ZZ)$. The claim is that this is an isomorphism after sheafifying,
i.e. for $X$ local.

\begin{corr} \label{corr:colim_KnMW}
Let $\ul{K}_n^{MW}$ denote the $n$-th unramified Milnor-Witt K-theory sheaf.
Then there is a canonical isomorphism $\colim_n \ul{K}_n^{MW} \to a_{\ret} \ZZ$.
Here the colimit is along multiplication with $\rho := -[-1] \in K_1^{MW}(k)$.
\end{corr}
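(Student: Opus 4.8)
The plan is to reduce everything to Jacobson's theorem (just stated) by way of Morel's presentation of Milnor--Witt K-theory as a fibre product. Recall \cite[Chapter 3]{A1-alg-top} that for every $n$ the Nisnevich sheaf $\ul{K}^{MW}_n$ is the fibre product of the other three corners of a Cartesian square whose remaining vertices are $\ul{K}^M_n$, $\ul{I}^n$ and $\ul{I}^n/\ul{I}^{n+1} \iso \ul{K}^M_n/2$, the map $\ul{K}^M_n \to \ul{K}^M_n/2$ being reduction mod $2$ and the map $\ul{I}^n \to \ul{I}^n/\ul{I}^{n+1}$ the natural projection. In particular the projection $\ul{K}^{MW}_n \to \ul{I}^n$ is surjective and its kernel is the subsheaf $2\ul{K}^M_n \subset \ul{K}^M_n$ (the image of multiplication by $2$), so there is a short exact sequence
\[ 0 \to 2\ul{K}^M_n \to \ul{K}^{MW}_n \to \ul{I}^n \to 0. \]

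First I would record how $\rho$ acts through this square. Under $\ul{K}^{MW}_1 \to \ul{K}^M_1$ the element $\rho = -[-1]$ maps to $\{-1\}$, and under $\ul{K}^{MW}_1 \to \ul{I}^1 = \ul{I}$ it maps to $\pm\langle\langle -1 \rangle\rangle = \pm\langle 1, 1 \rangle$; the sign is immaterial in what follows, since multiplying the $n$-th term of a directed system by $(-1)^n$ is an automorphism of the system. Consequently, in fibre-product coordinates, multiplication by $\rho$ on $\ul{K}^{MW}_n$ is $(a,\phi) \mapsto (\{-1\}a, \pm\langle 1,1\rangle\phi)$. In particular it restricts on the subsheaf $2\ul{K}^M_n$ to multiplication by $\{-1\}$, and it induces on the quotient $\ul{I}^n$ multiplication by $\pm\langle 1,1\rangle = \pm 2$, which are exactly the transition maps in Jacobson's theorem.

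Next I would take the colimit of the short exact sequence along $\rho$. Filtered colimits of sheaves are exact, so we obtain
\[ 0 \to \colim_n 2\ul{K}^M_n \to \colim_n \ul{K}^{MW}_n \to \colim_n \ul{I}^n \to 0, \]
the colimits being along $\cdot\{-1\}$, $\cdot\rho$, and $\cdot\langle 1,1\rangle$ respectively. By Jacobson's theorem (after the harmless sign rescaling, if needed) the right-hand term is $a_\ret \ul{\ZZ}$. The left-hand term vanishes: the transition map $\cdot\{-1\}\colon 2\ul{K}^M_n \to 2\ul{K}^M_{n+1}$ is identically zero, because any local section of $2\ul{K}^M_n$ has the form $2b$ and $\{-1\}\cdot 2b = (2\{-1\})\,b = 0$, using that $2\{-1\} = [-1] + [-1] = [(-1)^2] = 0$ already in $\ul{K}^M_1$; and a sequential colimit all of whose transition maps are zero vanishes. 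Therefore the middle map is an isomorphism $\colim_n \ul{K}^{MW}_n \iso a_\ret \ul{\ZZ}$, and it is canonical, being assembled from the natural transformation $\ul{K}^{MW}_* \to \ul{I}^*$ and Jacobson's canonical isomorphism.

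There is no serious obstacle here: all the depth sits in Jacobson's theorem, which is quoted, and the rest is bookkeeping with Morel's square. The only points that need a little care are the identification of $\ker(\ul{K}^{MW}_n \to \ul{I}^n)$ with $2\ul{K}^M_n$ together with the description of the $\rho$-action on the three pieces of the sequence (in particular matching signs with Jacobson's normalisation $2 = \langle 1,1\rangle$); once that is in place, the vanishing of $\colim_n 2\ul{K}^M_n$ is immediate from $2\{-1\}=0$. One should also note that Morel's fibre-product square is available for every $n$, not merely $n \ge 1$, so no separate discussion of the small-$n$ range is needed — and in any event a sequential colimit only depends on large $n$.
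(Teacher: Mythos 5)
Your proof is correct, and it follows the same overall reduction as the paper: both arguments exploit the surjection $\ul{K}^{MW}_n \to \ul{I}^n$, show that its kernel dies in the colimit along $\rho$, check that $\rho$ induces multiplication by $2=\langle 1,1\rangle$ on $\ul{I}^n$, and then quote Jacobson's theorem. The difference lies in how the kernel is handled. The paper stays entirely inside Milnor--Witt $K$-theory: it uses $\ul{K}^{MW}_n/h \cong \ul{I}^n$ and kills the kernel $h\ul{K}^{MW}_n$ via Morel's relation $a^2h=0$ (so $\rho^2h=0$, and kernel elements die after two transition maps). You instead invoke Morel's fibre-product presentation $\ul{K}^{MW}_n \cong \ul{K}^M_n\times_{\ul{K}^M_n/2}\ul{I}^n$, identify the kernel with $2\ul{K}^M_n$, and kill it in a single step using $2\{-1\}=0$ in Milnor $K$-theory. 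These are two faces of the same graded-commutativity phenomenon ($\rho^2 h$ corresponds to $2\{-1\}^2$ in the Milnor factor), and both routes rest on comparable input from Morel's work (the identification $\ul{K}^{MW}_n/h\cong\ul I^n$ and the fibre-product square each ultimately use the resolution of the Milnor conjectures, which is available here since the section works in characteristic zero). Your sign-hedging for the image of $\rho$ in $\ul{I}$ is harmless --- and in fact unnecessary, since with the stated convention $\rho=-[-1]$ one computes the image to be exactly $-(\langle -1\rangle - 1)=\langle 1,1\rangle = 2$, as the paper does.
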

\begin{proof}
Recall the element $h \in K_0^{MW}(k)$ with the following properties:
$\ul{K}_n^{MW}/h = \ul{I}^n$ \cite[Theoreme 2.1]{morel2004puissances}
and for $a \in K_1^{MW}(k)$ we have
$a^2 h = 0$ \cite[Corollary 3.8]{A1-alg-top}
(this relation is the analogue of the fact that in a
graded commutative ring $R_*$ with $a \in R_1$ we have $a^2 = -a^2$ by graded
commutativity, so $2a^2 = 0$). Consequently $\rho^2 h = 0$ and so $\colim_n
\ul{K}_n^{MW} \to \colim_n \ul{I}^n$ is an isomorphism. It remains to note that
the image of $\rho$ in $K_1^{MW}/h(k) \iso I(k)$ is given by $-(\langle -1
\rangle -1) = 2 \in I(k) \subset W(k)$, so the induced transition maps in the
colimit are precisely those used in Jacobson's theorem.
\end{proof}

Note that the sheaves $\ul{I}^n$ form a homotopy module, namely the homotopy
module of Witt $K$-theory \cite[Examples 3.33 and 3.33]{A1-alg-top}
\cite[Theoreme 2.1]{morel2004puissances}; see also \cite{gille2016milnor}.
Consequently they have transfers for
finite separable field extensions. The sheaf $a_{\ret} \ul{\ZZ}$ also has
transfers for finite (separable) field extensions. Indeed if $l/k$ is finite
then $Sper(l) \to Sper(k)$ is a finite-sheeted local homeomorphism
\cite[3.5.6 Remark (ii)]{scharlau2012quadratic} and hence we transfer by
``taking sums over the values at the preimages''.

\begin{lemm}
The isomorphism $\colim_n \ul{I}^n \to a_{\ret} \ul{\ZZ}$ is compatible with
transfers on fields.
\end{lemm}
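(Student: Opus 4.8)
The plan is to prove, for a finite separable extension of fields $f\colon Spec\,l \to Spec\,k$ and $x \in \colim_n I^n(l)$, the identity $\tilde{\sigma}(tr_f(x)) = tr_f(\tilde{\sigma}(x))$ in $H^0_\ret(Spec\,k,\ZZ)$, where $\tilde{\sigma}$ is the isomorphism $\colim_n \ul{I}^n \to a_\ret\ul{\ZZ}$ and the $tr_f$ on the right is the transfer of $a_\ret\ul{\ZZ}$, i.e.\ the map $s \mapsto \bigl(p \mapsto \sum_{q \mapsto p} s(q)\bigr)$ summing over the fibres of $Sper\,l \to Sper\,k$. The whole argument is a base change to real closed fields.

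First I would reduce to the case of a real closed base. Both sides are continuous functions $Sper\,k \to \ZZ$, so it suffices to compare their values at an arbitrary $p \in Sper\,k$. Writing $R_p$ for the real closure of $k$ at $p$, the space $Sper\,R_p$ is a single point, so the value at $p$ is computed by pulling back along $Spec\,R_p \to Spec\,k$. The transfer of $a_\ret\ul{\ZZ}$ visibly commutes with this pullback, being fibrewise summation along the finite covering $Sper\,l \to Sper\,k$; the homotopy-module transfer commutes with it by the base change formula (Proposition \ref{prop:transfer-base-change}), since $Spec\,R_p$ is essentially $k$-smooth --- it is the cofiltered limit of the finite \'etale $k$-schemes $Spec\,k_\alpha$, where $k_\alpha/k$ ranges over the finite subextensions of $R_p/k$. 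As $\tilde{\sigma}$ commutes with pullback as well, we may assume $k = R$ is real closed; then $l$ is replaced by a finite \'etale $R$-algebra $A$ (the base change of the original $l$), a finite product of fields.

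Such an $A$ is isomorphic to $R^{\times a} \times \bar{R}^{\times b}$ with $\bar{R} = R(\sqrt{-1})$, where by Artin--Schreier theory $a$ equals the number of orderings of the original $l$ over $p$, equivalently the cardinality of the fibre of $Sper\,l \to Sper\,k$ over $p$. Thus the transfer of $a_\ret\ul{\ZZ}$ returns at $p$ the sum over those $a$ orderings of the values of $\tilde{\sigma}(x)$; via $\tilde{\sigma}$ this is the sum of the coordinates of $x|_A \in \colim_n I^n(A) = \ZZ^{\times a}$ (the $\bar{R}$-factors contribute nothing, as $Sper\,\bar{R} = \varnothing$). On the homotopy side, the transfer is additive over the decomposition $Spec\,A = \coprod_{i=1}^{a} Spec\,R \sqcup \coprod_{j=1}^{b} Spec\,\bar{R}$; on each $Spec\,R \xrightarrow{\id} Spec\,R$ it is the identity, while on each $Spec\,\bar{R} \to Spec\,R$ it vanishes because its source $\colim_n I^n(\bar{R})$ is zero ($W(\bar{R}) = \ZZ/2$, hence $\ul{I}(\bar{R}) = 0$). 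Therefore $tr_f(x|_A)$ is the sum of the $a$ coordinates of $x|_A$ over the real factors, which is exactly the element described a moment ago. This proves the identity.

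The argument is soft once the base change formula is available; the only point needing care is the reduction in the second paragraph --- namely that it is legitimate to test the identity at the real closures $R_p$ and that both transfers respect that (pro-\'etale) base change. This rests on Proposition \ref{prop:transfer-base-change} and the standard fact that the real closed fields are the points of the r\'et-topology. The remaining ingredients --- additivity of the transfer over finite disjoint unions, $tr_{\id} = \id$, and the vanishing $\colim_n I^n(\bar{R}) = 0$ --- are elementary, and in particular no identification of the geometric transfer with a trace form is required.
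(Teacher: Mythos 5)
Your proof is correct and follows essentially the same route as the paper's: both reduce to a real closed base field via the base change formula (Proposition \ref{prop:transfer-base-change}) together with the observation that $Sper(R\otimes_k l)$ is the fibre of $Sper(l)\to Sper(k)$, and then conclude from the dichotomy that a finite extension of a real closed field is trivial or obtained by adjoining $\sqrt{-1}$. Your version merely spells out the endgame (the decomposition $A\cong R^{\times a}\times \bar R^{\times b}$ and the vanishing $\colim_n I^n(\bar R)=0$) in more detail than the paper does.
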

\begin{proof}
It suffices to prove that for a field $k$, the total signature $W(k) \to
H^0_{\ret}(k, \ZZ)$ is compatible with transfer. Let $l/k$ be a finite extension
and $R/k$ a real closure. There is a commutative diagram
\begin{equation*}
\begin{CD}
W(l) @>>> W(R \otimes_k l) \\
@V{tr}VV   @V{tr}VV        \\
W(k) @>>> W(R)
\end{CD}
\end{equation*}
by the base change formula, i.e. Proposition \ref{prop:transfer-base-change}.
Note that $Sper(R \otimes_k l)$ is the fibre of $Sper(l) \to
Sper(k)$ over the ordering corresponding to the inclusion $k \subset R$.
Consequently we also have the commutative diagram
\begin{equation*}
\begin{CD}
H^0_{\ret}(l, \ZZ) @>>> H^0_{\ret}(R \otimes_k l, \ZZ) \\
@V{tr}VV   @V{tr}VV        \\
H^0_{\ret}(k, \ZZ) @>>> H^0_{\ret}(R, \ZZ).
\end{CD}
\end{equation*}
Since the signature maps are determined by pulling back to a real closure, this
means that we may assume that $k$ is real closed. (Since both sides we are
trying to prove equal are additive, we may still assume that $l$ is a field.)
But then either $l = k$ or $l = k[\sqrt{-1}]$. In the former case the transfer
on both sides is the identity, and in the latter it is zero.
\end{proof}

We will make good use of the following observation.

\begin{corr} \label{corr:transfer-aretZZ-surjective}
Let $l_1, \dots, l_r/k$ be finite extensions such that $\coprod_i Spec(l_i) \to
Spec(k)$ is a rét-cover. Then $tr: \oplus_i H^0_{\ret}(l_i, \ZZ) \to H^0_{\ret}(k,
\ZZ)$ is surjective.
\end{corr}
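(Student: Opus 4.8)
The plan is to reduce the statement to a purely topological fact --- that the given real étale cover of $Spec(k)$ admits a clopen ``section'' after passing to real spectra --- and then read off surjectivity from a pointwise count of fibres.

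First I would unwind the objects involved. Using $Shv(Spec(m)_\ret) \wequi Shv(Sper(m))$ and the classical fact that for a field $m$ the real spectrum $Sper(m)$ is a compact, Hausdorff, totally disconnected space \cite[\S 3]{scharlau2012quadratic}, one sees that $H^0_\ret(m, \ZZ)$ is just the group of locally constant functions $Sper(m) \to \ZZ$. Moreover, by the construction of transfers on $a_\ret \ul{\ZZ}$ recalled above, the transfer $tr_i\colon H^0_\ret(l_i, \ZZ) \to H^0_\ret(k, \ZZ)$ along $f_i\colon Spec(l_i) \to Spec(k)$ is the map sending $g$ to the function $q \mapsto \sum_p g(p)$, the sum ranging over the (finite, by \cite[3.5.6 Remark (ii)]{scharlau2012quadratic}) fibre of $\pi_i\colon Sper(l_i) \to Sper(k)$ over $q$. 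So the map whose surjectivity we want sends $(g_i)_i$ to $q \mapsto \sum_i \sum_{\pi_i(p) = q} g_i(p)$.

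The heart of the argument is the following claim, to be proved by compactness. Write $X = \coprod_i Sper(l_i)$ and $\pi\colon X \to Sper(k) =: B$ for the induced map; since the $l_i/k$ form a rét-cover, $\pi$ is surjective, and it is a finite-sheeted local homeomorphism by the reference just cited. I claim there is a clopen subset $U \subseteq X$ for which $\pi|_U\colon U \to B$ is a bijection. Indeed, each point of $X$ has a clopen neighbourhood $V$ on which $\pi$ is injective (local homeomorphism plus total disconnectedness), and then $\pi(V)$ is clopen in $B$ (as $\pi$ is open, $V$ is compact, and $B$ is Hausdorff). By compactness of $X$ finitely many such $V_1, \dots, V_m$ cover $X$, so the clopens $\pi(V_j)$ cover $B$; disjointifying, the sets $W_j := \pi(V_j) \setminus \bigcup_{j' < j} \pi(V_{j'})$ form a finite clopen partition of $B$ with $W_j \subseteq \pi(V_j)$. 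Letting $s_j\colon W_j \to V_j$ be the restriction to $W_j$ of the inverse of the homeomorphism $\pi|_{V_j}\colon V_j \xrightarrow{\sim} \pi(V_j)$, the set $U := \bigcup_j s_j(W_j)$ is clopen in $X$ and $\pi|_U$ is a bijection onto $B$.

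Granting the claim, the proof finishes by a direct computation. Write $U = \coprod_i U_i$ with each $U_i$ clopen in $Sper(l_i)$, and for $a \in H^0_\ret(k, \ZZ)$ put $g_i := f_i^*(a) \cdot 1_{U_i} \in H^0_\ret(l_i, \ZZ)$, where $1_{U_i}$ is the indicator function. Then for every $q \in B$,
\[ \sum_i tr_i(g_i)(q) = \sum_i \sum_{\pi_i(p) = q,\ p \in U_i} a(q) = a(q) \cdot \#\{\, p \in U : \pi(p) = q \,\} = a(q), \]
so $(g_i)_i$ maps to $a$ and the transfer is surjective. (Equivalently, the claim shows that $(1_{U_i})_i$ has transfer $1 \in H^0_\ret(k,\ZZ)$, and then the projection formula of Corollary \ref{corr:transfer-projection-formula} does the rest; the computation above is just that formula made explicit.) Given the two recorded facts about real spectra of fields, the argument involves no genuine obstacle beyond the compactness construction of the clopen section $U$ in the previous paragraph, which is itself routine point-set topology.
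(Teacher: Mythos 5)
Your proof is correct and follows essentially the same route as the paper: the paper reduces to a standalone lemma stating that for a surjective local homeomorphism of compact, Hausdorff, totally disconnected spaces the summing-over-preimages transfer on $H^0(-,\ZZ)$ is surjective, and proves it by exactly your compactness-plus-disjointification construction of a clopen splitting, then hits indicator functions. Your explicit use of the clopen section $U$ with $tr(\chi_U)=1$ together with the projection formula is just a slightly more streamlined packaging of the same argument.
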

\begin{proof}
The map $\coprod_i Sper(l_i) \to Sper(k)$ is a surjective local homeomorphism of
compact, Hausdorff, totally disconnected spaces \cite[Theorem
3.5.1 and Remarks 3.5.6]{scharlau2012quadratic}. The result thus follows from the next lemma.
\end{proof}

\begin{lemm}
Let $\phi: X \to Y$ be a surjective local homeomorphism of compact, Hausdorff, totally
disconnected spaces. Then $\phi$ has finite fibers, and the ``summing over preimages'' transfer
$H^0(X, \ZZ) \to H^0(Y, \ZZ)$ is surjective.
\end{lemm}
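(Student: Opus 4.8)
The plan is to exploit that a local homeomorphism with compact source is, locally on the target, a split finite covering, and that over a compact totally disconnected base this local triviality can be promoted to triviality over each piece of a finite clopen partition of $Y$; surjectivity of the transfer then becomes a matter of writing down an obvious section.

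First I would handle finiteness of the fibres and simultaneously produce the key clopen partition of $X$. Since $\phi$ is a local homeomorphism, the open sets on which $\phi$ restricts to a homeomorphism onto an open subset of $Y$ cover $X$; as $X$ is compact, finitely many suffice, and as $X$ is moreover Hausdorff and totally disconnected --- hence zero-dimensional --- these can be refined to a finite clopen partition $X = U_1 \sqcup \dots \sqcup U_N$ with each $U_i$ contained in one of them, so that $\phi|_{U_i}$ is injective. In particular $\#\phi^{-1}(y) \le N$ for every $y \in Y$, so the summing-over-preimages transfer $\tr\colon H^0(X,\ZZ) \to H^0(Y,\ZZ)$, $f \mapsto \bigl(y \mapsto \sum_{x \in \phi^{-1}(y)} f(x)\bigr)$, is well-defined (recall that $H^0(-,\ZZ)$ is the group of locally constant $\ZZ$-valued functions, and one checks easily using the partition below that the sum is again locally constant).

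Next I would build a trivialising clopen partition of $Y$. Each $U_i$ is compact, so $V_i := \phi(U_i)$ is closed; it is also open, since a local homeomorphism is an open map, and $\phi|_{U_i}\colon U_i \xrightarrow{\sim} V_i$. As $Y$ is compact Hausdorff and totally disconnected it has a basis of clopen sets, so the Boolean subalgebra generated by $V_1,\dots,V_N$ is finite; its atoms form a finite clopen partition $Y = Y_1 \sqcup \dots \sqcup Y_m$ with each $Y_j$ contained in $V_i$ whenever it meets $V_i$. Fixing $j$ and setting $I_j = \{ i : Y_j \subseteq V_i\}$, surjectivity of $\phi$ (together with $\bigcup_i V_i = Y$) gives $I_j \ne \emptyset$, and $\phi^{-1}(Y_j) = \bigsqcup_{i \in I_j} W_{j,i}$ with $W_{j,i} := (\phi|_{U_i})^{-1}(Y_j) \xrightarrow{\sim} Y_j$ clopen in $X$.

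Finally, given $g \in H^0(Y,\ZZ)$, I would place all of $g$ on a single sheet: choosing for each $j$ some $i_0 = i_0(j) \in I_j$, define $f$ on $\phi^{-1}(Y_j)$ by $f|_{W_{j,i_0}} = g \circ \phi|_{W_{j,i_0}}$ and $f|_{W_{j,i}} = 0$ for the other $i \in I_j$. Since the $\phi^{-1}(Y_j)$ partition $X$ into clopen sets and $f$ is locally constant on each, $f \in H^0(X,\ZZ)$; and for $y \in Y_j$ one reads off $\tr(f)(y) = \sum_{i \in I_j} f\bigl((\phi|_{U_i})^{-1}(y)\bigr) = g(y)$. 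Hence $\tr$ is surjective. The whole argument is soft point-set topology and has no real obstacle; the only step deserving any care is the promotion of the local triviality of $\phi$ to a finite clopen trivialising partition of $Y$, which is exactly where compactness of $X$ and $Y$ and zero-dimensionality of $Y$ are used --- after which surjectivity of $\phi$ guarantees every fibre is non-empty and the explicit section $f$ can simply be written down.
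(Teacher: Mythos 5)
Your proof is correct and follows essentially the same route as the paper's: reduce to a finite clopen partition of $Y$ over each piece of which $\phi$ admits a clopen section, then lift $g$ by supporting $g\circ\phi$ on a single sheet. The only cosmetic differences are that you apply compactness and zero-dimensionality to $X$ first (getting injectivity pieces $U_i$, then taking Boolean-algebra atoms of the $\phi(U_i)$ in $Y$) and bound the fibres by $N$, whereas the paper works directly with clopen neighbourhoods in $Y$ and deduces finiteness of fibres from ``compact discrete''.
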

\begin{proof}
The claim that $\phi$ has finite fibers is well-known. We include a proof for
convenience of the reader: since $\phi$ is a local homeomorphism the fibers are
discrete, since $Y$ is Hausdorff they are closed, and since $X$ is compact they
are compact. Now observe that a compact discrete space is finite.

We now prove the surjectivity of the transfer. First we make the following claim: if
$X$ is a compact, Hausdorff, totally disconnected space, then given $x \in U
\subset X$ with $U$ open, there exists $x \in V \subset U$ such that $V$ is
clopen in $X$. Indeed for $y \ne x$ let $U_y$ be a clopen neighbourhood of $y$
disjoint from $x$. Then $\cup_{y \in X \setminus U} U_y$ is an open cover of the
compact (since closed) complement $X \setminus U$. Let $U_1, \dots, U_n$ be a
finite subcover. Then $V = X \setminus \cup_i U_i$ works.

Now consider the morphism $\phi: X \to Y$. For $y \in
Y$ choose a clopen neighbourhood $U_y$ of $y \in
Y$ such that there exists a clopen set $V_y \subset X$ with $\phi(V_y) = U_y$
and $\phi: V_y \to U_y$ a homeomorphism. We will say in this situation that $\phi$ \emph{splits
strongly over $U_y$}. We note that such $V_y, U_y$ exist: since $\phi$ is a local
homeomorphism, there exists $V_y' \subset X$ such that $U_y' := \phi(V_y)$ is an
open neighbourhood of $y$ and $\phi: V_y' \to U_y'$ is a homeomorphism. By the
claim, we may assume that $V_y'$ is clopen. Now
choose a clopen neighbourhood $U_y \subset U_y'$, using the claim again. Then $V_y :=
\phi^{-1}(U_y) \cap V_y'$ is clopen in $X$ and maps homeomorphically to $U_y$.

We obtain in this way an open cover
$\{U_y\}_{y \in Y}$ of $Y$. Since $Y$ is compact,
we can choose a finite subcover $U_1, \dots,
U_n$. Using that all the $U_i$ are clopen we can refine further until we have
found a disjoint clopen cover (replace $U_i$ by $U_i \setminus (U_1 \cup
U_2 \cup \dots \cup U_{i-1})$) over which $\phi$ splits strongly. (Note that if
$\phi$ splits strongly over a clopen $U \subset Y$, then it also splits strongly
over any clopen $U' \subset U$.)

Since $H^0(Y, \ZZ)$ is the set of continuous functions from $Y$ to
$\ZZ$, it suffices to prove that the indicator function $\chi_{U_i}:
Y \to \ZZ$ of the clopen subset $U_i$ is in the image of transfer (because
$1 = \sum_i \chi_{U_i}$, and the transfer is additive). But
$\phi$ is strongly split over $U_i$ by construction, so there exists some clopen subset $U
\subset X$ such that $\phi: U \to U_i$ is a homeomorphism.
Then $\chi_{U} \in H^0(X, \ZZ)$ and this is taken by transfer to
$\chi_{U_i}$, as follows from the explicit description of transfer in terms of
``summing over preimages''.
\end{proof}

We will want to show that certain presheaves are sheaves in the rét-topology. We
find it easiest to first develop a criterion for this. We start with the
following result, which is surely well known.

\begin{lemm} \label{lemm:sheaf-cond-technical}
Let $\tau$ be a topology on a category $\mathcal{C}$ and $F$ a presheaf on
$\mathcal{C}$ which is $\tau$-separated. Let $X \in \mathcal{C}$ and $U_\bullet,
V_\bullet \to X$ be $\tau$-coverings. Suppose that $V_\bullet$ refines
$U_\bullet$, i.e. we are given a morphism $f: V_\bullet \to U_\bullet$ over $X$.
Then if $F$ satisfies the sheaf condition with respect to $V_\bullet$, it also
satisfies the sheaf condition with respect to $U_\bullet$.
\end{lemm}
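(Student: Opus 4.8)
The plan is to unwind both sheaf conditions explicitly and use the refinement map $f\colon V_\bullet \to U_\bullet$ together with $\tau$-separatedness to move a would-be gluing from the $V$-cover to the $U$-cover. First I would recall what the sheaf condition with respect to a cover $U_\bullet = \{U_i \to X\}$ demands: given a family of sections $s_i \in F(U_i)$ whose restrictions agree on the (iterated) fibre products $U_i \times_X U_j$ (and higher, if $\tau$ is not generated by single-index covers — in the present applications covers are families of maps and the condition is the usual equaliser), there is a \emph{unique} $s \in F(X)$ restricting to each $s_i$. Uniqueness is automatic from $\tau$-separatedness, so only existence needs proof.

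So suppose we are given a matching family $(s_i)$ for the cover $U_\bullet$. Pull back along the refinement: for each index $\alpha$ of $V_\bullet$ let $f(\alpha)$ be the corresponding index of $U_\bullet$, so that $V_\alpha \to U_{f(\alpha)} \to X$, and set $t_\alpha := (V_\alpha \to U_{f(\alpha)})^* s_{f(\alpha)} \in F(V_\alpha)$. I would check that $(t_\alpha)$ is a matching family for the cover $V_\bullet$: on $V_\alpha \times_X V_\beta$ the two restrictions of $t_\alpha$ and $t_\beta$ are both obtained by pulling back $s_{f(\alpha)}$ and $s_{f(\beta)}$ along the evident maps $V_\alpha \times_X V_\beta \to U_{f(\alpha)} \times_X U_{f(\beta)}$, and these agree because $(s_i)$ was a matching family for $U_\bullet$ — this is just functoriality of pullback and the compatibility of $f$ with the structure maps to $X$. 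Since $F$ satisfies the sheaf condition for $V_\bullet$ by hypothesis, there is a (unique) $s \in F(X)$ with $(V_\alpha \to X)^* s = t_\alpha$ for all $\alpha$.

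It remains to see that this $s$ restricts to $s_i$ on each $U_i$. Fix $i$. The point is that $\{V_\alpha \times_X U_i \to U_i : f(\alpha) = i\}$ — or more safely, the whole family $\{V_\alpha \times_X U_i \to U_i\}_\alpha$ — is a $\tau$-cover of $U_i$ (covers are stable under base change). On $V_\alpha \times_X U_i$ we have two sections: the restriction of $s_i$ along $V_\alpha\times_X U_i \to U_i$, and the restriction of $s := s|_{U_i}$ along the same map, which equals the restriction of $t_\alpha = (V_\alpha \to U_{f(\alpha)})^* s_{f(\alpha)}$. Since $(s_j)$ is a matching family, the restriction of $s_{f(\alpha)}$ and of $s_i$ to $V_\alpha \times_X U_i$ (mapped into $U_{f(\alpha)} \times_X U_i$, then to the two factors) agree; hence $s|_{U_i}$ and $s_i$ have the same restriction along every member of a $\tau$-cover of $U_i$, and by $\tau$-separatedness of $F$ we conclude $s|_{U_i} = s_i$. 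This finishes the verification, and the main (mild) obstacle is purely bookkeeping: keeping the indexing of the refinement straight and being careful that the relevant base-changed families really are $\tau$-covers, so that separatedness can be invoked at each step.
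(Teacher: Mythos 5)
Your proof is correct and follows essentially the same route as the paper's: pull the matching family back along the refinement, glue via the sheaf condition for $V_\bullet$, and then verify $s|_{U_i} = s_i$ by checking agreement on the base-changed cover $\{V_\alpha \times_X U_i \to U_i\}_\alpha$ and invoking $\tau$-separatedness. The only cosmetic difference is that you keep track of a general index map for the refinement, whereas the paper simplifies by assuming a common indexing set.
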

\begin{proof}
The proof can be extracted from the proof of \cite[Tag 00VX]{stacks-project}. We
repeat the argument for convenience. For simplicity, suppose that $U_\bullet$
and $V_\bullet$ use the same indexing set $I$, and that the refinement is of the
form $V_i \to U_i$. We are given $s_i \in F(U_i)$ for each $i$, such that
$s_i|_{U_i \times_X U_j} = s_j|_{U_i \times_X U_j}$, and we need to show that
there is a (necessarily unique) $s \in F(X)$ with $s|_{U_i} = s_i$.

Let $t_i = f^* s_i$. Then $t_\bullet$ is a compatible family for the covering
$V_\bullet$, and hence there is $s \in F(X)$ with $s|_{V_i} = t_i$
for all $i$. We need to show that also $s|_{U_i} = s_i$. For this, fix $i_0 \in
I$ and consider the coverings $U_\bullet', V_\bullet' \to U_{i_0}$ obtained by
base change. Then $V_\bullet'$ refines $U_\bullet'$. We find that $s_{i_0}|_{U_{i_0}
\times_X U_i} = s_i|_{U_{i_0} \times_X U_i}$ by assumption, and hence
$f^*(s_{i_0}|_{U_{i_0} \times_X U_i}) = f^*(s_i|_{U_{i_0} \times_X U_i}) =
t_i|_{U_{i_0} \times_X V_i} = s|_{U_{i_0} \times_X V_i}$ by construction.
But now because $F$ is
separated in the $\tau$-topology and $V_\bullet' \to U_{i_0}$ is a cover we
conclude that $s_{i_0} = s|_{U_{i_0}}$, as needed.
\end{proof}

\begin{corr} \label{corr:ret-criterion}
Let $F$ be a sheaf on $Sm(k)_{Nis}$. Then $F$ is a sheaf in the rét-topology if
and only if $F$ satisfies the sheaf condition for every rét-cover $f\colon U \to X$,
where $X$ is (essentially) smooth, Henselian local and $f$ is \emph{finite} étale.
\end{corr}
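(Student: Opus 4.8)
The plan: one direction is immediate, since a sheaf in the r\'et-topology satisfies the sheaf condition for every r\'et-cover. For the converse, suppose $F$ is a Nisnevich sheaf satisfying the stated condition; the goal is to show $F$ is a r\'et-sheaf.

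The geometric heart, which I would establish first, is the following. If $Y = Spec(A)$ is essentially smooth and Henselian local over $k$, with closed point $x$ and residue field $\kappa$, then every r\'et-cover of $Y$ is refined by a \emph{finite} family $\{V_\mu \to Y\}$ in which each $V_\mu \to Y$ is finite \'etale (with $V_\mu$ again Henselian local). One first reduces to finite covers, using that $R(Y) = Sper(A)$ is a spectral space and that \'etale morphisms induce open maps on real spectra \cite{real-and-etale-cohomology}. For each $p \in Sper(\kappa) = R(x)$ choose a member $U_i$ of the cover and a point $(w,\gamma) \in R(U_i)$ over $(x,p)$; then $w$ lies over $x$ with $\kappa(w)/\kappa$ finite separable, so --- $A$ being Henselian --- there is a finite \'etale local $Y$-scheme $V_{(x,p)}$ with residue field $\kappa(w)$ at its closed point, equipped with a $Y$-morphism $V_{(x,p)} \to U_i$, and the open set $\mathrm{Im}(R(V_{(x,p)}) \to R(Y))$ contains $(x,p)$. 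The union $W$ of these sets over $p \in Sper(\kappa)$ contains $R(x)$, and since open subsets are stable under generization, $W = R(Y)$ provided every point of $Sper(A)$ specializes to a point over $x$. That last point is the one substantive input; in residue characteristic zero it is elementary, and I would prove it directly: quotienting by the support of a given prime cone reduces to showing that an ordering of $Frac(A)$, for $A$ a Henselian local domain, extends to a prime cone supported at the maximal ideal $\mathfrak m$, which can fail only if some unit $u \equiv 1 \pmod{\mathfrak m}$ satisfies $u \le 0$ in that ordering --- impossible, as such a $u$ is a square in $A$ by Hensel's lemma. Quasi-compactness of $R(Y)$ then extracts a finite subfamily of the $V_{(x,p)}$ covering $R(Y)$, which maps to the original cover by construction.

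The rest is bookkeeping with Nisnevich stalks. Next I would check that $F$ is r\'et-separated: given a r\'et-cover $\{U_i \to X\}$ with $X \in Sm(k)$ and $s \in F(X)$ with all $s|_{U_i} = 0$, it suffices (as $F$ is a Nisnevich sheaf) to kill $s$ in each stalk $\mathcal{O}_{X,x}^h$; base-changing the cover to $Y = Spec(\mathcal{O}_{X,x}^h)$ and refining by the geometric claim to a finite \'etale surjection $V = \coprod_\mu V_\mu \to Y$ with $s|_V = 0$, the hypothesis gives $F(Y) \hookrightarrow F(V)$, hence $s|_Y = 0$. With $F$ separated, Lemma \ref{lemm:sheaf-cond-technical} lets me replace any r\'et-cover by a finite subcover, or by any refinement over which $F$ is already known to be a sheaf. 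To prove the sheaf condition for a (hence finite) r\'et-cover $\{U_i \to X\}$ with $X \in Sm(k)$, I would note that $T \mapsto \mathrm{eq}\big(\prod_i F(U_i \times_X T) \rightrightarrows \prod_{i,j} F(U_i \times_X U_j \times_X T)\big)$ is a Nisnevich sheaf on $X_{Nis}$, so the canonical map to it from $F$ is an isomorphism once it is on Nisnevich stalks; as $F$ commutes with the cofiltered limits computing the Henselizations, this reduces us to the case $X$ essentially smooth Henselian local. Finally, in that case the geometric claim refines $\{U_i \to X\}$ by $\{V_\mu \to X\}$ with $V = \coprod_\mu V_\mu \to X$ finite \'etale; the hypothesis gives the sheaf condition for $V \to X$, hence (as $F$ carries finite coproducts to products) for $\{V_\mu \to X\}$, and Lemma \ref{lemm:sheaf-cond-technical} upgrades this to the sheaf condition for $\{U_i \to X\}$.

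The main obstacle is the geometric claim, and within it the statement that in a Henselian local ring of residue characteristic zero every prime cone specializes to one supported at the maximal ideal; once that is in hand, everything else is formal manipulation of Nisnevich stalks together with Lemma \ref{lemm:sheaf-cond-technical}.
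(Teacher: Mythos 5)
Your proof is correct and follows essentially the same path as the paper's: reduce to the Henselian local case via Nisnevich stalks, establish r\'et-separatedness first, refine any r\'et-cover of an essentially smooth Henselian local scheme by a single finite \'etale cover, and conclude with Lemma \ref{lemm:sheaf-cond-technical}. The only variation is inside the refinement claim, where the paper splits each member of the cover using the structure of quasi-finite schemes over a Henselian local base and quotes the fact that the only open subset of $Sper(A)$ containing the closed fibre is everything, whereas you build the finite \'etale refinement pointwise from orderings of the residue field and reprove that topological input directly (every prime cone specializes into the closed fibre, via Hensel's lemma for squares, valid since $k$ has characteristic zero throughout this section) --- a harmless and correct substitution.
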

\begin{proof}
For this proof, we call a morphism with the properties of $f$ a frét-cover.

The condition is clearly necessary; we show the converse.

(*) We first claim that every rét-cover $U_\bullet \to X$ with $X$ smooth Henselian
local can be refined by a frét-cover. We can
certainly refine $U_\bullet$ by an affine cover, so assume that each $U_i$ is
affine. Then by \cite[Tag
04GJ]{stacks-project} each $U_i$ splits as $U_i' \coprod U_i''$ with $U_i' \to
X$ finite étale and $U_i'' \to X$ not hitting the closed point $m$ of $X$ (note that $U_i
\to X$ is everywhere quasi-finite). I claim that
$U_\bullet'$ is also a rét-cover. Indeed étale morphisms induce open maps on
real spectra \cite[Proposition 1.8]{real-and-etale-cohomology} and $U_\bullet'$
covers $R(m) \subset R(X)$ by construction. But the only open subset of $R(X)$ containing
$R(m)$ is all of $R(X)$, by \cite[Propositions II.2.1 and
II.2.4]{andradas2012constructible}. Finally the real spectrum of any ring is
quasi-compact \cite[II.1.5]{andradas2012constructible} whence we can always
refine by a finite subcover, and then taking the disjoint union we refine by a
singleton cover.

If $X \in Sm(k)$, we write $F_X := F|_{X_{Nis}}$ for the restriction to the
small site. Write
$\iHom$ for the internal mapping presheaf functor in this category. Recall that
$\iHom(V, F_X)(V') = F(V \times_X V')$; in particular this functor
preserves sheaves.

Let $U_\bullet \to X$ be a rét-cover. To show that
$F(X) \to F(U_\bullet) \rightrightarrows F(U_\bullet \times_X U_\bullet)$ is an
equaliser diagram (respectively the first map is injective),
it is sufficient to show that $F_X \to \iHom(U_\bullet, F_X) \rightrightarrows
\iHom(U_\bullet \times_X U_\bullet, F_X)$ is an equaliser diagram of sheaves (respectively
the first map is an injection of sheaves), since limits of sheaves are computed
in presheaves.
But finite limits (respectively injectivity) are detected on stalks,
whence in both situations we may assume that $X$ is Henselian local. (**)

Now we show that $F$ is rét-separated. Let $U_\bullet \to X$ be a rét-cover.
By the above, to show that $F(X) \to F(U_\bullet)$ is injective we may assume that $X$
is Henselian local. Then $U_\bullet \to X$ is refined by a
frét-cover $V \to X$, by (*). But then $F(X) \to F(U_\bullet) \to F(V)$ is injective since $F$
satisfies the sheaf condition for $V \to X$ by assumption, so $F(X) \to F(U_\bullet)$ is
injective and $F$ is rét-separated.

Finally let $U_\bullet \to X$ be any rét-cover. We wish to show that $F$
satisfies the sheaf condition for this cover. By (**), we may assume that $X$ is
Henselian local. Then $U_\bullet \to X$ is refined by a frét-cover $V \to X$ and
$F$ satisfies the sheaf condition with respect to $V \to X$ by assumption, so it
satisfies the sheaf condition with respect to $U_\bullet \to X$ by Lemma
\ref{lemm:sheaf-cond-technical}.
\end{proof}

\paragraph{Remark.} Using \cite[Corollary II.1.15]{andradas2012constructible}, the
claim (*) can be extended as follows: Every rét-cover $U_\bullet \to X$ with $X$
arbitrary is refined by a cover $V'_\bullet \to V_\bullet \to X$, where
$V_\bullet \to X$ is a Nisnevich cover and each $V'_i \to V_i$ is a frét-cover.

\begin{thm} \label{thm:rho-stable-htpy-mod}
Let $F_*$ be a homotopy module such that $\rho: F_n \to F_{n+1}$ is an
isomorphism for all $n$. Then $F_*$ consists of rét-sheaves.
\end{thm}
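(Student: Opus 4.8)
The strategy is to verify the criterion from Corollary~\ref{corr:ret-criterion}: we must show that for every essentially smooth Henselian local scheme $X$ and every \emph{finite} étale rét-cover $f\colon U \to X$, the diagram
\[ F_n(X) \to F_n(U) \rightrightarrows F_n(U \times_X U) \]
is an equaliser. First I would record that $F_*$, being a homotopy module, is a module over $\ul{K}_*^{MW}$, hence (after inverting $\rho$, which is already done) a module over $\ul{K}_*^{MW}[\rho^{-1}] = a_\ret\ul{\ZZ}$ by Corollary~\ref{corr:colim_KnMW}. Since $\rho$ acts invertibly, I will suppress the grading and just write $F := F_n$, viewed as an $a_\ret\ul\ZZ$-module equipped with transfers for finite étale maps of essentially smooth schemes (via the duality transfers of Section~\ref{sec:recollections-motivic-homotopy}), and these transfers satisfy the base change and projection formulas (Proposition~\ref{prop:transfer-base-change} and Corollary~\ref{corr:transfer-projection-formula}).

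The key algebraic input is the projection formula together with surjectivity of the transfer on $a_\ret\ul\ZZ$. Concretely: since $f\colon U \to X$ is a finite étale rét-cover and $X$ is Henselian local, $X$ is in particular the spectrum of... well, it need not be a field, but the closed point $x$ has orderable residue field whenever $R(X)\neq\emptyset$; more to the point, by Corollary~\ref{corr:transfer-aretZZ-surjective} (applied on residue fields, then spread out) the transfer $tr_f\colon a_\ret\ul\ZZ(U) \to a_\ret\ul\ZZ(X)$ is surjective, so there exists $u \in a_\ret\ul\ZZ(U)$ with $tr_f(u) = 1$. Now the projection formula gives, for any $b \in F(X)$, the identity $tr_f(u \cdot f^*b) = tr_f(u)\cdot b = b$. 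This immediately shows \textbf{surjectivity} of $F(X)\to$ (equalising sections): given a compatible family $s \in F(U)$ with $s|_{U\times_X U}$ descent-closed, set $b := tr_f(u\cdot s) \in F(X)$; a base-change computation (using Proposition~\ref{prop:transfer-base-change} on the square defining $f\times\id$ versus the diagonal, exactly as in the proof of Corollary~\ref{corr:transfer-projection-formula}) shows $f^*b = tr_p(q^*u \cdot q^* s)$ where $p,q$ come from the pullback of $f$ along $f$, and the cocycle condition on $s$ turns this into $s$ itself. For \textbf{injectivity}: if $b \in F(X)$ with $f^*b = 0$, then $b = tr_f(u\cdot f^*b) = tr_f(0) = 0$.

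I expect the main obstacle to be bookkeeping the transfer identities cleanly rather than any conceptual difficulty. Two points need care. First, the element $u$ with $tr_f(u)=1$ exists over the closed point $x$ of $X$; one must promote it to a section over all of $U$ using that $F$ and $a_\ret\ul\ZZ$ are sheaves already known to be unramified / rigid-like on the Henselian local base, i.e.\ restriction to the closed point is an isomorphism on $a_\ret\ul\ZZ$ (this is where the Henselian hypothesis and the spectral-space description of $R(X)$ from Corollary~\ref{corr:ret-criterion}'s proof enter). Second, one must check that the module structure $\ul{K}_*^{MW}\wedge F \to F$ is suitably compatible with the duality transfers so that the projection formula of Corollary~\ref{corr:transfer-projection-formula} applies verbatim to the factor $u\cdot f^*b$; this is exactly the content of that corollary once one observes $u \in a_\ret\ul\ZZ(U) = \ul{K}_*^{MW}[\rho^{-1}](U)$, so $u$ can be viewed (non-canonically, but compatibly) as coming from $\ul{K}_*^{MW}$ in high enough degree. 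Modulo these two verifications, the descent datum argument is the same diagram chase used to prove the projection formula, and this concludes the proof.
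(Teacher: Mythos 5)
Your proposal is correct and follows essentially the same route as the paper: reduction via Corollary~\ref{corr:ret-criterion} to finite étale covers of Henselian local schemes, the $a_\ret\ul\ZZ$-module structure from Corollary~\ref{corr:colim_KnMW}, existence of $u$ with $tr_f(u)=1$ from Corollary~\ref{corr:transfer-aretZZ-surjective} together with $H^0_\ret(A,\ZZ)=H^0_\ret(\kappa,\ZZ)$ for $A$ Henselian local, and then the projection-formula/base-change diagram chase for injectivity and descent. The two "points needing care" you flag are resolved in the paper exactly as you anticipate.
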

\begin{proof}
We apply Corollary \ref{corr:ret-criterion}. Hence let $\phi: U \to X$ be a
rét-cover with $\phi$ finite étale and $X$ essentially smooth, Henselian local.
We need to show that $F$ satisfies the sheaf condition with respect to this
cover.
Note that $U$ is then a finite disjoint union of essentially smooth, Henselian
local schemes, by \cite[Tag 04GH (1)]{stacks-project}.

We now use the transfer $tr: F_*(U) \to F_*(X)$ from Section
\ref{sec:recollections-motivic-homotopy}.
Any homotopy module is a module over $K_*^{MW}$ and satisfies the projection
formula with respect to this module structure. It follows from Corollary
\ref{corr:colim_KnMW} and our assumption that $\rho$ acts invertibly on $F_*$
that $F_*$ is a module over $a_{\ret}\ZZ$, and satisfies the projection formula
with respect to that module structure.

We know that for a Henselian local ring $A$ with residue field $\kappa$, we have
$H^0_{\ret}(A, \ZZ) = H^0_{\ret}(\kappa, \ZZ)$. This follows from \cite[Propositions II.2.2 and
II.2.4]{andradas2012constructible} (the author learned this argument from
\cite[proof of Lemma 6.4]{karoubi2015witt}). Consequently by
Corollary \ref{corr:transfer-aretZZ-surjective}, Proposition
\ref{prop:transfer-base-change} and stability of rét-covers under base change,
there exists $a \in H^0_{\ret}(U, \ZZ)$ such that $tr(a) = 1$.

Now suppose given $b \in F_*(X)$ such that $b|_U = 0$. Then $b = 1b =
tr(a) b = tr(a\cdot b|_U) = 0$ by the projection formula (i.e. Corollary
\ref{corr:transfer-projection-formula}). Consequently
$F_*(X) \to F_*(U)$ is injective.

Write $p_1, p_2: U \times_X U \to U$ for the two projections and suppose given
$b \in F_*(U)$ such that $p_1^*b = p_2^*b$. We have to show that there is $c \in
F_*(X)$ such that $b = c|_U$. I claim that $c := tr(ab)$ works. Indeed we have
$tr(ab)|_U = \phi^*(tr_\phi(ab)) = tr_{p_2}(p_1^*(ab))$ by Proposition \ref{prop:transfer-base-change}.
Now $p_1^*b =
p_2^*b$ by assumption, and so $tr_{p_2}(p_1^*(a) p_1^*(b)) = tr_{p_2}(p_1^*(a) p_2^*
(b)) = tr_{p_2}(p_1^*a)b$ by the projection formula again. Finally
$tr_{p_2}(p_1^*a) = \phi^* tr_\phi(a) = \phi^* 1 = 1$ by base change again, so
we are done.
\end{proof}

\section{Preliminary Observations}
\label{sec:preliminary}

We are now almost ready to prove our main theorems. This section collects some
preliminary observations and reductions.

Lemma \ref{lemm:rho-local-model} from Section \ref{sec:monoidal-local}
applies in particular to $\SH(S)$ and $D_\Aone(S)$ for a
Noetherian base scheme $S$. We will be particularly interested in the case $Y =
\Gm$ and $\alpha = \rho: S \to \Gm$ the additive inverse of the morphism
corresponding to -1. What the lemma says is that the $\rho$-localization can be
computed as the obvious colimit.

We write $\SH(S)^\ret$
for the real étale localisation of $\SH(S)$ and $D_\Aone(S)^\ret$ for the real
étale localisation of $D_\Aone(S)$. There is possibly a slight confusion as to
what this means, since it could mean the localisation at desuspensions of real
étale (hyper-) covers, or the category obtained by the same procedure as $\SH(S)$ but
replacing the Nisnevich topology by the real étale one from the start. This does
not actually make a difference:

\begin{lemm} \label{lemm:commute-loc-stab}
Let $\mathcal{M}$ be a monoidal model category, $T \in \mathcal{M}$ cofibrant
and $H$ a set of maps. There is an isomorphism of Quillen model categories
\[ Spt(L_H \mathcal{M}, T) = L_{H'} Spt(\mathcal{M}, T), \]
provided that all the localisations exist (e.g. $\mathcal{M}$ left proper and
combinatorial). Here $H' = \cup_{i \in \ZZ} \Sigma^{\infty + i} H$ and
$Spt(\mathcal{N}, U)$ denotes the model category of (non-symmetric) $U$-spectra
in $\mathcal{N}$ with the local model structure.
\end{lemm}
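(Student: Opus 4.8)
The strategy is to exhibit both sides as left Bousfield localisations of one and the same base model category, namely the projective (levelwise) model structure $Spt(\mathcal{M}, T)^{\mathrm{proj}}$ on $T$-spectra in $\mathcal{M}$, and then to invoke the fact that iterated left Bousfield localisations compose. First note that the hypotheses are preserved throughout: a left Bousfield localisation of a left proper combinatorial monoidal model category is again of the same kind (the monoidal part being \cite[Proposition 4.47]{barwick2010left}), so $L_H\mathcal{M}$, $Spt(\mathcal{M}, T)$ and $Spt(L_H\mathcal{M}, T)$ are left proper and combinatorial and all the localisations below exist. Crucially, $\mathcal{M}$ and $L_H\mathcal{M}$ share their underlying category and their cofibrations, and a common generating set of cofibrations may be used; hence $Spt(\mathcal{M}, T)^{\mathrm{proj}}$ and $Spt(L_H\mathcal{M}, T)^{\mathrm{proj}}$ share their underlying category and cofibrations, and the set $St$ of stabilisation maps whose localisation turns the projective structure into the stable (``local'') one is literally the same set in both cases. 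Finally, recall the free-spectrum functors $\Sigma^{\infty+i} = F_i$, left adjoint to the evaluation functor $\mathrm{Ev}_i\colon X \mapsto X_i$.

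The key step is the identification
\[ Spt(L_H\mathcal{M}, T)^{\mathrm{proj}} \;=\; L_{H'}\bigl(Spt(\mathcal{M}, T)^{\mathrm{proj}}\bigr). \]
Both are left Bousfield localisations of $Spt(\mathcal{M}, T)^{\mathrm{proj}}$, so it suffices to check they have the same fibrant objects. A levelwise fibrant $T$-spectrum $X$ is fibrant in $Spt(L_H\mathcal{M}, T)^{\mathrm{proj}}$ exactly when each $X_i$ is $H$-local in $\mathcal{M}$. On the other hand $X$ is fibrant in $L_{H'}Spt(\mathcal{M}, T)^{\mathrm{proj}}$ exactly when, for every $h\colon B \to C$ in $H$ and every $i$, the map $\Map^d_{Spt}(F_iC, X) \to \Map^d_{Spt}(F_iB, X)$ is a weak equivalence; since $X$ is levelwise fibrant, the adjunction $F_i \dashv \mathrm{Ev}_i$ lets us compute this as $\Map^d_{\mathcal{M}}(C, X_i) \to \Map^d_{\mathcal{M}}(B, X_i)$, so the condition again says precisely that each $X_i$ is $H$-local. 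This establishes the displayed equality.

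It remains to combine the two localisations. On one hand
\[ Spt(L_H\mathcal{M}, T) \;=\; L_{St}\bigl(Spt(L_H\mathcal{M}, T)^{\mathrm{proj}}\bigr) \;=\; L_{St}\bigl(L_{H'}Spt(\mathcal{M}, T)^{\mathrm{proj}}\bigr), \]
and on the other hand
\[ L_{H'}Spt(\mathcal{M}, T) \;=\; L_{H'}\bigl(L_{St}Spt(\mathcal{M}, T)^{\mathrm{proj}}\bigr). \]
Now localising at a set $A$ and then at a set $B$ gives the localisation at $A \cup B$, in either order: a left Bousfield localisation is determined by its class of fibrant objects, and the fibrant objects of $L_A(L_B\mathcal{N})$ are exactly those that are simultaneously $A$-local and $B$-local in $\mathcal{N}$ (here one uses that mapping spaces out of a cofibrant object into a $B$-local fibrant object agree whether computed in $\mathcal{N}$ or in $L_B\mathcal{N}$). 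Hence both expressions above equal $L_{St \cup H'}\bigl(Spt(\mathcal{M}, T)^{\mathrm{proj}}\bigr)$, so they coincide as Quillen model categories, which is the asserted isomorphism.

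The main obstacle is the bookkeeping in the first two paragraphs rather than any deep input: one must check carefully that the projective structures on $T$-spectra over $\mathcal{M}$ and over $L_H\mathcal{M}$ genuinely share an underlying category, cofibrations, and generating set — so that all the Bousfield localisations invoked live over one common base and may legitimately be composed — and that the derived evaluation at level $i$ of a levelwise fibrant spectrum is computed on the nose by $X \mapsto X_i$. Everything past that is formal manipulation of left Bousfield localisations in the left proper combinatorial setting, for which one may appeal to \cite{barwick2010left}.
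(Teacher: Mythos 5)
Your proposal is correct and follows essentially the same route as the paper: both reduce to the levelwise (global/projective) model structure, use that iterated left Bousfield localisations compose to $L_{A\cup B}$, and identify $Spt(L_H\mathcal{M},T)_{gl}$ with $L_{H'}Spt(\mathcal{M},T)_{gl}$ by matching cofibrations and then fibrant objects via the adjunction $\Sigma^{\infty+i}\dashv \mathrm{Ev}_i$. The only cosmetic difference is the order in which the two localisations are peeled off.
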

\begin{proof}
We follow \cite{hovey2001spectra}. Recall that $Spt(\mathcal{N}, U)$ denotes the
category of sequences $(X_1, X_2, X_3, \dots)$ together with bonding maps $X_i
\otimes U \to X_{i+1}$, and morphisms the compatible sequences of morphisms.
This is firstly
provided with a global model structure $Spt(\mathcal{N},U)_{gl}$
in which a map $(X_\bullet) \to
(Y_\bullet)$ is a fibration or weak equivalence if and only if $X_i \to Y_i$ is
for all $i$. This is also called a levelwise fibration or weak equivalence.
The local model structure is then obtained by localisation at a set
of maps which is not important to us, because it only depends on a choice of set of
generators of $\mathcal{M}$, and for $L_H \mathcal{M}$ we can just choose the
same generators.

Since in any model category $L_{H_1} L_{H_2} \mathcal{N} = L_{H_1 \cup H_2}
\mathcal{N}$, it is enough to show that $L_{H'} Spt(\mathcal{M}, T)_{gl} =
Spt(L_H \mathcal{M}, T)_{gl}$. Note that an acyclic fibration in $Spt(L_H
\mathcal{M}, T)_{gl}$ is the same as a levelwise acyclic $H$-local fibration in
$\mathcal{M}$, i.e. a levelwise acyclic fibration. Consequently the cofibrations
in $Spt(L_H \mathcal{M}, T)_{gl}$ are the same as in $Spt(\mathcal{M}, T)_{gl}$,
whereas the former has more weak equivalences. Thus the
former is a Bousfield localisation of the latter and hence it is enough to show that $L_{H'}
Spt(\mathcal{M}, T)_{gl}$ and $Spt(L_H \mathcal{M}, T)_{gl}$ have the same
fibrant objects. An object of $Spt(L_H \mathcal{M}, T)_{gl}$ is fibrant if and
only if it is levelwise $H$-locally fibrant. An object $E$ of $L_{H'}
Spt(\mathcal{M}, T)_{gl}$ is fibrant if and only if it is levelwise fibrant and
$H'$-local, which means that for each $\alpha: X \to Y \in H$ and every $n \in
\ZZ$ the map
\[ \Map^d(\Sigma^{\infty + n} \alpha, E): \Map^d(\Sigma^{\infty + n} Y, E)
      \to \Map^d(\Sigma^{\infty + n} Y, E) \]
is a weak equivalence. By adjunction, this is the same as $\Map^d(\alpha, E_n)$
being an equivalence, i.e. all $E_n$ being $H$-local. This concludes the proof.
\end{proof}

Write $\SH^{S^1}(S)$ for the $S^1$-stable homotopy category (i.e. obtained from motivic
spaces by just inverting $S^1$, but not $\Gm$).

\begin{lemm} \label{lemm:S1-rho-inversion}
There are canonical Quillen equivalences $\SH^{S^1}(S)[\rho^{-1}] \wequi
\SH(S)[\rho^{-1}]$ and similarly for the real étale topology.
\end{lemm}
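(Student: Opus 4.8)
\emph{The plan is} to exploit that once $\rho$ is inverted the object $\Gm$ becomes invertible, so that the passage from $S^1$-spectra to $\Gm$-spectra has no effect. Fix a monoidal model $\mathcal{M}^{S^1}$ for $\SH^{S^1}(S)$ (the $S^1$-spectrum objects in motivic spaces), with its standard set $G$ of cofibrant generators $\Sigma^\infty_{S^1} X_+[i]$, $X \in Sm(S)$, which we may take to include $\tunit = \Sigma^\infty_{S^1} S_+$. Then $\SH(S) = Ho(Spt(\mathcal{M}^{S^1}, \Gm))$, and, writing $H = \{T \xrightarrow{\rho \wedge \id} \Gm \wedge T : T \in G\}$ for the $\rho$-localizing set, we have $\SH^{S^1}(S)[\rho^{-1}] = Ho(L_H \mathcal{M}^{S^1})$ and $\SH(S)[\rho^{-1}] = Ho(L_{H'} Spt(\mathcal{M}^{S^1}, \Gm))$, where $H' = \bigcup_i \Sigma^{\infty + i} H$ (the generators of $Spt(\mathcal{M}^{S^1},\Gm)$ being the $\Sigma^{\infty+i}T$, $T \in G$, the direct $\rho$-localizing set of the $\Gm$-spectrum model agrees with $H'$ up to saturation).

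\emph{First I would} invoke Lemma \ref{lemm:commute-loc-stab} to commute the Bousfield localization past the stabilization: $L_{H'} Spt(\mathcal{M}^{S^1}, \Gm) = Spt(L_H \mathcal{M}^{S^1}, \Gm)$, whose homotopy category is thus $\SH(S)[\rho^{-1}]$. \emph{Next} I observe that in $L_H\mathcal{M}^{S^1}$ the morphism $\rho\colon \tunit \to \Gm$ is a weak equivalence — it lies in $H$ (taking $T = \tunit$), or, if one prefers not to assume $\tunit \in G$, because the class of $T$ with $\rho \wedge T$ an $H$-local equivalence is closed under homotopy colimits and contains $G$. Hence $\Gm$ is weakly equivalent to the invertible monoidal unit, so $-\wedge \Gm$ is already a left Quillen self-equivalence of $L_H\mathcal{M}^{S^1}$. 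By the standard comparison for $T$-spectra (Hovey \cite{hovey2001spectra}) — when $-\wedge T$ is already a Quillen equivalence, the adjunction $\Sigma^\infty_T\colon \mathcal{N} \leftrightarrows Spt(\mathcal{N}, T)\colon \mathrm{ev}_0$ is a Quillen equivalence — applied with $\mathcal{N} = L_H\mathcal{M}^{S^1}$ and $T = \Gm$, and composing with the identification above, I obtain the asserted canonical Quillen equivalence $\SH^{S^1}(S)[\rho^{-1}] \wequi \SH(S)[\rho^{-1}]$, induced by $\Sigma^\infty_\Gm$.

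\emph{For the rét-version} nothing changes: one runs the same argument after replacing $\mathcal{M}^{S^1}$ throughout by its left Bousfield localization at the rét-(hyper)covers (equivalently, localizing at the rét-set and $H$ at once, using $L_{H_1}L_{H_2} = L_{H_1 \cup H_2}$), since the only input used is that $\rho$ becomes a weak equivalence, which is independent of the topology. \emph{The main obstacle is} organizational rather than conceptual: one must pin down the model and the generating set so that $\rho$ genuinely lies in the saturation of $H$, and — if one insists on presenting $\SH(S)$ by \emph{symmetric} $\PP^1$-spectra rather than by iterated non-symmetric $(S^1,\Gm)$-spectra as above — one must additionally quote the (standard) Quillen equivalence between these two presentations, which rests on the cyclic permutation of $\Gm^{\wedge 3}$ being the identity in the homotopy category (and which in any case becomes the identity after $\rho$-inversion, by naturality of the symmetry along $\rho^{\wedge 3}\colon \tunit \to \Gm^{\wedge 3}$).
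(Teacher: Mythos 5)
Your proof is correct and follows essentially the same route as the paper: both arguments commute the $\rho$-localization past $\Gm$-stabilization via Lemma \ref{lemm:commute-loc-stab} and then appeal to Hovey's theorem that stabilizing with respect to an already-invertible object (here $\Gm \wequi \tunit$ via $\rho$ in the localized category) is a Quillen equivalence. The extra care you take with the generating set and the symmetric-spectra caveat is welcome but not a departure from the paper's argument.
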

\begin{proof}
By Lemma \ref{lemm:commute-loc-stab} we know that
$Spt(\mathcal{SH}^{S^1}(S)[\rho^{-1}], \Gm) = Spt(\mathcal{SH}^{S^1}(S),
\Gm)[\rho^{-1}]
\wequi \mathcal{SH}(S)[\rho^{-1}]$. But the map $\rho: S \to \Gm$ is invertible in
$\SH^{S^1}(S)[\rho^{-1}]$ and thus $Spt(\mathcal{SH}^{S^1}(S)[\rho^{-1}], \Gm) \wequi
\mathcal{SH}^{S^1}(S)[\rho^{-1}]$, i.e. inverting an invertible object has no
effect \cite[Theorem 5.1]{hovey2001spectra}
\end{proof}

We also observe the following:

\begin{prop} \label{prop:six-functors}
The pseudofunctor $X \mapsto
\SH(X)[\rho^{-1}]$ satisfies the full six functors formalism (on Noetherian
schemes of finite dimension), compact
generation, and continuity.
\end{prop}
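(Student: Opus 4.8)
The plan is to reduce the statement about the six functors formalism to Theorem~\ref{thm:fundamental-six-functors}. So I would exhibit $X \mapsto \SH(X)[\rho^{-1}]$ as a pre-motivic category satisfying the homotopy, stability and localisation properties, with $\SH(X)[\rho^{-1}]$ well-generated for every $X$, and then establish compact generation and continuity by separate arguments. Throughout, $\SH(X)[\rho^{-1}]$ is presented by the monoidal Bousfield localisation $L_\rho^\otimes \mathcal{SH}(X)$ of the combinatorial, left proper, monoidal model category $\mathcal{SH}(X)$ underlying $\SH(X)$ (Section~\ref{sec:monoidal-local}), which is again a monoidal model category by \emph{loc.\ cit.} The decisive point is that $\rho$, and the object $\Gm$, are pulled back from $Spec(\ZZ)$, hence from every scheme in our base category; the entire discussion applies verbatim to $D_\Aone$.

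To set up the pre-motivic structure: for any $f \colon Y \to X$ the functor $f^*$ is monoidal with $f^*\rho = \rho$, so it sends the maps $\rho \otimes (-)$ that generate the $\rho$-local equivalences to maps of the same shape; thus $f^*$ remains left Quillen for the $\rho$-localised model structures, with right adjoint $f_*$, and it preserves $\rho$-local objects (if $\rho \colon E \to E \wedge \Gm$ is an equivalence, so is $f^*\rho \colon f^*E \to f^*E \wedge \Gm$). For $f$ smooth this last fact is exactly what makes $f_\#$ descend to the localisations as a left adjoint of $f^*$; and $f_*$ likewise preserves $\rho$-local objects, using the projection formula for the invertible object $\Gm$. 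Since the localisation functor $L \colon \SH(X) \to \SH(X)[\rho^{-1}]$ is exact, monoidal, and commutes with $f^*$ and (for smooth $f$) with $f_\#$, applying $L$ to the smooth base change isomorphism $q_\# g^* \wequi f^* p_\#$ and to the smooth projection formula isomorphism $f_\#(-\otimes f^*-) \wequi f_\#(-) \otimes -$ valid in $\SH$ yields the analogous isomorphisms after $\rho$-inversion, and so the pre-motivic axioms hold.

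The homotopy and stability properties are inherited from $\SH$: the map $p_\#\tunit \to \tunit$ for $p \colon \Aone \times X \to X$ is already an isomorphism, and the cone of $q_\#\tunit \to \tunit$ for $q \colon \PP^1 \times X \to X$ is already $\otimes$-invertible, in $\SH(X)$, and these survive application of the exact monoidal functor $L$. For localisation, apply $L$ to the distinguished triangle $j_\# j^* E \to E \to i_* i^* E$ of $\SH(X)$ attached to an open immersion $j$ with complementary closed immersion $i$; when $E$ is $\rho$-local, so are $E$ and $i_* i^* E$, and the result is precisely the triangle $Lj_\# j^* E \to E \to i_* i^* E$ realising the localisation property in $\SH(X)[\rho^{-1}]$. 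Compact generation — and hence well-generatedness, which is what lets us invoke Theorem~\ref{thm:fundamental-six-functors} — follows from Lemma~\ref{lemm:rho-local-model}: $\SH(X)$ has a set of compact generators given by the suspension spectra $\Sigma^\infty_+ Y$ of smooth $X$-schemes together with their $\Gm$-twists, and their images form a set of compact generators of $\SH(X)[\rho^{-1}]$.

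Finally, continuity. Let $\{S_\alpha\}$ be an affine cofiltered system with limit $S$, let $E \in \SH(S_{\alpha_0})[\rho^{-1}]$ — so $E$ is a $\rho$-local object of $\SH(S_{\alpha_0})$ — and let $i \in \ZZ$. Since $f^*$ preserves $\rho$-local objects, each $E_\alpha$ and $p_{\alpha_0}^* E$ are $\rho$-local, and morphisms into a $\rho$-local object are computed the same way in $\SH(-)[\rho^{-1}]$ as in $\SH(-)$. Thus
\[ \colim_{\alpha > \alpha_0} [\tunit(i), E_\alpha]_{\SH(S_\alpha)[\rho^{-1}]} = \colim_{\alpha > \alpha_0} [\tunit(i), E_\alpha]_{\SH(S_\alpha)}, \]
and by continuity of $\SH$ this equals $[\tunit(i), p_{\alpha_0}^* E]_{\SH(S)} = [\tunit(i), p_{\alpha_0}^* E]_{\SH(S)[\rho^{-1}]}$, which is the continuity isomorphism we want. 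The main obstacle is the bookkeeping in the pre-motivic step: one must check carefully that the $\rho$-localised model categories are genuinely monoidal, that $f_\#$ truly remains left Quillen (equivalently that $f^*$ preserves $\rho$-local objects, which rests on the monoidality of $f^*$), and that the smooth base change and projection formula $2$-cells really are identified with their $\rho$-inverted counterparts. Once that is settled, the homotopy, stability, localisation, compact generation and continuity statements follow routinely from the cited results and from Lemma~\ref{lemm:rho-local-model}.
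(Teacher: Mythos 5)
Your proposal is correct and follows essentially the same route as the paper: reduce the six functors claim to Theorem \ref{thm:fundamental-six-functors} by checking the homotopy, stability and localisation properties, obtain compact generation from Lemma \ref{lemm:rho-local-model}, and deduce continuity from that of $\SH$ via commutation of $f^*$ with $\rho$-localisation. The only cosmetic difference is in the localisation step, where the paper verifies that $i_*$ commutes with the localisation functor $L$ (using that $i_*$ preserves filtered homotopy colimits and $\Gm$-twists) and then cites \cite[Proposition 2.3.19]{triangulated-mixed-motives}, whereas you inspect the localisation triangle directly using that $i_*$ preserves $\rho$-local objects — two formulations of the same observation.
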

\begin{proof}
If $i: Z \to X$ is a closed immersion then the
functor $i_*: \SH(Z) \to \SH(X)$ commutes with filtered homotopy colimits (being
right adjoint to a functor preserving compact objects) and
satisfies $i_*(X \otimes \Gm) \wequi i_*(X) \otimes \Gm$ \cite[A.5.1 (6) and
(3)]{triangulated-mixed-motives}.
It follows from the explicit description of $\rho$-localisation in Lemma
\ref{lemm:rho-local-model} that $i_*$ commutes with $L: \SH(X) \to
\SH(X)[\rho^{-1}]$. Thus $\SH(X)[\rho^{-1}]$ satisfies localisation, by
\cite[Proposition 2.3.19]{triangulated-mixed-motives}. Since
$\SH(X)[\rho^{-1}]$ clearly satisfies the homotopy and stability properties,
it satisfies the six functors formalism by Theorem
\ref{thm:fundamental-six-functors}.

Since $\SH(X)$ is compactly generated so is $\SH(X)[\rho^{-1}]$, by the last
sentence of Lemma \ref{lemm:rho-local-model}.

For any morphism $f: X \to Y$ the functor $f^*: \SH(Y) \to \SH(X)$ commutes with
(filtered) homotopy colimits (being a left adjoint), and consequently it
commutes with $\rho$-localisation, as above. Thus continuity for
$\SH(X)[\rho^{-1}]$ follows from continuity for $\SH(X)$.
\end{proof}

For completeness,  we include the following rather formal observation. It is not
used in the remainder of this text (except that it is restated as part of
Theorem \ref{thm:final-comparison}).

\begin{prop} \label{prop:ret-rho-alread-equiv}
The canonical functor $\SH(S)^\ret \to \SH(S)^\ret[\rho^{-1}]$
is an equivalence. In other
words, $\rho$ is a weak equivalence in $\SH(S)^\ret$.
\end{prop}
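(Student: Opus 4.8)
The plan is to reduce to the $S^1$-stable category and then feed in Theorem~\ref{thm:ret-htpy-rho-inv}. By Lemmas~\ref{lemm:commute-loc-stab} and~\ref{lemm:S1-rho-inversion} we may regard $\SH(S)^\ret$ as the category of $\Gm$-spectra in $\SH^{S^1}(S)^\ret$, so the suspension $\Gm$-spectrum functor $\Sigma^\infty_\Gm\colon \SH^{S^1}(S)^\ret \to \SH(S)^\ret$ is defined, is monoidal, and sends $\rho$ to $\rho$. Hence it suffices to show that $\rho$ is a weak equivalence in $\SH^{S^1}(S)^\ret$; since $\rho=-\rho'$, this is the same as showing that $\rho'\colon S^0\to\Gm$ is one.

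First I would identify the cofibre of $\rho'$. As a map of pointed motivic spaces, $\rho'$ is the inclusion of the closed subscheme $\{1\}\amalg\{-1\}\cong S\amalg S$ into $\Gm$ (pointed at $1$), so in $\SH^{S^1}(S)^\ret$
\[ \operatorname{cofib}(\rho') \;\wequi\; \Sigma^\infty\bigl(\Gm/\{1,-1\}\bigr) \;=\; \operatorname{cofib}\Bigl(\Sigma^\infty(S\amalg S)_+ \xrightarrow{\ \phi\ } \Sigma^\infty \Gm_+\Bigr), \]
where $\phi$ is induced by the two sections $\pm 1\colon S\to\Gm_S$. We must show this vanishes. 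It is connective, being the cofibre of a map between (connective) suspension spectra of schemes. So by non-degeneracy of the homotopy $t$-structure together with the usual Postnikov-tower argument, it is enough to prove that $[\operatorname{cofib}(\rho'),F[n]]=0$ for every object $F$ of the heart --- a strictly $\Aone$-invariant real étale sheaf on $Sm(S)$ --- and every $n\in\ZZ$. By the cofibre sequence above and Lemma~\ref{lemm:SH-t-structure}, this is equivalent to the assertion that the map induced by the two sections,
\[ \phi^*\colon H^n_\ret(\Gm_S,F) \longrightarrow H^n_\ret(S,F)\oplus H^n_\ret(S,F), \]
is an isomorphism for every such $F$ and every $n$.

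When $F$ is pulled back from the small site $S_\ret$, this is exactly the second assertion of Theorem~\ref{thm:ret-htpy-rho-inv}. For a general strictly $\Aone$-invariant real étale sheaf the same computation goes through: since $t$ is a unit on $\Gm_S$, the real spectrum splits into clopen pieces $R(\Gm_S)=R(\Gm_S)_{t>0}\amalg R(\Gm_S)_{t<0}$, each mapping to $R(S)$ with fibres homeomorphic to the spectral space underlying $R(\Aone_\kappa)$ for the (essentially smooth) residue fields $\kappa$; applying $\Aone$-invariance of $F$ fibrewise --- organised via the Leray spectral sequence for $R(\Gm_S)_{t>0}\to R(S)$ and Scheiderer's base-change theorems to identify the stalks of the higher direct images with cohomology of these $F$-acyclic fibres --- yields $H^n_\ret(\Gm_S,F)=H^n_\ret(S,F)^{\oplus 2}$ compatibly with $\phi^*$. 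Reassembling, $\operatorname{cofib}(\rho')=0$, so $\rho'$, and hence $\rho$, is a weak equivalence in $\SH^{S^1}(S)^\ret$ and therefore in $\SH(S)^\ret$.

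The main obstacle is this last input: upgrading Theorem~\ref{thm:ret-htpy-rho-inv} from sheaves pulled back from the base (all that its proof delivers) to arbitrary strictly $\Aone$-invariant real étale sheaves --- equivalently, checking that the fibres of the non-proper map $R(\Gm_S)_{t>0}\to R(S)$ are $F$-acyclic, which is exactly where strict $\Aone$-invariance, rather than homotopy invariance of an individual sheaf, is used. A secondary, bookkeeping-type obstacle is that over a general base $S$ one needs the homotopy $t$-structure on $\SH^{S^1}(S)^\ret$ together with enough compactness for the Postnikov argument; if one prefers not to set this up in that generality, continuity and localisation (in the style of Corollary~\ref{corr:conservative-detection}) reduce the whole statement to $S=\operatorname{Spec} k$ with $k$ perfect, where the homotopy $t$-structure is the classical one.
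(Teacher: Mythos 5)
Your opening reduction is where the argument goes wrong: you replace the claim ``$\rho$ is invertible in $\SH(S)^\ret$'' by the strictly stronger claim ``$\rho'$ is invertible in $\SH^{S^1}(S)^\ret$'', and in doing so you throw away the one tool that makes the proposition easy, namely the invertibility of $\Gm$. The paper never asserts the $S^1$-stable statement (note that Theorem \ref{thm:final-comparison} compares $\SH(S_\ret)$ with $\SH^{S^1}(S)^\ret[\rho^{-1}]$, not with $\SH^{S^1}(S)^\ret$), and there is good reason to doubt it: real étale locally $a_\ret\Gm$ decomposes as $G_+\sqcup G_-$ into totally positive and totally negative units, so $\operatorname{cofib}(\rho')$ is the suspension spectrum of $(G_+,1)\vee(G_-,-1)$, and nothing forces these pointed sheaves to be $(\Aone,\ret)$-locally contractible ($G_+$ is not even $\Aone$-invariant as a presheaf: $t^2+1$ is a totally positive unit on $\Aone_\RR$). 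Your proposed way of killing this cofibre requires $H^n_\ret(\Gm_X,F)\cong H^n_\ret(X,F)^{\oplus 2}$ for \emph{every} object $F$ of the heart of $\SH^{S^1}(S)^\ret$, whereas Theorem \ref{thm:ret-htpy-rho-inv} only gives this for $F$ pulled back from $S_\ret$. You flag this yourself as ``the main obstacle'', but it is not an obstacle to be smoothed over --- it is essentially the content of Sections \ref{sec:jacobson}--\ref{sec:main-theorems}, and the proof there uses transfers and the $\ul{K}^{MW}_*$-module structure on homotopy modules, which only exist after $\Gm$-stabilization (and the $\rho$-stability hypothesis). The sketched fibrewise argument does not repair this: Scheiderer's base change is for proper maps and $R(\Gm_S)_{t>0}\to R(S)$ is not proper, the fibres are half-lines rather than $R(\Aone_\kappa)$, and ``$\Aone$-invariance of $F$'' is a big-site notion that cannot be applied to the restriction of $F$ to a fibre. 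Finally, even the framework you need --- a non-degenerate, left-complete homotopy $t$-structure on the $\Aone$-localized category $\SH^{S^1}(S)^\ret$ with heart the strictly $\Aone$-invariant rét-sheaves --- is not available from the paper: Morel's stable connectivity theorem is a Nisnevich statement over a perfect field, and its analogue for finer topologies is not automatic; nor are continuity and localisation established for $\SH^{S^1}(S)^\ret$, so Corollary \ref{corr:conservative-detection} cannot be invoked to reduce to fields.

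The paper's actual proof is much shorter and goes in the opposite direction. The clopen decomposition $a_\ret R_{\Aone\setminus 0}=G_+\sqcup G_-$ (checked on stalks, which are Henselian local rings with real closed residue field) gives a map $a_\ret\Gm\to a_\ret S^0$ collapsing $G_+$ to the basepoint, and since $-1$ is totally negative this retracts $\rho'$. Hence in $\SH(S)^\ret$ one has a splitting $\Gm\simeq\tunit\vee\Delta$ with $\rho$ the inclusion of the first summand. Now --- and this is precisely the step that is unavailable in $\SH^{S^1}(S)^\ret$ --- $\Gm$ is \emph{invertible} in $\SH(S)^\ret$, and Lemma \ref{lemm:technical-monoidal} shows that an invertible object of an additive symmetric monoidal category which splits off a copy of $\tunit$ has trivial complement. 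So $\Delta\simeq 0$ and $\rho$ is an equivalence. If you want to salvage your write-up, keep your identification of $\operatorname{cofib}(\rho')$ but do it in $\SH(S)^\ret$ and kill $\Delta$ by the invertibility argument rather than by a cohomological computation.
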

\begin{proof}
I claim that in $\SH(S)^\ret$ there is a splitting $\Gm \wequi \tunit \vee
\Delta$ such that the composite $\tunit \xrightarrow{\rho} \Gm \wequi \tunit
\vee \Delta \to \tunit$ is the identity.
It will follow from Lemma \ref{lemm:technical-monoidal} below that $\Delta \wequi 0$, proving
this lemma.

Call $a \in \mathcal{O}^\times(X)$ \emph{totally positive}
if for every real closed field $r$ and morphism $\alpha: Spec(r) \to X$ we have
$\alpha^*(a) > 0$. Note that in particular any square of a unit is totally positive.

This defines a sub-presheaf $G_+ \subset R_{\Aone \setminus
0}$ of the presheaf represented by $\Aone \setminus 0$.
Define $G_-$ analogously using totally negative units. I claim that $a_\ret
R_{\Aone \setminus 0} = a_\ret G_+ \coprod a_\ret G_{-1}$. We may prove this on
stalks, which are Henselian rings with real closed residue fields
\cite[(3.7.3)]{real-and-etale-cohomology}. If
$A$ is such a ring and $a \in A^\times$, then the reduction $\bar{a} \in A/m$ is
a unit and so either positive or negative. It follows that either $\bar{a}$ or
$-\bar{a}$ is a square, whence either $a$ or $-a$ is a square ($A$ being
Henselian of characteristic zero). Consequently $a$ is either totally positive or totally negative,
proving the claim.

We may thus define a map $a_\ret \Gm \to a_\ret S^0 = a_\ret(* \coprod *)$
by mapping $a_\ret G_+$ to the
base point and $a_\ret G_-$ to the other point. Since $-1$ is totally negative
this yields an unstable
splitting $a_\ret S^0 \to a_\ret \Gm \to a_\ret S^0$ of the required form. The stable
splitting follows.
\end{proof}

\begin{lemm}\label{lemm:technical-monoidal}
%\marginpar{NOTE: I expect this result about symm. monoidal cat. should be known}
Let $\mathcal{C}$ be an additive symmetric monoidal category in which $\otimes$
distributes over $\oplus$.

If $G \in
\mathcal{C}$ is an invertible object, such that $G \iso \tunit \oplus \Delta$,
then $\Delta \iso 0$.
\end{lemm}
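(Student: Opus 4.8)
The plan is to exploit invertibility of $G$ to pin down the structure of the idempotent splitting. Write $e: \tunit \to G$ and $p: G \to \tunit$ for the two components of the given isomorphism $G \iso \tunit \oplus \Delta$, so that $p \circ e = \id_\tunit$, and let $q = e \circ p: G \to G$ be the corresponding idempotent, with complementary idempotent $q' = \id_G - q$ whose image is $\Delta$. Since $G$ is invertible, tensoring with $G^{-1}$ is an equivalence of additive monoidal categories; applying it to $q$ we obtain an idempotent $G^{-1} \otimes q$ on $\tunit$, i.e. an idempotent endomorphism of the unit object. The key point is that $\operatorname{End}(\tunit)$ is a commutative ring (by the Eckmann--Hilton argument, valid in any symmetric monoidal category), and the only idempotents I need to understand are those coming from this particular splitting.

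First I would reduce to a statement about $\operatorname{End}(\tunit)$. Applying the functor $[\tunit, G^{-1} \otimes (-)] \cong [\tunit, G^{-1}] \otimes_{?} \dots$ — more cleanly, just observe that $\Delta \iso 0$ if and only if $G^{-1} \otimes \Delta \iso 0$, and $G^{-1} \otimes \Delta$ is the image of the idempotent $\bar q := G^{-1} \otimes q'$ on $G^{-1} \otimes G \iso \tunit$. So it suffices to show that $\bar q = 0$ as an element of the ring $R := \operatorname{End}_\mathcal{C}(\tunit)$. Now $\bar q$ is an idempotent in $R$, and its complement $1 - \bar q$ is (the image under $G^{-1} \otimes -$ of) $q = e \circ p$, which factors through the invertible object $G$; but more to the point, $1 - \bar q$ corresponds under the equivalence $G^{-1} \otimes - $ to $e \circ p$. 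The composite $p \circ e = \id_\tunit$ shows that, on the other side, the "complementary" summand $\tunit$ sits inside $G$ as a retract with retraction splitting the identity.

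The cleanest way to finish: since $G \iso \tunit \oplus \Delta$ and $G$ is invertible, $\tunit \oplus \Delta$ is invertible, so there is $H$ with $(\tunit \oplus \Delta) \otimes H \iso \tunit$. Distributing, $H \oplus (\Delta \otimes H) \iso \tunit$. Thus $\tunit$ is a retract of $\tunit$ via maps through $H$; writing the resulting idempotent on $\tunit$ as $u \in R$, its complement is the idempotent whose image is $\Delta \otimes H$. Now I claim $\Delta \otimes H \iso 0$: indeed tensoring the isomorphism $H \oplus (\Delta\otimes H) \iso \tunit$ with $\Delta$ gives $(\Delta \otimes H) \oplus (\Delta \otimes \Delta \otimes H) \iso \Delta$, and iterating, or better, arguing that the idempotent $1-u \in R$ satisfies $(1-u)\cdot(\text{something}) $ — hmm, this still needs the ring-theoretic input. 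The honest main obstacle is exactly this: showing an idempotent $\epsilon \in R = \operatorname{End}(\tunit)$ with $\epsilon \cdot \tunit \iso \Delta \otimes H$ and $(1-\epsilon)\cdot \tunit \iso H$ must have $\epsilon = 0$. This follows because the invertible object $G \iso (1-\epsilon)\tunit$ would then be a proper direct summand of $\tunit$ with complement $\epsilon \tunit = \Delta$; tensoring $G$ with its inverse and tracking summands, one gets $\tunit \iso G \otimes G^{-1} \iso ((1-\epsilon)\tunit) \otimes G^{-1}$, forcing $\epsilon \cdot G^{-1} \iso \epsilon\tunit \otimes G^{-1}$ to be a direct summand of $0$, hence $\epsilon \tunit \otimes G^{-1} \iso 0$, and tensoring back with $G$ gives $\Delta \iso \epsilon\tunit \iso 0$.

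So the structure of the argument is: (1) pass from $\Delta$ to the idempotent $q' \in \operatorname{End}(G)$ and thence, via $- \otimes G^{-1}$, to an idempotent $\epsilon \in R$ with $\epsilon \tunit \iso \Delta \otimes G^{-1}$; (2) use $G \iso \tunit \oplus \Delta$ together with $G \otimes G^{-1} \iso \tunit$ to compute $\tunit \iso G \otimes G^{-1} \iso (\tunit \oplus \Delta)\otimes G^{-1} \iso G^{-1} \oplus (\Delta \otimes G^{-1})$; (3) tensor this last isomorphism with $G$ to get $G \iso \tunit \oplus (\Delta \otimes G^{-1} \otimes G) \iso \tunit \oplus \Delta$, which is consistent but not yet a contradiction, so instead (3') tensor $\tunit \iso G^{-1}\oplus(\Delta\otimes G^{-1})$ with $\Delta \otimes G^{-1}$ and iterate / use that a nonzero direct summand cannot be "absorbed", or most efficiently observe that $\Delta \otimes G^{-1}$ is a direct summand of $\tunit$ complementary to $G^{-1}$, hence $\Delta \otimes G^{-1} \otimes G = \Delta$ is a direct summand of $G$ complementary to $\tunit$ — going in circles. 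The genuinely decisive step I expect to be the main obstacle is extracting a contradiction; I believe the right move is: from $\tunit \iso G^{-1} \oplus (\Delta \otimes G^{-1})$, tensor with $\Delta$ to obtain $\Delta \iso (\Delta \otimes G^{-1}) \oplus (\Delta^{\otimes 2} \otimes G^{-1})$, substitute back to get $\tunit \iso G^{-1} \oplus (\Delta \otimes G^{-1}) \iso G^{-1} \oplus \Delta \oplus (\text{stuff})$ is not obviously terminating — so in the write-up I would instead invoke the Eckmann--Hilton commutativity of $R$ directly: the idempotents $u = [G^{-1} \hookrightarrow \tunit]$ and $1-u$ are central, and $(1-u)R(1-u) \cong \operatorname{End}(\Delta \otimes G^{-1})$ while $G^{-1} \iso uR$-summand is invertible in the monoidal sense; an invertible object that is a direct summand of the unit must equal the unit (its "complement" tensored with its inverse embeds in $0$), giving $u = 1$, hence $\Delta \otimes G^{-1} \iso 0$, hence $\Delta \iso 0$.
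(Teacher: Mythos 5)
You have assembled the right ingredients --- commutativity of $R=\operatorname{End}(\tunit)$ and invertibility of $G$ --- but the decisive step is never actually performed. After tensoring with $G^{-1}$ you reach $\tunit \iso G^{-1}\oplus(\Delta\otimes G^{-1})$ and then finish by asserting that ``an invertible object that is a direct summand of the unit must equal the unit.'' That assertion \emph{is} the lemma, in an equivalent form (tensoring with $G^{\pm 1}$ translates between the two statements), so invoking it is circular. The parenthetical justification does not hold either: the complement $\Delta\otimes G^{-1}$ tensored with the inverse $G$ is $\Delta$, which embeds as a summand of $\tunit\otimes G\iso G$, not of $0$ --- you just recover $G\iso\tunit\oplus\Delta$. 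Nor does centrality of the idempotent $u$ suffice: commutative rings have many nontrivial idempotents, and in a bare additive category one can perfectly well have $X\iso X\oplus Y$ with $Y\ne 0$ (Eilenberg swindle), so invertibility must enter in an essential way that your write-up never pins down. Your own repeated admissions of ``going in circles'' are accurate diagnoses.

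The paper closes the gap in two moves. First, it reduces to the case $G=\tunit$: since $G$ is rigid with dual $D(G)$, and $D$ commutes with finite sums, one gets $\tunit\iso D(G)\otimes G\iso(\tunit\oplus D\Delta)\otimes(\tunit\oplus\Delta)\iso\tunit\oplus\Delta\oplus D\Delta\oplus(D\Delta\otimes\Delta)$, which exhibits $\Delta$ as a summand of a complement of $\tunit$ \emph{inside $\tunit$ itself} --- whereas your reduction only makes $G^{-1}$, not $\tunit$, the invertible summand, which is why you cannot get traction. Second, given $\tunit\xrightarrow{e}\tunit\oplus\Delta\xrightarrow{f}\tunit$ with $fe=\id$ and an isomorphism $z:\tunit\to\tunit\oplus\Delta$, the elements $a=z^{-1}e$ and $b=fz$ of $R$ satisfy $ba=\id$; commutativity of $R$ gives $ab=\id$, hence $ef=z(ab)z^{-1}=\id$, so the idempotent is the identity and $\Delta\iso 0$. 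The ring-theoretic fact actually used is that a one-sided inverse in a commutative ring is two-sided --- not that idempotents are central. (Your route through the idempotent $u$ with image $G^{-1}$ can be salvaged: the image of $u\otimes\id_G$ in $\tunit\otimes G\iso G$ is $G^{-1}\otimes G\iso\tunit$, $u$ acts as the identity on the image of any of its incarnations, and transporting along $G^{-1}\otimes G\iso\tunit$ via naturality of the $R$-action forces $u=1$. But some such argument must be written down; as it stands the proof is incomplete.)
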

\begin{proof}
The object $G$ is rigid (being invertible) and hence
$\Delta$ is rigid (being a summand of $G$). We have $\tunit \iso D(G) \otimes G \iso (D\tunit \oplus
D\Delta) \otimes (\tunit \oplus \Delta) \iso \tunit \oplus \Delta \oplus
D(\Delta) \oplus D(\Delta) \otimes \Delta$. Thus in order to prove the claim we
may assume that $G = \tunit$. Now the splitting $\tunit \iso \tunit \oplus
\Delta$ corresponds to morphisms $\tunit \xrightarrow{e} \tunit \oplus \Delta
\xrightarrow{f} \tunit$ with $fe = \id$ and $ef \in End(\tunit \oplus \Delta)$
the projection. Fixing an isomorphism $\tunit
\xrightarrow{z} \tunit \oplus \Delta$ we get corresponding elements $z^{-1}e, fz
\in [\tunit,\tunit]$. We have $\id = fe = f(zz^{-1})e = (fz)(z^{-1}e)$. But
$End(\tunit)$ is commutative \cite[sentence before Proposition 2.2]{balmer2010spectra} so $\id = (z^{-1}e)(fz)$ and
consequently $ef = zz^{-1} = \id$ and $\Delta = 0$.
\end{proof}

\section{Main Theorems}
\label{sec:main-theorems}

\begin{prop} \label{prop:main-case-char-0}
Let $k$ be a field of characteristic zero. The functor $L: \SH(k)[\rho^{-1}] \to
\SH(k)^\ret[\rho^{-1}]$ is an equivalence.
\end{prop}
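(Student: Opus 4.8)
The plan is to show that the localisation functor $L\colon \SH(k)[\rho^{-1}] \to \SH(k)^\ret[\rho^{-1}]$, which clearly has dense image (it is a Bousfield localisation), is fully faithful. Since $\SH(k)[\rho^{-1}]$ is compactly generated by objects of the form $\Sigma^\infty X_+ \wedge \Gmp{j}[i]$ with $X \in Sm(k)$ (by Lemma \ref{lemm:rho-local-model}), and $L$ preserves these generators and all homotopy colimits, it suffices to check that for such a compact generator $P$ and arbitrary $E \in \SH(k)[\rho^{-1}]$ the map
\[ [P, E]_{\SH(k)[\rho^{-1}]} \to [LP, LE]_{\SH(k)^\ret[\rho^{-1}]} \]
is an isomorphism. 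Equivalently, writing $R$ for the right adjoint of $L$ (the inclusion of rét-local objects), I must show that the unit $E \to RLE$ is an equivalence for all $E$, i.e. that every $\rho$-local object of $\SH(k)$ is already rét-local. By non-degeneracy of the homotopy $t$-structure on $\SH(k)$ (recalled in Section \ref{sec:recollections-motivic-homotopy}), it is enough to prove this on homotopy sheaves, i.e. to show $\rho$-localisation commutes with rét-localisation on hearts, which by the descent spectral sequence reduces to a cohomological comparison.

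Concretely, by Lemma \ref{lemm:rho-local-model} the homotopy sheaves of $E[\rho^{-1}]$ are $\ul{\pi}_i(E)_*[\rho^{-1}] = \colim_n \ul{\pi}_i(E)_n$, the colimit along multiplication by $\rho$; these are precisely the homotopy modules $F_*$ for which $\rho\colon F_n \to F_{n+1}$ is an isomorphism for all $n$, i.e. the $\rho$-stable homotopy modules. By Theorem \ref{thm:rho-stable-htpy-mod}, every such $F_*$ consists of rét-sheaves. Then by \cite[Proposition 19.2.1]{real-and-etale-cohomology} — which says that Nisnevich, Zariski and rét cohomology of a rét-sheaf all agree — we get $H^n_{Nis}(X, F_*) = H^n_\ret(X, F_*)$ for all $X \in Sm(k)$ and all $n$. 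Feeding this into a comparison of the descent (hypercohomology) spectral sequences computing $[\Sigma^\infty X_+[i], E']$ in $\SH(k)$ versus in $\SH(k)^\ret$, for $E'$ a $\rho$-local object, shows the two agree; hence $L$ is fully faithful on the generators, as required. One must also handle the second grading, but since $\rho$ is inverted the homotopy modules are (up to canonical isomorphism) independent of the $\Gm$-degree, so this causes no trouble.

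There are a couple of technical points to attend to. First, one should confirm that the rét-localisation of $\SH(k)$ may be modelled as $\SH^{S^1}(k)^\ret[\rho^{-1}]$ with $\Gm$ already inverted (it is, via $\rho$), as in the introduction's overview and Lemmas \ref{lemm:commute-loc-stab} and \ref{lemm:S1-rho-inversion}, so that the relevant descent spectral sequence is genuinely the rét-hypercohomology spectral sequence over $Sm(k)_\ret$ and one may invoke Corollary \ref{corr:Le-exact-ff} and $t$-exactness to control convergence. Second, one needs the spectral sequences to converge — this is where one uses that $k$ has finite Krull dimension (so finite rét-cohomological dimension, by \cite[Theorem 7.6]{real-and-etale-cohomology}) and that rét-cohomology commutes with filtered colimits, so that the colimit defining $\ul{\pi}_i(E)_*[\rho^{-1}]$ passes through cohomology.

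The main obstacle is clearly the input from Theorem \ref{thm:rho-stable-htpy-mod}: that $\rho$-stable homotopy modules are rét-sheaves. That theorem is the real content, resting on Jacobson's computation $\ul{K}^{MW}_*[\rho^{-1}] = a_\ret\ul\ZZ$ (Corollary \ref{corr:colim_KnMW}) together with the projection formula for finite étale transfers on homotopy modules (Corollary \ref{corr:transfer-projection-formula}) and the surjectivity-of-transfer statement Corollary \ref{corr:transfer-aretZZ-surjective}. Granting that, the present proposition is a fairly formal unwinding: reduce fully faithfulness to a statement on homotopy sheaves, identify those sheaves as $\rho$-stable homotopy modules, invoke rét-descent for such sheaves, and compare descent spectral sequences. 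The only place requiring genuine care is making sure all the spectral sequence comparisons converge, which is exactly why the characteristic-zero (hence perfect) and finite-dimensionality hypotheses are in force.
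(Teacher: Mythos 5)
Your proposal is correct and follows essentially the same route as the paper: reduce to showing every $\rho$-local object is rét-local, identify its homotopy sheaves as $\rho$-stable homotopy modules via Lemma \ref{lemm:rho-local-model}, invoke Theorem \ref{thm:rho-stable-htpy-mod} and \cite[Proposition 19.2.1]{real-and-etale-cohomology} to identify Nisnevich with rét cohomology, and conclude by comparing strongly convergent descent spectral sequences (the paper organises this as a comparison of the Nisnevich descent spectral sequence, the analogous one for the homotopy colimit of a rét-hypercover, and the homotopy colimit spectral sequence, with convergence from finite cohomological dimension exactly as you indicate).
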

\begin{proof}
It is
enough to show that all objects in $\SH(k)[\rho^{-1}]$ are rét-local. Let
$U_\bullet \to X$ be a rét-hypercover, and let $\hat{X}$ be its homotopy colimit
(in $\SH(k)$). We need to show that if $E \in \SH(k)[\rho^{-1}]$, then
$[\hat{X}, E] = [X, E]$. We have conditionally convergent descent
spectral sequences
\begin{equation}\label{eq:ss1}
 H^p_{Nis}(X, \ul{\pi}_{-q}(E)_{-i}) \Rightarrow [\Sigma^\infty X_+ \wedge
\Gmp{i}, E[p+q]]
\end{equation}
\begin{equation}\label{eq:ss2}
 [\hat{X}, \ul{\pi}_{-q}(E)_{-i}[p]] \Rightarrow [\hat{X} \wedge
\Gmp{i}, E[p+q]].
\end{equation}
Here we display the $E_2$-pages on the left hand side.
We moreover have the conditionally convergent homotopy colimit spectral sequence
\begin{equation}\label{eq:ss3}
  [U_*, \ul{\pi}_{-q}(E)_{-i}[p]] \Rightarrow [\hat{X},
\ul{\pi}_{-q}(E)_{-i}[p+*]].
\end{equation}
Here the left hand side is the $E_1$-page.
We have $[U_n, \ul{\pi}_{-q}(E)_{-i}[p]] = H^p_{Nis}(U_n, \ul{\pi}_{-q}(E)_{-i})
= H^p_\ret(U_n, \ul{\pi}_{-q}(E)_{-i})$; indeed since $E$ is $\rho$-local each
$\ul{\pi}_{-q}(E)_{-i}$ is a rét-sheaf, by Theorem
\ref{thm:rho-stable-htpy-mod}, and for any rét-sheaf $F$ we
have $H^p_{ret}(U_n, F) = H^p_{Nis}(U_n, F)$
\cite[Proposition 19.2.1]{real-and-etale-cohomology}. It follows that spectral
sequence \eqref{eq:ss3} converges strongly (because the dimension of $X$ is
finite) and identifies with the descent
spectral sequence in rét-cohomology for the cover $U_\bullet$. In particular, it
converges to $H^{p+*}_\ret(X, \ul{\pi}_{-q}(E)_{-i})$. Thus  we find that
$[\hat{X}, \ul{\pi}_{-q}(E)_{-i}[p]] = H^p_\ret(X, \ul{\pi}_{-q}(E)_{-i})$. Using
\cite[Proposition 19.2.1]{real-and-etale-cohomology} again, we conclude that the
evident map from spectral sequence \eqref{eq:ss1} to spectral sequence
\eqref{eq:ss2} induces an isomorphism on the $E_2$-pages, and moreover both
converge strongly (again for cohomological dimension reasons). Thus the induced
map on targets is an isomorphism, which is what we wanted to show.
\end{proof}

\begin{corr} \label{corr:main-case-fields}
The proposition holds for all fields.
\end{corr}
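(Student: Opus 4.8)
The plan is to note that the characteristic zero case is precisely Proposition~\ref{prop:main-case-char-0}, so it remains to treat a field $k$ of characteristic $p>0$. Since $\SH(k)^\ret[\rho^{-1}]$ is obtained from $\SH(k)[\rho^{-1}]$ by the $\ret$-Bousfield localisation --- the $\rho$- and $\ret$-localisations commute, as in the proof of Lemma~\ref{lemm:commute-loc-stab} --- it is enough to prove that $\SH(k)[\rho^{-1}]\wequi 0$; then the target vanishes as well and $L$ is trivially an equivalence $0\to 0$. Heuristically this must hold because $k$ admits no ordering, so $R(X)=\emptyset$ for every $X\in Sm(k)$, every object of $Sm(k)_\ret$ is covered by the empty family, and the whole real étale theory degenerates; but it is cleanest to argue directly on the motivic side, and no input about $\rho$-stable homotopy modules (Theorem~\ref{thm:rho-stable-htpy-mod}) is needed. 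The same remarks apply to $D_\Aone$ throughout.

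The one genuine input is that $\rho^2=0$ \emph{already over the prime field} $\FF_p$. If $p=2$ this is immediate, since then $-1=1$ and $\rho=0$. If $p$ is odd, consider the square $\rho^2\in[\tunit,\Gmp{2}]_{\SH(\FF_p)}$; by Morel's identification of the zeroth stable homotopy sheaf this group is $\ul{\pi}_0(\tunit)_2(\FF_p)=K_2^{MW}(\FF_p)$, and this vanishes because $K_2^M(\FF_p)=0$ while $I^2(\FF_p)=0$ (over a finite field every quadratic form of dimension $\ge 3$ is isotropic, so the square of the fundamental ideal is zero). Since the structure morphism $Spec(k)\to Spec(\FF_p)$ induces a monoidal functor $\SH(\FF_p)\to\SH(k)$ carrying $\rho_{\FF_p}$ to $\rho_k$, we conclude $\rho^2=0$ in $\SH(k)$; composing further with the monoidal linearisation functor $\SH(k)\to D_\Aone(k)$ gives $\rho^2=0$ in $D_\Aone(k)$ as well.

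Granting $\rho^2=0$, I invoke Lemma~\ref{lemm:rho-local-model}, which gives $\tunit[\rho^{-1}]\wequi\hocolim\bigl(\tunit\xrightarrow{\rho}\Gm\xrightarrow{\rho}\Gmp{2}\to\cdots\bigr)$. In this telescope every two-fold composite is $\id_{\Gmp{n}}\wedge\rho^2=0$, so for any compact object $C$ the colimit $\colim_n[C,\Gmp{n}]$ has each element killed after two steps and therefore vanishes; as $\SH(k)$ is compactly generated, $\tunit[\rho^{-1}]\wequi 0$. Because the derived tensor product commutes with homotopy colimits we have $X[\rho^{-1}]\wequi X\otimes^L\tunit[\rho^{-1}]\wequi 0$ for \emph{every} $X\in\SH(k)$, and since the localisation functor $\SH(k)\to\SH(k)[\rho^{-1}]$ is essentially surjective this forces $\SH(k)[\rho^{-1}]\wequi 0$; the identical argument gives $D_\Aone(k)[\rho^{-1}]\wequi 0$. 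There is essentially no obstacle in this case: the only step that requires (mild) care is identifying the relevant bigraded homotopy group of the sphere over $\FF_p$ with $K_2^{MW}(\FF_p)$ and recalling its elementary vanishing; everything else is formal manipulation of the homotopy colimits and monoidal Bousfield localisations already set up in Sections~\ref{sec:monoidal-local} and~\ref{sec:preliminary}.
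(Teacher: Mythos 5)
Your proof is correct and follows essentially the same route as the paper: reduce by base change to the prime field, deduce nilpotence of $\rho$ from quadratic-form theory over $\FF_p$ (the paper shows $\colim_n K^{MW}_n(\FF_p)=\colim_n I^n(\FF_p)=0$, you show $\rho^2=0$ directly from $K_2^{MW}(\FF_p)\iso K_2^M(\FF_p)\times_{I^2/I^3}I^2(\FF_p)=0$), and conclude that $\SH(k)[\rho^{-1}]$ and hence its rét-localisation vanish. The telescope/compact-generation bookkeeping you spell out is exactly what the paper leaves implicit.
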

\begin{proof}
I claim that if $k$ has positive characteristic, then $\rho$ is nilpotent in
$\SH(k)$. By base change, it suffices to prove this when $k = \FF_p$.
That is to say, we wish to show that $\rho$
is nilpotent in $K_*^{MW}(\FF_p)$, or equivalently that
$\colim_n K_n^{MW}(\FF_p) = 0$. By the same argument as in the proof of
Corollary \ref{corr:colim_KnMW} we know that $\colim_n K_n^{MW}(\FF_p)] =
\colim_n I^n(\FF_p)$. Thus our claim follows from
nilpotence of the fundamental ideal of $\FF_p$, which is well known \cite[III (5.9)]{milnor1973symmetric}.
\end{proof}

\begin{corr} \label{corr:main-equiv}
Let $S$ be a Noetherian scheme of finite dimension. The functor $L:
\SH(S)[\rho^{-1}] \to \SH(S)^\ret[\rho^{-1}]$ is an equivalence.

In particular $\SH(S)^\ret[\rho^{-1}]$ satisfies the full six functors
formalism.
\end{corr}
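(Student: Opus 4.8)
The plan is to reduce, via base change and conservativity, to the case of a field, which is Corollary \ref{corr:main-case-fields}. Since localisations commute (Lemma \ref{lemm:commute-loc-stab}, Lemma \ref{lemm:S1-rho-inversion}), $\SH(S)^\ret[\rho^{-1}]$ is a reflective localisation of $\SH(S)[\rho^{-1}]$ (localise further at the rét-hypercovers), and $L$ is the associated reflector. It is therefore automatically essentially surjective, so the only thing to prove is that every $E \in \SH(S)[\rho^{-1}]$ is already rét-local, i.e.\ that the unit $E \to LE$ is an equivalence. Equivalently, writing $C := \operatorname{cofib}(E \to LE)$, I must show $C \wequi 0$ for every $E$.

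The key observation is that for any morphism $g\colon S' \to S$ in the base category the functor $g^*\colon \SH(S)[\rho^{-1}] \to \SH(S')[\rho^{-1}]$ commutes with $L$. Indeed $g^*$ on $\SH(-)$ is the derived functor of a left Quillen functor which carries rét-hypercovers to rét-hypercovers (the real étale topology is stable under base change) and commutes with $\rho$-inversion, since it preserves filtered homotopy colimits (cf.\ the proofs of Proposition \ref{prop:six-functors} and Lemma \ref{lemm:rho-local-model}); hence it descends to the rét-localisations and intertwines the reflectors. Granting this, I argue as follows. By Proposition \ref{prop:six-functors} the pseudofunctor $X \mapsto \SH(X)[\rho^{-1}]$ comes from model categories and satisfies continuity and localisation, so Corollary \ref{corr:conservative-detection} applies: $C \wequi 0$ if and only if $x^* C \wequi 0$ for every $x\colon Spec(k) \to S$ with $k$ a field. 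But $x^* C \wequi \operatorname{cofib}(x^* E \to x^*(LE)) \wequi \operatorname{cofib}(x^* E \to L(x^* E))$, and this vanishes because $L\colon \SH(k)[\rho^{-1}] \to \SH(k)^\ret[\rho^{-1}]$ is an equivalence by Corollary \ref{corr:main-case-fields}. Hence $C \wequi 0$ and $L$ is an equivalence. For the final assertion, the equivalences $L_S$ commute with $g^*$ (as just noted) and are monoidal (being monoidal Bousfield localisations, Section \ref{sec:monoidal-local}), so they assemble into an equivalence of pre-motivic categories and transport the full six functors formalism established for $\SH(-)[\rho^{-1}]$ in Proposition \ref{prop:six-functors} to $\SH(-)^\ret[\rho^{-1}]$.

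I do not anticipate a genuine difficulty here: all the real content sits in Corollaries \ref{corr:main-case-fields} and \ref{corr:conservative-detection} and in Proposition \ref{prop:six-functors}. The one point that needs a little care is the interplay of $g^*$ with the rét-localisation — that is, verifying that base change of a rét-(hyper)cover is again a rét-(hyper)cover, so that $g^*$ remains left Quillen after rét-localising, and combining this with commutation of $g^*$ with $\rho$-inversion — together with the bookkeeping needed to see that ``$L$ fully faithful'' is literally the same as ``every object is rét-local''.
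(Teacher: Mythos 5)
Your argument is correct and is essentially the paper's proof: both reduce, via Proposition \ref{prop:six-functors} and Corollary \ref{corr:conservative-detection}, to the field case of Corollary \ref{corr:main-case-fields}. The only (harmless) difference is that you apply conservativity to the cofibre of the unit $E \to LE$, which requires knowing that $x^*$ carries this unit to an equivalence over a field --- your blanket claim that $g^*$ ``intertwines the reflectors'' is a bit more than preserving r\'et-hypercovers buys you in general, but here it is justified because $x^*$ preserves r\'et-local equivalences (r\'et-covers pull back to r\'et-covers and $x^*$ preserves homotopy colimits) and over a field every r\'et-local equivalence is an equivalence --- whereas the paper applies conservativity directly to the generating hypercover maps $\hocolim_\Delta \Sigma^\infty U_\bullet \to \Sigma^\infty X$, for which commuting $x^*$ past the map is immediate.
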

Our initial proof of this statement contained a mistake; a correction and vast
simplification has kindly been communicated by Denis-Charles Cisinski.
\begin{proof}
It suffices to prove that all objects of $\SH(S)[\rho^{-1}]$ are rét-local. Thus
let $X \in Sm(S)$ and $U_\bullet \to X$ a rét-hypercover. We need to show that
\[ \alpha: \hocolim_\Delta \Sigma^\infty U_\bullet \to \Sigma^\infty X\]
is an
equivalence in $\SH(S)[\rho^{-1}]$. (See also Lemma \ref{lemm:commute-loc-stab}.)
Since $\SH(S)[\rho^{-1}]$ satisfies the six
functors formalism by Proposition \ref{prop:six-functors}, it follows from Corollary
\ref{corr:conservative-detection} that suffices to show that if $f: Spec(k) \to S$
is a morphism (with $k$ a field), then $f^*\alpha$ is an equivalence. But
$f^*$ is a left adjoint so commutes with homotopy colimits (and $\Sigma^\infty$), so
$f^*\alpha$ is isomorphic to the map
\[ \hocolim_\Delta \Sigma^\infty f^*U_\bullet \to \Sigma^\infty f^* X \]
in $\SH(k)[\rho^{-1}]$. Since rét-covers are stable by pullback,
this is an equivalence by Corollary \ref{corr:main-case-fields}.
\end{proof}

\begin{prop} \label{prop:main-equiv}
Let $S$ be a Noetherian scheme of finite dimension. Then the canonical functor
$\sSH(S_\ret) \to \SH(S)^\ret[\rho^{-1}]$ is an equivalence.
\end{prop}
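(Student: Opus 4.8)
The plan is to show that the canonical functor $i\colon \sSH(S_\ret) \to \SH(S)^\ret[\rho^{-1}]$ is fully faithful and essentially surjective, treating the two properties by quite different methods.

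\textbf{Full faithfulness.} First I would factor $i$ as
\[ \sSH(S_\ret) \xrightarrow{Le} \sSH(Sm(S)_\ret) \xrightarrow{L} \SH^{S^1}(S)^\ret[\rho^{-1}] \wequi \SH(S)^\ret[\rho^{-1}], \]
where $L$ is $(\Aone,\rho)$-localisation and the last equivalence is Lemma \ref{lemm:S1-rho-inversion} (the $\Gm$-stabilisation has no effect since $\Gm \wequi \tunit$ via $\rho$ in the middle category). The functor $Le$ is fully faithful by Corollary \ref{corr:Le-exact-ff}, so it suffices to show that $L$ does not change mapping spectra into objects of the form $Le F$; equivalently, that $Le F$ is already $\Aone$-local and $\rho$-local for every $F \in \sSH(S_\ret)$, so that $[iE,iF] = [Le E, Le F]_{\sSH(Sm(S)_\ret)} = [E,F]_{\sSH(S_\ret)}$. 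Fix $X \in Sm(S)$. By Lemma \ref{lemm:SH-t-structure} there is a conditionally convergent descent spectral sequence $H^p_\ret(X, \ul{\pi}_{-q}(Le F)) \Rightarrow [\Sigma^\infty X_+, Le F[p+q]]$, and $\ul{\pi}_{-q}(Le F) = e\,\ul{\pi}_{-q}(F)$ by the $t$-exactness clause of Corollary \ref{corr:Le-exact-ff}. The restrictions of this sheaf to the small rét-sites of $X$, of $X \times \Aone$ and of $X \times (\Aone \setminus 0)$ are compatible with pullback, so Theorem \ref{thm:ret-htpy-rho-inv} shows that the maps $X \to X \times \Aone$ and $X \coprod X \to X \times (\Aone \setminus 0)$ induce isomorphisms on the $E_2$-pages of the associated spectral sequences. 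Since $X$ and $X \times \Aone$ are Noetherian of finite Krull dimension their rét-cohomological dimension is finite, so all these spectral sequences converge strongly; hence the maps are isomorphisms on abutments, i.e.\ $Le F$ is $\Aone$-local and $\rho$-local. This establishes full faithfulness.

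\textbf{Essential surjectivity.} Using Corollary \ref{corr:main-equiv} I would identify $\SH(S)^\ret[\rho^{-1}] \wequi \SH(S)[\rho^{-1}]$; by Proposition \ref{prop:six-functors} this category satisfies the six functors formalism, continuity, and (via Scheiderer's proper base change theorem) proper base change, and by Theorem \ref{thm:ret-proper-basechange} the same holds for $\sSH(S_\ret)$. The essential image $\mathcal{E}$ of $i$ is a localising subcategory containing the unit: since $i$ preserves colimits and is fully faithful, $\mathcal{E}$ is generated by the $\Sigma^\infty X_+$ with $X/S$ étale. Following the argument of Cisinski--D\'eglise \cite{cisinski2013etale}, one checks that $i$ commutes with $f^*$ for all $f$ and with $f_\#$ for $f$ étale (both are formal, being statements about compatible left adjoints), and then --- this is where proper base change is used --- that the resulting exchange transformation $i\,f_* \to f_*\,i$ is an equivalence for $f$ proper; by Corollary \ref{corr:conservative-detection} this can be verified after pulling back along all morphisms $\Spec(k) \to S$, where it follows from the proper base change theorems on the two sides. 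Thus $\mathcal{E}$ is stable under $f^*$, étale $f_\#$ and proper $f_*$, and contains the unit; a dévissage (reduce to $S$ the spectrum of a field via continuity and localisation as in the proof of Corollary \ref{corr:main-equiv}, then use that Tate twists are trivial in $\SH(S)^\ret[\rho^{-1}]$ and that the motive of a smooth $S$-scheme is assembled from motives of étale $S$-schemes by these operations) shows $\Sigma^\infty X_+ \in \mathcal{E}$ for every $X \in Sm(S)$. Since such objects generate, $\mathcal{E} = \SH(S)^\ret[\rho^{-1}]$, and $i$ is an equivalence.

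The full faithfulness argument is essentially bookkeeping around Theorem \ref{thm:ret-htpy-rho-inv} and the descent spectral sequence, and I expect no trouble there. The delicate part will be essential surjectivity: making precise that $i$ intertwines the proper pushforwards --- so that $\mathcal{E}$ is closed under $f_*$ for proper $f$ --- and running the Cisinski--D\'eglise dévissage that reconstructs an arbitrary smooth motive from small-site data. The structural fact that makes this work on both sides is that a proper morphism is locally of finite relative rét-cohomological dimension (cf.\ step 2 of the proof of Theorem \ref{thm:ret-proper-basechange}), which is precisely what allows proper base change for unbounded objects.
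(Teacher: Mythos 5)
Your proposal follows essentially the same route as the paper. The full faithfulness half is identical: factor through $Le\colon \sSH(S_\ret)\to\sSH(Sm(S)_\ret)$, use $t$-exactness and the descent spectral sequence together with Theorem \ref{thm:ret-htpy-rho-inv} to see that the image is already $\Aone$- and $\rho$-local, and dispose of the $\Gm$-stabilisation via Lemma \ref{lemm:S1-rho-inversion}. The essential surjectivity half is also the paper's argument (the Cisinski--D\'eglise mechanism: the image is a localising subcategory, and one checks that the comparison functor commutes with proper pushforward using proper base change on both sides), with one reorganisation worth flagging: rather than running your own d\'evissage to produce every $\Sigma^\infty X_+$ from \'etale objects, the paper simply invokes the general fact that $\SH(S)^\ret[\rho^{-1}]$, satisfying the six functors formalism, is generated by $p_*(\tunit)$ for $p$ projective (\cite[Proposition 4.2.13]{triangulated-mixed-motives}); combined with $i\,p_* \simeq p_*\,i$ this finishes immediately. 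Your phrase ``reduce to $S$ the spectrum of a field via continuity and localisation'' is the one step I would not let stand as written --- Corollary \ref{corr:conservative-detection} detects when a \emph{map} is an equivalence after restriction to points, but membership of an object in a localising subcategory is not a pointwise condition, so the d\'evissage has to be done by Noetherian induction and localisation as in the cited generation result rather than by passing to fibres.
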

\begin{proof}
The functor $\sSH(S_\ret) \to \sSH(Sm(S)_\ret)$ is fully faithful and $t$-exact by
Corollary \ref{corr:Le-exact-ff}.
The image of $\sSH(S_\ret)$ in $\sSH(Sm(S)_\ret)$ consists of $\Aone$-local and
$\rho$-local objects, by the descent spectral sequence and
Theorem \ref{thm:ret-htpy-rho-inv} (and Corollary \ref{corr:Le-exact-ff}, which
implies that the
homotopy sheaves of $LeE$ are the extensions of the homotopy sheaves of $E$).
Consequently
$\sSH(S_\ret) \to \SH^{S^1}(S)^\ret[\rho^{-1}]$ is fully faithful. But
$\SH^{S^1}(S)^\ret[\rho^{-1}] \to \SH(S)^\ret[\rho^{-1}]$ is an
equivalence by Lemma \ref{lemm:S1-rho-inversion}. We have thus established
that the functor is fully faithful. We need to show it is essentially
surjective.

The category $\SH(S)^\ret[\rho^{-1}]$ is generated by objects of the form
$p_*(\tunit)$ where $p: T \to S$ is projective \cite[Proposition
4.2.13]{triangulated-mixed-motives}. Since the functor $e:
\sSH(S_\ret) \to \SH(S)^\ret[\rho^{-1}]$ has a right adjoint it commutes with
arbitrary sums, and hence it identifies $\sSH(S_\ret)$ with a localising
subcategory of $\SH(S)^\ret[\rho^{-1}]$. It thus suffices to show that $e$
commutes with $p_*$, where $p: T \to S$ is a projective morphism. This is
exactly the same as the proof of \cite[Proposition 4.4.3]{cisinski2013etale}.
It boils down to the
proper base change theorem holding both in $\SH(S)^\ret[\rho^{-1}]$ (where it
follows from the six functors formalism which we have already established by
showing that $\SH(S)^\ret[\rho^{-1}] \wequi \SH(S)[\rho^{-1}]$) and in
$\sSH(S_\ret)$; the latter is Theorem \ref{thm:ret-proper-basechange}.
\end{proof}

\paragraph{Remark.} If $S$ is the spectrum of a field, the above proof can be
simplified greatly, by arguing as in \cite[Section 5]{bachmann-hurewicz}. See in
particular Lemma 21, Corollary 26 and Proposition 28 of \emph{loc. cit.} This way we no
longer need to use the proper base change theorems, and thus also do not need to
know that $\SH(X)^\ret[\rho^{-1}]$ satisfies the six functors formalism.

One may also extract from \emph{loc. cit.} a proof of
Proposition \ref{prop:main-case-char-0} not relying on Theorem
\ref{thm:rho-stable-htpy-mod}. Thus if the base is a field, Sections
\ref{sec:recoll-real-etale}, \ref{sec:recoll-pre-motivic}, and
\ref{sec:jacobson} can be dispensed with.

\paragraph{}In summary, we have thus established the following result.

\begin{thm} \label{thm:final-comparison}
Let $S$ be a Noetherian scheme of finite dimension.
In the following two diagrams, all functors are the canonical ones, and are
equivalences of categories:
\begin{equation*}
\begin{CD}
\SH(S_\ret) @>a>> \SH^{S^1}(S)^\ret[\rho^{-1}] @<<< \SH^{S^1}(S)[\rho^{-1}]\\
@VVV                   @VbVV                          @Vb'VV  \\
\SH(S)^\ret       @>c>> \SH(S)^\ret[\rho^{-1}] @<d<< \SH(S)[\rho^{-1}]
\end{CD}
\end{equation*}

\begin{equation*}
\begin{CD}
D_\Aone(S_\ret) @>>> D_\Aone^{S^1}(S)^\ret[\rho^{-1}] @<<< D_\Aone^{S^1}(S)[\rho^{-1}] \\
@VVV                   @VVV                                  @VVV  \\
D_\Aone(S)^\ret       @>>> D_\Aone(S)^\ret[\rho^{-1}] @<<< D_\Aone(S)[\rho^{-1}].
\end{CD}
\end{equation*}

In particular all these categories satisfy the full six functors formalism, and
continuity.
\end{thm}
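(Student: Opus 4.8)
The plan is to deduce the theorem by a bookkeeping argument from the equivalences established in Sections~\ref{sec:preliminary} and~\ref{sec:main-theorems}, using repeatedly the elementary fact that in a commutative square of functors in which three of the four edges are equivalences, the fourth edge is an equivalence as well.

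First I would record the edges already known in the first diagram. The arrow $d$ is an equivalence by Corollary~\ref{corr:main-equiv}; the arrow $c$ is an equivalence by Proposition~\ref{prop:ret-rho-alread-equiv}; and the vertical arrows $b$ and $b'$ are equivalences by Lemma~\ref{lemm:S1-rho-inversion}, since after inverting $\rho$ the object $\Gm$ is already invertible and $\otimes$-inverting it has no effect. The composite $b \circ a\colon \SH(S_\ret) \to \SH(S)^\ret[\rho^{-1}]$ is precisely the canonical functor proved to be an equivalence in Proposition~\ref{prop:main-equiv}; since $b$ is an equivalence, so is $a$. Now commutativity of the left-hand square forces the left vertical $\SH(S_\ret) \to \SH(S)^\ret$ to be an equivalence (its three companions being so), and commutativity of the right-hand square forces the remaining top arrow $\SH^{S^1}(S)[\rho^{-1}] \to \SH^{S^1}(S)^\ret[\rho^{-1}]$ to be an equivalence. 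This settles the first diagram.

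For the second diagram I would simply observe that each ingredient above---Lemma~\ref{lemm:S1-rho-inversion}, Proposition~\ref{prop:ret-rho-alread-equiv}, Corollary~\ref{corr:main-equiv}, and Proposition~\ref{prop:main-equiv}---was stated and proved for $D_\Aone$ in place of $\SH$ by the same argument, so the identical diagram chase applies verbatim. Finally, that all the categories appearing satisfy the full six functors formalism and continuity is immediate from Proposition~\ref{prop:six-functors} and its $D_\Aone$-analogue, which supply these properties for $\SH(S)[\rho^{-1}]$ and $D_\Aone(S)[\rho^{-1}]$, transported along the equivalences just produced.

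The one point deserving genuine care---and where I expect the only real work to lie---is the verification that the various ``canonical'' functors in the two diagrams actually commute with one another, so that the three-out-of-four arguments above are legitimate. Concretely, one must check that extension along $S_\ret \hookrightarrow Sm(S)_\ret$, $\Aone$-localization, $\Gm$-stabilization, and $\rho$-inversion commute pairwise up to canonical natural isomorphism. This follows from their descriptions as left Quillen functors between Bousfield localizations and stabilizations---the commutation of localization with stabilization is already Lemma~\ref{lemm:commute-loc-stab}---but it is precisely this compatibility that turns the collection of equivalences obtained earlier into the asserted statement about the displayed diagrams.
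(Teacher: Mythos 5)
Your proposal is correct and follows essentially the same route as the paper: the paper likewise cites Corollary \ref{corr:main-equiv} for $d$, Lemma \ref{lemm:S1-rho-inversion} for $b,b'$, Proposition \ref{prop:ret-rho-alread-equiv} for $c$, and Proposition \ref{prop:main-equiv} for $ba$, then deduces the remaining arrows by the three-out-of-four argument and transports the six functors formalism from Proposition \ref{prop:six-functors}. Your closing remark about checking the commutations is a fair point of extra care, but it does not change the argument.
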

\begin{proof}
The functor $d$ is an equivalence by Corollary \ref{corr:main-equiv}, $b$ and
$b'$ are equivalences by Corollary \ref{lemm:S1-rho-inversion}, $c$ is an
equivalence by Proposition \ref{prop:ret-rho-alread-equiv} and $ba$ is an
equivalence by Proposition  \ref{prop:main-equiv}. It follows that $a$ is an
equivalence, and so are the two unlabelled functors.

By Proposition \ref{prop:six-functors}, $\SH(\bullet)[\rho^{-1}]$ satisfies the
full six functors formalism, and hence so do all the other pseudofunctors, being
equivalent.

We have provided the proofs for $\SH$, the ones for $D_\Aone$ are exactly the
same.
\end{proof}

\section{Real Realisation}
\label{sec:real-realisation}

In this section we work over the field $\RR$ of real numbers. We then have a
composite
\[ R_1: \SH(\RR) \xrightarrow{L_\rho} \SH(\RR)^\ret[\rho^{-1}] \wequi
   \SH^{S^1}(\RR)^\ret[\rho^{-1}] \xrightarrow{r} \SH^s. \]
Here by $\SH^s$ we mean the model of the stable homotopy category $\SH$ built
from simplicial sets. Of course $\SH^s \wequi \SH$ canonically (and this may be
an equality depending, on our favourite model of $\SH$). Also $r$ denotes the
functor induced by the right adjoint of $e: Pre(\RR_\ret) \to Pre(Sm(\RR))$ from
Section \ref{sec:recoll-real-etale}.

Following Heller-Ormsby \cite[Section 4.4]{heller2016galois}, there is also the
real realisation functor $LR_2: \SH(\RR) \to \SH^t$. Here $\SH^t$ is the model of
$\SH$ built from topological spaces. The functor $LR_2$ is defined by starting
with the functor $R_2: Sm(\RR) \to Top, X \mapsto X(\RR)$ assigning a smooth
scheme over $\RR$ its set of real points with the strong topology. We then get a
functor $R_2: sPre(Sm(\RR))_* \to Top$ by left Kan extension, i.e. demanding that
$R_2(\Delta^n_+ \wedge X_+) = \Delta^n_+ \wedge X(\RR)_+$ and that $R_2$ preserves
colimits. Using the projective model structure on $sPre(Sm(\RR))_*$ this functor
is left Quillen and then one promotes it to $LR_2: \SH(\RR) \to \SH^t$ in the
usual way.

Fortunately the two potential real realisation functors are the same. To state
this result, recall that there is an adjunction
\[ |\bullet| : sSet \leftrightarrows Top : Sing^t, \]
and then by passing to homotopy categories of spectra one obtains the adjoint equivalence
\[ L|\bullet| : \SH^s \leftrightarrows \SH^t : RSing^t. \]

\begin{prop} \label{prop:real-realn-compatible}
The two functors $L|R_1(\bullet)|, LR_2(\bullet): \SH(\RR) \to \SH^t$ are
canonically isomorphic.
\end{prop}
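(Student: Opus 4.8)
The plan is to realise both functors as precompositions with $L_\rho$ of the canonical equivalence $\SH(\RR)[\rho^{-1}] \wequi \SH^t$, and then to identify these by comparing values on the unit object. First I would note that both $L|R_1(-)|$ and $LR_2(-)$ are triangulated and preserve arbitrary homotopy colimits: for $L|R_1(-)|$ because $L_\rho$ is a Bousfield localisation, the identifications $\SH(\RR)^\ret[\rho^{-1}] \wequi \SH^{S^1}(\RR)^\ret[\rho^{-1}] \wequi \sSH(Spec(\RR)_\ret)$ of Theorem \ref{thm:final-comparison} are equivalences, $r = e_*$ preserves colimits by Lemma \ref{lemm:ret-extension-fully-faithful}, and $L|\cdot|$ is a left adjoint; for $LR_2(-)$ because it is the total derived functor of a left Quillen functor. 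Moreover $LR_2$ is symmetric monoidal, since $R_2\colon X \mapsto X(\RR)$ carries products to products. In particular $LR_2(\rho)$ is the inclusion of a point into one component of $R_2(\Gm) = \RR^\times \wequi \{\pm 1\}$, hence an equivalence, so $LR_2$ inverts every morphism in $H_\rho$; by the universal property of the Bousfield localisation defining $\SH(\RR)[\rho^{-1}]$ (Section \ref{sec:monoidal-local}) it therefore factors as $\bar{R}_2 \circ L_\rho$ with $\bar{R}_2\colon \SH(\RR)[\rho^{-1}] \to \SH^t$ colimit-preserving and triangulated. The functor $L|R_1(-)|$ visibly factors through $L_\rho$ as well; unwinding its definition and using that on the $\Aone$-local part the restriction $r$ inverts the fully faithful $e$ (Proposition \ref{prop:main-equiv}), together with Corollary \ref{corr:main-equiv}, one identifies the resulting $\bar{R}_1\colon \SH(\RR)[\rho^{-1}] \to \SH^t$ with the composite of the canonical equivalences $\SH(\RR)[\rho^{-1}] \wequi \SH(\RR)^\ret[\rho^{-1}] \wequi \sSH(Spec(\RR)_\ret) \wequi \SH^s$ followed by $L|\cdot|\colon \SH^s \wequi \SH^t$.

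Next I would compare $\bar{R}_1$ and $\bar{R}_2$ on the unit $\tunit \in \SH(\RR)[\rho^{-1}]$. Every functor in the chain defining $\bar{R}_1$ preserves the monoidal unit, so $\bar{R}_1(\tunit) \wequi \tunit_{\SH^t}$, the topological sphere spectrum. On the other hand $\tunit_{\SH(\RR)} = \Sigma^\infty Spec(\RR)_+$, and $R_2(Spec(\RR)) = (Spec\,\RR)(\RR)$ is a one-point space, whence $\bar{R}_2(\tunit) = LR_2(\Sigma^\infty Spec(\RR)_+) \wequi \Sigma^\infty \{*\}_+ \wequi \tunit_{\SH^t}$ as well. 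Thus the two functors agree, up to canonical isomorphism, on the unit.

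Finally I would invoke Theorem \ref{thm:final-comparison} together with the elementary fact that $\sSH(Spec(\RR)_\ret) \wequi \SH$ — valid because the small real étale site of $\RR$ carries the trivial topology on its real point — to conclude that $\SH(\RR)[\rho^{-1}] \wequi \SH$, the classical stable homotopy category. A colimit-preserving triangulated functor out of $\SH$ into a presentable stable category is determined, up to natural isomorphism, by its value on the sphere spectrum (it is $-\otimes F(\tunit)$ for the canonical tensoring over $\SH$); applied to $\bar{R}_1$ and $\bar{R}_2$, which agree on $\tunit$, this yields a canonical natural isomorphism $\bar{R}_1 \wequi \bar{R}_2$, and hence $L|R_1(-)| \wequi LR_2(-)$.

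I expect the main obstacle to be making this last step rigorous in the model-categorical language in force here, namely the assertion that a homotopy-colimit-preserving triangulated functor from $\SH$ is pinned down by its value on $\tunit$. A safe alternative is a direct comparison on the generators $\Sigma^\infty X_+$, $X \in Sm(\RR)$: one identifies $R_1(\Sigma^\infty X_+)$ with (the simplicial model of) $\Sigma^\infty X(\RR)_+$ using the equivalence of topoi $Shv(X_\ret) \wequi Shv(X(\RR))$ and Theorem \ref{thm:ret-htpy-rho-inv} for the $\Gm$-twist, and then checks that the comparison is compatible with both the simplicial $S^1$- and the $\Gm$-suspension; compatibility with the $\Gm$-suspension is the delicate point, of the same flavour as the switch-on-the-square-of-an-invertible-object phenomenon exploited in Section \ref{sec:monoidal-local}.
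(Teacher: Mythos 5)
Your reduction is the same as the paper's up to the last step: both proofs factor $LR_2$ through $L_\rho$ (because $R_2(\rho)$ is an equivalence onto the non-basepoint component of $R_2(\Gm)\wequi\{\pm 1\}$) and then compare two functors out of $\SH(\RR)[\rho^{-1}]\wequi\SH^{S^1}(\RR)^\ret[\rho^{-1}]$. The gap is in how you identify them. The principle you invoke --- that a homotopy-colimit-preserving triangulated functor out of $\SH$ into a presentable stable category is determined up to natural isomorphism by its value on $\tunit$ --- is an $\infty$-categorical universal property of $\SH$; it is not available at the level of triangulated homotopy categories, which is the framework in force here. To even write down the comparison map $X\otimes F(\tunit)\to F(X)$ you need an $\SH$-linear (or at least lax monoidal) structure on $F$, which you have not supplied for $\bar R_1$; and without such a natural transformation in hand, your fallback of checking agreement on the generators $\Sigma^\infty X_+$ does not close the gap either, since two coproduct-preserving triangulated functors that agree objectwise on a set of compact generators need not be naturally isomorphic (cones are not functorial, so one cannot propagate objectwise identifications to an isomorphism of functors).

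The paper sidesteps this by producing the natural isomorphism at the \emph{underived}, point-set level, where the relevant universal property is an honest $1$-categorical statement. Concretely: since $r$ is an equivalence with inverse $e$ (Theorem \ref{thm:final-comparison}), comparing $R_2'$ with $|\bullet|\circ r$ reduces to comparing the two functors $R_2'\circ e$ and $|\bullet|$ out of $\SH^s$; both are computed levelwise on spectra, both underived functors preserve colimits of (pointed) simplicial sets, and they agree on $\Delta^n_+$ by definition, so the free-cocompletion property of $sSet$ over $\Delta$ yields a natural isomorphism of point-set functors, which then descends to the homotopy categories. If you want to salvage your route instead, you would need to equip both $\bar R_1$ and $\bar R_2$ with compatible module structures over $\SH(\RR)[\rho^{-1}]$ and then argue that the locus where the resulting comparison transformation is an isomorphism is a localizing subcategory containing $\tunit$; that works, but it is more structure than your write-up provides.
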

\begin{proof}
The functor $R_2$ takes multiplication by $\rho$ into a weak equivalence.
Consequently it remains left Quillen in the $\rho$-local model structure and
hence $LR_2$ canonically factors through the localisation $\SH(\RR) \to
\SH(\RR)[\rho^{-1}].$ Since $\SH(\RR)[\rho^{-1}] \wequi
\SH^{S^1}(\RR)^\ret[\rho^{-1}]$ the obvious functor $R_2': Spt(Sm(\RR)) \to
Spt^t$ is left Quillen in the $(\rho,\ret,\Aone)$-local model structure. (Here
we have used twice the following well-known observation: if $L: \mathcal{M}
\leftrightarrows \mathcal{N}: R$ is a Quillen adjunction and $H$ is a set of
maps between cofibrant objects in $\mathcal{M}$ which is taken by $L$ into weak
equivalences, then $L: L_H \mathcal{M} \leftrightarrows \mathcal{N}: R$ is also
a Quillen adjunction. This follows from \cite[Propositions 8.5.4 and
3.3.16]{hirschhorn-book}.)

We now have the following diagram (which we do not know to be commutative so far):
\begin{equation*}
\begin{CD}
\SH^{S^1}(\RR)^\ret[\rho^{-1}] @>R_2'>> \SH^t \\
@VrVV                                     @|    \\
\SH^s     @>|\bullet|>>                   \SH^t.
\end{CD}
\end{equation*}

Here all the functors are derived; we omit the ``L'' and ``R''.
The functor $r$ is an equivalence with inverse $e$ by Theorem
\ref{thm:final-comparison}.
Thus for $E \in \SH^{S^1}(\RR)^\ret[\rho^{-1}]$ we have a canonical
isomorphism $R_2' E \wequi R_2' erE$ and so to prove the proposition it suffices to
exhibit a canonical isomorphism of functors $R_2' e \wequi |\bullet|$. 

But this isomorphism exists on the level of underived functors, and then passes
to the homotopy categories. Indeed if $E \in Spt^s$ then $R_2'eE$ and $|E|$ are
both computed by applying functors (of the same names) levelwise to $E$, so we
may just as well show that for $E \in sSet_*$ we have $R_2'eE \iso |E|$. But now
$R_2'$, $e$ and $|\bullet|$ all preserve colimits, so we may just deal
with $E = \Delta^n_+$. But then $R_2'eE \Delta^n_+ = |\Delta^n_+|$ holds
basically by definition.
\end{proof}

A similar result can be obtained for the $\Aone$-derived category. We have $r:
D_\Aone(\RR)[\rho^{-1}] \to D(Spec(\RR)_\ret) \wequi D(Ab).$ There is also
$R_2: D_\Aone(\RR) \to D(Ab)$ which is obtained by (derived) left Kan extension
from the functor $Sm(\RR) \to C(Ab)$ which sends a smooth scheme $X$ the singular complex of its real points
$C_*(X(\RR))$. Then there is a commutative diagram
\begin{equation*}
\begin{CD}
\SH(\RR) @>R_2>> \SH     \\
@VC_*VV          @VC_*VV \\
D_\Aone(\RR) @>R_2>> D(Ab).
\end{CD}
\end{equation*}

\begin{prop}
The functors $rL_\rho, R_2: D_\Aone(\RR) \to D(Ab)$ are canonically isomorphic.
\end{prop}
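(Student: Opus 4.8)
The plan is to mimic the proof of Proposition \ref{prop:real-realn-compatible}, replacing spectra by chain complexes throughout. First I would show that $R_2 \colon D_\Aone(\RR) \to D(Ab)$ factors through the localisation $L_\rho$. By definition $R_2$ is the derived left Kan extension of the functor $Sm(\RR) \to C(Ab)$, $X \mapsto C_*(X(\RR))$, and this functor inverts the relevant classes of maps: it takes multiplication by $\rho$ to a quasi-isomorphism (since $R_2(\Gm) = C_*(\RR \setminus 0)$, which is equivalent to $C_*$ of the two-point space $\{\pm 1\}$, with $\rho$ realising to the evident identification, as recorded in the introduction), it takes $\Aone$-projections to quasi-isomorphisms (as $\RR$ is contractible), and it takes rét-hypercovers to quasi-isomorphisms (a rét-cover induces an open cover on real points, since étale maps are local homeomorphisms there, and singular chains satisfy descent for open covers). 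By the same reasoning as in Proposition \ref{prop:real-realn-compatible} --- using that a Quillen adjunction whose left adjoint carries a set of maps between cofibrant objects to weak equivalences descends to the left Bousfield localisation at those maps, \cite[Propositions 8.5.4 and 3.3.16]{hirschhorn-book} --- and using Theorem \ref{thm:final-comparison} to identify the $(\rho,\ret,\Aone)$-localisation of $D_\Aone(\RR)$ with $D_\Aone^{S^1}(\RR)^\ret[\rho^{-1}]$, this yields a functor $R_2' \colon D_\Aone^{S^1}(\RR)^\ret[\rho^{-1}] \to D(Ab)$ with $R_2 \wequi R_2' \circ L_\rho$.

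Next I would invoke Theorem \ref{thm:final-comparison} once more. Since $\RR$ is real closed the site $Spec(\RR)_\ret$ is trivial, so $D(Spec(\RR)_\ret) \wequi D(Ab)$, and the functor $r \colon D_\Aone^{S^1}(\RR)^\ret[\rho^{-1}] \to D(Ab)$ is an equivalence with inverse the extension-from-the-small-site functor $e$. Exactly as in the proof of Proposition \ref{prop:real-realn-compatible}, it then suffices to exhibit a canonical natural isomorphism $R_2' \circ e \wequi \id_{D(Ab)}$: granting this, $R_2 \wequi R_2' L_\rho \wequi R_2' e r L_\rho \wequi r L_\rho$. To build this isomorphism I would use that $R_2'$, $e$ and $\id$ all preserve homotopy colimits (being left derived functors of left adjoints), so it is enough to identify them on a compact generator. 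The category $D(Ab)$ is generated by $\ZZ$, which $e$ carries to the unit $\tunit = \ZZ[Spec(\RR)]$, and by the definition of $R_2$ as a left Kan extension $R_2'(\ZZ[Spec(\RR)])$ is computed on the underived level by $C_*(Spec(\RR)(\RR)) = C_*(\mathrm{pt})$, which is canonically quasi-isomorphic, via the augmentation, to $\ZZ$ concentrated in degree zero. This supplies the required isomorphism $R_2' e \wequi \id$.

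The only step I expect to require genuine (as opposed to purely formal) work is the opening verification that $C_*((-)(\RR))$ inverts $\rho$, the $\Aone$-projections and the rét-hypercovers, together with checking that left Bousfield localisation is compatible with the Quillen adjunction; but this is the literal analogue of the first half of the proof of Proposition \ref{prop:real-realn-compatible}, so it should be routine. (One could instead try to deduce the proposition directly from Proposition \ref{prop:real-realn-compatible} and the commutative square relating the two versions of $R_2$ through the linearisation functor $C_*$, but making precise the compatibility of $r$ with $C_*$ does not seem to shorten the argument.)
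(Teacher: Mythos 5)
Your proposal is correct and follows essentially the same route as the paper, which simply says ``as before $R_2$ factors through $L_\rho$ as $R_2'$\dots\ the functor $r$ is an equivalence with inverse $e$, so it is enough to show that $R_2'e$ is canonically isomorphic to the identity; this is the same argument as before.'' You have merely spelled out the details that the paper delegates to the proof of Proposition \ref{prop:real-realn-compatible} (the verification that $C_*((-)(\RR))$ inverts $\rho$, the $\Aone$-projections and the r\'et-hypercovers, and the reduction of $R_2'e\wequi\id$ to generators using preservation of colimits).
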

\begin{proof}
As before $R_2$ factors through $L_\rho$ as $R_2'$ and we may show that $r,
R_2': D_\Aone(\RR)[\rho^{-1}] \to D(Ab)$ are canonically isomorphic.
The functor $r$ is an equivalence with inverse $e$, so it is enough to show that
$R_2' e: D(Ab) \to D_\Aone(\RR)[\rho^{-1}] \wequi
D_\Aone^{S^1}(\RR)[\rho^{-1}] \to D(Ab)$ is canonically isomorphic to the
identity. This is the same argument as before.
\end{proof}

Let us make explicit the following consequence.

\begin{corr}
Let $E \in \SH(\RR)$. Then
\[ \ul{\pi}_i(E)(\RR)[\rho^{-1}] = \pi_i(RE) \]
and
\[ \ul{h}_i^\Aone(E)(\RR)[\rho^{-1}] = H_i(RE). \]
Here $R: \SH(\RR) \to \SH$ denotes any one of the (canonically isomorphic) real
realisation functors we have considered and $\ul{h}_i^\Aone(E) := \ul{h}_i(FE)$
where $F: \SH(\RR) \to D_\Aone(\RR)$ is the canonical functor.
\end{corr}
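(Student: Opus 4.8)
The plan is to identify the real realisation with $r\circ L_\rho$ and then to read off homotopy groups on the small real étale site of $Spec(\RR)$, which is a point. By construction $R$ is canonically isomorphic to the functor $R_1\colon \SH(\RR)\xrightarrow{L_\rho}\SH(\RR)^\ret[\rho^{-1}]\wequi\SH^{S^1}(\RR)^\ret[\rho^{-1}]\xrightarrow{r}\SH^s$ (and, by Proposition \ref{prop:real-realn-compatible} and its $D_\Aone$-analogue, to all the other realisations we considered), so it suffices to treat $R_1$; thus $RE\wequi r(L_\rho E)$. Since $\RR$ is real closed, $Sper(\RR)$ is a point, hence $Shv(Spec(\RR)_\ret)\wequi Set$, $\sSH(Spec(\RR)_\ret)\wequi\SH^s$, and $Spec(\RR)$ has trivial rét-cohomological dimension; so by Lemma \ref{lemm:SH-t-structure} the descent spectral sequence degenerates and, for an object $Z$ on $Spec(\RR)_\ret$, $\pi_i(Z)$ is the abelian group $\Gamma(Spec(\RR),\ul{\pi}_i Z)$, i.e. the value of the homotopy sheaf at the terminal object. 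The statement therefore reduces to computing $\ul{\pi}_i(rL_\rho E)$ and evaluating at $Spec(\RR)$.

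Next I would compute the homotopy sheaves of $L_\rho E$ and transport them through $r$. By Lemma \ref{lemm:rho-local-model} the $\rho$-localisation is modelled by $\hocolim_n E\wedge\Gmp{n}$, and since homotopy sheaves commute with filtered homotopy colimits this gives $\ul{\pi}_i(L_\rho E)=\colim_n\ul{\pi}_i(E)_n=:\ul{\pi}_i(E)_*[\rho^{-1}]$, a $\rho$-stable homotopy module, hence a rét-sheaf by Theorem \ref{thm:rho-stable-htpy-mod} — this is exactly what lets us view $L_\rho E$ inside $\SH(\RR)^\ret[\rho^{-1}]\wequi\SH(\RR)[\rho^{-1}]$ (Corollary \ref{corr:main-equiv}) and then in $\SH^{S^1}(\RR)^\ret[\rho^{-1}]$ (Lemma \ref{lemm:S1-rho-inversion}). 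By Theorem \ref{thm:final-comparison} the functor $r$ is the inverse of the extension equivalence $e$, and $e$ is $t$-exact with $e^\heart$ the fully faithful small-to-large extension of rét-sheaves (Corollary \ref{corr:Le-exact-ff} and Lemma \ref{lemm:SH-functor-t}); hence $r$ is $t$-exact and $r^\heart$ is restriction to the small site. Consequently $\ul{\pi}_i(rL_\rho E)$ is the restriction of $\ul{\pi}_i(E)_*[\rho^{-1}]$ to $Spec(\RR)_\ret$, whose value at the terminal object is $\bigl(\colim_n\ul{\pi}_i(E)_n\bigr)(\RR)=\colim_n\ul{\pi}_i(E)_n(\RR)$; the last equality holds because $Spec(\RR)$, being a field, carries no non-trivial Nisnevich covers, so $\Gamma(Spec(\RR),-)$ is exact on sheaves and commutes with the filtered colimit. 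Since $\ul{\pi}_i(E)_n(\RR)=[\tunit[i],E\wedge\Gmp{n}]$ with transition maps multiplication by $\rho$, this colimit is precisely $\ul{\pi}_i(E)(\RR)[\rho^{-1}]$, proving the first formula.

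The second formula is obtained by running the identical argument along the lower row of Theorem \ref{thm:final-comparison}: one uses the $D_\Aone$-version of Proposition \ref{prop:real-realn-compatible}, the definition $\ul{h}_i^\Aone(E)=\ul{h}_i(FE)$ together with the commutative square relating $R_2$, $F$ and $C_*$ from Section \ref{sec:real-realisation} (which gives $H_i(RE)=H_i(R(FE))$), and the $D_\Aone$-analogue of Theorem \ref{thm:rho-stable-htpy-mod}, valid by the ``same proof'' convention of the paper; this makes $r\colon D_\Aone(\RR)[\rho^{-1}]\to D(Spec(\RR)_\ret)\wequi D(Ab)$ act on hearts by restriction to the small site, and the colimit computation goes through verbatim. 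I expect the only genuinely delicate point to be the bookkeeping in the second paragraph: checking that the localisation $L_\rho$ and the equivalences of Theorem \ref{thm:final-comparison} are compatible with the homotopy $t$-structures in the way asserted, so that $r$ really does correspond on hearts to evaluation at $Spec(\RR)$; granted this, everything else is a routine filtered-colimit manipulation.
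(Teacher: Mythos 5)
Your argument is correct in outline, but it takes a more roundabout route than the paper and, as you yourself flag, hinges on a compatibility that the paper never establishes in the generality you invoke: that the chain of equivalences in Theorem \ref{thm:final-comparison} matches Morel's homotopy $t$-structure on $\SH(\RR)[\rho^{-1}]$ (bigraded Nisnevich homotopy sheaves) with the local homotopy $t$-structure on $\sSH(Spec(\RR)_\ret)$, with $r$ acting on hearts by restriction to the small site. The paper's own proof sidesteps this entirely. Since $RE \wequi r(E[\rho^{-1}])$ by Proposition \ref{prop:real-realn-compatible}, and $r$ is an \emph{equivalence} carrying $\tunit$ to $\tunit$, one has directly $\pi_i(RE) = [\tunit[i], E[\rho^{-1}]]_{\SH(\RR)[\rho^{-1}]} = [\tunit[i], E[\rho^{-1}]]_{\SH(\RR)}$ (the latter because $E[\rho^{-1}]$ is $\rho$-local); then Lemma \ref{lemm:rho-local-model} gives $E[\rho^{-1}] = \hocolim_n E \wedge \Gmp{n}$, and compactness of $\tunit$ yields $\colim_n [\tunit[i], E\wedge\Gmp{n}] = \colim_n \ul{\pi}_i(E)_n(\RR) = \ul{\pi}_i(E)(\RR)[\rho^{-1}]$, using only that sections of the homotopy sheaf over the field $\RR$ agree with the presheaf value $[\tunit[i], E\wedge\Gmp{n}]$. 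In other words, because the statement only concerns evaluation at the terminal object $Spec(\RR)$, everything you need is already contained in ``equivalences preserve mapping groups out of the unit'' plus compactness; no $t$-exactness of $r$, no identification of hearts, and no appeal to Theorem \ref{thm:rho-stable-htpy-mod} is required. Your route buys a genuinely stronger statement (a sheaf-level identification of $\ul{\pi}_i(E)_*[\rho^{-1}]$ with the extension of a sheaf on the small site, which the paper does use elsewhere, e.g.\ in Corollary \ref{corr:rho-rigidity}), but for this corollary it imports a delicate verification that can simply be avoided. The same remark applies to your treatment of the $D_\Aone$ case.
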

\begin{proof}
Combine Lemma \ref{lemm:rho-local-model} (saying that $L_\rho E =
E[\rho^{-1}] = \hocolim_n E \wedge \Gmp{n}$) with compactness of the
units of $\SH, D_\Aone$ and the above two propositions.
\end{proof}

\section{Application 1: The $\eta$-inverted Sphere}
\label{sec:app1}

From now on, $k$ will denote a perfect field. Since essentially all our results
concern the $\rho$-inverted situation, they are really only interesting if $k$
has characteristic zero, so this is not a big restriction.

Recall that the motivic Hopf map $\eta: \mathbb{A}^2 \setminus 0 \to
\mathbb{P}^1$ defines an element of the same name in motivic stable homotopy
theory $\eta: \Sigma^\infty \Gm \to \tunit$. Here we use that $\Sigma^\infty
(\mathbb{A}^2 \setminus 0) \wequi \Sigma^\infty \Gm \wedge \Sigma^\infty
\mathbb{P}^1$. The element $\eta \in \ul{\pi}_0(\tunit)_{-1}$ is non-nilpotent, and
so inverting it is very natural. The category $\SH(k)[\eta^{-1}]$ can be
constructed very similarly to $\SH(k)[\rho^{-1}]$. In particular the
localisation functor $L: \SH(k) \to \SH(k)[\eta^{-1}]$ is just the evident
colimit, see Lemma \ref{lemm:rho-local-model}. It is typically denoted $E
\mapsto E_\eta$ or $E \mapsto E[1/\eta]$.
One may similarly invert other endomorphisms of the sphere
spectrum. If $0 \ne n \in \ZZ$ then there is a corresponding automorphism of
$\tunit$, and we denote the localisation by $E \mapsto E[1/n]$.

At least after inverting $2$, inverting $\eta$ is essentially
the same as inverting $\rho$:

\begin{lemm} \label{lemm:rho-eta-comparison}
The endomorphism ring $K_*^{MW}(k)[1/2] = [\tunit[1/2], \tunit[1/2] \wedge \Gmp{*}]$ splits canonically
into two summands $K_*^{MW}(k)[1/2] = K^+ \oplus K^-$. In fact $K^- =
K_*^{MW}(k)[1/2, 1/\eta]$ and $K^+$ is characterised by the fact that $\eta K^+
= 0$.

In $K^-$ we have the equality $\eta \rho = 2$, whereas in
$K^+$ we have $\rho^2 = 0$. In particular
\[ K_*^{MW}(k)[1/2, 1/\eta] = K^- = K_*^{MW}(k)[1/2, 1/\rho]. \]
\end{lemm}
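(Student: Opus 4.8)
The plan is to work entirely inside the graded endomorphism ring $R_* := K_*^{MW}(k)[1/2]$ and exploit the two elements $h$ and $\eta$ together with the relation $h = 2 - \eta\rho$ (which comes from the defining relation $h = 1 + \langle -1\rangle$ in $K_0^{MW}$ and $\langle -1 \rangle = 1 + \eta\rho$, i.e.\ $\langle -1\rangle - 1 = \eta[-1] = -\eta\rho$). First I would recall from \cite[Corollary 3.8]{A1-alg-top} the relation $\eta h = 0$; hence $\eta(2 - \eta\rho) = 0$, so $2\eta = \eta^2\rho$, which will be the source of the claimed identity $\eta\rho = 2$ after the splitting. The splitting itself: since $2$ is invertible and $\eta h = 0$, multiplication by $h/2$ is an idempotent-adjacent operator, but more cleanly, set $e^- := \eta\rho/2 = (2 - h)/2 = 1 - h/2$ and $e^+ := h/2$. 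Using $\eta h = 0$ and the relation $\rho^2 h = 0$ (proved in the course of Corollary \ref{corr:colim_KnMW}, from $a^2 h = 0$ for $a \in K_1^{MW}$), I would check $h^2 = 2h$ (equivalently $(h/2)^2 = h/2$): this holds because $h^2 = h(2 - \eta\rho) = 2h - \eta h \rho = 2h$ since $\eta h = 0$. So $e^\pm$ are complementary orthogonal idempotents in $R_0$, central because $K_*^{MW}$ is a graded-commutative ring with $R_0$ in the centre, giving the ring decomposition $R_* = K^+ \oplus K^-$ with $K^\pm := e^\pm R_*$.

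Next I would identify the two summands. On $K^+ = (h/2)R_*$ we have $\eta = \eta \cdot (h/2)\cdot(\text{stuff})$... more precisely any element of $K^+$ is $h x /2$, and $\eta \cdot hx/2 = 0$ since $\eta h = 0$; so $\eta$ annihilates $K^+$, and moreover $\rho^2 = \rho^2 \cdot 1$ acts on $K^+$ as multiplication by $\rho^2 h/2 = 0$, giving $\rho^2 = 0$ in $K^+$. On $K^- = (1 - h/2)R_* = (\eta\rho/2)R_*$ we have, by the computation above, $\eta\rho \cdot e^- = \eta\rho - \eta\rho h/2 = \eta\rho$ (using $\eta h = 0$), while $2 e^- = 2 - h = \eta\rho$; hence $\eta\rho = 2$ holds in $K^-$. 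It remains to match $K^-$ with the localisations. Since $e^-$ is the identity of $K^-$ and $e^- = \eta\rho/2$ is divisible by $\eta$ (and by $\rho$) in $K^-$, both $\eta$ and $\rho$ are invertible in $K^-$: indeed $\eta \cdot (\rho/2) = e^- = 1_{K^-}$ and $\rho \cdot (\eta/2) = 1_{K^-}$. Conversely, localising $R_*$ at $\eta$ kills $K^+$ (since $\eta$ is nilpotent there — in fact $\eta\cdot K^+ = 0$) and inverts $\eta$ on $K^-$ where it is already invertible, so $R_*[1/\eta] = K^-$; the identical argument with $\rho$ in place of $\eta$ (using $\rho^2 = 0$ on $K^+$) gives $R_*[1/\rho] = K^-$. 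Chaining these, $K_*^{MW}(k)[1/2,1/\eta] = K^- = K_*^{MW}(k)[1/2,1/\rho]$.

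I expect the only real subtlety — the ``main obstacle'' — to be bookkeeping the graded structure and confirming centrality: one must be slightly careful that $h$ and $\eta$ really do satisfy $\eta h = 0$ and $\rho^2 h = 0$ in the \emph{unramified} sheaf $\ul K^{MW}_*$ over a general perfect field, but these are exactly the relations already invoked for Corollary \ref{corr:colim_KnMW}, so no new input is needed. The algebra is otherwise a routine idempotent-decomposition argument, and no localisation needs to be taken at the level of categories here — the statement is purely about the endomorphism ring of the sphere, so Lemma \ref{lemm:rho-local-model} is not even required for this particular lemma (it will be used afterwards to deduce the corresponding statement for $\SH(k)[1/2]$ itself).
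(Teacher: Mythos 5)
Your proposal is correct and is essentially the paper's own argument: the paper splits $K_*^{MW}(k)[1/2]$ into the $\pm 1$ eigenspaces of $\epsilon=-\langle -1\rangle$, whose projectors are exactly your central idempotents $e^+=h/2=(1-\epsilon)/2$ and $e^-=1-h/2$, and it uses the same relations $\eta h=0$, $\eta\rho=2-h$ and $\rho^2h=0$ to draw the same conclusions. The only difference is cosmetic (idempotents versus eigenspaces), so nothing further is needed.
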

\begin{proof}
This is well known, see for example \cite[Section 3.1]{A1-alg-top}. We
summarise: For $a \in k^\times$ let $\langle a \rangle = 1 + \eta [a] \in
K_0^{MW}(k)$. Put $\epsilon = -\langle-1\rangle$. Then $\epsilon^2 = 1$ and so
after inverting $2$, $K_*^{MW}(k)$ splits into the eigenspaces for $\epsilon$.
One puts $h = 1 - \epsilon$ and then has $\eta h = 0$. On $K^+$ we have
$\epsilon = -1$, so $h = 2$ and consequently $\eta = 0$ (since $2$ is
invertible).

By definition $\rho = -[-1]$ and consequently $\eta \rho = 1 + \epsilon$.
Thus on $K^-$ where $\epsilon = 1$ we find $\eta \rho = 2$ as claimed, and in
particular $\eta$ is invertible on $K^-$.

Finally $\rho^2 h = 0$ in $K_*^{MW}(k)$ and thus $2 \rho^2 = 0$ in $K^+$.
(This is just another expression of the fact that $K^+ \iso K^M(k)[1/2]$ is
graded-commutative and $\rho$ has degree 1, so $\rho^2 = -\rho^2$.) But since $2$
is invertible in $K^+$ we find $\rho^2 = 0$ (in $K^+$). This concludes the proof.
\end{proof}

Oliver Röndigs has studied the homotopy sheaves $\ul{\pi}_1(\tunit_\eta)$ and
$\ul{\pi_2}(\tunit_\eta)$ and proved that they vanish \cite{rondigs2016eta}. (Note
that $\ul{\pi}_i(E_\eta)_*$ is independent of $*$, because multiplication by
$\eta$ is an isomorphism, so we shall suppress the second index.) He argues that
$\ul{\pi}_i(\tunit)_* \to \ul{\pi}_i(\tunit[1/2])_*$ is injective for $i = 1, 2$ (see his
Lemma 8.2) and consequently an important part of his work is in showing that
$\ul{\pi}_i(\tunit[1/\eta, 1/2]) = 0$ for $i = 1, 2$. We can deduce this as an easy
corollary from our work:

\begin{prop} \label{prop:app-rondigs}
Let $k$ be a perfect field. Then $\ul{\pi}_i(\tunit[1/\eta, 1/2]) = 0$ for $i = 1, 2$.
\end{prop}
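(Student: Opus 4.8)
The plan is to reduce everything to the $\rho$-inverted sphere, transport the computation across the main comparison theorem, and then read off the answer from the classical stable stems. First I would invoke Lemma \ref{lemm:rho-eta-comparison}, which gives an equivalence $\tunit[1/\eta,1/2] \wequi \tunit[1/\rho,1/2]$ in $\SH(k)$. This object is $\rho$-local, hence may be regarded as an object of $\SH(k)[\rho^{-1}]$ (and its homotopy sheaves computed there agree with those computed in $\SH(k)$), so it suffices to show $\ul{\pi}_i(\tunit[1/\rho,1/2]) = 0$ for $i=1,2$.

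Next I would pass through the equivalence $\Phi\colon \SH(k)[\rho^{-1}] \wequi \sSH(Spec(k)_\ret)$ of Theorem \ref{thm:final-comparison}. Two features of $\Phi$ are needed, and both are really built into its construction. It is $t$-exact: it is assembled from the functor $Le$, which is $t$-exact by Corollary \ref{corr:Le-exact-ff}, together with the $\Aone$- and $\rho$-localisation steps and the $\Gm$-stabilisation, each of which is visibly an equivalence on (and hence acts as the identity on the homotopy sheaves of) the objects in play. It is also symmetric monoidal, being a composite of monoidal Bousfield localisations and of the extension functor $e$, which is monoidal because $Spec(k)_\ret \hookrightarrow Sm(k)$ preserves fibre products and the terminal object. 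Granting these, $\Phi$ sends the unit of $\SH(k)[\rho^{-1}]$ to the unit $\tunit_{\sSH}$ of $\sSH(Spec(k)_\ret)$, and (as $\Phi$ preserves the colimit defining $[1/2]$) carries $\tunit[1/\rho,1/2]$ to $\tunit_{\sSH}[1/2]$; by $t$-exactness, $\ul{\pi}_i(\tunit[1/\rho,1/2])$ vanishes if and only if $\ul{\pi}_i(\tunit_{\sSH}[1/2])$ does.

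Then I would compute the latter on stalks. The topos $Shv(Spec(k)_\ret) \wequi Shv(Sper(k))$ has enough points, one for each ordering $\alpha$ of $k$; write $p_\alpha$ for the corresponding point. By Lemma \ref{lemm:SH-functor-t}, the stalk of $\ul{\pi}_i(\tunit_{\sSH}[1/2])$ at $\alpha$ is $\pi_i\!\bigl(Lp_\alpha^*(\tunit_{\sSH}[1/2])\bigr)$. The geometric pullback $Lp_\alpha^*$ is monoidal and commutes with the colimit defining $[1/2]$, so $Lp_\alpha^*(\tunit_{\sSH}[1/2]) \wequi \tunit_{\SH}[1/2]$ is the classical sphere with $2$ inverted, and hence this stalk is $\pi_i^s \otimes_\ZZ \ZZ[1/2]$. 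For $i=1,2$ we have $\pi_i^s = \ZZ/2$, so every stalk vanishes; since $Shv(Spec(k)_\ret)$ has enough points, $\ul{\pi}_i(\tunit_{\sSH}[1/2]) = 0$, and therefore $\ul{\pi}_i(\tunit[1/\eta,1/2]) = \ul{\pi}_i(\tunit[1/\rho,1/2]) = 0$ for $i=1,2$. (When $k$ is not formally real the source category $\sSH(Spec(k)_\ret)$ is trivial and the statement is vacuous, so the same argument covers positive characteristic.)

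The only real content beyond bookkeeping is the claim that $\Phi$ is $t$-exact and monoidal, i.e.\ that the $\rho$-inverted motivic sphere over $k$ is identified with the ``constant sheaf of spheres'' on $Sper(k)$, so that its homotopy sheaves are sheafified classical stable stems; once that is in place, the $2$-torsion vanishing $\pi_1^s = \pi_2^s = \ZZ/2$ finishes the job. I expect establishing monoidality (and $t$-exactness) of the comparison functor to be where the care is needed; everything else is formal. An alternative packaging, closer to the heuristic in the introduction, would argue stalk-by-stalk over the real closures of the ordered fields $(k,\alpha)$, using continuity (Proposition \ref{prop:six-functors}), the punctual case $\SH(R)[\rho^{-1}] \wequi \SH$ of Theorem \ref{thm:final-comparison}, and Morel's identification of $\ul{\pi}_i(E)$ over a field with the naive homotopy groups; this amounts to the same computation.
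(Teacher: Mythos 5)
Your proposal is correct and follows essentially the same route as the paper: reduce to $\tunit[1/\rho,1/2]$ via Lemma \ref{lemm:rho-eta-comparison}, transport the computation across Theorem \ref{thm:final-comparison}, and conclude from $\pi_1^s = \pi_2^s = \ZZ/2$. The only difference is the middle bookkeeping: where you check vanishing on the stalks of $Shv(Sper(k))$ via Lemma \ref{lemm:SH-functor-t}, the paper instead reduces to a real closed base field using unramifiedness of the homotopy sheaves together with the rét-separatedness supplied by Theorem \ref{thm:rho-stable-htpy-mod}; and the $t$-exactness/monoidality of the comparison that you rightly flag as the delicate point is exactly what the paper itself also relies on (without further elaboration) in Corollary \ref{corr:rho-rigidity}.
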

\begin{proof}
By Lemma \ref{lemm:rho-eta-comparison} we know that $\SH(k)[1/2, 1/\eta] =
\SH(k)[1/2, 1/\rho]$. By Theorem \ref{thm:final-comparison}, we have
$\SH(k)[1/2, 1/\rho] = \sSH(Spec(k)_\ret)[1/2]$. In particular this category is
trivial unless $k$ has characteristic zero, which we shall assume from now on.

The sheaves $\ul{\pi}_i(\tunit[1/2, 1/\rho])$ are unramified \cite[Lemma 6.4.4]{morel2005stable}, so it
suffices to show that $\ul{\pi}_i(\tunit[1/2, 1/\rho])(K) = 0$ for $i=1,2$ and every $K$ (of
characteristic zero). Since $k$ was also arbitrary, we may just as well show the
result for $k=K$, simplifying notation. We are dealing with rét-sheaves by
Theorem
\ref{thm:rho-stable-htpy-mod}, and so if
\[ Spec(l_1) \coprod Spec(l_2) \coprod \dots \coprod Spec(l_n) \to Spec(k) \]
is a rét-cover, the canonical map
\[ \ul{\pi}_i(\tunit[1/2, 1/\rho])(k) \to \prod_m \ul{\pi}_i(\tunit[1/2, 1/\rho])(l_m) \]
is injective. Consequently we may assume that $k$ is real closed. But then
$\sSH(Spec(k)_\ret) = \SH$ is just the ordinary stable homotopy category, so it
suffices to show: $\pi_i^s[1/2] = 0$ for $i=1, 2$, where $\pi_i^s$ are the
classical stable homotopy groups. But $\pi_1^s = \ZZ/2 = \pi_2^s$ is well known,
so we are done.
\end{proof}

In classical algebraic topology, it is well known that rational stable homotopy
theory is the same as rational homology theory: $\SH_\QQ \wequi D(\QQ)$. In
motivic stable homotopy theory, the situation is not so simple. As is well known
(and follows for example from Lemma \ref{lemm:rho-eta-comparison}) there is a
splitting $\SH(k)_\QQ = \SH(k)_\QQ^+ \times \SH(k)_\QQ^-$. The + part has been
identified with rational motivic homology theory by Cisinski-Déglise
\cite[Section 16]{triangulated-mixed-motives}: $\SH(k)_\QQ^+ \wequi \DM(k, \QQ)$.

The - part has been only identified recently with an appropriate category of
rational Witt motives by Ananyevskiy-Levine-Panin \cite{levine2015witt}:
$\SH(k)_\QQ^- = \DM_W(k, \QQ)$. Here the category $\DM_W(k, \QQ)$ may be
conveniently defined as the homotopy category of modules over the (strict ring
spectrum model of the) homotopy module of rational Witt theory. That is to say
there is the homotopy module $\ul{W}_\QQ$ with $(\ul{W}_\QQ)_i = \ul{W}
\otimes_\ZZ \QQ$ for all $i$. This is the same as $K^- \otimes_{\ZZ[1/2]} \QQ$,
or equivalently $\ul{\pi}_0(\tunit_{\eta,\QQ})$.
Then corresponding to this homotopy module there is a strict ring spectrum, the
Eilenberg-MacLane spectrum $EM \ul{W}_\QQ$. Finally we may form the model
category $EM\ul{W}_\QQ\Mod$ and its homotopy category $Ho(EM\ul{W}_\QQ\Mod) =:
\DM_W(k, \QQ)$.

More generally, one may define $\DM_W(k, \ZZ[1/2])$ by replacing $\ul{W}_\QQ =
\ul{\pi}_0(\tunit_\QQ[1/\eta])$ in the above construction with $\ul{W}[1/2] =
\ul{\pi}_0(\tunit[1/\eta,1/2])$.

The theorem of Ananyevskiy-Levine-Panin essentially boils down to the
computation that $\ul{\pi}_i(\tunit_{\eta,\QQ}) = 0$ for $i > 0$. We can deduce
this and more from our general theory. We write $H_{\Aone} \ZZ$ for the image of
the tensor unit in $D_\Aone(k)$ under the ``forgetful'' functor $D_\Aone(k) \to
\SH(k)$.

\begin{prop} \label{prop:app1-ident}
We have $\ul{\pi}_i(H_{\Aone} \ZZ[1/\rho]) = 0$ for $i > 0$, and consequently
$\ul{\pi}_i(H_{\Aone} \ZZ[1/2, 1/\eta]) = 0$ for $i > 0$. Similarly
$\ul{\pi}_i(\tunit_\QQ[1/\rho]) = \ul{\pi}_i(\tunit_\QQ[1/\eta]) = 0$ for $i > 0$.

Thus we have the equivalences
\[ D_\Aone(k, \ZZ[1/2])^- \wequi \DM_W(k, \ZZ[1/2]) \wequi D(Spec(k)_\ret, \ZZ[1/2]) \]
\[ \SH(k)_\QQ^- \wequi \DM_W(k, \QQ) \wequi D(Spec(k)_\ret, \QQ). \]
\end{prop}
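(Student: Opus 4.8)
The plan is to reduce everything to Theorem~\ref{thm:final-comparison} (the main equivalences $\SH(k)[\rho^{-1}] \wequi \sSH(Spec(k)_\ret)$ and $D_\Aone(k)[\rho^{-1}] \wequi D(Spec(k)_\ret)$), to the computation $\ul{K}^{MW}_*[\rho^{-1}] = a_\ret\ZZ$ of Corollary~\ref{corr:colim_KnMW}, and to the identity $\SH(k)[1/2,1/\eta] = \SH(k)[1/2,1/\rho]$ of Lemma~\ref{lemm:rho-eta-comparison}, which lets us treat all $\eta$-statements as $\rho$-statements once $2$ is inverted; the colimit formula $\ul{\pi}_i(E[\rho^{-1}]) = \colim_n \ul{\pi}_i(E)_n$ from Lemma~\ref{lemm:rho-local-model} is used freely.

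\emph{The vanishing statements.} By Theorem~\ref{thm:rho-stable-htpy-mod} the homotopy sheaves of a $\rho$-local object of $\SH(k)$ are rét-sheaves extended from $Spec(k)_\ret$, so they are detected on stalks, i.e.\ on real closed fields $r/k$. Over such an $r$, Theorem~\ref{thm:final-comparison} gives $\SH(r)[\rho^{-1}] \wequi \sSH(Spec(r)_\ret) \wequi \SH$ and $D_\Aone(r)[\rho^{-1}] \wequi D(Spec(r)_\ret) \wequi D(Ab)$, monoidally; under the first, $\tunit$ goes to the classical sphere, and under the second the unit of $D_\Aone(r)$ goes to $\ZZ$ placed in degree zero, whence (applying the $t$-exact Eilenberg--MacLane functor of Lemma~\ref{lemm:SH-t-structure}, which identifies the forgetful functors on both sides) $H_{\Aone}\ZZ$ goes to the Eilenberg--MacLane spectrum $H\ZZ$. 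Consequently $\ul{\pi}_i(\tunit_\QQ[1/\rho])$ has all stalks equal to $\pi_i^s\otimes\QQ = 0$ for $i>0$, and $\ul{\pi}_i(H_{\Aone}\ZZ[1/\rho])$ has all stalks equal to $\pi_i(H\ZZ)=0$ for $i\neq 0$; hence both vanish for $i>0$. The remaining vanishings follow from Lemma~\ref{lemm:rho-eta-comparison} plus exactness of $(-)\otimes\ZZ[1/2]$ on homotopy sheaves, since $H_{\Aone}\ZZ[1/2,1/\eta] = H_{\Aone}\ZZ[1/2,1/\rho] = (H_{\Aone}\ZZ[1/\rho])\otimes\ZZ[1/2]$ and $\tunit_\QQ[1/\eta] = \tunit_\QQ[1/\rho]$.

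\emph{The equivalences.} Fix $R \in \{\ZZ[1/2], \QQ\}$. By Lemma~\ref{lemm:rho-eta-comparison} we have $\rho^2=0$ on the plus summand after inverting $2$ while $\rho$ is invertible on the minus summand, so --- using non-degeneracy of the homotopy $t$-structure and the colimit model --- $E\mapsto E[\rho^{-1}]$ is, over $R$, just the projection onto the minus part; thus $D_\Aone(k,\ZZ[1/2])^- = D_\Aone(k,\ZZ[1/2])[\rho^{-1}]$ and $\SH(k)_\QQ^- = \SH(k)_\QQ[\rho^{-1}]$. Theorem~\ref{thm:final-comparison}, applied with $R$-coefficients (which commute with all localisations), then yields $D_\Aone(k,\ZZ[1/2])^- \wequi D(Spec(k)_\ret,\ZZ[1/2])$, while $\SH(k)_\QQ^- = \SH(k)[\rho^{-1}]_\QQ \wequi \sSH(Spec(k)_\ret)_\QQ \wequi D(Spec(k)_\ret,\QQ)$, the last step because the rational sphere on $Spec(k)_\ret$ is the Eilenberg--MacLane spectrum of $\QQ$ by the vanishing just established. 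Finally, $\DM_W(k,R) = Ho(EM\ul{W}_R\Mod)$ with $\ul{W}_R = \ul{\pi}_0(\tunit[1/\eta])\otimes R = \ul{K}^{MW}_*[\rho^{-1}]\otimes R = a_\ret\ZZ\otimes R$ (Lemma~\ref{lemm:rho-eta-comparison} and Corollary~\ref{corr:colim_KnMW}), i.e.\ $\ul{W}_R$ is the constant sheaf of rings $R$ extended from $Spec(k)_\ret$ (Corollary~\ref{corr:Le-exact-ff}); so under Theorem~\ref{thm:final-comparison} the object $EM\ul{W}_R \in \SH(k)[\rho^{-1}]$ corresponds to the Eilenberg--MacLane ring spectrum of that constant sheaf of rings. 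Since any module over $EM\ul{W}_R$ is automatically $\rho$-local and $R$-linear, the module category may be formed inside $\SH(k)[\rho^{-1}]_R$, and modules over the Eilenberg--MacLane ring spectrum of the constant sheaf of rings $R$ on $Spec(k)_\ret$ are precisely $D(Spec(k)_\ret,R)$ (Lemma~\ref{lemm:SH-t-structure}). Assembling these identifications gives both displayed chains of equivalences (for $R=\QQ$ one also gets $\SH(k)_\QQ^-\wequi D_\Aone(k,\QQ)^-$, as in classical rational stable homotopy theory).

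\emph{The main obstacle.} The delicate step is identifying $EM\ul{W}_R$-modules with $D(Spec(k)_\ret,R)$. This hinges on recognising $\ul{W}_R$ as the constant rét-sheaf $R$ --- which is exactly where Jacobson's theorem (Corollary~\ref{corr:colim_KnMW}) enters --- on the compatibility of the various forgetful / Eilenberg--MacLane functors with the canonical monoidal equivalence of Theorem~\ref{thm:final-comparison}, and on the standard identification of modules over the Eilenberg--MacLane ring spectrum of a constant sheaf of rings with the corresponding derived category. None of these is hard in isolation, but keeping the bookkeeping straight --- $[\rho^{-1}]$ versus $[\eta^{-1}]$ versus the minus part, and their interaction with $R$-coefficients --- is where the care lies.
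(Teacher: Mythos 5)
Your proposal is correct in substance but organised differently from the paper, and one step deserves a caveat. The paper runs the logic in the opposite order: it first gets the categorical equivalences $D_\Aone(k)[1/2,1/\eta]=D_\Aone(k)[1/2,1/\rho]\wequi D(Spec(k)_\ret,\ZZ[1/2])$ and $\SH(k)_\QQ^-\wequi \sSH(Spec(k)_\ret)_\QQ\wequi D(Spec(k)_\ret,\QQ)$ directly from Lemma \ref{lemm:rho-eta-comparison} and Theorem \ref{thm:final-comparison}, and only then \emph{reads off} the vanishing by computing $\ul{\pi}_n(H_\Aone\ZZ[1/2,1/\eta])(K)=[\tunit[n],\tunit]_{D(Spec(K)_\ret,\ZZ[1/2])}=H^{-n}_\ret(K,\ZZ[1/2])=0$ on sections over arbitrary fields; you instead prove the vanishing first, stalkwise at real closed points using the extended-sheaf property and $\SH(r)[\rho^{-1}]\wequi\SH$, $D_\Aone(r)[\rho^{-1}]\wequi D(Ab)$. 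These are essentially equivalent computations (both rest on the same compatibility of the forgetful/Eilenberg--MacLane adjunctions with the equivalence of Theorem \ref{thm:final-comparison}, which the paper also uses tacitly), and the order does not matter since the first two equivalences in each chain need no vanishing input. The genuine divergence is the identification of $\DM_W$: the paper shows that $H_\Aone\ZZ[1/\rho,1/2]\to EM\ul{W}[1/2]$ is a weak equivalence of ring spectra (both have $\ul{\pi}_0=a_\ret\ZZ[1/2]$ and no other homotopy sheaves) and then invokes the tautological identification of modules over the localized unit of $D_\Aone$ with the localized $D_\Aone$; you instead recognise $\ul{W}_R$ as the constant rét-sheaf $a_\ret R$ via Jacobson's theorem and identify $EM\ul{W}_R$-modules with $D(Spec(k)_\ret,R)$ by a Schwede--Shipley/stable Dold--Kan argument over the small rét-site. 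Your route is legitimate and arguably more transparent about where Jacobson's theorem enters, but note that the step ``modules over the Eilenberg--MacLane ring spectrum of a constant sheaf of rings are the derived category'' is \emph{not} contained in Lemma \ref{lemm:SH-t-structure} (which only produces the $t$-structure and its heart); it is a standard but separate tilting-type input, comparable in weight to the unproven fact $Ho(H_\Aone\ZZ\Mod)\wequi D_\Aone(k)$ that the paper's own route leans on. Finally, for completeness you should dispose of the case $char(k)>0$ (where $\rho$ is nilpotent and all categories in sight are trivial by Corollary \ref{corr:main-case-fields}) before invoking Theorem \ref{thm:rho-stable-htpy-mod}, which is stated only in characteristic zero.
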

\begin{proof}
As in the proof of Proposition \ref{prop:app-rondigs} we have
\[ D_\Aone(k)[1/2]^- := D_\Aone(k)[1/2, 1/\eta] = D_\Aone(k)[1/2, 1/\rho]. \]
By Theorem \ref{thm:final-comparison}, this is the same as $D(Spec(k)_\ret)[1/2]
= D(Spec(k)_\ret, \ZZ[1/2])$.

Similarly
\[ \SH(k)_\QQ^- := \SH(k)_\QQ[1/\eta] = \SH(k)_\QQ[1/\rho] =
   \sSH(Spec(k)_\ret)_\QQ, \]
and the latter category is equivalent to $D(Spec(k)_\ret)_\QQ$ by classical
stable rational homotopy theory.

From this we can read off $\ul{\pi}_*(H_\Aone \ZZ[1/2, 1/\eta])$ and so on. The
main point is that $\ul{\pi}_n(H_\Aone \ZZ[1/2, 1/\eta]) = 0$ for $n > 0$. It
suffices to check this on fields, so we may as well check it for $k$ ($k$ being
arbitrary), and we have $\ul{\pi}_n(H_\Aone \ZZ[1/2, 1/\eta])(k) = [\tunit[n],
\tunit]_{D(Spec(k)_\ret, \ZZ[1/2])} = H^{-n}_\ret(k, \ZZ[1/2]) = 0$.

It remains
to show that $D_\Aone(k, \ZZ[1/2])^- \simeq \DM_W(k, \ZZ[1/2])$. Our
computation of homotopy sheaves implies that $H_\Aone\ZZ[1/\rho, 1/2] \to EM\ul{W}[1/2]$
is a weak equivalence. The result follows.
\end{proof}

Let us also make explicit the following observation.

\begin{corr}
Let $k$ be a real closed field or $\QQ$. Then $\ul{\pi}_*(\tunit[1/\rho])(k) =
\pi_*^s$ and in particular $\ul{\pi}_*(\tunit[1/\eta, 1/2])(k) = \pi_*^s
\otimes_\ZZ \ZZ[1/2]$. Here $\pi_*^s$ denotes the classical stable homotopy
groups.
\end{corr}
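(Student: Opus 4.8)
The plan is to deduce the statement directly from Theorem~\ref{thm:final-comparison}, the point being that the two fields in question have a one-point real spectrum.

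First I would record that a real closed field carries a unique ordering, and that $\QQ$ carries a unique ordering, so that in either case $Sper(k)$ is a single point. Consequently $Shv(Spec(k)_\ret)\wequi Shv(Sper(k))\wequi Set$ \cite[Theorem (1.3)]{real-and-etale-cohomology}, and hence $\sSH(Spec(k)_\ret)\wequi \SH$ is the classical stable homotopy category, with the unit corresponding to the sphere spectrum. (For $k=\QQ$ one may instead invoke $Sper(\QQ)=Sper(\RR)$, as in the proof of Theorem~\ref{thm:ret-htpy-rho-inv}.) Since the canonical functors of Theorem~\ref{thm:final-comparison} are monoidal, this yields an equivalence $\SH(k)[\rho^{-1}]\wequi \SH$ carrying the unit $\tunit[\rho^{-1}]=L_\rho\tunit$ to the sphere spectrum.

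Next I would compute the homotopy sheaves. By Lemma~\ref{lemm:rho-local-model} the object $\tunit[\rho^{-1}]$ is $\rho$-local, so $\rho\colon\tunit[\rho^{-1}]\to\tunit[\rho^{-1}]\wedge\Gm$ is already an equivalence in $\SH(k)$; thus the bigraded homotopy sheaf $\ul{\pi}_i(\tunit[\rho^{-1}])_n$ is independent of $n$, and since every Nisnevich covering of the field $k$ splits, its value at $k$ is $[\tunit[i],\tunit[\rho^{-1}]]_{\SH(k)}$. (Equivalently, as $\tunit$ is compact and $\tunit[\rho^{-1}]=\hocolim_n\tunit\wedge\Gmp{n}$, this equals $\colim_n[\tunit[i],\tunit\wedge\Gmp{n}]_{\SH(k)}$, in accordance with the formula $\ul{\pi}_i(\tunit[\rho^{-1}])=\colim_n\ul{\pi}_i(\tunit)_n$ from the introduction.) By the reflective localisation adjunction, $[\tunit[i],\tunit[\rho^{-1}]]_{\SH(k)}$ is the $i$-th homotopy group of the unit of $\SH(k)[\rho^{-1}]$, which is $\pi_i^s$ by the previous paragraph. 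Letting $i$ and the suppressed index $n$ range over $\ZZ$ gives $\ul{\pi}_*(\tunit[1/\rho])(k)=\pi_*^s$. The $\eta$-inverted statement then follows formally: by Lemma~\ref{lemm:rho-eta-comparison} we have $\SH(k)[1/2,1/\eta]=\SH(k)[1/2,1/\rho]$, whose unit is $(\tunit[1/\rho])[1/2]$, and inverting $2$ is a filtered colimit, hence commutes with homotopy sheaves, so $\ul{\pi}_*(\tunit[1/\eta,1/2])(k)=\ul{\pi}_*(\tunit[1/\rho])(k)\otimes_\ZZ\ZZ[1/2]=\pi_*^s\otimes_\ZZ\ZZ[1/2]$.

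I do not expect a genuine obstacle, since all of the content is packaged in Theorem~\ref{thm:final-comparison}; the only thing requiring care is the chain of identifications in the second step — showing that inverting $\rho$ collapses the $\Gm$-grading and turns the homotopy groups of the unit into the classical stable stems — together with the elementary verification that $Sper(k)$ is a point in these two cases.
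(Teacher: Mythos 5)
Your proposal is correct and takes the same route as the paper: the paper's proof is a one-line citation of $Shv(Spec(k)_\ret)=Set$, Lemma \ref{lemm:rho-eta-comparison}, and Theorem \ref{thm:final-comparison}, which are exactly the ingredients you use. You have merely spelled out the details (uniqueness of the ordering on a real closed field and on $\QQ$, and the identification of homotopy sheaves at $k$ with homotopy groups of the unit of $\SH(k)[\rho^{-1}]$) that the paper leaves implicit.
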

\begin{proof}
This follows immediately from $Shv(Spec(k)_\ret) = Set$, Lemma
\ref{lemm:rho-eta-comparison} and Theorem \ref{thm:final-comparison}.
\end{proof}

\section{Application 2: Some Rigidity Results}
\label{sec:app2}

In this section we establish some rigidity results. We
work with $\rho$-stable sheaves. These sheaves are $h$-torsion (because $\rho^2
h = 0$), explaining
to some extent why we do not need the usual torsion assumptions.

There are various notions of rigidity for sheaves. We shall call a presheaf $F$
on $Sm(k)$ \emph{rigid} if for every essentially smooth, Henselian local scheme
$X$ with residue field $x$, the natural map $F(X) \to F(x)$ is an isomorphism.
This notion goes back to perhaps Gillet-Thomason \cite{gillet1984k} and Gabber \cite{gabber1992k}.

\begin{lemm} \label{lemm:loc-const-rigid-ret}
Let $F \in Shv(Spec(k)_\ret)$. Then $eF \in Shv(Sm(k)_\ret)$ is rigid.
\end{lemm}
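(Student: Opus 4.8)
The plan is, first, to compute the restriction of $eF$ to the small real étale site of an essentially smooth $k$-scheme, and then to deduce rigidity from the (semi-algebraic) fact that the real spectrum of a Henselian local ring retracts onto the real spectrum of its residue field, compatibly with the structure map to $Spec(k)$.

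\textbf{Step 1: identify $eF$ on small sites.} For $T$ essentially smooth over $k$ with structure map $\pi_T\colon T \to Spec(k)$, I claim $(eF)|_{T_\ret} \wequi \pi_T^* F$, where $\pi_T^*\colon Shv(Spec(k)_\ret) \to Shv(T_\ret)$ is the small-site pullback of Lemma~\ref{lemm:f*-SH-Xret-exact}; in particular $(eF)(T) = H^0_\ret(T, \pi_T^* F) = \Gamma(R(T), R(\pi_T)^*\mathcal F)$, where $\mathcal F \in Shv(R(Spec(k)))$ corresponds to $F$. When $T$ is smooth this is a formal Kan-extension computation: by Lemma~\ref{lemm:ret-extension-fully-faithful}, $e = a_\ret e^p$ with $e^p$ the left Kan extension of $Pre(Spec(k)_\ret) \to Pre(Sm(k))$ along the inclusion, while $\pi_T^*$ is, at the presheaf level, the left Kan extension along $V \mapsto V \times_k T$; for $U \to T$ étale both $(e^p F)(U)$ and the presheaf pullback are computed by $\colim F(V)$ over étale $k$-schemes $V$ with a $k$-morphism $U \to V$ (a $k$-morphism $U \to V$ being the same as a $T$-morphism $U \to V \times_k T$), and $\ret$-sheafification commutes with restriction to $T_\ret$ since every $\ret$-cover of a $T$-scheme is a $\ret$-cover in $Sm(k)$. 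For $T$ merely essentially smooth one writes $T = \lim T_\alpha$ with the $T_\alpha$ smooth and affine transition maps, and passes to the colimit, using that $\ret$-cohomology commutes with such limits of schemes (as in the proof of Theorem~\ref{thm:ret-htpy-rho-inv}, via \cite[Proposition (A.9)]{real-and-etale-cohomology}) and that the sheaves $\pi_{T_\alpha}^* F$ are compatible.

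\textbf{Step 2: rigidity.} Let $X$ be essentially smooth, Henselian local, with closed point $i\colon x \hookrightarrow X$ and residue field $\kappa$, and write $\pi_X = \pi_x\circ$ (no scheme map $X\to x$ exists, but) $\pi_x = \pi_X\circ i$. Under the equivalence $Shv(T_\ret) \wequi Shv(R(T))$ natural in $T$ (\cite[Theorem (1.3)]{real-and-etale-cohomology}), Step~1 identifies the map $(eF)(X) \to (eF)(x)$ occurring in the definition of rigidity with the restriction $\Gamma(R(X), R(\pi_X)^*\mathcal F) \to \Gamma(R(x), R(\pi_x)^*\mathcal F)$ along $R(i)\colon R(x) \to R(X)$. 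By the theory of the real spectrum of a Henselian local ring (\cite[Propositions II.2.1, II.2.2 and II.2.4]{andradas2012constructible}; this is the input behind both the equality $H^0_\ret(A,\ZZ) = H^0_\ret(\kappa,\ZZ)$ used in the proof of Theorem~\ref{thm:rho-stable-htpy-mod} and the fact, used in the proof of Corollary~\ref{corr:ret-criterion}, that the only open subset of $R(X)$ containing $R(x)$ is $R(X)$ itself) there is a retraction $\lambda\colon R(X) \to R(x)$ with $\lambda\circ R(i) = \id$ and $R(\pi_X) = R(\pi_x)\circ\lambda$. Hence $R(\pi_X)^*\mathcal F = \lambda^*(R(\pi_x)^*\mathcal F)$, so pulling back sections along $\lambda$ (and restricting along $R(i)$) exhibits a section of the restriction map, giving surjectivity; and the restriction map is injective because the locus in $R(X)$ on which two given sections of a sheaf agree is open and, if they agree on $R(x)$, contains $R(x)$, hence is all of $R(X)$. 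Therefore $(eF)(X) \xrightarrow{\sim} (eF)(x)$, i.e.\ $eF$ is rigid.

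The genuine content of Step~2 is entirely imported from semi-algebraic geometry; the only point requiring care is Step~1, namely pinning down $(eF)|_{T_\ret}$ via the Kan-extension formalism and extending the identification from smooth to arbitrary essentially smooth (non-finite-type) $T$ by continuity.
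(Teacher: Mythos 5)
Your proof is correct, and Step 1 is the same computation the paper makes (the paper dismisses it with ``extension and pullback are both left Kan extensions, so they commute''; your colimit-indexing argument and the limit argument for essentially smooth $T$ are the details behind that). Step 2, however, takes a genuinely different route. The paper reduces to characteristic zero, uses perfectness of $k$ and Hensel's lemma to build a \emph{scheme-level} retraction $s\colon X \to x$ (automatically a $k$-morphism, so $\pi_X = \pi_x \circ s$), and then applies the fact that $H^0(X,G) = H^0(x, i^*G)$ for every $G \in Shv(X_\ret)$ to $G = s^*F$. You instead build the retraction purely topologically, as the specialization retraction $\lambda\colon R(X) \to R(x)$; this buys you a uniform argument with no case distinction on the characteristic and no appeal to perfectness, at the cost of having to know that $\lambda$ is \emph{continuous} and compatible with the maps to $Sper(k)$. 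The compatibility is automatic (every point of $Sper(k)$ is closed, and $\lambda(\alpha)$ is a specialization of $\alpha$), but continuity is not literally one of the cited propositions of Andradas--Br\"ocker--Ruiz, which give only that $R(x)$ consists of closed points and that its sole open neighbourhood in $R(X)$ is $R(X)$ itself; it does follow (e.g.\ the indicator function of a clopen $V \subset R(x)$ extends to a locally constant function on $R(X)$, whose support is exactly $\lambda^{-1}(V)$), so you should add a line to that effect. Alternatively you could drop $\lambda$ entirely: the neighbourhood fact already gives $H^0(R(X), \mathcal G) \xrightarrow{\sim} H^0(R(x), R(i)^*\mathcal G) = \colim_{U \supseteq R(x)} H^0(U, \mathcal G)$ for \emph{every} sheaf $\mathcal G$, and applying this to $\mathcal G = R(\pi_X)^*\mathcal F$ finishes the proof in one step --- this is precisely the form in which the paper invokes the semi-algebraic input.
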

\begin{proof}
Extension $e$ and pullback are both left Kan extensions. From this it is easy to
show that they commute,
and so we find that $(eF)|_{X_\ret} = f^*F \in Shv(X_\ret)$, where $X$ is
(essentially) smooth over $k$ with structural morphism $f$.

If $char(k) > 0$ then $Spec(k)_\ret$ and $Sm(k)_\ret$ are both the trivial site,
so we may assume that $k$ is of characteristic zero and consequently perfect. In
this case, for an essentially $k$-smooth Henselian local scheme $X$ with closed
point $i\colon x \to X$, there exists a retraction $s\colon X \to x$. (Write
$k(x)/k$ as $k(T_1, \dots, T_n)[U]/P$ with $P \in k(T_1, \dots, T_n)[U]$ separable;
this is possible because $k(x)/k$ is separable, $k$ being perfect. Lift the
elements $T_i$ to $\mathcal{O}_X$ arbitrarily and then use Hensel's lemma to
produce a root of $P$ in $\mathcal{O}_X$.)

It is thus enough to prove: if $F \in Shv(x_\ret)$ then $H^0(x, F) = H^0(X,
s^*F)$. It follows from \cite[Discussion after Proposition
19.2.1]{real-and-etale-cohomology} and \cite[Propositions II.2.2 and
II.2.4]{andradas2012constructible} that for any $G \in Shv(X_\ret)$ we have
$H^0(X, G) = H^0(x, i^*G)$. Consequently $H^0(X, s^*F) = H^0(x, i^*s^*F) = H^0(x,
F)$, because $si = \id$ by construction.
\end{proof}

\begin{corr} \label{corr:rho-rigidity}
If $E \in \SH(k)[\rho^{-1}]$ then all the homotopy sheaves $\ul{\pi}_i(E)$ are
rigid.
\end{corr}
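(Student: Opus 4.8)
The plan is to deduce this from Theorem \ref{thm:final-comparison} together with Lemma \ref{lemm:loc-const-rigid-ret}: I will show that every homotopy sheaf $\ul{\pi}_i(E)$ is of the form $eF$ for some $F \in Shv(Spec(k)_\ret)$, and rigidity then follows at once from Lemma \ref{lemm:loc-const-rigid-ret}. If $char(k) > 0$ there is nothing to prove, since $\rho$ is nilpotent (see the proof of Corollary \ref{corr:main-case-fields}) and hence $\SH(k)[\rho^{-1}] = 0$; so I may assume $char(k) = 0$.

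First I would observe that, because $E$ is $\rho$-local, the bigraded homotopy sheaf $\ul{\pi}_i(E)_j$ is independent of $j$ up to canonical isomorphism (as recalled in the introduction and Lemma \ref{lemm:rho-local-model}); write $\ul{\pi}_i(E)$ for it. It is a homotopy module on which $\rho$ acts invertibly, so by Theorem \ref{thm:rho-stable-htpy-mod} it is a sheaf in the real étale topology. Since the rét-topology refines the Nisnevich one, the rét-sheafification of a presheaf coincides with its Nisnevich sheafification whenever the latter is already a rét-sheaf; consequently $\ul{\pi}_i(E)$ simultaneously computes the Nisnevich homotopy sheaf of $E \in \SH(k)$ and the rét-homotopy sheaf of the image of $E$ in $\sSH(Sm(k)_\ret)$ under the equivalences of Theorem \ref{thm:final-comparison} (recall that inverting $\rho$ kills the $\Gm$-stabilisation, by Lemma \ref{lemm:S1-rho-inversion}, so $\SH(k)[\rho^{-1}]$ is just the $(\Aone,\rho)$-localisation of $\sSH(Sm(k)_\ret)$).

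Next I would invoke Theorem \ref{thm:final-comparison} to write $E$ as $Le(E_0)$ for a unique $E_0 \in \sSH(Spec(k)_\ret)$, where $Le: \sSH(Spec(k)_\ret) \to \sSH(Sm(k)_\ret)$ is the extension functor of Corollary \ref{corr:Le-exact-ff}. That functor is $t$-exact, and, exactly as in the proof of Proposition \ref{prop:main-equiv} (using Theorem \ref{thm:ret-htpy-rho-inv}), its essential image consists of $\Aone$-local and $\rho$-local objects, so the passage to $\SH(k)^\ret[\rho^{-1}]$ leaves it unchanged. Combining this with the previous paragraph and the identification of the heart of $\sSH(Spec(k)_\ret)$ with $Shv(Spec(k)_\ret)$ (Lemma \ref{lemm:SH-t-structure}), together with the fact that $(Le)^\heart = e$ (Corollary \ref{corr:Le-exact-ff} and Lemma \ref{lemm:SH-functor-t}), I obtain $\ul{\pi}_i(E) \iso e(\ul{\pi}_i(E_0))$ with $\ul{\pi}_i(E_0) \in Shv(Spec(k)_\ret)$. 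Lemma \ref{lemm:loc-const-rigid-ret} then shows that $e(\ul{\pi}_i(E_0))$ is rigid, which completes the proof.

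The one step requiring care is the middle one: identifying the Nisnevich homotopy $t$-structure on $\SH(k)[\rho^{-1}]$ with the rét homotopy $t$-structure transported from $\sSH(Sm(k)_\ret)$ along the chain of equivalences. This is precisely where Theorem \ref{thm:rho-stable-htpy-mod} is used — without knowing that $\rho$-stable homotopy modules are rét-sheaves, the Nisnevich and rét sheafifications of the defining presheaves could differ — but once that identification is in hand, the rest of the argument is purely formal.
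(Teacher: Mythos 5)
Your proof is correct and takes essentially the same route as the paper: the paper's (two-sentence) proof is exactly the observation that Theorem \ref{thm:final-comparison} together with Corollary \ref{corr:Le-exact-ff} exhibits every homotopy sheaf of $E$ as $eF$ for some $F \in Shv(Spec(k)_\ret)$, after which Lemma \ref{lemm:loc-const-rigid-ret} applies. Your extra care in matching the Nisnevich and r\'et homotopy $t$-structures via Theorem \ref{thm:rho-stable-htpy-mod} just fills in a detail the paper leaves implicit.
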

\begin{proof}
By Theorem \ref{thm:final-comparison} and Corollary \ref{corr:Le-exact-ff} we
know that all the homotopy sheaves of $E$ are of the form $eF$, with $F \in
Shv(Spec(k)_\ret)$. Thus the claim follows immediately from Lemma
\ref{lemm:loc-const-rigid-ret}.
\end{proof}

\begin{corr}
Let $k$ be a perfect field of finite virtual 2-étale cohomological dimension and
exponential characteristic $e \ne 2$.
Then the homotopy sheaves $\ul{\pi}_i(\tunit)_0[1/e]$ are rigid.
\end{corr}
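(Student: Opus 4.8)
Write $F_i := \ul{\pi}_i(\tunit)_0[1/e]$; the plan is to show each $F_i$ is rigid. For $i<0$ one has $F_i=0$ by Morel's connectivity theorem, and for $i=0$ the sheaf $F_0 = \ul{GW}[1/e]$ is rigid directly: since $e\neq 2$, the element $2$ is invertible on every essentially smooth $k$-scheme, and for a Henselian local such scheme $X$ with residue field $\kappa(x)$ the restriction $GW(X)\to GW(\kappa(x))$ is an isomorphism. So I would fix $i\ge 1$. First I would observe that $F_i$ is torsion, with torsion prime to $e$: the rationalisation $\tunit_\QQ$ splits as $\tunit_\QQ^+\vee \tunit_\QQ^-$, the weight-zero homotopy sheaves of $\tunit_\QQ^+$ (rational motivic cohomology) and of $\tunit_\QQ^-$ (rational Witt theory, cf. \cite{levine2015witt}) being concentrated in degree $0$, so $\ul{\pi}_i(\tunit_\QQ)_0=0$ for $i\geq 1$. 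A torsion sheaf decomposes canonically as $F_i = F_i\{2\}\oplus F_i[1/2]$, and rigidity holds for a direct sum if and only if it holds for each summand; so it suffices to prove $F_i[1/2]$ and $F_i\{2\}$ are rigid.

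For the prime-to-$2$ part: $F_i[1/2]=\ul{\pi}_i(\tunit)_0[1/2e]$, and by the idempotent ($\epsilon$-eigenspace) decomposition of Lemma \ref{lemm:rho-eta-comparison} this splits as $\ul{\pi}_i(\tunit)_0[1/2e]^+\oplus\ul{\pi}_i(\tunit)_0[1/2e]^-$. The minus summand is a homotopy sheaf of $\tunit[1/2e,1/\eta]=\tunit[1/2e,1/\rho]$, which is an object of $\SH(k)[\rho^{-1}]$, so it is rigid by Corollary \ref{corr:rho-rigidity}. The plus summand lies in the heart of $\SH(k)[1/2]^+$; there $\eta$ acts as zero, so (by Morel's identification of $\eta$-killed homotopy modules with Rost cycle modules, equivalently with homotopy-invariant Nisnevich sheaves with transfers over $\ZZ[1/2]$) this summand carries transfers, and being torsion prime to the exponential characteristic it is rigid by Suslin's rigidity theorem \cite[Theorem 4.4]{suslin1996singular}. (For $i=1,2$ this is essentially the content of Proposition \ref{prop:app-rondigs}, where the plus part in fact vanishes.)

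It remains to treat $F_i\{2\}$. Since $e\neq 2$, inverting $e$ does nothing to a $2$-primary sheaf, so the task is to show the $2$-primary part of $\ul{\pi}_i(\tunit)_0$ is rigid, and this is exactly where the hypothesis $\mathrm{vcd}_2(k)<\infty$, i.e. $\mathrm{cd}_2(k(\sqrt{-1}))<\infty$, is needed. The plan is to pass to the $2$-complete sphere $\tunit^\wedge_2$ and fracture its weight-zero homotopy over $k$ into two pieces glued along the real closed points of $k$: a $\rho$-inverted (real étale) piece, coming from $\tunit^\wedge_2 \in \SH(k)[\rho^{-1}]$, whose homotopy sheaves are rigid by Corollary \ref{corr:rho-rigidity} and Lemma \ref{lemm:loc-const-rigid-ret}; and an étale piece, where over $k(\sqrt{-1})$ the finiteness of $\mathrm{cd}_2$ forces $2$-complete motivic homotopy theory to agree with étale homotopy theory, so that the relevant weight-zero homotopy sheaves become étale-locally constant $2$-torsion sheaves, hence rigid by Gabber's rigidity theorem \cite{gabber1992k}. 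Since rigidity (an isomorphism-of-sections condition) is preserved under limits and under finite separable extensions, it descends through this fracture square, giving rigidity of $\ul{\pi}_i(\tunit^\wedge_2)_0$ and thus of $F_i\{2\}$.

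The main obstacle is precisely this last step. Everything else is bookkeeping built on $\rho$-inverted rigidity (Corollary \ref{corr:rho-rigidity}) and Suslin rigidity for the $\eta=0$ part; but making the $2$-adic fracture over $k$ into a real-étale part and an étale part over $k(\sqrt{-1})$ precise at the level of homotopy sheaves — and checking the comparison is compatible enough that rigidity transports along it — requires the $2$-adic motivic-to-étale comparison theorems and some care with $\lim$/$\lim^1$ terms, and is the heart of the argument.
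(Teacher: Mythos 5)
Your treatment of the rational and odd-primary pieces is sound and essentially parallels the paper: torsionness of $\ul{\pi}_i(\tunit)_0$ for $i>0$ from the rational splitting, the minus part handled by Corollary \ref{corr:rho-rigidity}, and the plus part handled by transfers plus Suslin-type rigidity for torsion prime to the characteristic. The genuine gap is the $2$-primary part, which is exactly where the hypothesis of finite virtual $2$-étale cohomological dimension must enter, and which you explicitly leave as a ``plan'' while conceding it is the heart of the argument. Worse, the plan as stated would not work: it is not true that over $k(\sqrt{-1})$ with $\mathrm{cd}_2<\infty$ the $2$-complete motivic sphere becomes an étale-locally constant object whose weight-zero homotopy sheaves are rigid by Gabber's theorem --- the map from $2$-complete motivic homotopy theory to its étale localisation is very far from an equivalence (motivic cohomology itself fails étale descent), and no fracture square of $\tunit_2^\wedge$ into a real-étale piece and an étale piece is established in this paper or available off the shelf at the level of generality you need.

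The paper's route for the $2$-complete sphere is different and is the idea you are missing: by Hu--Kriz--Ormsby, the finiteness of $\mathrm{vcd}_2(k)$ guarantees that the motivic Adams spectral sequence converges to $\ul{\pi}_*(\tunit_2^\wedge)_*$. Its $E_1$-page consists of homotopy sheaves of (smash powers of) the mod-$2$ motivic Eilenberg--MacLane spectrum, which are sheaves with transfers in Voevodsky's sense and torsion prime to the characteristic, hence rigid. Since rigidity of sheaves is closed under kernels, cokernels, extensions and filtered colimits, it propagates through the pages of the spectral sequence to the abutment, giving rigidity of $\ul{\pi}_i(\tunit_2^\wedge)_0$; the arithmetic fracture square then combines this with the $\tunit[1/2]$ computation via the five lemma. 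To repair your argument you would need to replace your étale/real-étale fracture of $\tunit_2^\wedge$ by this Adams spectral sequence step (or supply a genuinely new proof of a $2$-adic motivic-to-étale comparison strong enough to control weight-zero homotopy sheaves, which is well beyond what you have written).
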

\begin{proof}
We will first assume that $e=1$, and explain the necessary changes in positive
characteristic at the end.

For $i=0$ we have $\ul{\pi}_0(\tunit)_0 = \ul{GW}$ and this sheaf is known to be
rigid \cite[Theorem 2.4]{gille2015quadratic}. We consider the arithmetic square \cite[Lemma
3.9]{rondigs2016first}
\begin{equation*}
\begin{CD}
\tunit @>>> \tunit[1/2] \\
@VVV    @VVV  \\
\tunit_2^\wedge @>>> \tunit_2^\wedge[1/2].
\end{CD}
\end{equation*}
Since rigid sheaves are stable under extension, kernel and cokernel, the five
lemma implies that it
is enough to show that $\ul \pi_*(\tunit[1/2])_0, \ul \pi_*(\tunit_2^\wedge)_0$ and $\ul
\pi_*(\tunit_2^\wedge[1/2])_0$ are rigid. Since rigid sheaves are stable by colimit,
the case of $\tunit_2^\wedge[1/2]$ follows from $\tunit_2^\wedge$.

By \cite[Theorem 1]{hu2011convergence} and \cite[proof of
Theorem 8.1]{rondigs2016eta}, we know that $\tunit_2^\wedge$ is the target of the
convergent motivic Adams spectra sequence. The homotopy sheaves at the
$E_1$ page are all sheaves with transfers in the sense of Voevodsky and
torsion prime to the characteristic, and hence
rigid, for example by \cite[Paragraph after Lemma 1.6]{hornbostel2007rigidity}.
Since rigid sheaves are stable by extension etc., it follows that the $E_\infty$
page is rigid, and finally so are the homotopy sheaves of $\tunit_2^\wedge$.

By motivic Serre finiteness \cite[Theorem
6]{levine2015witt} (beware that their indexing convention for motivic homotopy
groups differs from ours!), $\ul \pi_i(\tunit[1/2])_0$ is torsion for $i>0$.
By design, it is of odd
torsion prime to the exponential characteristic. Consequently all of the
$l$-torsion subsheaves of $\ul{\pi}_i(\tunit[1/2]^+)_0$ are rigid by the same argument
as before, and so is the colimit $\ul{\pi}_i(\tunit[1/2]^+)_0$.

It remains to deal with $\ul{\pi}_i(\tunit[1/2]^-)_0$. But this is just the same as
$\ul{\pi}_i(\tunit[1/2, 1/\rho])_0$ and so is rigid by Corollary
\ref{corr:rho-rigidity}.

This concludes the proof if $e=1$. If $e > 2$ the same proof works. The only
problem might be that we have torsion prime to the characteristic, but we
excluded this possibility by inverting $e$.
\end{proof}

\paragraph{Remark.} We appeal to \cite{levine2015witt} in order to know that
$\ul \pi_i(\tunit)_0 \otimes \QQ = 0$ for $i>0$. This can also be deduced from
Proposition \ref{prop:app1-ident}, using that $\SH(k)_\QQ^+ = \DM(k,\QQ)$.

There is another (older) notion of rigidity first considered by Suslin
\cite{suslin1983thek}. This corresponds to (1) in the next result. It is a
slightly silly property in our situation, but (2) is a replacement in spirit.
It is related to important results in semialgebraic topology due to Coste-Roy, Delfs
\cite[see in particular Corollary II.6.2]{delfs1991homology} and
Scheiderer \cite{real-and-etale-cohomology}.

\begin{prop}
Let $E \in \SH(k)[\rho^{-1}]$ and $i \in \ZZ$.
\begin{enumerate}[(1)]
\item If $\bar{L}/\bar{K}$ is an extension of algebraically closed fields
  over $k$, then
  \[ \ul{\pi}_i(E)(\bar{K}) = \ul{\pi}_i(E)(\bar{L}) = 0. \]
\item If $L^r/K^r$ is an extension of real closed fields over $k$, then  also
  \[ \ul{\pi}_i(E)(K^r) = \ul{\pi}_i(E)(L^r). \]
\end{enumerate}
\end{prop}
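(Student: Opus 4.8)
The plan is to reduce both statements to the structural description of the homotopy sheaves of $\rho$-local spectra from Theorem \ref{thm:final-comparison}, and then to evaluate on the real étale topos of an algebraically closed (resp.\ real closed) field, which is trivial (resp.\ punctual). First I would dispose of positive characteristic: if $char(k) > 0$ then $k$ has no orderings and, exactly as in the proof of Corollary \ref{corr:main-case-fields}, $\rho$ is nilpotent in $\SH(k)$, so $\SH(k)[\rho^{-1}] = 0$ and both claims are vacuous (in (2) there are moreover no real closed fields over $k$). So assume $char(k) = 0$; then $k$ is perfect, every extension field of $k$ is separable, and $Spec(\bar K)$, $Spec(\bar L)$, $Spec(K^r)$, $Spec(L^r)$ are all essentially smooth over $k$. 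By the proof of Corollary \ref{corr:rho-rigidity} (that is, Theorem \ref{thm:final-comparison} together with Corollary \ref{corr:Le-exact-ff}) each homotopy sheaf of $E$ has the form $\ul{\pi}_i(E) = eF_i$ for a unique $F_i \in Shv(Spec(k)_\ret)$, and, as noted in the proof of Lemma \ref{lemm:loc-const-rigid-ret}, $e$ commutes with pullback; hence for any essentially smooth $X$ over $k$ with structure map $g_X$ one has $(eF_i)|_{X_\ret} = g_X^* F_i$ and therefore $\ul{\pi}_i(E)(X) = H^0(X_\ret, g_X^* F_i)$ (using continuity to make sense of the value at essentially smooth $X$).

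For (1) I would take $X = Spec(\bar K)$. Since an algebraically closed field admits no ordering, $Sper(\bar K) = \emptyset$, so $Shv(Spec(\bar K)_\ret)$ is the terminal category and $H^0$ of any of its objects is $0$. Thus $\ul{\pi}_i(E)(\bar K) = 0$, and the same argument applied to $\bar L$ gives $\ul{\pi}_i(E)(\bar L) = 0$.

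For (2) I would take $X = Spec(K^r)$. A real closed field carries a unique ordering, and its only nontrivial finite separable extension is its algebraic closure, which has empty real spectrum; hence $Sper(K^r)$ is a single point and $Shv(Spec(K^r)_\ret) \wequi Ab$, under which the composite of $g_{K^r}^*$ with the global sections functor is the stalk at the point $p \in Sper(k)$ determined by the ordered embedding $k \hookrightarrow K^r$. So $\ul{\pi}_i(E)(K^r) = (F_i)_p$. Now $Spec(L^r) \to Spec(k)$ factors through $Spec(K^r)$, and because the unique orderings on $L^r$ and $K^r$ restrict compatibly to the same ordering of $k$, the identical computation gives $\ul{\pi}_i(E)(L^r) = (F_i)_p$; moreover the restriction map $\ul{\pi}_i(E)(K^r) \to \ul{\pi}_i(E)(L^r)$ becomes the identity of $(F_i)_p$ under these identifications, hence is an isomorphism.

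I do not expect a genuine obstacle: the mathematical content is entirely contained in Theorem \ref{thm:final-comparison} and in the triviality, resp.\ punctuality, of the real spectra of algebraically and real closed fields. The only points requiring a little care are bookkeeping ones, namely making precise (via continuity, or via the $eF_i$ description together with $t$-exactness of $Le$) that $\ul{\pi}_i(E)$ evaluated at the essentially smooth schemes $Spec(\bar K)$, $Spec(K^r)$, $\dots$ is computed on the small real étale site, and checking that the small real étale site of a real closed field is punctual with global sections identified with the stalk of $F_i$ at the corresponding point of $Sper(k)$.
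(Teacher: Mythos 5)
Your proposal is correct and follows essentially the same route as the paper: reduce via Theorem \ref{thm:final-comparison} and Corollary \ref{corr:Le-exact-ff} to sheaves of the form $eF$ with $F \in Shv(Spec(k)_\ret)$, then use that the real spectrum of an algebraically closed field is empty for (1) and that the small real étale sites of $K^r$ and $L^r$ are both punctual (indeed isomorphic under pullback) for (2). The extra bookkeeping you supply (positive characteristic, commutation of $e$ with pullback) is consistent with what the paper delegates to the proof of Lemma \ref{lemm:loc-const-rigid-ret}.
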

\begin{proof}
As before, by Theorem \ref{thm:final-comparison} and Corollary \ref{corr:Le-exact-ff} we
know that all the homotopy sheaves of $E$ are of the form $eF$, with $F \in
Shv(Spec(k)_\ret)$. For such sheaves we have $eF(\bar{K}) = 0 = eF(\bar{L})$, so
(1) holds.
Since pullback $Spec(K^r)_\ret \to Spec(L^r)_\ret$ induces an isomorphism of
sites, (2) also follows immediately. (See also the first paragraph of the proof
of Lemma \ref{lemm:loc-const-rigid-ret}.)
\end{proof}

\bibliographystyle{amsalphac}
\bibliography{bibliography}

\end{document}